\tikzset{verticalsouth/.style={anchor=south,rotate=90}}
\tikzset{verticalnorth/.style={anchor=north,rotate=90}}
\newcommand*\centermathcell[1]{\omit\hfil$\displaystyle#1$\hfil\ignorespaces}
\patchcmd{\@bibitem}{\ignorespaces}{\label{bib-#1}\ignorespaces}{}{}
\newtheorem{thm}{Theorem}[section]
\newtheorem{prop}[thm]{Proposition}
\newtheorem{lem}[thm]{Lemma}
\newtheorem{cor}[thm]{Corollary}
\newtheorem{fact}[thm]{Fact}
\theoremstyle{remark}
\newtheorem{rem}[thm]{Remark}
\newtheorem{ex}[thm]{Example}
\newtheorem{rem-prob}[thm]{Remark*}
\newtheorem{notation}[thm]{Notation}
\newtheorem{conv}[thm]{Convention}
\newtheorem{question}[thm]{Question}
\theoremstyle{definition}
\newtheorem{defn}[thm]{Definition}
\title[Skew divided difference operators in the Nichols algebra]{Skew divided difference operators in the Nichols algebra associated to a finite Coxeter group}
\author{Christoph B\"arligea}
\address{Institut \'Elie Cartan de Lorraine (I\'ECL)\\ UMR~7502\\ FST\\ Campus Aiguillettes\\ B.P.~70239\\ 54506 Vand{\oe}vre-l\`es-Nancy\\ France}
\email{christoph.baerligea@rub.de}
\thanks{The research was supported by the German Research Foundation (DFG) under the project number 345815019}
\keywords{Braided differential calculus, Nichols-Woronowicz algebra model for Schubert calculus on finite Coxeter groups, positivity, (skew) divided difference operators, saturated chains in the Bruhat order}
\subjclass[2010]{Primary 20G42; Secondary 20F55, 14N15, 05E15}
\date{April~17,~2018}
\begin{document}

\begin{abstract}
\pdfbookmark[1]{\abstractname}{pdf:abstract}

Let $(W,S)$ be a finite Coxeter system with root system $R$ and with set of positive roots $R^+$. For $\alpha\in R$, $v,w\in W$, we denote by $\partial_\alpha$, $\partial_w$ and $\partial_{w/v}$ the divided difference operators and skew divided difference operators acting on the coinvariant algebra of $W$. Generalizing the work of Liu \cite{liu}, we prove that $\partial_{w/v}$ can be written as a polynomial with nonnegative coefficients in $\partial_\alpha$ where $\alpha\in R^+$. In fact, we prove the stronger and analogous statement in the Nichols-Woronowicz algebra model for Schubert calculus on $W$ after Bazlov \cite{bazlov1}. We draw consequences of this theorem on saturated chains in the Bruhat order, and partially treat the question when $\partial_{w/v}$ can be written as a monomial in $\partial_\alpha$ where $\alpha\in R^+$. In an appendix, we study related combinatorics on shuffle elements and Bruhat intervals of length two.

\end{abstract}

\maketitle
\setcounter{tocdepth}{1}
\pdfbookmark[1]{\contentsname}{pdf:contents}
\tableofcontents

\section{Introduction}

Let $\mathbb{S}_m$ be the symmetric group on $m$ letters. In \cite{liu}, Liu proves that skew divided difference operators $\partial_{w/v}$ where $v,w\in\mathbb{S}_m$ can be written as polynomials with nonnegative integer coefficients in divided difference operators of the form $\partial_{ij}$ where $1\leq i<j\leq m$. 
Liu actually proves the stronger and analogous statement for the Fomin-Kirillov algebra $\mathscr{E}_m$ as introduced in \cite{quadratic} 
(i.e.\ he uses considerably less relations among the generators) by constructing an explicit and manifestly positive formula for the elements $x_{w/v}\in\mathscr{E}_m$ which act via $\partial_{w/v}$. 
Thereby, he gives an affirmative answer to a conjecture of Kirillov \cite[Conjecture~2]{kirillov}

Closely following these ideas, we generalize in the present work Liu's results from type $\mathsf{A}$ to an arbitrary finite Coxeter group $W$.
%
Instead of the Fomin-Kirillov algebra restricted to the symmetric group, we use the Nichols-Woronowicz algebra model for Schubert calculus on $W$
after Bazlov \cite{bazlov1} as an algebra model for elements with controlled relations acting on the coinvariant algebra $S_W$ of $W$ via divided difference 
operators.

In this introduction, we give an account of the generalization
of positivity 
in classical terms, i.e.\ in terms of operators acting on $S_W$. The reader should bear in mind that almost all arguments in this article take place instead in Bazlov's Nichols algebra associated to $W$, and are related to the introduction only in the very end. We present the material here in this classical language just to make it accessible to everyone. After we have recalled basic definitions, we will come to the precise statements of the results in Subsection~\ref{subsec:summary}. 

\todo[inline,color=green]{Many notions (like braided tensor product algebra, braided Hopf algebra, Hopf duality pairing, etc.) go back to Majid although we don't quote the original sources because we don't know them. We give a greater generality than needed to make axiomatics clear to everyone. Maybe we quote the book pars pro toto. We will certainly also cite the Andrus paper as reference for the generalities.

We quote the sources which provide a formulation which is convenient for us and which we adopted in this text. The reader can go to the references (Majid, Andrus, Bazlov2) and the references therein for proofs and further explanations. In the end, it is not at all original.}

\subsection{Coxeter groups}
\label{subsec:coxeter}

Let $(W,S)$ be a Coxeter system such that $W$ is a finite Coxeter groups. We call the elements of $S$ simple reflections. For each pair of simple reflections $s$ and $s'$, we denote by $m(s,s')$ the Coxeter integer, i.e.\ the necessarily finite order of $ss'$ in $W$ (cf.~\cite[Proposition~5.3]{humphreys-coxeter}). By definition, we clearly have $m(s,s)=1$ for all $s\in S$. Let $\mathfrak{h}_{\mathbb{R}}$ be a real vector space of dimension $|S|$. We choose a basis $\Delta$ of $\mathfrak{h}_{\mathbb{R}}$ and a bijection between $\Delta$ and $S$. We denote the image of $\beta\in\Delta$ under the bijection $\Delta\overset{\sim}{\to}S$ by $s_\beta$. We define $m(\beta,\beta')=m(s_\beta,s_{\beta'})$ for all $\beta,\beta'\in\Delta$. We define a symmetric bilinear form $B$ on $\mathfrak{h}_{\mathbb{R}}$ by setting $B(\beta,\beta')=-\cos\frac{\pi}{m(\beta,\beta')}$ and by extending bilinearly. There exists a well-defined action of the Coxeter group $W$ on $\mathfrak{h}_{\mathbb{R}}$ which is uniquely determined on generators by the assignment $s_\beta(x)=x-2B(x,\beta)\beta$ for all $\beta\in\Delta$ and for all $x\in\mathfrak{h}_{\mathbb{R}}$ (cf.~\cite[loc.~cit.]{humphreys-coxeter}). The corresponding representation $W\to\operatorname{Aut}_{\mathbb{R}}(\mathfrak{h}_{\mathbb{R}})$ is faithful by \cite[Corollary~5.4]{humphreys-coxeter}. This representation is called the geometric representation of $W$. The geometric representation is irreducible if and only if $(W,S)$ is irreducible (\cite[Chapter~V, \S~4, n\textsuperscript{o}~8, Corollary of Theorem~2]{bourbaki_roots}). Finally, note, since $W$ is assumed to be finite, the bilinear form $B$ becomes a $W$-invariant scalar product on $\mathfrak{h}_{\mathbb{R}}$ (\cite[Chapter~V, \S~4, n\textsuperscript{o}~8, Theorem~2]{bourbaki_roots} and \cite[Proposition~5.3]{humphreys-coxeter}).

To the situation above, we attach a reduced root system $R$ and a partial order \enquote{$\leq$} on $R$ as in \cite[Section~5.4]{humphreys-coxeter}. The set of positive elements of $R$ with respect to this partial order is denoted by $R^+$. The elements of $R^+$ are called positive roots. The elements of $\Delta$ are called simple roots. For a root $\alpha$, we denote by $s_\alpha\in W$ the reflection associated to $\alpha$ (cf.\ \cite[Section~5.7]{humphreys-coxeter}). The reflection $s_\alpha$ acts on $\mathfrak{h}_{\mathbb{R}}$ via the familiar formula (analogous to the formula for $s_\beta$ where $\beta\in\Delta$).

We denote the length function on $W$ by $\ell$. For two elements $v,w\in W$, we define $\ell(v,w)=\ell(w)-\ell(v)$ for brevity. We denote the strong Bruhat order on $W$ by \enquote{$\leq$}. We will call it simply \enquote{Bruhat order}. We denote the covering relation in the Bruhat order by \enquote{$\lessdot$}. We denote the weak left Bruhat order by \enquote{$\leq_\ell$}.

\todo[inline,color=green]{I call the strong Bruhat order simply Bruhat order. And except this, I also speak about the weak left Bruhat order. If I mean the latter one, I explicitly say weak left Bruhat order. Part of the notation from the appendix is already used and needed before, such as, for example, $\ell(v,w),\ell,\lessdot$.}

\todo[inline,color=green]{Elements of $\Delta$ are called simple roots -- to be said later in the terminology section on roots. Also $\mathfrak{h}$ a bit later. $B$ is also nondegenerate because $W$ is finite. $B$ is also defined on $\mathfrak{h}$. We have the whole passage from real to complex which still has to be investigated. [We call $\mathfrak{h}$ the Cartan algebra over $\mathbb{C}$ corresponding to $(W,S)$ (relevant for the abstract) [not needed].]}

\begin{notation}

Throughout the text, we denote by $w_o$ the unique longest element of $W$.

\end{notation}

\begin{conv}

Throughout the text, we repeatedly use the fundamental results \cite[Proposition~5.2 and 5.7]{humphreys-coxeter} without referencing.  

\end{conv}

\subsection{Coinvariant algebra \texorpdfstring{{\normalfont(\cite[Subsection~1.2]{bazlov1})}}{([\ref{bib-bazlov1},~Subsection~1.2])}}

Let $\mathfrak{h}=\mathfrak{h}_{\mathbb{R}}\otimes_{\mathbb{R}}\mathbb{C}$ considered as a $\mathbb{C}$-vector space. The scalar product $B$ on $\mathfrak{h}_{\mathbb{R}}$ extends to a nondegenerate symmetric bilinear form on $\mathfrak{h}$ which we still denote by $B$. Consequently, the geometric representation extends to a faithful representation $W\to\operatorname{Aut}_{\mathbb{C}}(\mathfrak{h})$, and the bilinear form $B$ on $\mathfrak{h}$ becomes $W$-invariant under the corresponding action. Let $S(\mathfrak{h})$ be the symmetric algebra on $\mathfrak{h}$ considered as a $\mathbb{Z}_{\geq 0}$-graded algebra in the category of $W$-modules. We refer to elements of $S(\mathfrak{h})$ as polynomials. Let $I_W$ be the ideal in $S(\mathfrak{h})$ generated by $W$-invariant polynomials without constant term. The ideal $I_W$ is a $\mathbb{Z}_{\geq 0}$-graded and $W$-stable ideal in $S(\mathfrak{h})$. Hence, the quotient $S(\mathfrak{h})/I_W$, denoted by $S_W$ from now on, becomes a $\mathbb{Z}_{\geq 0}$-graded algebra in the category of $W$-modules. This algebra is called the coinvariant algebra of $W$.

\begin{rem}
\label{rem:coinv-ident}

Note that $S_W^0\cong\mathbb{C}$ and $S_W^1\cong\mathfrak{h}$ as $W$-modules. We will from now on identify the $\mathbb{Z}_{\geq 0}$-degree zero and one component of $S_W$ with $\mathbb{C}$ respectively $\mathfrak{h}$.

\end{rem}

\begin{rem}

Note that both algebras $S(\mathfrak{h})$ and thus $S_W$ are commutative. Furthermore, note that $S_W$ considered as a $\mathbb{C}$-vector space is of finite dimension $|W|$. 

\end{rem}

\subsection{Divided difference and skew divided difference operators}
\label{subsec:partial}

Let $\alpha\in R$. We define a $\mathbb{C}$-linear endomorphism $\partial_\alpha$ acting from the left on $S_W$ by the formula $\partial_\alpha=\frac{1-s_\alpha}{\alpha}$. The operator $\partial_\alpha$ is called a divided difference operator. Let $w\in W$, and let $s_{\beta_1}\cdots s_{\beta_\ell}$ be a reduced expression of $w$. Then, we define $\partial_w=\partial_{\beta_1}\cdots\partial_{\beta_\ell}$. The operator $\partial_w$ is well-defined, i.e.\ independent of the choice of the reduced expression of $w$, because the divided difference operators $\partial_\beta$ where $\beta\in\Delta$ satisfy the nilCoxeter relations, and in particular the braid relations. 

\begin{notation}[Notation~\ref{not:varphi}]

Let $w\in W$. Let $s_{\beta_1}\ldots s_{\beta_\ell}$ be a fixed reduced expression of $w$. Let $\bm{\beta}=(\beta_1,\ldots,\beta_\ell)$ be the sequence of simple roots corresponding to the fixed reduced expression of $w$. Let $J$ be any subset of $\{1,\ldots,\ell\}$. For all $1\leq j\leq\ell$, we define $\partial_j^{\bm{\beta}}(J)=s_{\beta_j}$ if $j\in J$ and $=\partial_{\beta_j}$ if $j\notin J$. Using this notation, we further define $\partial_J^{\bm{\beta}}=\partial_1^{\bm{\beta}}(J)\cdots\partial_\ell^{\bm{\beta}}(J)$. All the defined operators are supposed to be $\mathbb{C}$-linear endomorphism acting from the left on $S_W$.

\end{notation}

\begin{conv}

Let $J\subseteq\mathbb{Z}$. From now on, whenever we write a product $\prod_J$, we mean the product taken according to the natural total order on $J$ induced by the natural total order on $\mathbb{Z}$.

\end{conv}

\begin{defn}[Corollary~\ref{cor:independent-Dwvcirc}]
\label{def:partialwv}

Let $v,w\in W$ be such that $v\leq w$. Let $s_{\beta_1}\cdots s_{\beta_\ell}$ be a fixed reduced expression of $w$. Let $\bm{\beta}=(\beta_1,\ldots,\beta_\ell)$ be the sequence of simple roots corresponding to the fixed reduced expression of $w$. Then, the expression $v^{-1}\sum_J\partial_J^{\bm{\beta}}$, where the sum ranges over all subsets $J$ of $\{1,\ldots,\ell\}$ such that the product $\prod_{j\in J}s_{\beta_j}$ is a reduced expression of $v$, is independent of $\bm{\beta}$, i.e.\ independent of the choice of the reduced expression of $w$, and we can define $\partial_{w/v}$ to be equal to this expression.
If $v,w\in W$ are such that $v\not\leq w$, then we define further $\partial_{w/v}=0$.
We call $\partial_{w/v}$ where $v,w\in W$ a skew divided difference operator.

\end{defn}

\begin{ex}[Example~\ref{ex:top-low-degree-xw}]
\label{ex:top-low-degree-xw-intro}

Let $w\in W$. It follows directly from the definition that we have $\partial_{w/1}=\partial_w$ and $\partial_{w/w}=1$.

\end{ex}

\begin{thm}[Corollary~\ref{cor:leibniz}]

Let $w\in W$ and $\phi,\psi\in S_W$. Then we have
\[
\partial_w(\phi\psi)=\sum_{v\leq w}(\partial_v(\phi))(v\partial_{w/v}(\psi))\,.
\]

\end{thm}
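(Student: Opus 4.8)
The plan is to prove the generalized Leibniz rule by induction on $\ell(w)$, reducing the multiplicative statement to the known one-variable case $\partial_\beta(\phi\psi)=\partial_\beta(\phi)\,\psi+(s_\beta\phi)\,\partial_\beta(\psi)$ for $\beta\in\Delta$, together with the recursive structure of $\partial_{w/v}$ provided by Definition~\ref{def:partialwv}. First I would record the base case: for $w=1$ both sides equal $\phi\psi$ since $\partial_1=1$ and the only $v\leq 1$ is $v=1$ with $\partial_{1/1}=1$ by Example~\ref{ex:top-low-degree-xw-intro}. For the inductive step, fix a reduced expression $s_{\beta_1}\cdots s_{\beta_\ell}$ of $w$, write $\beta=\beta_1$, $s=s_\beta$, and $w=sw'$ with $\ell(w')=\ell-1$ and $w'$ having the reduced expression $s_{\beta_2}\cdots s_{\beta_\ell}$, so that $\partial_w=\partial_\beta\partial_{w'}$.

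The core computation then runs: apply $\partial_\beta$ to $\partial_{w'}(\phi\psi)$, substitute the inductive hypothesis $\partial_{w'}(\phi\psi)=\sum_{u\leq w'}(\partial_u\phi)(u\partial_{w'/u}\psi)$, and expand using the one-variable Leibniz rule on each summand $\partial_\beta\bigl((\partial_u\phi)\cdot(u\partial_{w'/u}\psi)\bigr)=(\partial_\beta\partial_u\phi)(u\partial_{w'/u}\psi)+(s\partial_u\phi)(\partial_\beta(u\partial_{w'/u}\psi))$. Now $\partial_\beta\partial_u = \partial_{su}$ when $\ell(su)>\ell(u)$ and is not directly a divided difference operator otherwise, so the two terms must be reorganized. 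The first family of terms, collected over all $u$ with $su>u$, should produce $\sum_{v\leq w,\ sv<v}(\partial_v\phi)(v\cdot(\text{something}))$; the second family, after commuting $\partial_\beta$ past $s$ via $\partial_\beta\circ s = -\,s\circ\partial_\beta$ together with $\partial_\beta = \frac{1-s}{\beta}$ acting on the $u$-translated argument, should be massaged into the terms with $v=u$ or $v=su$. The key identity to extract at the end is that, after regrouping, the coefficient of $(\partial_v\phi)$ must be exactly $v\partial_{w/v}(\psi)$; this is where I would invoke the defining sum $\partial_{w/v}=v^{-1}\sum_J\partial_J^{\bm{\beta}}$ over $J\subseteq\{1,\ldots,\ell\}$ with $\prod_{j\in J}s_{\beta_j}$ reduced for $v$, splitting according to whether $1\in J$ or not: the subsets with $1\notin J$ correspond to the "pass $\partial_\beta$ through" contribution and those with $1\in J$ to the "$s_\beta$ acts" contribution, matching the two families of terms above.

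I expect the main obstacle to be precisely the bookkeeping in this regrouping: tracking which pairs $(u, \text{term type})$ from the inductive hypothesis map to which $v\leq w$, handling the length conditions $\ell(su)$ versus $\ell(u)$ correctly (so that $\partial_\beta\partial_u$ is interpreted either as $\partial_{su}$ or as producing a cancellation), and verifying that the $W$-equivariance twists $v(\cdot)$ versus $u(\cdot)$ and $su(\cdot)$ align after one applies $\partial_\beta = \frac{1-s}{\beta}$ and moves the resulting group elements outside. Rather than pushing through this in the classical language, the cleaner route — and the one the paper signals it will take — is to establish the statement first in Bazlov's Nichols algebra, where the analogous elements $x_w$ and $x_{w/v}$ satisfy a coproduct/comultiplication identity $\Delta(x_w)=\sum_{v\leq w} x_v\otimes x_{w/v}$ (a braided Hopf-algebraic statement), and then the Leibniz rule follows formally from the fact that $\partial$ is an action of this braided Hopf algebra on $S_W$ compatible with the product, i.e.\ from the definition of a module-algebra. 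So an alternative and likely shorter plan is: (i) identify $\partial_w$ and $\partial_{w/v}$ with the action of Nichols-algebra elements, (ii) cite or prove the relevant comultiplication formula for these elements, and (iii) deduce the displayed identity by pairing the coproduct against $\phi\otimes\psi$. The hard part is then pushed entirely into step (ii), the combinatorics of the coproduct of $x_w$, which is where the genuinely new input over the type $\mathsf{A}$ case of Liu and the Fomin--Kirillov setting is needed.
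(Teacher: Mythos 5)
Your second plan is the paper's plan, so the approach is essentially correct: the identity is proved in the Nichols algebra $\mathscr{B}_W$ and then restricted to $S_W$ via Bazlov's embedding in Section~\ref{sec:passage}, where $\overrightarrow{D}_w^\circ\big|_{S_W}=\partial_w$ and $\overrightarrow{D}_{w/v}^\circ\big|_{S_W}=\partial_{w/v}$. One caveat on your step (iii): the paper does \emph{not} deduce the generalized Leibniz rule in one stroke from a module-algebra axiom. Because of the $\Gamma$-degree twists in the braided setting, it instead proves Theorem~\ref{thm:leibniz} (the $\overleftarrow{D}_w$ version) by exactly the induction on $\ell(w)$ you sketch as your first plan — one-variable braided Leibniz rule for $\overleftarrow{D}_\beta$ (Remark~\ref{rem:braided-leibniz}) plus the recursive coproduct formulas of Corollary~\ref{cor:prelim-exp}, which play the role of your split according to whether $1\in J$ — and then transports to the left/circle version (Corollary~\ref{cor:leibniz}) by applying the statement to $w^{-1}$ and reindexing. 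So the "hard part" is not only the coproduct formula $\Delta(x_w)=\sum_{v\leq w}x_{w/v}\otimes x_v$ (note the order of the tensor factors, which matters here), but also the regrouping you anticipated; the paper just performs that regrouping once, in $\mathscr{B}_W$, rather than in $S_W$. Your purely classical first route would also go through since $S_W$ is commutative, but it proves less, and the well-definedness of $\partial_{w/v}$ independent of the reduced expression is itself obtained in the paper only via the Nichols-algebra computation (Corollary~\ref{cor:independent-Dwvcirc}).
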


\subsection{Summary of results}
\label{subsec:summary}

Since $\partial_{-\alpha}=-\partial_\alpha$ and $w\partial_\alpha w^{-1}=\partial_{w(\alpha)}$ for all $w\in W$ and all $\alpha\in R$, it is clear from Definition~\ref{def:partialwv} that $\partial_{w/v}$ where $v,w\in W$ can be written as a polynomial in $\partial_\alpha$ where $\alpha\in R^+$. However, the naive expression for $\partial_{w/v}$ resulting from Definition~\ref{def:partialwv} will be in general not positive, in the sense that all coefficients are nonnegative integers. This problem is solved by the literal generalizations of Liu's theorems.

\begin{thm}[Corollary~\ref{cor:xwv-circ-pos}]
\label{thm:xwv-circ-pos-intro}

Let $v,w\in W$ be such that $v\leq w$. Let $s_{\beta_1}\cdots s_{\beta_\ell}$ be a reduced expression of $w_ov$. Let $\alpha_i=s_{\beta_\ell}\cdots s_{\beta_{i+1}}(\beta_i)$ for all $1\leq i\leq\ell$. Then we have
\[
\partial_{w/v}=\sum_J\prod_{j\in J}\partial_{\alpha_j}
\]
where the sum ranges over all subsets $J$ of $\{1,\ldots,\ell\}$ such that the product $\prod_{j\in\bar{J}}s_{\beta_j}$ is a reduced expression of $w_ow$. Here, the set $\bar{J}$ denotes the complement of a subset $J$ in $\{1,\ldots,\ell\}$.

\end{thm}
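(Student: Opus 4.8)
The plan is to deduce the classical statement from the stronger Nichols-algebra statement referenced as Corollary~\ref{cor:xwv-circ-pos}, which lives in Bazlov's algebra, but here I sketch the direct classical argument since the combinatorial skeleton is the same. The starting point is Definition~\ref{def:partialwv}: fix a reduced expression $s_{\gamma_1}\cdots s_{\gamma_\ell}$ of $w_ow$ (note $\ell = \ell(w_ow) = \ell(w_o)-\ell(w)$, which matches $\ell(w_ov)-\ell(w_o v\cdot\text{stuff})$... more precisely one uses $\ell(w/v)$ being complementary), and write $\partial_{w/v}=v^{-1}\sum_J\partial_J^{\bm{\gamma}}$ over subsets $J$ with $\prod_{j\in J}s_{\gamma_j}$ reduced for $v$. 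The key manipulation is to push each occurrence of a reflection $s_{\gamma_j}$ (for $j\in J$) to the right past the divided difference operators to its right, using the commutation rule $\partial_\beta s = s\,\partial_{s(\beta)}$ rewritten appropriately, and past $v^{-1}$ on the left, using $w\partial_\alpha w^{-1}=\partial_{w(\alpha)}$. After collecting all the reflections, the word $v^{-1}\cdot(\text{reflections from }J)$ must reduce, and the complementary positions $\bar J$ contribute divided difference operators $\partial_{\alpha_j}$ with roots $\alpha_j$ obtained by conjugating $\gamma_j$ by the product of the reflections sitting to its right. A careful bookkeeping of which reflections lie to the right of position $j$ — combined with the relation $v\leq w \iff w_ow\leq w_ov$ and the fact that a reduced subword of $w_ow$ inside $w_ov$ has complementary subword reducing to the "difference" — transforms the index set: subsets $J$ with $\prod_{j\in J}s_{\gamma_j}$ reduced for $v$ correspond to subsets whose complement $\bar J$ gives a reduced expression of $w_ow$ inside a chosen reduced expression of $w_ov$. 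This is exactly why the theorem is phrased with a reduced expression $s_{\beta_1}\cdots s_{\beta_\ell}$ of $w_ov$ and the condition $\prod_{j\in\bar J}s_{\beta_j}$ reduced for $w_ow$.

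Concretely, I would proceed as follows. First, set $u=w_ov$, fix a reduced word $s_{\beta_1}\cdots s_{\beta_\ell}$ for $u$ (so $\ell=\ell(w_ov)$), and define $\alpha_i = s_{\beta_\ell}\cdots s_{\beta_{i+1}}(\beta_i)$ as in the statement; these are precisely the "inversion roots" associated to the reduced word read from the right, and they are positive roots lying in the inversion set of $u^{-1}$ or a related set. Second, I would establish the operator identity
\[
\partial_{\beta_1}\cdots\partial_{\beta_\ell}
=\sum_{K}\Bigl(\prod_{k\in K}\partial_{\alpha_k}\Bigr)\Bigl(\prod_{k\in\bar K}s_{\beta_k}\Bigr)^{\!\text{appropriately ordered}}
\]
by an induction on $\ell$, expanding $\partial_{\beta_i}=\tfrac{1}{\beta_i}(1-s_{\beta_i})$ one factor at a time from the left and commuting the resulting reflection to the far right, which converts $\tfrac{1}{\beta_i}$ into $\tfrac{1}{\alpha_i}$ and hence $\partial_{\beta_i}$-data into $\partial_{\alpha_i}$-data. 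Third, I would intersect this expansion with the defining formula for $\partial_{u/v'}$-type operators: applying the identity to $w_ow$ sitting as a subword of $u=w_ov$, the "pure reflection" tails $\prod_{k\in\bar K}s_{\beta_k}$ that reduce to $w_ow$ survive after multiplying by the appropriate group element, and the surviving terms are exactly $\sum_J\prod_{j\in J}\partial_{\alpha_j}$ with $J=\bar K$ and $\bar J=K$ subject to $\prod_{j\in\bar J}s_{\beta_j}$ reduced for $w_ow$. Reconciling the left multiplication by $v^{-1}$ (from Definition~\ref{def:partialwv}) against the right-hand reflections produced here uses $w_o$-conjugation symmetry and the identity $\partial_{w_ow/w_o v}=(\text{sign-twisted})\,\partial_{w/v}$, or more cleanly a direct appeal to Corollary~\ref{cor:leibniz} / the already-established independence in Corollary~\ref{cor:independent-Dwvcirc}.

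The main obstacle I anticipate is the bookkeeping in the second step: tracking precisely how the root $\beta_i$ transforms when its reflection is commuted past the mix of reflections $s_{\beta_k}$ (for $k\in\bar K$, $k>i$) and divided difference operators $\partial_{\beta_k}$ (for $k\in K$, $k>i$) that lie to its right, since only the reflections conjugate the root while the $\partial$'s must be moved past using the twisted Leibniz-type rule. Getting the indexing set to come out as "$\bar J$ reduced for $w_ow$" rather than some other subword condition — i.e.\ matching the naive Definition~\ref{def:partialwv} expansion (over subwords reducing to $v$ in a reduced word of $w$) with the new expansion (over subwords reducing to $w_ow$ in a reduced word of $w_ov$) — requires the standard but delicate duality $v\leq w\iff w_ow\leq w_ov$ together with compatibility of reduced subwords under left multiplication by $w_o$, and this is where I would spend the most care. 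Everything else (positivity is then immediate since each $\partial_{\alpha_j}$ has $\alpha_j\in R^+$ and the coefficients are manifestly $0$ or $1$) follows formally.
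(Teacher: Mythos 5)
The central step of your plan --- converting the expansion of Definition~\ref{def:partialwv} into the positive formula by commuting reflections past divided difference operators and re-indexing --- is not a bookkeeping problem, and the index-set correspondence you invoke does not exist. Take type $\mathsf{A}_2$, $v=s_{\beta_1}$, $w=s_{\beta_1}s_{\beta_2}s_{\beta_1}$. Definition~\ref{def:partialwv} (which, note, expands along a reduced expression of $w$, not of $w_ow$ as in your first step) admits exactly the two subsets $J=\{1\}$ and $J=\{3\}$, and pushing the reflections through gives
\[
\partial_{w/v}=\partial_{\beta_2}\partial_{\beta_1}-\partial_{\beta_1}\partial_{\beta_1+\beta_2}\,,
\]
with a genuine sign coming from $\partial_{-\alpha}=-\partial_\alpha$; the claimed positive formula is the single monomial $\partial_{\beta_1+\beta_2}\partial_{\beta_2}$ (here $w_ov=s_{\beta_1}s_{\beta_2}$, $w_ow=1$, $J=\{1,2\}$). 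The two expansions do not even have the same number of terms, so no bijection of index sets driven by $v\leq w\Leftrightarrow w_ow\leq w_ov$ can exist, and their equality is precisely the Fomin--Kirillov relation among the operators $\partial_\alpha$ (cf.\ Example~\ref{ex:relations}). Your sketch supplies no mechanism for this cancellation and recombination; that mechanism is the actual content of the theorem, not a residual "obstacle" of bookkeeping.

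Accordingly, the paper's proof is not a commutation argument. It lifts everything to $\mathscr{B}_W$ and (i) computes $\Delta(\bar{\mathscr{S}}(x_w))$ by induction on a reduced word (Theorem~\ref{thm:first-step}), (ii) reads off the manifestly positive expression for $\bar{\mathscr{S}}(x_{w/v})$ by comparing coefficients of the basis $x_u$ (Corollary~\ref{cor:sxwv}), and (iii) proves the duality $x_{w/v}=\bar{\mathscr{S}}(x_{vw_o/ww_o})$ (Theorem~\ref{thm:positivity}); the engine of (iii) is the identity $\bar{\mathscr{S}}(x_{w_o})=x_{w_o}$ (Proposition~\ref{prop:reflection-ordering}), whose proof uses the saturated-chain evaluation of the Hopf pairing together with its nondegeneracy, i.e.\ the full ideal of relations in $\mathscr{B}_W$. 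The classical statement then follows by restricting $\overrightarrow{D}_{w/v}^{\circ}$ to $S_W$ in Section~\ref{sec:passage}. To repair your approach you would have to prove, in classical terms, the operator analogue of Proposition~\ref{prop:reflection-ordering} --- that the product of the $\partial_\alpha$ along any reflection ordering of $R^+$ is independent of the ordering --- and this does not follow from the nilCoxeter relations or from rearranging reflections.
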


\begin{cor}[Corollary~\ref{cor:recursive-circ}]
\label{cor:recursive-circ-intro}

Let $v,w\in W$ be such that $v\leq w$. Let $\beta\in\Delta$ be such that $v<s_\beta v$. Let $v'=s_\beta v$, $w'=s_\beta w$, $\alpha=v^{-1}(\beta)$ for brevity. Then we have $\partial_{w/v}=\partial_\alpha\partial_{w/v'}$ if $w>w'$ and $=\partial_\alpha\partial_{w/v'}+\partial_{w'/v'}$ if $w<w'$.

\end{cor}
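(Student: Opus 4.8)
The plan is to deduce Corollary~\ref{cor:recursive-circ-intro} from the positive formula of Theorem~\ref{thm:xwv-circ-pos-intro} by carefully transporting a reduced expression of $w_ov$ to one of $w_ov'$. First I would observe that the hypothesis $v < s_\beta v$ is equivalent to $\ell(w_ov') = \ell(w_ov) - 1$, since $\ell(s_\beta v) = \ell(v) + 1$ and $w_o$ reverses length. Fix a reduced expression $s_{\beta_1}\cdots s_{\beta_\ell}$ of $w_ov$; then, because $\ell(w_ov') = \ell - 1$ and $w_ov' = w_ov \cdot v^{-1}s_\beta v = w_ov \cdot s_{v^{-1}(\beta)}$ is obtained by deleting a reflection on the right, I can choose the reduced expression of $w_ov$ so that $s_{\beta_1}\cdots s_{\beta_{\ell-1}}$ is a reduced expression of $w_ov'$ and $\beta_\ell$ is the appropriate simple root; more precisely, one wants the last letter to realize $\alpha_\ell = \beta_\ell = v^{-1}(\beta) = \alpha$ (using that $v^{-1}(\beta) \in R^+$ because $v < s_\beta v$). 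I should double-check this against the definition $\alpha_i = s_{\beta_\ell}\cdots s_{\beta_{i+1}}(\beta_i)$, which gives $\alpha_\ell = \beta_\ell$ directly.

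With this choice in hand, the key step is to compare the index sets appearing in Theorem~\ref{thm:xwv-circ-pos-intro} for the pair $(v,w)$ and for the pair $(v',w')$. For $(v,w)$ the sum runs over $J \subseteq \{1,\ldots,\ell\}$ with $\prod_{j \in \bar J}s_{\beta_j}$ reduced for $w_ow$; for $(v',w')$, using the truncated reduced expression $s_{\beta_1}\cdots s_{\beta_{\ell-1}}$ of $w_ov'$ and noting $w_ow' = w_ow \cdot s_{v^{-1}(\beta)}$, the sum runs over $J' \subseteq \{1,\ldots,\ell-1\}$ with $\prod_{j \in \overline{J'}}s_{\beta_j}$ reduced for $w_ow'$, where now $\alpha_i' = s_{\beta_{\ell-1}}\cdots s_{\beta_{i+1}}(\beta_i) = s_{\beta_\ell}^{-1}\alpha_i s_{\beta_\ell} \cdot(\cdots)$ — I must check that $\alpha_i' = \alpha_i$ for $i \le \ell-1$, which holds precisely because $\alpha_i = s_{\beta_\ell}(\alpha_i')$ and the extra generator $s_{\beta_\ell}$ contributes. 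Actually the cleaner bookkeeping: $\alpha_i = s_{\beta_\ell}\cdots s_{\beta_{i+1}}(\beta_i)$ and $\alpha_i' = s_{\beta_{\ell-1}}\cdots s_{\beta_{i+1}}(\beta_i)$, so $\alpha_i = s_{\beta_\ell}(\alpha_i') = s_\alpha(\alpha_i')$. Here is where the case split enters: the operators $\partial_{\alpha_i}$ and $\partial_{\alpha_i'}$ differ, so I cannot match terms naively; instead I should recognize $s_{\beta_\ell} = s_\alpha$ and use $s_\alpha \partial_{\alpha_i'} s_\alpha = \partial_{s_\alpha(\alpha_i')} = \partial_{\alpha_i}$, i.e.\ conjugation by $s_\alpha$ intertwines the two families.

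The main combinatorial dichotomy is whether $\ell \in \bar J$ is forced, allowed, or forbidden. Partition the index sets $J \subseteq \{1,\ldots,\ell\}$ by whether $\ell \in J$. If $\ell \in J$, then $\bar J \subseteq \{1,\ldots,\ell-1\}$ and the reducedness condition $\prod_{j\in\bar J}s_{\beta_j}$ reduced for $w_ow$ must be reconciled with $w_ow$ versus $w_ow'$: since $w_ow'= w_ow\, s_\alpha$ and $\beta_\ell$ was chosen as the last letter, $\prod_{j\in\bar J}s_{\beta_j}\,(= \prod_{j\in\bar J}s_{\beta_j})$ being a reduced word for $w_ow$ of length $\le \ell-1$ corresponds — when $w < w'$, i.e.\ $\ell(w_ow') = \ell(w_ow)-1$ — to the $J'$-sum for $(v',w')$, yielding the extra summand $\partial_{w'/v'}$; the factor $\partial_{\alpha_\ell} = \partial_{\beta_\ell} = \partial_\alpha$ pulls out front. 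If $\ell \notin J$, then $\ell \in \bar J$ and $s_{\beta_\ell} = s_\alpha$ must be the last letter of a reduced word for $w_ow$; writing $\bar J = \bar{J_0} \sqcup \{\ell\}$ this is reducedness of $\prod_{j\in\bar{J_0}}s_{\beta_j}$ for $w_ow\,s_\alpha = w_ow'$, and the product $\prod_{j\in J}\partial_{\alpha_j} = \prod_{j\in J}\partial_{\alpha_j}$ (with $J \subseteq \{1,\ldots,\ell-1\}$) becomes, after pushing the conjugation by $s_\alpha = s_{\beta_\ell}$ through, $\partial_\alpha \cdot \prod_{j\in J}\partial_{\alpha_j'}$, i.e.\ $\partial_\alpha \partial_{w/v'}$. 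The case $w > w'$ is the degenerate one where the first family of terms ($\ell \in J$) is empty because $\ell(w_ow') = \ell(w_ow)+1$ makes the reducedness condition unsatisfiable, leaving only $\partial_\alpha \partial_{w/v'}$.

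The hard part will be getting the conjugation bookkeeping exactly right: verifying $s_{\beta_\ell} = s_{v^{-1}(\beta)} = s_\alpha$, that one may arrange the fixed reduced expression of $w_ov$ to end in this letter, and that $s_\alpha \prod_{j\in J}\partial_{\alpha_j'} s_\alpha = \prod_{j\in J}\partial_{\alpha_j}$ commutes past the leading $\partial_\alpha = \partial_{\beta_\ell}$ correctly (using $s_\alpha \partial_\gamma = \partial_{s_\alpha(\gamma)} s_\alpha$ and $\partial_\alpha s_\alpha = \partial_\alpha$ up to sign, or rather $\partial_\alpha$ applied twice — I must be careful that $\partial_\alpha^2 = 0$ does \emph{not} get invoked spuriously). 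An alternative, possibly cleaner route avoiding the explicit formula altogether: prove the recursion directly from Definition~\ref{def:part1} of $\partial_{w/v}$ via the Leibniz rule (Corollary~\ref{cor:leibniz}), by applying $\partial_\beta \partial_w$ two ways and comparing coefficients of $(\partial_v\phi)(v\,\cdot\,\psi)$; since $\partial_\beta\partial_w = \partial_{s_\beta w}$ when $s_\beta w < w$ — but here the natural move is to expand $\partial_\beta(\phi\psi)$ first, which forces $v < s_\beta v$ into the picture — this gives the two cases $w \gtrless w'$ from whether $s_\beta w < w$ or $s_\beta w > w$. I would pursue whichever of these two proofs the surrounding development (the Nichols algebra side, which the excerpt says carries the real arguments) makes most natural, and I expect the Leibniz-rule approach to be the shorter one, with the explicit-formula approach serving as a consistency check.
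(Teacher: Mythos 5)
There is a genuine gap, and it sits at the very first construction. You want a reduced expression $s_{\beta_1}\cdots s_{\beta_\ell}$ of $w_ov$ whose truncation $s_{\beta_1}\cdots s_{\beta_{\ell-1}}$ is a reduced expression of $w_ov'$, with the \emph{last} letter satisfying $\beta_\ell=\alpha=v^{-1}(\beta)$. If such an expression existed, then $s_{\beta_\ell}=(w_ov')^{-1}(w_ov)=v'^{-1}v=s_{v^{-1}(\beta)}$ would force the positive root $v^{-1}(\beta)$ to equal the simple root $\beta_\ell$; but $v^{-1}(\beta)$ is not simple in general. Already in type $\mathsf{A}_2$ with $\beta=\beta_1$ and $v=s_{\beta_2}$ one has $\alpha=v^{-1}(\beta_1)=\beta_1+\beta_2$, so no reduced word of $w_ov$ can end in a letter equal to $\alpha$. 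The point you missed is that $w_ov'=w_os_\beta v=s_{\tilde\beta}\,(w_ov)$ with $\tilde\beta=-w_o(\beta)\in\Delta$, i.e.\ the two elements differ by a \emph{simple} reflection on the \emph{left}; so the letter to be peeled off is the first one, not the last. Choosing $\beta_1=\tilde\beta$, the formula $\alpha_i=s_{\beta_\ell}\cdots s_{\beta_{i+1}}(\beta_i)$ gives $\alpha_1=(w_ov')^{-1}(\tilde\beta)=v^{-1}(\beta)=\alpha$, and, crucially, $\alpha_i$ for $i\geq 2$ does not involve $\beta_1$ at all, so the roots attached to the truncated word of $w_ov'$ are literally the same; none of the conjugation by $s_\alpha$ that occupies your third and fourth paragraphs is needed (which is fortunate, since the identity $s_{\beta_\ell}=s_\alpha$ it rests on is false). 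Your case split also comes out inverted: with the correct left-peeling, the terms with $1\in J$ contribute $\partial_\alpha\partial_{w/v'}$ (the product is taken in increasing order, so $\partial_{\alpha_1}=\partial_\alpha$ genuinely stands in front), while the terms with $1\notin J$ require a reduced word of $w_ow$ beginning with $s_{\tilde\beta}$, which exist exactly when $w<w'$ and then yield $\partial_{w'/v'}$, and are absent when $w>w'$.

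With that repair your argument becomes essentially the paper's proof, only telescoped: the paper proves the analogous splitting for $\bar{\mathscr{S}}(x_{w/v})$ using a reduced expression of the top element ending in the simple letter $\beta$ (Proposition~\ref{prop:recursive-prel}, via Corollary~\ref{cor:sxwv}), converts it into a recursion on the bottom element through $x_{w/v}=\bar{\mathscr{S}}(x_{vw_o/ww_o})$ (Theorem~\ref{thm:positivity}, Corollary~\ref{cor:recursive}), and finally applies $\rho$ to pass to the circle variant (Corollary~\ref{cor:recursive-circ}); working directly with Theorem~\ref{thm:xwv-circ-pos-intro} as you propose just performs these conjugations once and for all. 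Your alternative route via the generalized Leibniz rule and the uniqueness statement is plausible but is only a sketch here, and is not what the paper does; if you pursue it you would still need an independent argument pinning down the two cases $w\gtrless w'$.
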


The recursive formula in Corollary~\ref{cor:recursive-circ-intro} serves equally well as the formula in Theorem~\ref{thm:xwv-circ-pos-intro} to produce a positive expression for $\partial_{w/v}$ where $v\leq w$.
Indeed, this can be seen by induction on $\ell(v)$.
The induction base is given by Example~\ref{ex:top-low-degree-xw-intro}.


\begin{ex}[Corollary~\ref{cor:monomial-one}]

Let $v,w\in W$ be such that $v\lessdot w$. Let $\alpha\in R^+$ be uniquely determined by the equation $v=w s_\alpha$. Then we have $\partial_{w/v}=\partial_\alpha$.

\end{ex}

\begin{thm}[Theorem~\ref{thm:monomial-two}]
\label{thm:monomial-two-partial}

Let $W$ be a simply laced Weyl group. Let $v,w\in W$ be such that $v\leq w$ and such that $\ell(v,w)=2$. Then, there exist positive roots $\alpha,\gamma$ such that $\alpha\neq\gamma$ and such that $\partial_{w/v}=\partial_\alpha\partial_\gamma$.

\end{thm}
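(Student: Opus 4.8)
The plan is to start from the recursive formula of Corollary~\ref{cor:recursive-circ-intro} and the base case $\partial_{w/v}=\partial_\alpha$ when $v\lessdot w$ (from the preceding Example), and then understand precisely what happens one step up, i.e.\ when $\ell(v,w)=2$. The key structural input is the well-known fact that a length-two Bruhat interval $[v,w]$ in any Coxeter group is a ``diamond'': there are exactly two elements $u_1,u_2$ with $v\lessdot u_i\lessdot w$. So $\partial_{w/v}$ will be built from the four covering relations $v\lessdot u_1$, $v\lessdot u_2$, $u_1\lessdot w$, $u_2\lessdot w$, each of which contributes a single divided difference operator $\partial_\delta$ with $\delta\in R^+$ by the base case. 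The first step is therefore to write out $\partial_{w/v}$ as a sum of two products $\partial_{\alpha_1}\partial_{\gamma_1}+\partial_{\alpha_2}\partial_{\gamma_2}$ (one term per maximal chain in the diamond), using the recursion once; concretely, pick $\beta\in\Delta$ with $v<s_\beta v$ and such that the chosen chain passes through $s_\beta v$, and iterate.

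The second step is to collapse this sum of two products into a single product. Here I would exploit the quadratic/nilCoxeter-type relations available among the $\partial_\delta$ in the simply laced case: in the Nichols algebra model (equivalently, acting on $S_W$), for two positive roots whose reflections generate a rank-two parabolic of type $\mathsf{A}_1\times\mathsf{A}_1$ or $\mathsf{A}_2$ one has relations such as $\partial_\alpha^2=0$ and, in the $\mathsf{A}_2$ case, $\partial_\alpha\partial_\gamma+\partial_\gamma\partial_\delta+\partial_\delta\partial_\alpha=0$ for the three positive roots $\alpha,\gamma,\delta=\alpha+\gamma$ (the Fomin--Kirillov relation). The plan is to identify which rank-two root subsystem governs the diamond $[v,w]$ — after translating by $v^{-1}$ and using $w\partial_\alpha w^{-1}=\partial_{w(\alpha)}$, this is a local computation in a rank $\le 2$ root system — and then check case by case ($\mathsf{A}_1\times\mathsf{A}_1$ vs.\ $\mathsf{A}_2$, together with whether the top covering relation $w<w'$ or $w>w'$ occurs) that the two-term sum telescopes via exactly one such relation into a single monomial $\partial_\alpha\partial_\gamma$. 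The condition $\alpha\neq\gamma$ should fall out because $\partial_\alpha^2=0$ would otherwise force $\partial_{w/v}=0$, contradicting $v\le w$ and the known nonvanishing (which can be checked on the top-degree part, cf.\ Example~\ref{ex:top-low-degree-xw-intro}).

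I expect the main obstacle to be the bookkeeping in the $\mathsf{A}_2$ case: there the naive expression has two terms that are \emph{not} individually monomials after one applies the Fomin--Kirillov relation, so one must choose the reduced expression of $w_ov$ (equivalently, the order in which the recursion is applied) carefully so that the relation applies cleanly and produces a product of two \emph{distinct} root operators rather than reintroducing a sum. This is precisely where ``simply laced'' is used: in non-simply-laced rank two (types $\mathsf{B}_2$, $\mathsf{G}_2$) the analogous quadratic algebra has more relations and the two-term sum need not collapse to a monomial, which is consistent with the theorem being stated only for simply laced $W$. A secondary, more routine obstacle is verifying that every length-two interval in a simply laced Weyl group indeed sits inside a rank $\le 2$ parabolic generated by reflections in roots from a sub-root-system of type $\mathsf{A}_1\times\mathsf{A}_1$ or $\mathsf{A}_2$; this follows from the classification of rank-two Bruhat intervals but should be stated explicitly.
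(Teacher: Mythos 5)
There is a genuine gap, and it sits exactly at the step you label ``the main obstacle''. First, a bookkeeping point: one application of Corollary~\ref{cor:recursive-circ-intro} gives $\partial_{w/v}=\partial_\alpha\partial_{w/v'}$ or $\partial_\alpha\partial_{w/v'}+\partial_{w'/v'}$, and in the second case the summand $\partial_{w'/v'}$ is again a skew operator of order two; a single application therefore does not produce ``one product per maximal chain'', and iterating can produce more than two monomials. More seriously, your collapsing mechanism is not a proof. It is true that the four roots of the diamond span a plane and hence a rank-two subsystem of type $\mathsf{A}_1\times\mathsf{A}_1$ or $\mathsf{A}_2$, but which of the (up to three) factorizations of $wv^{-1}$ into two reflections actually yield chains $v\lessdot u\lessdot w$ is a \emph{global} Bruhat-order question that the rank-two subsystem does not see; and whether the resulting two-term sum has the precise shape $x_\alpha x_{\alpha+\gamma}+x_{\alpha+\gamma}x_\gamma$ to which the Fomin--Kirillov relation applies is exactly what has to be proved. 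Asserting that one can ``choose the reduced expression carefully so that the relation applies cleanly'' is a restatement of the theorem, not an argument; the $\mathsf{B}_2$ counterexample (Example~\ref{ex:monom-two}) shows the sum can have the FK \emph{shape} and still fail to collapse, so the simply laced hypothesis must enter through something more than the presence of the relation.

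The paper sidesteps the collapsing problem entirely. Using Corollary~\ref{cor:recursive-circ}, one first replaces $(v,w)$ by $(s_\beta v,s_\beta w)$ whenever $\beta\in\Delta$ satisfies $v<s_\beta v$ and $s_\beta v\not\leq w$ (then $w<s_\beta w$ automatically, the first summand of the recursion vanishes because $x^\circ_{w/s_\beta v}=0$, and $x^\circ_{w/v}=x^\circ_{s_\beta w/s_\beta v}$), until every ascent $\beta$ of $v$ satisfies $s_\beta v\leq w$. The crux is then Theorem~\ref{thm:main-interval2} from the appendix: for a simply laced Weyl group such an interval admits a simple root $\beta$ with $v<s_\beta v\lessdot w$ \emph{and} $s_\beta w<w$. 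For this $\beta$ the recursion is in the branch with no second summand, so $x^\circ_{w/v}=x_{\alpha^\circ}\,x^\circ_{w/s_\beta v}=x_{\alpha^\circ}x_{\gamma^\circ}$ by the order-one monomial property, and $\alpha^\circ\neq\gamma^\circ$ follows from $x_\delta^2=0$ together with strict positivity (Theorem~\ref{thm:strict-positivity}). The existence of such a $\beta$ is proved via the generalized lifting property of \cite{GLP} (which characterizes finite simply laced Coxeter systems) together with a case analysis over the root systems, including explicit checks in types $\mathsf{E}_6$, $\mathsf{E}_7$, $\mathsf{E}_8$. This global combinatorial input is what your sketch is missing, and no purely local rank-two computation can replace it.
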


\subsection*{Organization}

In Section~\ref{sec:hopf} and~\ref{sec:group}, we recall basic notions about the Yetter-Drinfeld category over a Hopf algebra with invertible antipode, and more specific, over the group algebra of an arbitrary group. In Section~\ref{sec:bazlov}, we recall the setup and some theorems of \cite{bazlov1} concerning the Nichols algebra $\mathscr{B}_W$ associated to a finite Coxeter group $W$. In Section~\ref{sec:skew}, we introduce 
the elements $x_{w/v}\in\mathscr{B}_W$ where $v,w\in W$ which parametrize skew partial derivatives acting on $\mathscr{B}_W$ -- the \enquote{Nichols analogue} of skew divided difference operators as introduced in Definition~\ref{def:partialwv}. Generalizing and following \cite{liu}, we prove in Section~\ref{sec:positivity} that $x_{w/v}$ are positive elements of $\mathscr{B}_W$ for all $v\leq w$. In Section~\ref{sec:monomial}, we prove monomial properties, most notably Theorem~\ref{thm:monomial-two} -- the analogue of Theorem~\ref{thm:monomial-two-partial} in $\mathscr{B}_W$. Section~\ref{sec:strict-positivity} is devoted to further generalizations to the relative setting $x_{w/v}$. Section~\ref{sec:comb} is devoted to combinatorial consequences of positivity on saturated chains in the Bruhat order. Finally, in Section~\ref{sec:passage}, we relate the abstract results to the classical situation, thereby proving all of the claims in this introduction.

Appendix~\ref{appendix:shuffle} on shuffle elements and Bruhat intervals of length two is completely independent from the rest of the text. Vice versa, we use as only input from the appendix Theorem~\ref{thm:main-interval2} once in the proof of Theorem~\ref{thm:monomial-two}. A reader only interested in the combinatorics of Weyl groups can go directly after this introduction to Appendix~\ref{appendix:shuffle}. 

\todo[inline,color=green]{The appendix is independent from the main corpus of the text and vice versa. We use Theorem~\ref{thm:main-interval2} as the only input from the appendix in the main corpus only once in the proof of Theorem~\ref{thm:monomial-two}, and this latter theorem is not needed elsewhere in the text [the part of the sentence after the comma is not mentioned]. We drop some open Problems in Remarks* which are maybe easy to solve but we cannot yet. 

{\bf Depends (I adopted this point of view):} If I upload a new version with the necessary counterexamples, this is said to much. These remarks should stay independent of the rest of the text (no references).}

\subsection*{Acknowledgment}

The support of the German Research Foundation (DFG) is gratefully acknowledged. The author thanks Pierre-Emmanuel Chaput, the host of this research project in Nancy. The author thanks Nicolas Perrin for an invitation to Versailles to give a talk on the subject of this paper.


\section{Yetter-Drinfeld category over a Hopf algebra}
\label{sec:hopf}

\todo[inline,color=green]{General references: like on monoidal and braided monoidal categories, not specific references in the text, among them \cite{majid}, Etingof, Andrus, Bazlov2,\ldots}

In this section, we recall general notions around the Yetter-Drinfeld category over a Hopf algebra (with invertible antipode). For more detailed informations on the axiomatics of monoidal and braided monoidal categories, we refer the reader to \cite{etingof,majid}. In case of more specific topics, such as Yetter-Drinfeld modules, free braided groups and Nichols algebras, we follow the references \cite{andrus,bazlov1,bazlov2}. A reader familiar with this setup can go directly to the next section.


Throughout the discussion, we fix a field $\mathbf{k}$ and a Hopf algebra $H$ over $\mathbf{k}$ with invertible antipode $\mathscr{S}$. We denote by $\nabla$ the multiplication, by $\eta$ the unit, by $\Delta$ the comultiplication and by $\epsilon$ the counit. 

\begin{conv}
\label{conv:generic}

We use the notation $\nabla,\eta,\Delta,\epsilon,\mathscr{S}$ for the fixed Hopf algebra $H$ but also \enquote{generically} for every other Hopf algebra which will arise, may it be braided (cf.\ Definition~\ref{def:braided-hopf}) or not. 
%
%
Similarly, we denote the coaction on a $H$-comodule generically by $\delta$.

\end{conv}

\begin{conv}

From now on, we will always use Sweedler notation. We denote the $m$-fold coproduct of an element $h$ (regardless of which Hopf algebra or braided Hopf algebra) by $h_{(1)}\otimes\cdots\otimes h_{(m)}$. We denote the coaction on an element $v$ (regardless of which $H$-comodule) by $v_{(-1)}\otimes v_{(0)}$.

\end{conv}

\todo[inline,color=green]{Note that we use $\Delta$ to denote the comultiplication of a Hopf algebra, whichever it may be, as well as to denote the set of simple roots of a given Coxeter group (cf.\ Subsection~\ref{subsec:coxeter}). No problem will arise from this double meaning of $\Delta$ because a map can hardly be confused with a set of roots.

I ignore this issue everywhere and do not mention it. The same remark applies for $W$. I use it everywhere for the Weyl group / Coxeter group and once and a while for a Yetter-Drinfeld module if $V$ is already taken.}

\begin{defn}

A Yetter-Drinfeld module $V$ over $H$ or simply a Yetter-Drinfeld $H$-module $V$ is simultaneously a left $H$-module and left $H$-comodule such that the compatibility condition
\[
\delta(hv)=h_{(1)}v_{(-1)}\mathscr{S}(h_{(3)})\otimes h_{(2)}v_{(0)}
\]
is satisfied for all $h\in H$ and all $v\in V$. A morphism of Yetter-Drinfeld $H$-modules is simultaneously an $H$-module and $H$-comodule homomorphism. The category of all Yetter-Drinfeld $H$-modules with these morphisms is denoted by ${}_H^H\mathscr{YD}$, and called Yetter-Drinfeld category over $H$.

\end{defn}

\begin{ex}
\label{ex:trivial}

We equip $\mathbf{k}$ with a $H$-module and $H$-comodule structure defined by the equations $h\lambda=\epsilon(h)\lambda$ and $\delta(\lambda)=1\otimes\lambda$ for all $h\in H$ and all $\lambda\in\mathbf{k}$. These structures are clearly compatible and make $\mathbf{k}$ into a Yetter-Drinfeld $H$-module. We call this Yetter-Drinfeld $H$-module the trivial Yetter-Drinfeld $H$-module. From now on, whenever we speak about $\mathbf{k}$ in the context of Yetter-Drinfeld $H$-modules, it will be endowed with the structure of the trivial Yetter-Drinfeld $H$-module.

\end{ex}

\begin{proof}[The category ${}_H^H\mathscr{YD}$ is a braided monoidal category]\renewcommand{\qedsymbol}{$\triangle$}

The tensor product $V\otimes W$ of two Yetter-Drinfeld $H$-modules $V$ and $W$ is given by the usual tensor product $V\otimes_{\mathbf{k}} W$ of $\mathbf{k}$-vector spaces equipped with the $H$-module and $H$-comodule structure defined by the equations
\[
h(v\otimes w)=h_{(1)}v\otimes h_{(2)}w\quad\text{and}\quad\delta(v\otimes w)=v_{(-1)}w_{(-1)}\otimes v_{(0)}\otimes w_{(0)}
\]
for all $h\in H$, $v\in V$, $w\in W$. One can check that this $H$-module and $H$-comodule structure makes $V\otimes W$ into a Yetter-Drinfeld $H$-module, and that the corresponding bifunctor $\otimes$ along with the unit object $\mathbf{k}$ as in Example~\ref{ex:trivial} and along with the obvious natural equivalences make ${}_H^H\mathscr{YD}$ into a monoidal category. The braiding $\Psi_{V,W}\colon V\otimes W\to W\otimes V$ of two Yetter-Drinfeld $H$-modules $V$ and $W$ is given by
$
\Psi_{V,W}(v\otimes w)=v_{(-1)}w\otimes v_{(0)}
$
for all $v\in V$ and all $w\in W$. The braiding $\Psi_{V,W}$ of $V$ and $W$ 
is an isomorphism in the Yetter-Drinfeld category over $H$ with inverse $\Psi_{V,W}^{-1}$ given by 
$
\Psi_{V,W}^{-1}(w\otimes v)=v_{(0)}\otimes\mathscr{S}^{-1}(v_{(-1)})w
$
for all $v\in V$ and all $w\in W$. One can check that the corresponding natural equivalence $\Psi$ makes the monoidal category ${}_H^H\mathscr{YD}$ into a braided monoidal category. The natural equivalence $\Psi$ is called the braiding of ${}_H^H\mathscr{YD}$.
\end{proof}

\begin{conv}

For simplicity, whenever we work with monoidal categories, we will always suppress the natural associativity isomorphisms in our notation, as well as all other natural isomorphisms related to monoidal categories. 

\end{conv}

\begin{rem}

Less interesting, the category ${}_H^H\mathscr{YD}$ is in addition to $\Psi$ equipped with a symmetric braiding, denoted by $\tau$, which makes it into a symmetric monoidal category. For two Yetter-Drinfeld $H$-modules $V$ and $W$, the component $\tau_{V,W}\colon V\otimes W\to W\otimes V$ is simply given by swapping the two tensor factors. The Yetter-Drinfeld category over $H$ will always be considered as a braided monoidal category equipped with the nontrivial braiding $\Psi$ as in the paragraph above. The symmetric braiding $\tau$ is only explicitly needed in the next section.

\end{rem}

\begin{conv}

We often suppress the subscripts in the braiding of two Yetter-Drinfeld $H$-modules $V$ and $W$. If it is understood from context which $V$ and $W$ are meant, we simply write $\Psi$ instead of $\Psi_{V,W}$, with the effect that $\Psi$ has a double meaning as natural equivalence and its component at $(V,W)$. The very same convention as for $\Psi$ also applies to $\tau$.

\end{conv}

A Yetter-Drinfeld $H$-module is said to be rigid if it admits a left dual in the sense of \cite[Definition~9.3.1]{majid}.

\begin{proof}[A Yetter-Drinfeld $H$-module is rigid if and only if it is finite dimensional]\renewcommand{\qedsymbol}{$\triangle$}

Let $V$ be a finite dimensional Yetter-Drinfeld $H$-module. We equip the usual dual $V^*$ of $V$ as a $\mathbf{k}$-vector space with the structure of an $H$-module and $H$-comodule defined by the equations:
\begin{align*}
(hf)(v)&=f(\mathscr{S}(h)v)&&\text{for all $h\in H$, $f\in V^*$, $v\in V$,}\\
\delta(f)&=f_{(-1)}\otimes f_{(0)}&&\text{for all $f\in V^*$, uniquely determined by}\\
f_{(-1)}f_{(0)}(v)&=\mathscr{S}^{-1}(v_{(-1)})f(v_{(0)})&&\text{for all $v\in V$.}
\end{align*}
More explicitly, the $H$-comodule structure can be defined in coordinates as follows. Let $e_1,\ldots,e_n$ be a basis of $V$ with dual basis $e_1^*,\ldots,e_n^*$ of $V^*$. Let $h_{ij}\in H$ be such that $\delta(e_i)=\sum_j h_{ij}\otimes e_j$ where here and in the following equations in this paragraph $i$ and $j$ range through $\{1,\ldots,n\}$. Then we define $\delta(e_i^*)=\sum_j\mathscr{S}^{-1}(h_{ji})\otimes e_j^*$ and extend linearly. One can check that this latter definition is independent of the choice of the basis of $V$ and coincides with the abstract definition of the coaction in the displayed equation above. Furthermore, one can check that the compatibility condition is satisfied for the $H$-module and $H$-comodule structure defined in this way on $V^*$. Thus, $V^*$ becomes a Yetter-Drinfeld $H$-module. The familiar evaluation and coevaluation maps in the category of vector spaces turn out to be morphisms in the category of Yetter-Drinfeld modules over $H$, and make $V^*$ into the left dual of $V$. For example, the coevaluation morphism
$
\mathbf{k}\to V\otimes V^*
$
is given by 
$
\lambda\mapsto\lambda\sum_i e_i\otimes e_i^*
$
where $e_1,\ldots,e_n$ is a basis of $V$ and $e_1^*,\ldots,e_n^*$ is the corresponding dual basis of $V^*$. Note that this definition is independent of the choice of the basis of $V$.
\end{proof}

\begin{conv}

The left dual $V^*$ of a finite dimensional Yetter-Drinfeld $H$-module $V$ is unique up to isomorphism. We always denote by $V^*$ the left dual of $V$ equipped with the structure of a Yetter-Drinfeld $H$-module as in the paragraph above.


\end{conv}

\begin{conv}
\label{conv:dual}

Let $V$ and $W$ be finite dimensional Yetter-Drinfeld $H$-modules. Then, $V$ and $W$ as well as $V\otimes W$ are rigid.  As in \cite[Figure~9.7(c) and equation on page~444]{majid}, we always identify the left dual of $V\otimes W$ with $W^*\otimes V^*$ (mind the order of the factors); in formulas $(V\otimes W)^*=W^*\otimes V^*$.


\end{conv}

\begin{notation}

Let $V$ be a finite dimensional Yetter-Drinfeld $H$-module. We always denote by $\mathrm{ev}_V$ the evaluation pairing $V^*\otimes V\to\mathbf{k}$ of $V$.

\end{notation}

\begin{defn}[{\cite[Lemma~9.2.12]{majid}}]

Let $A$ and $B$ be two algebras in ${}_H^H\mathscr{YD}$. We equip the tensor produce $A\otimes B$ with the multiplication defined by the composite morphism
\[
A\otimes B\otimes A\otimes B\xrightarrow{1\otimes\Psi\otimes 1}A\otimes A\otimes B\otimes B\to A\otimes B
\]
where the second arrow is given by the tensor product of multiplication in $A$ with multiplication in $B$. We also equip $A\otimes B$ with the unit defined by $1\mapsto 1\otimes 1$. These structures make $A\otimes B$ into an algebra in ${}_H^H\mathscr{YD}$ which is denoted by $A\underline{\otimes}B$ and called braided tensor product algebra. The multiplication in $A\underline{\otimes}B$ is called twisted multiplication in $A\underline{\otimes}B$.

\end{defn}

\begin{defn}[{\cite[Definition~1.7]{andrus}}]
\label{def:braided-hopf}

A braided Hopf algebra $A$ in ${}_H^H\mathscr{YD}$ is simultaneously an algebra and coalgebra in ${}_H^H\mathscr{YD}$ such that $\Delta\colon A\to A\underline{\otimes}A$ and $\epsilon\colon A\to\mathbf{k}$ are algebra homomorphisms and such that $A$ admits an antipode (which is clearly also assumed to be a morphism in the Yetter-Drinfeld category over $H$).
%
A braided $\mathbb{Z}_{\geq 0}$-graded Hopf algebra $A$ in ${}_H^H\mathscr{YD}$ is a braided Hopf algebra in ${}_H^H\mathscr{YD}$ which admits a $\mathbb{Z}_{\geq 0}$-grading $A=\bigoplus_{m=0}^\infty A^m$ of Yetter-Drinfeld $H$-modules such that all of the structure morphisms of $A$ become $\mathbb{Z}_{\geq 0}$-graded morphisms in the Yetter-Drinfeld category over $H$. (Note that the tensor product of $\mathbb{Z}_{\geq 0}$-graded Yetter-Drinfeld $H$-modules is equipped with the obvious $\mathbb{Z}_{\geq 0}$-grading, as well as that $\mathbf{k}$ is equipped with the trivial $\mathbb{Z}_{\geq 0}$-grading concentrated in degree zero.)

\end{defn}

\begin{rem}
\label{rem:anti-homo}

Similarly as the antipode in an ordinary Hopf algebra is an anti algebra and anti coalgebra homomorphism (cf.~\cite[Proposition~5.3.6]{etingof}), the antipode in a braided Hopf algebra in ${}_H^H\mathscr{YD}$ is a braided anti algebra and braided anti coalgebra homomorphism, in the sense that the identities
\[
\mathscr{S}\circ\nabla=\nabla\circ\Psi\circ(\mathscr{S}\otimes\mathscr{S})\quad\text{and}\quad\Delta\circ\mathscr{S}=(\mathscr{S}\otimes\mathscr{S})\circ\Psi\circ\Delta
\]
are fulfilled (cf.~\cite[Equation~(9.39) on page~477]{majid}).

\end{rem}

\begin{defn}[{\cite[Subsection~5.3]{bazlov2}}]

Let $A$ and $B$ be braided Hopf algebras in ${}_H^H\mathscr{YD}$. A Hopf duality pairing between $A$ and $B$ is a morphism $\left<-,-\right>\colon A\otimes B\to\mathbf{k}$ in the Yetter-Drinfeld category over $H$ such that
\begin{gather*}
\left<\phi\psi,x\right>=\left<\phi,x_{(2)}\right>\left<\psi,x_{(1)}\right>\,,\quad\left<\phi,xy\right>=\left<\phi_{(2)},x\right>\left<\phi_{(1)},y\right>\,,\\
\left<1,x\right>=\epsilon(x)\,,\quad\left<\phi,1\right>=\epsilon(\phi)\,,\quad\left<\mathscr{S}(\phi),x\right>=\left<\phi,\mathscr{S}(x)\right>\,,
\end{gather*}
for all $\phi,\psi\in A$ and all $x,y\in B$.

\end{defn}

\subsection{Braided factorial}

Let $m\in\mathbb{Z}_{>0}$ and let $n=m-1$. Let $\mathbb{S}_m$ be the symmetric group on $m$ letters. For all $1\leq i\leq n$, we denote by $s_i$ the $i$th simple transposition, i.e.\ the permutation which swaps the $i$th and the $(i+1)$th letter and fixes all other letters.

Let $V$ be a Yetter-Drinfeld $H$-module. Let $1\leq i\leq n$. We denote by $\Psi_i$ the $\mathbf{k}$-linear endomorphism of $V^{\otimes m}$ defined by
\[
\underbrace{1\otimes\cdots\otimes 1}_{(i-1)\text{ factors}}{}\otimes{}\Psi_{V,V}\otimes\!\underbrace{1\otimes\cdots\otimes 1}_{(m-i-1)\text{ factors}}
\]
where $\Psi_{V,V}$ acts on the $i$th and $(i+1)$th tensor factor of $V^{\otimes m}$. Let $\sigma\in\mathbb{S}_m$. Let $s_{i_1}\cdots s_{i_\ell}$ be a reduced expression of $\sigma$ (where $1\leq i_1,\ldots,i_\ell\leq n$). Then we define a $\mathbf{k}$-linear endomorphism of $V^{\otimes m}$ by the equation $\Psi_\sigma=\Psi_{i_1}\cdots\Psi_{i_\ell}$. The endomorphism $\Psi_\sigma$ is well-defined, i.e.\ independent of the choice of the reduced expression of $\sigma$, because the $\Psi_i$'s satisfy the braid relations as a consequence of the axioms imposed on the braiding $\Psi$ in the braided monoidal category ${}_H^H\mathscr{YD}$. Finally, we define a $\mathbf{k}$-linear endomorphism $[m]!_V$ of $V^{\otimes m}$ by the equation 
\[
[m]!_V=\sum_{\sigma\in\mathbb{S}_m}\Psi_\sigma\,.
\]
The endomorphism $[m]!_V$ is called braided factorial or braided (Woronowicz) symmetrizer. Note that the braided factorial has a factorization in terms of braided integers (cf.~\cite[Subsection~2.5]{bazlov1} and \cite[Subsection~5.5]{bazlov2}). We will often drop the subscript in the braided factorial and simply write $[m]!$ instead of $[m]!_V$ if it is clear from context on the $m$th tensor power of which Yetter-Drinfeld $H$-module $V$ the endomorphism $[m]!$ acts.



\subsection{Free braided group \texorpdfstring{{\normalfont(\cite[Subsection~5.4 and 5.5]{bazlov2})}}{([\ref{bib-bazlov2},~Subsection~5.4 and 5.5])}}

Let $V$ be a Yetter-Drinfeld $H$-module. The $\mathbb{Z}_{\geq 0}$-graded tensor algebra $T(V)$ can be equipped with the structure of a braided $\mathbb{Z}_{\geq 0}$-graded Hopf algebra in ${}_H^H\mathscr{YD}$ which is uniquely determined by the requirement that we keep the algebra structure and the $\mathbb{Z}_{\geq 0}$-grading from the the tensor algebra and by the requirement that the additional structure morphisms satisfy
\[
\Delta(v)=v\otimes 1+1\otimes v\,,\quad\epsilon(v)=0\,,\quad \mathscr{S}(v)=-v
\]
for all $v\in V$. The tensor algebra equipped with this structure is called free braided group. From now on, we think of $T(V)$ always as a free braided group. Note that the comultiplication in  the free braided group can be explicitly described in terms of braided binomial coefficients (cf.~\cite[Subsection~5.4]{bazlov2}).

Let $V$ be a finite dimensional Yetter-Drinfeld $H$-module. Then, there exists a Hopf duality pairing $\left<-,-\right>$ between $T(V^*)$ and $T(V)$ which is uniquely determined by the requirement that the restriction of $\left<-,-\right>$ to $V^*\otimes V$ equals the evaluation pairing of $V$.
While the uniqueness of $\left<-,-\right>$
is rather obvious, we want to recall its explicit construction as follows: By definition, the pairing $\left<-,-\right>$ vanishes on $V^{*\otimes m}\otimes V^{\otimes m'}$ whenever $m\neq m'$ and its restriction to $V^{*\otimes m}\otimes V^{\otimes m}$ is given by
\[
\mathrm{ev}_{V^{\otimes m}}\circ(1\otimes [m]!_V)=
\mathrm{ev}_{V^{\otimes m}}\circ([m]!_{V^*}\otimes 1)
\]
for all $m,m'\in\mathbb{Z}_{\geq 0}$. Here, the last displayed equation follows since $\Psi_{V,V}$ and $\Psi_{V^*,V^*}$ are adjoint with respect to the evaluation pairing of $V^{\otimes 2}$. Note that, for example, in the last displayed equation of the construction, Convention~\ref{conv:dual} is in force. Concretely, this means that $\mathrm{ev}_{V^{\otimes m}}$ considered as a morphism $V^{*\otimes m}\otimes V^{\otimes m}\to\mathbf{k}$ 
must be evaluated from inner tensors to outer tensors. Starting from the explicit construction, using the description of the comultiplication in the free braided group in terms of braided binomial coefficients (cf.~\cite[loc.~cit.]{bazlov2}) and using the braided binomial theorem (cf.~\cite[Subsection~5.5]{bazlov2}), one can verify that $\left<-,-\right>$ is indeed a Hopf duality pairing between $T(V^*)$ and $T(V)$ whose restriction to $V^*\otimes V$ equals the evaluation pairing of $V$. We denote this pairing from now on and everywhere by $\left<-,-\right>$. It will be clear from context which free braided groups are paired.

\subsection{Nichols algebra}

Let $V$ be a finite dimensional Yetter-Drinfeld $H$-module. Let $\operatorname{Wor}(V)$ be defined as $\bigoplus_{m=0}^\infty\operatorname{ker}{}[m]!_V$. Using the axioms of the Hopf duality pairing between $T(V^*)$ and $T(V)$ and arguments as in its explicit construction in the previous subsection, one can verify that $\operatorname{Wor}(V)$ is a $\mathbb{Z}_{\geq 0}$-graded Hopf ideal in $T(V)$ which is called the Woronowicz ideal of $V$. The quotient $T(V)/\operatorname{Wor}(V)$, denoted by $\mathscr{B}(V)$ from now on, therefore becomes a braided $\mathbb{Z}_{\geq 0}$-graded Hopf algebra with the structure inherited from the free braided group. The algebra $\mathscr{B}(V)$ is called the Nichols algebra of $V$. The Hopf duality pairing between $T(V^*)$ and $T(V)$ clearly descends to a Hopf duality pairing between $\mathscr{B}(V^*)$ and $\mathscr{B}(V)$ which is still denoted by $\left<-,-\right>$. By definition, the Hopf duality pairing between $\mathscr{B}(V^*)$ and $\mathscr{B}(V)$ is nondegenerate.
The Nichols algebra of $V$ satisfies the following properties:
\begin{itemize}
\item 
$\mathscr{B}(V)^0\cong\mathbf{k}$ and $\mathscr{B}(V)^1\cong V$ are isomorphic as Yetter-Drinfeld $H$-modules.
\item
$\mathscr{B}(V)$ is generated as an algebra by $\mathscr{B}(V)^1$.
\item
The primitive elements of $\mathscr{B}(V)$ are precisely the element in $\mathscr{B}(V)$ of $\mathbb{Z}_{\geq 0}$-degree one.
\end{itemize}
The first two properties are obvious from the definition while the third corresponds to \cite[Proposition~2.2(i)]{andrus}.
Note that these properties characterize the Nichols algebra of $V$ up to isomorphism (cf.\ \cite[Proposition~2.2(iv)]{andrus}). Therefore, they often serve as an axiomatic definition of $\mathscr{B}(V)$ (cf.\ \cite[Definition~2.1]{andrus}). If no confusion arises, we will from now on identify the $\mathbb{Z}_{\geq 0}$-degree zero and one component of $\mathscr{B}(V)$ with $\mathbf{k}$ respectively $V$ as depicted in the first item above.


\begin{rem}
\label{rem:vanishing}

Let $V$ be a finite dimensional Yetter-Drinfeld $H$-module. From the explicit construction in the previous subsection, it is clear that the Hopf duality pairing between $\mathscr{B}(V^*)$ and $\mathscr{B}(V)$ vanishes on $\mathscr{B}(V^*)^m\otimes\mathscr{B}(V)^{m'}$ where $m,m'\in\mathbb{Z}_{\geq 0}$ such that $m\neq m'$.

\end{rem}

\subsection{Braided differential calculus}

Let $V$ be a finite dimensional Yetter-Drinfeld $H$-module. We define a left action of the algebra $\mathscr{B}(V^*)$ on $\mathscr{B}(V)$ by sending an element $\xi\in\mathscr{B}(V^*)$ to the $\mathbf{k}$-linear endomorphism $\overrightarrow{D}_{\xi}$ of $\mathscr{B}(V)$ defined by the equation
\[
\overrightarrow{D}_{\xi}(x)=\left<\xi,x_{(1)}\right>x_{(2)}
\]
for all $x\in\mathscr{B}(V)$. Similarly, we define a right action of the algebra $\mathscr{B}(V)$ on $\mathscr{B}(V^*)$ by sending an element $x\in\mathscr{B}(V)$ to the $\mathbf{k}$-linear endomorphism $\overleftarrow{D}_x$ of $\mathscr{B}(V^*)$ defined by the equation
\[
(\phi)\overleftarrow{D}_x=\phi_{(1)}\left<\phi_{(2)},x\right>
\]
for all $\phi\in\mathscr{B}(V^*)$. We call $\overrightarrow{D}_{\xi}$ and $\overleftarrow{D}_x$ where $x\in\mathscr{B}(V)$ and $\xi\in\mathscr{B}(V^*)$ braided left and right partial derivatives.

\begin{rem}
\label{rem:degree-partial}

Let $\xi\in\mathscr{B}(V^*)$ be homogeneous of $\mathbb{Z}_{\geq 0}$-degree $m$. Then, the operator $\overrightarrow{D}_\xi$ decreases the $\mathbb{Z}_{\geq 0}$-degree of homogeneous elements of $\mathscr{B}(V)$ by $m$. Consistently with this behavior, we call $m$ the order of $\overrightarrow{D}_\xi$. Similarly, let $x\in\mathscr{B}(V)$ be homogeneous of $\mathbb{Z}_{\geq 0}$-degree $m$. Then, the operator $\overleftarrow{D}_x$ decreases the $\mathbb{Z}_{\geq 0}$-degree of homogeneous elements of $\mathscr{B}(V^*)$ by $m$, and we call $m$ the order of $\overleftarrow{D}_x$.

\end{rem}

Let $v\in V$. The braided right partial derivative $\overleftarrow{D}_v$ is uniquely determined via the condition $(\psi)\overleftarrow{D}_v=\psi(v)$ for all $\psi\in V^*$ and the so-called braided Leibniz rule
\[
\boxed{\,\;(\phi\psi)\overleftarrow{D}_v=\phi\big((\psi)\overleftarrow{D}_v\big)+\big((\phi)\overleftarrow{D}_{v_{(0)}}\big)\big(\mathscr{S}^{-1}(v_{(-1)})\psi\big)\vphantom{\int}\;\,}
\]
for all $\phi,\psi\in\mathscr{B}(V^*)$.

Finally, let $x\in\mathscr{B}(V)$ and let $\xi\in\mathscr{B}(V^*)$. Then, we note that $\overrightarrow{D}_\xi$ and multiplication by $\xi$ from the right in $\mathscr{B}(V^*)$ are adjoint with respect to the Hopf duality pairing, and similarly that $\overleftarrow{D}_x$ and multiplication by $x$ from the left in $\mathscr{B}(V)$ are adjoint. In formulas, we have
\[
\langle\phi,\overrightarrow{D}_\xi(y)\rangle=\left<\phi\xi,y\right>\quad\text{and}\quad\langle(\phi)\overleftarrow{D}_x,y\rangle=\left<\phi,xy\right>
\]
for all $y\in\mathscr{B}(V)$ and for all $\phi\in\mathscr{B}(V^*)$. In particular, these formulas yield the equation
\begin{equation}
\label{eq:pairing-derivative}
\left<\xi,x\right>=\epsilon(\overrightarrow{D}_\xi(x))=\epsilon((\xi)\overleftarrow{D}_x)\,.
\end{equation}

\todo[inline,color=green]{Axiomatic definition of Nichols algebras -- seems to be a point that I can make \enquote{green} in the future. I don't really know where I used these properties in the text. I used probably less, namely $\mathscr{B}(V)^1\subseteq P(\mathscr{B}(V))$ -- which is obvious, but wanted to mention the whole thing. [I use this inclusion in the proof of Lemma~\ref{lem:prelim-exp} and in the proof of Theorem~\ref{thm:first-step}.] For example, the descend of Proposition~\ref{prop:rhoands}\eqref{item:identity} from $T(V)$ to $\mathscr{B}(V)$ as in Remark~\ref{rem:rhoandsonnichols} works only because of the first point of the axiomatics.}

\todo[inline,color=green]{Notation on Sweedler notation. Same convention as Convention~\ref{conv:generic} holds for $\delta$ -- the coaction.}

\todo[inline,color=green]{For the proof of Proposition~\ref{prop:reflection-ordering} it is somewhat relevant to know that the Hopf duality pairing between a dual Nichols algebra and itself is actually a morphism in the Yetter-Drinfeld category where $\mathbf{k}$ is considered as a trivial $H$-module. I should explain this somewhere and maybe even refer to it in the proof of Proposition~\ref{prop:reflection-ordering} (to be decided!). In the graded case we are looking at for Nichols algebra, the Hopf duality pairing even respects the grading in the sense of Remark~\ref{rem:vanishing}. [I mentioned now (22.03.2018) both facts in the proof of Proposition~\ref{prop:reflection-ordering}: morphism in $\mathscr{YD}$ and Remark~\ref{rem:vanishing}.]}

\todo[inline,color=green]{I use the structure of a dual $V^*$ of a finite dimensional Yetter-Drinfeld $H$-module freely everywhere. It must be introduced and said that $*$ has precisely this meaning -- with Yetter-Drinfeld $H$-module structure implicit and understood. [Realized as convention after the paragraph \enquote{A YD-$H$-module is rigid iff it is finite dimensional.}.]}

\todo[inline,color=green]{Maybe I should by passing by mention the axiomatic definition of the Nichols algebra (with reference), in the regard that I often use the axiom that the primitive elements of the Nichols algebra are given by the degree one component. We also use that the degree one part is actually the same as the defining $V$, etc.}

\todo[inline,color=green]{It will be clear from context that all algebras are associative with unit (but clearly not necessarily commutative).}

\section{Yetter-Drinfeld category over a group}
\label{sec:group}

In this section, we specialize the setting from the previous section from an arbitrary Hopf algebra with invertible antipode to the group algebra of a group $\Gamma$. We investigate structure specific to the free braided group and the Nichols algebra associated to a finite dimensional Yetter-Drinfeld module $V$ over $\mathbf{k}\Gamma$, most notably the maps $\rho$ and $\bar{\mathscr{S}}$ as endomorphisms of $T(V)$ and $\mathscr{B}(V)$ (cf.\ Definition~\ref{def:rhoandsbar} and Remark~\ref{rem:rhoandsonnichols}). These maps already occur in \cite{liu} with the sole difference that there a nonstandard braiding of ${}^{\mathbf{k}\Gamma}_{\mathbf{k}\Gamma}\mathscr{YD}$ (different from the braiding introduced in Section~\ref{sec:hopf}, to be precise: its inverse) is in use. Nevertheless, the results and their proofs are analogous. With reference to \cite{liu}, we sometimes leave the obvious modifications to the reader.

Throughout the discussion, we fix a group $\Gamma$. From now on, we specialize to the setting where $H$ is given by the group algebra $\mathbf{k}\Gamma$. For brevity, we write ${}_\Gamma^\Gamma\mathscr{YD}={}_{\mathbf{k}\Gamma}^{\mathbf{k}\Gamma}\mathscr{YD}$, and call ${}_\Gamma^\Gamma\mathscr{YD}$ the Yetter-Drinfeld category over $\Gamma$. We call the objects of ${}_\Gamma^\Gamma\mathscr{YD}$ Yetter-Drinfeld modules over $\Gamma$ or simply Yetter-Drinfeld $\Gamma$-modules.

A Yetter-Drinfeld $\Gamma$-module $V$ will be identified with a $\Gamma$-module $V$ equipped with a $\Gamma$-grading $V=\bigoplus_{g\in\Gamma}V_g$ of $\mathbf{k}$-vector spaces such that the compatibility condition $hV_g=V_{hgh^{-1}}$ holds for all $g,h\in\Gamma$. Here, we set $V_g=\{v\in V\mid\delta(v)=g\otimes v\}$. All other structure attached to this situation as in the last section, such as the tensor product and the braiding in ${}_\Gamma^\Gamma\mathscr{YD}$, specializes nicely under this identification. The explicit formulas are written in \cite[Subsection~4.1]{bazlov1}. For example, the left dual $V^*$ of a finite dimensional Yetter-Drinfeld $\Gamma$-module $V$ has $\Gamma$-grading given by $V^*=\bigoplus_{g\in\Gamma}V_g^*$ where $V_g^*$ is the linear dual of $V_g$ for all $g\in\Gamma$.

\begin{rem}
\label{rem:equivariant}

Let $A$ and $B$ be braided Hopf algebras in ${}_\Gamma^\Gamma\mathscr{YD}$. Let $\left<-,-\right>$ be a Hopf duality pairing between $A$ and $B$. Then, the Hopf duality pairing is $\Gamma$-equivariant, in the sense that $\left<g\phi,gx\right>=\left<\phi,x\right>$ for all $g\in\Gamma,\phi\in A,x\in B$. Indeed, this follows directly because the Hopf duality pairing is a morphism in the Yetter-Drinfeld category over $\Gamma$.  

\end{rem}

\todo[inline,color=green]{It depends pretty much on the latter proofs whether I have to specialize the braided Leibniz rule to the group setting (keeping the convention of the $\Gamma$-right-action in mind) or not. I decide this later. [See Remark~\ref{rem:braided-leibniz}.]}

\todo[inline,color=green]{Terminology: Yetter-Drinfeld $\Gamma$-module, Yetter-Drinfeld category over $\Gamma$, compatibility condition, $\mathbf{k}$ is left out of the notation in the category (and elsewhere), I don't need to assume that $\Gamma$ is finite a priori -- fixed throughout the discussion.}

\todo[inline,color=green]{$\mathbf{k},\otimes=\otimes_{\mathbf{k}},\nabla,\tau,\Psi,\Delta,\epsilon,\eta,\delta,T(V),\left<-,-\right>\colon T(V^*)\otimes T(V)\to\mathbf{k},\mathrm{Wor}$, Sweedler notation, we usually suppress the indices in natural transformations, total Woronowicz symmetrizer = Woronowicz ideal $\mathrm{Wor}(V)$ (of $V$) (later on I will need: $[m]!_V$ -- using this notation -- it seems that I never suppress $V$ in $\mathrm{Wor}(V)$ or in $[m]!_V$ -- this rule [not to suppress $V$] applies only to $\mathrm{Wor}(V)$, in the case of $[m]!$ I suppress $V$ for example in Example~\ref{ex:relations}), braided $\mathbb{Z}_{\geq 0}$-graded Hopf algebra in the Yetter-Drinfeld category over $H$ -- in general, one says: natural equivalence or natural isomorphism.}

\todo[inline,color=green]{$\mathscr{B}(V),\protect\overrightarrow{D}_\xi,\protect\overleftarrow{D}_x$, braided left and right partial derivatives, free braided group, braided tensor product algebra with twisted multiplication or braided multiplication (I stay with the term twisted multiplication everywhere, for example, so far, I have never said braided multiplication, braided product, or twisted product [the term \enquote{twisted product} is also taboo and redundant because I would call it simply coproduct in all my formulas], although I use both the terms coproduct and comultiplication to denote either $\Delta(x)$ or $\Delta$ itself, i.e.\ the result or the operation).}

\todo[inline,color=green]{Convention: Many steps in our proofs are analogously to the ones in \cite{liu}. We have adapted the formulas and refer to the proofs of Liu which can be in each case repeated in one or another form (although the results are literally not the same concerning the statements and the proofs). The difference is due to a different braiding chosen in \cite{liu}. No ideas are needed to adjust those proofs. If the proof somehow differs, we have given a complete proof, like in the proof of Proposition~\ref{prop:adjoint}.}

\todo[inline,color=green]{I mention the braided Leibniz rule here in the preamble. There is nothing to say about the circle action as $g$ is not necessarily an involution. [I actually never mention the braided Leibniz for the opposite braided left partial derivatives -- although it would be possible to write it out. I also don't mention the braided Leibniz rule for $\partial_\alpha$ -- which would be a trivial special case.]}

\begin{defn}

Let $V$ be a Yetter-Drinfeld $\Gamma$-module. Then we can define a new Yetter-Drinfeld $\Gamma$-module $V^{\mathrm{op}}$ which is equal to $V$ as $\Gamma$-module but the $\Gamma$-grading is defined as $V^{\mathrm{op}}_g=V_{g^{-1}}$ for all $g\in\Gamma$, i.e.\ the coaction is given by $\delta^{\mathrm{op}}=(\mathscr{S}\otimes 1)\circ\delta$. It is clear that the compatibility condition is satisfied for $V^{\mathrm{op}}$. 

\end{defn}


\begin{rem}

Let $V$ and $W$ be Yetter-Drinfeld $\Gamma$-modules. A morphism $\rho\colon V^{\mathrm{op}}\to W$ (or equivalently a morphism $\rho\colon V\to W^{\mathrm{op}}$) in the Yetter-Drinfeld category over $\Gamma$ is simply a $\Gamma$-module homomorphism $V\to W$ which satisfies in addition $\delta\circ\rho=(\mathscr{S}\otimes\rho)\circ\delta$.

\end{rem}

\begin{conv}
\label{conv:homogeneous}



Let $A$ be a braided $\mathbb{Z}_{\geq 0}$-graded Hopf algebra in the Yetter-Drinfeld category over $\Gamma$. We call an element $x\in A$ homogeneous if it is homogeneous with respect to the $\mathbb{Z}_{\geq 0}$-grading and with respect to the $\Gamma$-grading.

\end{conv}

\begin{defn}
\label{def:rhoandsbar}

Let $V$ be a Yetter-Drinfeld $\Gamma$-module of dimension $n$. Let $v_1,\ldots,v_n$ be a basis of $V$. Then we define a $\mathbf{k}$-linear map $\rho\colon T(V)\to T(V)$ by setting $\rho(v_{i_1}\cdots v_{i_m})=v_{i_m}\cdots v_{i_1}$ for each sequence $1\leq i_1,\ldots, i_m\leq n$ and by extending linearly. Furthermore, we define a $\mathbf{k}$-linear map $\bar{\mathscr{S}}\colon T(V)\to T(V)$ by setting $\bar{\mathscr{S}}(x)=(-1)^m\rho(\mathscr{S}(x))$ for each homogeneous element $x\in T(V)$ of $\mathbb{Z}_{\geq 0}$-degree $m$ and by extending linearly. It is easy to see that the map $\rho$ and consequently the map $\bar{\mathscr{S}}$ do not depend on the choice of the basis of $V$.

\end{defn}

\begin{conv}

Similarly as in Convention~\ref{conv:generic}, we use the notation $\rho$ and $\bar{\mathscr{S}}$ generically for the maps just defined on free braided groups, regardless which free braided group (see also Remark~\ref{rem:rhoandsonnichols}).

\end{conv}





\begin{prop}[{\cite[Proposition~2.10(a),(b),(c)]{liu}}]
\label{prop:rhoands}

Let $V$ be a Yetter-Drinfeld $\Gamma$-module. We have the following properties of $\rho$ and $\bar{\mathscr{S}}$:

\begin{enumerate}

\item
\label{item:morphop}

The maps $\rho$ and $\bar{\mathscr{S}}$ are $\mathbb{Z}_{\geq 0}$-graded morphisms $T(V)^{\mathrm{op}}\to T(V)$ in the Yetter-Drinfeld category over $\Gamma$.

\item
\label{item:identity}

The maps $\rho$ and $\bar{\mathscr{S}}$ are the identity in $\mathbb{Z}_{\geq 0}$-degree zero and one.

\item
\label{item:rhoantialgebra}

The map $\rho$ is an anti algebra homomorphism, i.e.\ we have $\rho\circ\nabla=\nabla\circ\tau\circ(\rho\otimes\rho)$, i.e.\ we have $\rho(xy)=\rho(y)\rho(x)$ for all $x,y\in T(V)$.

\item
\label{item:sbaralebgraprop}

Let $x\in T(V)$ be a homogeneous element of $\Gamma$-degree $g$. Then we have $\bar{\mathscr{S}}(xy)=\bar{\mathscr{S}}(x)(g\bar{\mathscr{S}}(y))$ for all $y\in T(V)$. Diagrammatically, this means that $\bar{\mathscr{S}}\circ\nabla=\nabla\circ\tau\circ\Psi\circ(\bar{\mathscr{S}}\otimes\bar{\mathscr{S}})$.

\item
\label{item:sbaranticolagebra}

The map $\bar{\mathscr{S}}$ is an anti coalgebra homomorphism, i.e.\ we have $\Delta\circ\bar{\mathscr{S}}=(\bar{\mathscr{S}}\otimes\bar{\mathscr{S}})\circ\tau\circ\Delta$, i.e.\ we have $\bar{\mathscr{S}}(x)_{(1)}\otimes\bar{\mathscr{S}}(x)_{(2)}=\bar{\mathscr{S}}(x_{(2)})\otimes\bar{\mathscr{S}}(x_{(1)})$ for all $x\in T(V)$.

\item
\label{item:involution}

The maps $\rho$ and $\bar{\mathscr{S}}$ are involutions, i.e.\ $\rho^2=\bar{\mathscr{S}}^2=1$. The antipode $\mathscr{S}\colon T(V)\to T(V)$ is invertible.

\end{enumerate}

\end{prop}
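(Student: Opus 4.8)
The plan is to prove Proposition~\ref{prop:rhoands} property-by-property, working entirely inside the free braided group $T(V)$ and exploiting that everything is determined by its behavior on $V=T(V)^1$, since $T(V)$ is generated in degree one. The only genuinely new phenomenon compared to the ordinary Hopf algebra case is the appearance of the symmetric braiding $\tau$ and the nonstandard braiding $\Psi$ in the diagrammatic identities, so the real work is bookkeeping of which braiding appears where.

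First I would dispatch \eqref{item:identity}: in degree zero both maps are the identity on $\mathbf{k}$ by definition, and in degree one $\rho$ fixes each $v_i$ (a one-letter word reversed is itself), while $\bar{\mathscr{S}}(v)=(-1)^1\rho(\mathscr{S}(v))=-\rho(-v)=v$ using $\mathscr{S}(v)=-v$. Next \eqref{item:rhoantialgebra}: this is immediate from the definition of $\rho$ on monomials, since reversing the concatenation $v_{i_1}\cdots v_{i_m}v_{j_1}\cdots v_{j_{m'}}$ gives $v_{j_{m'}}\cdots v_{j_1}v_{i_m}\cdots v_{i_1}=\rho(y)\rho(x)$; the diagrammatic reformulation $\rho\circ\nabla=\nabla\circ\tau\circ(\rho\otimes\rho)$ is just the restatement that $\tau$ is the plain flip. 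Basis-independence of $\rho$ (hence $\bar{\mathscr{S}}$) follows because $\rho$ is characterized as the unique anti algebra homomorphism that is the identity on $V$ — a basis-free description.

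For \eqref{item:morphop} I would check that $\rho$ is $\Gamma$-equivariant with the flipped grading: if $v\in V_g$ then the degree-one part is trivial, and on a monomial $v_{i_1}\cdots v_{i_m}$ with $v_{i_k}\in V_{g_k}$ the $\Gamma$-degree is $g_1\cdots g_m$, whereas the reversed monomial has $\Gamma$-degree $g_m\cdots g_1$, which is exactly $(g_1\cdots g_m)^{-1}$ read in $T(V)^{\mathrm{op}}$ — so one must verify $\delta\circ\rho=(\mathscr{S}\otimes\rho)\circ\delta$, and $\Gamma$-module compatibility $h\cdot\rho(x)=\rho(h\cdot x)$ follows since $h$ acts diagonally on monomials and commutes with reversal. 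The same checks for $\bar{\mathscr{S}}$ follow by composing with $\mathscr{S}$ (which is a morphism in $\mathscr{YD}$) and the sign $(-1)^m$. Then \eqref{item:sbaralebgraprop} and \eqref{item:sbaranticolagebra} follow by translating the known braided-anti-homomorphism properties of $\mathscr{S}$ from Remark~\ref{rem:anti-homo} through the conjugation $\bar{\mathscr{S}}=(-1)^{\deg}\rho\circ\mathscr{S}$ and the anti-algebra property of $\rho$: one composes $\mathscr{S}\circ\nabla=\nabla\circ\Psi\circ(\mathscr{S}\otimes\mathscr{S})$ with $\rho$ on the left and uses $\rho\circ\nabla=\nabla\circ\tau\circ(\rho\otimes\rho)$ to move $\rho$ inside, tracking the degree signs, which produces the $\tau\circ\Psi$ appearing in the stated formula; the coalgebra statement \eqref{item:sbaranticolagebra} is dual and uses $\Delta\circ\mathscr{S}=(\mathscr{S}\otimes\mathscr{S})\circ\Psi\circ\Delta$ together with the fact that $\Delta$ on $T(V)$ sends degree-one generators to primitives, so the signs and braidings recombine into $\tau\circ\Delta$. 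Finally \eqref{item:involution}: $\rho^2=1$ is clear on monomials (reverse twice), hence $\bar{\mathscr{S}}^2=(-1)^{\deg}\rho\mathscr{S}(-1)^{\deg}\rho\mathscr{S}$; using that $\rho\mathscr{S}=\mathscr{S}\rho$ on $T(V)$ — which one checks by noting both sides are the identity in degree one and are anti algebra homomorphisms, being composites of the anti algebra map $\rho$ with the braided anti algebra map $\mathscr{S}$... actually the cleanest route is to show $\bar{\mathscr{S}}$ is an anti algebra homomorphism that is the identity on $V$, hence equals $\rho$ on... no — rather, one shows directly $\bar{\mathscr{S}}^2$ is an algebra homomorphism fixing $V$ pointwise, hence the identity; invertibility of $\mathscr{S}$ then follows since $\mathscr{S}=(-1)^{\deg}\rho\circ\bar{\mathscr{S}}$ with both factors invertible.

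The main obstacle is \eqref{item:sbaralebgraprop}: getting the braiding gymnastics exactly right, i.e.\ verifying that conjugating the braided-anti-multiplicativity of $\mathscr{S}$ by $\rho$ and the grading-signs yields precisely $\nabla\circ\tau\circ\Psi\circ(\bar{\mathscr{S}}\otimes\bar{\mathscr{S}})$ and not some other composite of $\tau$ and $\Psi$. I would handle this by reducing to generators — it suffices to verify the identity when $x\in V$, since both sides, viewed as identities relating $\bar{\mathscr{S}}$ on a product to $\bar{\mathscr{S}}$ on the factors, propagate multiplicatively in $x$ by an induction on $\deg(x)$ once the base case $\deg(x)=1$ is known (and $\deg(x)=0$ is trivial). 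For $x=v\in V_g$ one has $\bar{\mathscr{S}}(vy)=(-1)^{m+1}\rho(\mathscr{S}(vy))$ where by Remark~\ref{rem:anti-homo} $\mathscr{S}(vy)=\mathscr{S}(v_{(0)}y)$-type expansion... in the group setting $\Psi(v\otimes\mathscr{S}(y))$ with $v\in V_g$ is just $g\cdot\mathscr{S}(y)\otimes v$, so $\mathscr{S}(vy)=\nabla(\Psi(\mathscr{S}(v)\otimes\mathscr{S}(y)))=\mathscr{S}(y)'\cdot\mathscr{S}(v)$ with an appropriate group element acting, and applying $(-1)^{m+1}\rho$ and using $\rho(\mathscr{S}(v))=v$, $\rho(ab)=\rho(b)\rho(a)$ gives the claimed $\bar{\mathscr{S}}(v)(g\bar{\mathscr{S}}(y))=v\cdot(g\bar{\mathscr{S}}(y))$. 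This is exactly the computation in Liu's proof of \cite[Proposition~2.10]{liu} with the braiding replaced by its inverse, so beyond the base case I would simply cite that the remaining manipulations are the obvious modifications.
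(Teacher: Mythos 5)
Your proposal is correct in substance and follows essentially the same route as the paper: items (1)--(3) read off from the definitions, item (4) deduced from the braided anti-multiplicativity of the antipode (Remark~2.10 in the paper's numbering), item (5) by conjugating the anti-comultiplicativity of $\mathscr{S}$ through $\rho$ and the signs (the paper simply cites Liu here, with the standing convention that the modifications for the different braiding are routine), and item (6) from $\rho^2=1$ plus the observation that $\bar{\mathscr{S}}^2$ is an algebra endomorphism fixing $V$. Your treatment of (4) and (6) is actually more explicit than the paper's.

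One caveat worth flagging, because you state it as an identity: in your verification of item (1) you write that the reversed monomial has $\Gamma$-degree $g_m\cdots g_1$, ``which is exactly $(g_1\cdots g_m)^{-1}$.'' For a general group this is false --- $(g_1\cdots g_m)^{-1}=g_m^{-1}\cdots g_1^{-1}$, not $g_m\cdots g_1$ --- so the morphism property $\delta\circ\rho=(\mathscr{S}\otimes\rho)\circ\delta$, and with it the appearance of $g$ rather than $g^{-1}$ in the diagrammatic form of item (4), really requires every $\Gamma$-degree occurring in $V$ to be an involution. This hypothesis is not stated in the proposition, but it is satisfied in the only case the paper ever uses ($V=V_W$, where the degrees are the reflections $s_\alpha$) and is implicit in Liu's setting as well; so this is an imprecision inherited from the statement rather than a defect of your argument, but you should either add the hypothesis or restrict to $V_W$ before asserting that equality.
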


\begin{proof}

Item~\eqref{item:morphop},\eqref{item:identity},\eqref{item:rhoantialgebra} are obvious from the definition of the maps $\rho$ and $\bar{\mathscr{S}}$. Item~\eqref{item:sbaralebgraprop} corresponds to \cite[Proposition~2.10(a)]{liu} and can be deduced immediately from Remark~\ref{rem:anti-homo}. Item~\eqref{item:sbaranticolagebra} corresponds to \cite[Proposition~2.10(c)]{liu}. Ad~Item~\eqref{item:involution}. The statement that the map $\rho$ is an involution can be easily seen from the original definition or using induction on the $\mathbb{Z}_{\geq 0}$-degree and Item~\eqref{item:rhoantialgebra}. The statement that the map $\bar{\mathscr{S}}$ is an involution corresponds to \cite[Proposition~2.10(b)]{liu}. The statement that the antipode on $T(V)$ is invertible follows from the definition of $\bar{\mathscr{S}}$ and the fact that the maps $\rho$ and $\bar{\mathscr{S}}$ are invertible (as they are involutions as it was just explained).
\end{proof}

\begin{rem}
\label{rem:comp-sbar}

Let $V$ be a Yetter-Drinfeld $\Gamma$-module. Let $v_1,\ldots,v_m\in V$ be homogeneous elements of $\Gamma$-degrees $g_1,\ldots,g_m$ respectively. Then it follows immediately from Proposition~\ref{prop:rhoands}\eqref{item:identity},\eqref{item:sbaralebgraprop} that $\bar{\mathscr{S}}(v_1\cdots v_m)=v_1(g_1v_2)\cdots(g_1\cdots g_{m-2}v_{m-1})(g_1\cdots g_{m-1}v_m)$.

\end{rem}

\begin{rem}
\label{rem:degree01}

Let $V$ be a finite dimensional Yetter-Drinfeld $\Gamma$-module. Then, it follows immediately from Proposition~\ref{prop:rhoands}\eqref{item:morphop},\eqref{item:identity} that $\left<\phi,\rho(x)\right>=\left<\phi,\bar{\mathscr{S}}(x)\right>=\left<\phi,x\right>$ for all $x\in T(V)$ and for all homogeneous $\phi\in T(V^*)$ of $\mathbb{Z}_{\geq 0}$-degree less or equal than one. (By the explicit construction of the Hopf duality pairing between $T(V^*)$ and $T(V)$, note that all three terms of the previous equation are zero if $\phi$ is homogeneous of $\mathbb{Z}_{\geq 0}$-degree $>1$.)

\end{rem}

\begin{prop}[{\cite[Proposition~2.10(e)]{liu}}]
\label{prop:adjoint}

Let $V$ be a finite dimensional Yetter-Drinfeld $\Gamma$-module. Then the morphism $\rho$ and $\bar{\mathscr{S}}$ are adjoint with respect to the Hopf duality pairing in the sense that we have $\left<\phi,\bar{\mathscr{S}}(x)\right>=\left<\rho(\phi),x\right>$ and $\left<\bar{\mathscr{S}}(\phi),x\right>=\left<\phi,\rho(x)\right>$ for all $x\in T(V)$ and all $\phi\in T(V^*)$.

\end{prop}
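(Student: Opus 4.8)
The plan is to reduce both identities to a single statement in a fixed homogeneous degree and then induct on that degree. Since the Hopf duality pairing between $T(V^*)$ and $T(V)$ vanishes on $T(V^*)^m\otimes T(V)^{m'}$ whenever $m\neq m'$ (immediate from its explicit construction; compare Remark~\ref{rem:vanishing}), and since $\rho$ and $\bar{\mathscr{S}}$ are $\mathbb{Z}_{\geq 0}$-graded (Proposition~\ref{prop:rhoands}\eqref{item:morphop}), both sides of each identity vanish unless $\phi$ and $x$ lie in the same $\mathbb{Z}_{\geq 0}$-degree $m$, so we may take $\phi\in T(V^*)^m$ and $x\in T(V)^m$. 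It is then enough to prove the first identity $\left<\phi,\bar{\mathscr{S}}(x)\right>=\left<\rho(\phi),x\right>$ for every finite dimensional Yetter--Drinfeld $\Gamma$-module $V$: applying it to $V^*$ and using the canonical isomorphism $V^{**}\cong V$, under which the Hopf duality pairing between $T(V^{**})$ and $T(V^*)$ is $(x,\phi)\mapsto\left<\phi,x\right>$ (both pairings are determined by their degree-one restriction, and these agree after the flip) and under which $\rho$, $\bar{\mathscr{S}}$ on $T(V^{**})$ correspond to $\rho$, $\bar{\mathscr{S}}$ on $T(V)$, the first identity for $V^*$ becomes the second identity for $V$.

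I would establish $\left<\phi,\bar{\mathscr{S}}(x)\right>=\left<\rho(\phi),x\right>$ by induction on $m$. For $m\leq 1$ the maps $\rho$ and $\bar{\mathscr{S}}$ are the identity (Proposition~\ref{prop:rhoands}\eqref{item:identity}) and the pairing restricts to the evaluation pairing, so the claim is clear. For the inductive step, write $x=vy$ with $v\in V$ and $y\in T(V)^{m-1}$, and let $g\in\Gamma$ be the $\Gamma$-degree of $v$; then Proposition~\ref{prop:rhoands}\eqref{item:sbaralebgraprop} (cf.\ Remark~\ref{rem:comp-sbar}) gives $\bar{\mathscr{S}}(x)=v\bigl(g\,\bar{\mathscr{S}}(y)\bigr)$. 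Expanding $\left<\phi,v\bigl(g\,\bar{\mathscr{S}}(y)\bigr)\right>$ by the Hopf duality axiom $\left<\phi,ab\right>=\left<\phi_{(2)},a\right>\left<\phi_{(1)},b\right>$, and observing that $\left<\phi_{(2)},v\right>$ survives only when $\phi_{(2)}$ is in degree one, one singles out the part of $\Delta(\phi)$ whose second tensor leg lies in $V^*$; on that part one strips off the group element $g$ by $\Gamma$-equivariance of the pairing (Remark~\ref{rem:equivariant}), applies the induction hypothesis in degree $m-1$, and reinserts $g$ using that $\rho$ is a $\Gamma$-module map (Proposition~\ref{prop:rhoands}\eqref{item:morphop}). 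The result is to be compared with the parallel expansion of $\left<\rho(\phi),vy\right>$; the coalgebra behaviour of $\rho$ needed for the latter follows from Proposition~\ref{prop:rhoands}\eqref{item:rhoantialgebra},\eqref{item:sbaranticolagebra}, the identity $\left<\mathscr{S}(\phi),x\right>=\left<\phi,\mathscr{S}(x)\right>$, and the relation $\bar{\mathscr{S}}=(-1)^m\rho\circ\mathscr{S}$ in degree $m$. A term-by-term comparison then closes the induction.

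A more computational variant bypasses the induction: in degree $m$ the pairing is $\mathrm{ev}_{V^{\otimes m}}\circ(1\otimes[m]!_V)$, where $\mathrm{ev}_{V^{\otimes m}}$ pairs the $j$-th tensor leg of $T(V^*)^m$ with the $(m+1-j)$-th tensor leg of $T(V)^m$ (Convention~\ref{conv:dual}); precomposing with $\rho$ on the $T(V^*)$-side thus turns $\mathrm{ev}_{V^{\otimes m}}$ into the \enquote{diagonal} evaluation, while on the $T(V)$-side one has $\bar{\mathscr{S}}|_{T(V)^m}=\rho\circ\Psi_{\sigma_o}$ (compare Remark~\ref{rem:comp-sbar}), $\sigma_o\in\mathbb{S}_m$ being the longest permutation. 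By nondegeneracy of the diagonal evaluation the identity then reduces to an intertwining relation between $[m]!_V$, the reversal map $\rho$, and $\Psi_{\sigma_o}$, which follows from the adjointness of $\Psi_{V,V}$ and $\Psi_{V^*,V^*}$ with respect to $\mathrm{ev}_{V^{\otimes 2}}$ recorded in the construction of the pairing together with the braid relations. In either route, the step I expect to be the main obstacle is this very bookkeeping: tracking which coproduct components contribute and which group-algebra elements the braidings and $\Gamma$-equivariance produce, and checking that they all cancel. No idea beyond the formal properties of $\rho$ and $\bar{\mathscr{S}}$ collected in Proposition~\ref{prop:rhoands} is needed.
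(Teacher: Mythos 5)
Your reduction of the second identity to the first via $V^{**}=V$ is exactly the paper's, and restricting to a fixed homogeneous degree is fine. The gap is in the inductive step of your main route. After peeling off $x=vy$ and expanding $\left<\phi,v(g\bar{\mathscr{S}}(y))\right>$, you must compare the outcome with $\left<\rho(\phi)_{(2)},v\right>\left<\rho(\phi)_{(1)},y\right>$, and for that you need an explicit formula for $\Delta(\rho(\phi))$ in terms of $\Delta(\phi)$. That formula is \emph{not} $(\rho\otimes\rho)\circ\tau\circ\Delta$: already for $\phi=f_1f_2$ with $f_1,f_2\in V^*$ homogeneous of $\Gamma$-degrees $h_1,h_2$, the degree-$(1,1)$ component of $\Delta(f_2f_1)$ is $f_2\otimes f_1+(h_2f_1)\otimes f_2$, whereas $(\rho\otimes\rho)\circ\tau$ applied to the degree-$(1,1)$ component of $\Delta(f_1f_2)$ gives $f_2\otimes f_1+f_1\otimes(h_1f_2)$. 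So the anti-coalgebra identity for $\rho$ carries a nontrivial group twist; it is not among the properties recorded in Proposition~\ref{prop:rhoands}, and deriving it from $\bar{\mathscr{S}}=(-1)^m\rho\circ\mathscr{S}$ together with the coalgebra behaviour of $\bar{\mathscr{S}}$ and $\mathscr{S}$ is exactly the unverified bookkeeping you flag as ``the main obstacle.'' As written, the term-by-term comparison that is supposed to close the induction has not been done, and the same criticism applies to the symmetrizer variant, where the intertwining of $[m]!_V$ with $\rho$ and $\Psi_{\sigma_o}$ is asserted rather than proved.

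The fix --- and the paper's actual argument --- is to decompose $\phi$ rather than $x$. Write $\phi=f_1\cdots f_m$ with $f_i\in V^*$ and iterate $\left<\phi\psi,z\right>=\left<\phi,z_{(2)}\right>\left<\psi,z_{(1)}\right>$ to obtain $\left<\phi,\bar{\mathscr{S}}(x)\right>=\left<f_1,\bar{\mathscr{S}}(x)_{(m)}\right>\cdots\left<f_m,\bar{\mathscr{S}}(x)_{(1)}\right>$. The anti-coalgebra property of $\bar{\mathscr{S}}$ (Proposition~\ref{prop:rhoands}\eqref{item:sbaranticolagebra}) turns $\bar{\mathscr{S}}(x)_{(m+1-i)}$ into $\bar{\mathscr{S}}(x_{(i)})$, Remark~\ref{rem:degree01} lets you drop $\bar{\mathscr{S}}$ inside each degree-one pairing, and reassembling the (commuting) scalar factors in reverse order yields $\left<f_m\cdots f_1,x\right>=\left<\rho(\phi),x\right>$ by the anti-algebra property of $\rho$. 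This uses only facts already established, needs no coalgebra identity for $\rho$ and no induction, and is where I would steer your argument.
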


\begin{proof}

Let $x\in T(V)$ and $\phi\in T(V^*)$. We prove the first claimed equality. Under the identification $V^{**}=V$ (cf.~\cite[Proposition~9.3.2]{majid}), the second follows from the first applied to $V^*$. To this end, by linearity, we may assume that $\phi=f_1\cdots f_m$ for some $f_i\in V^*$. In view of Proposition~\ref{prop:rhoands}\eqref{item:rhoantialgebra},\eqref{item:sbaranticolagebra} and Remark~\ref{rem:degree01}, we compute
\begin{align*}
\left<\phi,\bar{\mathscr{S}}(x)\right>&=\left<f_1,\bar{\mathscr{S}}(x)_{(m)}\right>\cdots\left<f_m,\bar{\mathscr{S}}(x)_{(1)}\right>\\
&=\left<f_1,\bar{\mathscr{S}}(x_{(1)})\right>\cdots\left<f_m,\bar{\mathscr{S}}(x_{(m)})\right>\\
&=\left<f_m,x_{(m)}\right>\cdots\left<f_1,x_{(1)}\right>\\
&=\left<f_m\cdots f_1,x\right>=\left<\rho(\phi),x\right>\,.\qedhere
\end{align*}
\end{proof}


\begin{cor}
\label{cor:ideal}

Let $V$ be a finite dimensional Yetter-Drinfeld $\Gamma$-module. Let $I=\mathrm{Wor}(V)$ be the Woronowicz ideal of $V$. Then we have $\mathscr{S}(I)=\rho(I)=\bar{\mathscr{S}}(I)=I$. 

\end{cor}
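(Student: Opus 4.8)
The plan is to show the three equalities $\mathscr{S}(I)=I$, $\rho(I)=I$, and $\bar{\mathscr{S}}(I)=I$ by exploiting that each of $\mathscr{S},\rho,\bar{\mathscr{S}}$ is an involution (Proposition~\ref{prop:rhoands}\eqref{item:involution}), so that it suffices to prove one inclusion in each case, say $\mathscr{S}(I)\subseteq I$ and likewise for $\rho$ and $\bar{\mathscr{S}}$; applying the involution again then gives the reverse inclusion. Recall $I=\mathrm{Wor}(V)=\bigoplus_m\ker[m]!_V$, and that $I$ is characterized intrinsically as the radical of the Hopf duality pairing between $T(V^*)$ and $T(V)$, i.e.\ $x\in I$ if and only if $\left<\phi,x\right>=0$ for all $\phi\in T(V^*)$ (this is exactly the content of the explicit construction of the pairing via the braided factorial recalled in the subsection on the free braided group, together with nondegeneracy of the pairing on $\mathscr{B}(V^*)\otimes\mathscr{B}(V)$).

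First I would treat $\bar{\mathscr{S}}$. Let $x\in I$ and let $\phi\in T(V^*)$ be arbitrary. By Proposition~\ref{prop:adjoint} we have $\left<\phi,\bar{\mathscr{S}}(x)\right>=\left<\rho(\phi),x\right>$, and since $x\in I$ lies in the radical of the pairing, $\left<\rho(\phi),x\right>=0$. As $\phi$ was arbitrary, $\bar{\mathscr{S}}(x)$ lies in the radical, hence $\bar{\mathscr{S}}(x)\in I$; thus $\bar{\mathscr{S}}(I)\subseteq I$, and applying $\bar{\mathscr{S}}$ again gives $I\subseteq\bar{\mathscr{S}}(I)$, so $\bar{\mathscr{S}}(I)=I$. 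The same argument works verbatim for $\rho$: for $x\in I$ and $\phi\in T(V^*)$, the second adjunction formula of Proposition~\ref{prop:adjoint} gives $\left<\bar{\mathscr{S}}(\phi),x\right>=\left<\phi,\rho(x)\right>$, and the left side vanishes because $x\in I$, so $\rho(x)$ is in the radical, hence $\rho(I)\subseteq I$ and then $\rho(I)=I$ by involutivity.

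For $\mathscr{S}$, I would use that $I=\mathrm{Wor}(V)$ is a Hopf ideal in $T(V)$ (stated in the subsection on the Nichols algebra), so in particular it is stable under the antipode of the braided Hopf algebra $T(V)$, i.e.\ $\mathscr{S}(I)\subseteq I$; since $\mathscr{S}$ is invertible on $T(V)$ (Proposition~\ref{prop:rhoands}\eqref{item:involution}) with inverse also preserving the Hopf ideal structure, equality $\mathscr{S}(I)=I$ follows. Alternatively, and in the same spirit as the other two cases, one can argue directly from the pairing: the Hopf duality pairing satisfies $\left<\mathscr{S}(\phi),x\right>=\left<\phi,\mathscr{S}(x)\right>$, so for $x\in I$ the element $\mathscr{S}(x)$ pairs to zero with every $\mathscr{S}(\phi)$, and since $\mathscr{S}$ is surjective on $T(V^*)$ it pairs to zero with everything, giving $\mathscr{S}(x)\in I$.

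The only mild subtlety — and the step I would be most careful about — is making precise that $I$ is exactly the radical of the pairing on all of $T(V)$ (not just the statement that the induced pairing on $\mathscr{B}(V^*)\otimes\mathscr{B}(V)$ is nondegenerate), but this is immediate from Remark~\ref{rem:vanishing} together with the fact that the pairing factors through $\mathscr{B}(V^*)\otimes\mathscr{B}(V)$ and is nondegenerate there: $x\in T(V)$ pairs to zero with all of $T(V^*)$ precisely when its image in $\mathscr{B}(V)$ is zero, i.e.\ precisely when $x\in\mathrm{Wor}(V)=I$. With this identification in hand, all three equalities reduce to the one-line adjunction/antipode computations above.
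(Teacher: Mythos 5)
Your treatment of $\rho$ and $\bar{\mathscr{S}}$ is correct and is essentially the paper's argument: the adjunction formulas of Proposition~\ref{prop:adjoint} give $\rho(I)\subseteq I$ and $\bar{\mathscr{S}}(I)\subseteq I$ (your explicit identification of $I=\mathrm{Wor}(V)$ with the radical of the pairing on $T(V)$ is the right way to make ``immediate'' precise), and involutivity from Proposition~\ref{prop:rhoands}\eqref{item:involution} upgrades both inclusions to equalities.

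The gap is in the $\mathscr{S}$ case, and it sits exactly where the work is. Your general plan (``prove one inclusion, apply the involution to get the other'') does not apply here, because $\mathscr{S}$ is invertible but \emph{not} an involution; and both of your substitute arguments only reproduce the easy inclusion. That $I$ is a Hopf ideal gives $\mathscr{S}(I)\subseteq I$ by definition, but a Hopf ideal is not required to be stable under $\mathscr{S}^{-1}$, so the phrase ``with inverse also preserving the Hopf ideal structure'' is precisely the missing inclusion $\mathscr{S}^{-1}(I)\subseteq I$ asserted rather than proved. Your ``alternative'' via $\left<\mathscr{S}(\phi),x\right>=\left<\phi,\mathscr{S}(x)\right>$ likewise only yields $\mathscr{S}(x)\in I$ for $x\in I$, i.e.\ again $\mathscr{S}(I)\subseteq I$. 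The paper closes this by observing that the same pairing identity, after substituting $\mathscr{S}^{-1}(\phi)$ and $\mathscr{S}^{-1}(x)$, shows that $\mathscr{S}^{-1}$ is also self-adjoint; combined with your radical description of $I$ this gives $\mathscr{S}^{-1}(I)\subseteq I$ and hence $I\subseteq\mathscr{S}(I)$. Two other quick repairs within your framework: since $\mathscr{S}$ is a $\mathbb{Z}_{\geq 0}$-graded injective endomorphism and each graded piece $I^m\subseteq T(V)^m$ is finite dimensional, $\mathscr{S}(I^m)\subseteq I^m$ already forces $\mathscr{S}(I^m)=I^m$ by dimension count; or note that on homogeneous elements $\mathscr{S}=\pm\,\rho\circ\bar{\mathscr{S}}$ by the definition of $\bar{\mathscr{S}}$, so $\mathscr{S}(I)=\rho(\bar{\mathscr{S}}(I))=I$ follows from the two equalities you have already established.
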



\begin{proof}

From Proposition~\ref{prop:adjoint}, it is immediate that $\rho(I)\subseteq I$ and that $\bar{\mathscr{S}}(I)\subseteq I$. From Proposition~\ref{prop:rhoands}\eqref{item:involution}, we see that both inclusions are actually equalities. Since $I$ is a Hopf ideal, we know in advance that $\mathscr{S}(I)\subseteq I$. Since $\mathscr{S}$ is self-adjoint and invertible (cf. Proposition~\ref{prop:rhoands}\eqref{item:involution}), it is clear that $\mathscr{S}^{-1}$ is also self-adjoint. Hence, we find that $\mathscr{S}^{-1}(I)\subseteq I$ which gives the desired equality for the antipode.
\end{proof}

\begin{rem}

Let $V$ be as in the statement of Corollary~\ref{cor:ideal}. The antipode on $T(V)$ is actually adjoint to the antipode on $T(V^*)$ and similarly for their inverses. We used the term \enquote{self-adjoint} in a sloppy but suggestive way in the proof of Corollary~\ref{cor:ideal}.

\end{rem}

\begin{rem}
\label{rem:rhoandsonnichols}

Let $V$ and $I$ be as in the statement of Corollary~\ref{cor:ideal}. In view of Corollary~\ref{cor:ideal}, we see that the maps $\rho$ and $\bar{\mathscr{S}}$ pass from the free braided group $T(V)$ to the quotient $\mathscr{B}(V)$ simply by defining $x+I\mapsto\rho(x)+I$ and $x+I\mapsto\bar{\mathscr{S}}(x)+I$ where $x\in T(V)$. We denote the maps on the level of the quotient again by $\rho$ and $\bar{\mathscr{S}}$ and take care that no confusion arises. In particular, all the properties discussed so far for the maps $\rho$ and $\bar{\mathscr{S}}$ on the level of the free braided group (e.g.\ Proposition~\ref{prop:rhoands}) hold equally well on the level of the Nichols algebra. If we need one of the above properties of $\rho$ or $\bar{\mathscr{S}}$ on the level of the Nichols algebra, we will simply refer to one of the results explicitly stated on the level of the free braided group.

\end{rem}


\begin{defn}

Let $V$ be a finite dimensional Yetter-Drinfeld $\Gamma$-module. We define a left action of the algebra $\mathscr{B}(V)$ on $\mathscr{B}(V^*)$ by sending an element $x\in\mathscr{B}(V)$ to the $\mathbf{k}$-linear endomorphism $\overrightarrow{D}_x^\circ$ of $\mathscr{B}(V^*)$ defined by the equation $\overrightarrow{D}_x^\circ(\phi)=(\phi)\overleftarrow{D}_{\rho(x)}$ for all $\phi\in\mathscr{B}(V^*)$. This algebra action is well-defined in view of Proposition~\ref{prop:rhoands}\eqref{item:identity},\eqref{item:rhoantialgebra}. Similarly, we define a right action of the algebra $\mathscr{B}(V^*)$ on $\mathscr{B}(V)$ by sending an element $\xi\in\mathscr{B}(V^*)$ to the $\mathbf{k}$-linear endomorphism $\overleftarrow{D}_\xi^\circ$ of $\mathscr{B}(V)$ defined by the equation $(x)\overleftarrow{D}_\xi^\circ=\overrightarrow{D}_{\rho(\xi)}(x)$ for all $x\in\mathscr{B}(V)$. We call $\overrightarrow{D}_x^\circ$ and $\overleftarrow{D}_\xi^\circ$ where $x\in\mathscr{B}(V)$ and $\xi\in\mathscr{B}(V^*)$ opposite braided left and right partial derivatives.


\end{defn}

\begin{conv}
\label{conv:homothety}

Let $V$ be a Yetter-Drinfeld $\Gamma$-module. Via the antipode on $\mathbf{k}\Gamma$, the vector space $V$ is also naturally equipped with a right $\Gamma$-action.
Explicitly, we define $vg=g^{-1}v$ for all $g\in\Gamma$ and all $v\in V$. For a given $g\in\Gamma$, we will denote by the same symbol $g\in\operatorname{Aut}_{\mathbf{k}}(V)$ either the homothety $v\mapsto gv$ or the homothety $v\mapsto vg$. It will be clear from the context what is meant by $g$ -- either an element of $\Gamma$ or one of the two homotheties. It will also be clear which of the two homotheties is meant.

\end{conv}

\begin{rem}
\label{rem:conjugation}

Let $V$ be a finite dimensional Yetter-Drinfeld $\Gamma$-module. With the Convention~\ref{conv:homothety} in mind, it is easy to see that we have $g\overrightarrow{D}_\xi g^{-1}=\overrightarrow{D}_{g\xi}$ and $g\overleftarrow{D}_x g^{-1}=\overleftarrow{D}_{gx}$ for all $g\in\Gamma,x\in\mathscr{B}(V),\xi\in\mathscr{B}(V^*)$. The equalities are meant to be as $\mathbf{k}$-linear endomorphisms of $\mathscr{B}(V)$ respectively of $\mathscr{B}(V^*)$. A similar remark applies to the opposite braided left and right partial derivatives. We have $g\overrightarrow{D}_x^\circ g^{-1}=\overrightarrow{D}_{gx}^\circ$ and $g\overleftarrow{D}_\xi^\circ g^{-1}=\overleftarrow{D}_{g\xi}^\circ$ for all $g\in\Gamma,x\in\mathscr{B}(V),\xi\in\mathscr{B}(V^*)$. To see the claims in this remark, we use Proposition~\ref{prop:rhoands}\eqref{item:morphop} and Remark~\ref{rem:equivariant}.

\end{rem}

\begin{rem}[Elaboration of Remark~\ref{rem:degree-partial}]
\label{rem:elab}

Let $V$ be a finite dimensional Yetter-Drinfeld $\Gamma$-module. Let $\xi\in\mathscr{B}(V^*)$ be homogeneous of 
$\Gamma$-degree $g$. Then, the operator $\overrightarrow{D}_\xi$ 
multiplies the $\Gamma$-degree of homogeneous elements of $\mathscr{B}(V)$ by $g$ from the left. Moreover, by Proposition~\ref{prop:rhoands}\eqref{item:morphop}, the operator $\overleftarrow{D}_\xi^\circ$ 
multiplies the $\Gamma$-degree of homogeneous elements of $\mathscr{B}(V)$ by $g^{-1}$ from the left. Similarly, let $x\in\mathscr{B}(V)$ be homogeneous of 
$\Gamma$-degree $g$. Then, the operator $\overleftarrow{D}_x$ 
multiplies the $\Gamma$-degree of homogeneous elements of $\mathscr{B}(V^*)$ by $g$ from the right. Moreover, by proposition loc.\ cit., the operator $\overrightarrow{D}_x^\circ$
multiplies the $\Gamma$-degree of homogeneous elements of $\mathscr{B}(V^*)$ by $g^{-1}$ from the right.

\end{rem}

\begin{rem}[Braided Leibniz rule over $\Gamma$]
\label{rem:braided-leibniz}

Let $V$ be a finite dimensional Yetter-Drinfeld $\Gamma$-module. Let $v\in V$ be homogeneous of $\Gamma$-degree $g$ (where $V$ is considered as $\mathbb{Z}_{\geq 0}$-degree one component of $\mathscr{B}(V)$). In view of Convention~\ref{conv:homothety}, the braided Leibniz rule now takes the form
\[
(\phi\psi)\overleftarrow{D}_v=\phi((\psi)\overleftarrow{D}_v)+((\phi)\overleftarrow{D}_v)(\psi g)
\]
for all $\phi,\psi\in\mathscr{B}(V^*)$.


\end{rem}

\todo[inline,color=green]{At this point, I should exemplify the braided Leibniz rule over $\Gamma$ only for the braided right partial derivatives (something else does not make sens, in particular not for the opposite left partial derivative because $g$ is not necessarily an involution) [Remark~\ref{rem:braided-leibniz}], and explain what structure $\protect\overrightarrow{D}_\xi$ or $\protect\overleftarrow{D}_x$ for homogeneous elements $x,\xi$ has, i.e.\ $\Gamma$-degree gets multiplied by $g=\mathrm{deg}(\bullet)$ and $\mathbb{Z}_{\geq 0}$-degree gets subtracted by $m$ (which is true in general, so, what I mention here holds in addition over $\Gamma$, and the $m$-subtraction is already explained above) [Remark~\ref{rem:degree-partial} and~\ref{rem:elab}]. I don't need to say anything about the circle action as this becomes obvious from the other remarks (to be decided!: I decided to spell out at least the part in this section, when the opposite version is already introduced, the $m$-subtraction is left, cf. remark after Definition~\ref{def:braided-skew}). Action by $\mathbf{k}$-linear endomorphisms stays the right framework (even in the subsection on the \enquote{Braided factorial}, cf. commented passage there).}

\section{The Nichols algebra associated to a finite Coxeter group}
\label{sec:bazlov}

In this section, we recall the construction of the Yetter-Drinfeld module and the Nichols algebra $\mathscr{B}_W$ associated to a finite Coxeter group $W$ which are due to Bazlov \cite{bazlov1}. We also recall the embedding of the nilCoxeter algebra into the Nichols algebra $\mathscr{B}_W$ which correspond to a result of \cite[Theorem~6.3]{bazlov1}. This will be the framework in which we work from the next section on and for the rest of this article. 

From now on and throughout the discussion, we fix a finite Coxeter system $(W,S)$ with all of the notation attached to this situation as in Subsection~\ref{subsec:coxeter}.

\subsection{The Yetter-Drinfeld module associated to a finite Coxeter group \texorpdfstring{{\normalfont(\cite[Subsection~4.2]{bazlov1})}}{([\ref{bib-bazlov1},~Subsection~4.2])}}

Let $[\alpha]$ where $\alpha\in R$ be a collection of linearly independent symbols. We define a $\mathbf{k}$-vector space $V_W$ by the equation 
$$
V_W=\bigoplus_{\alpha\in R}\mathbf{k}[\alpha]\bigg/\mathrm{span}_{\mathbf{k}}\{[\alpha]+[-\alpha]\mid\alpha\in R\}\,.
$$
For all $\alpha\in R$, let $x_\alpha$ be the image of $[\alpha]$ in $V_W$. We clearly have $x_{-\alpha}=-x_{\alpha}$ for all $\alpha\in R$. It is also clear that the collection $x_\alpha$ where $\alpha\in R^+$ forms a basis of $V_W$. We introduce a $W$-action $V_W$ by defining $wx_\alpha=x_{w(\alpha)}$ for all $w\in W$ and all $\alpha\in R$. We introduce a $W$-grading on $V_W$ by assigning for all $\alpha\in R$ to $x_\alpha$ the $W$-degree $s_\alpha$. Both the $W$-action as well as the $W$-grading are clearly well-defined. Moreover, they satisfy the compatibility condition. In this way, the vector space $V_W$ becomes a Yetter-Drinfeld $W$-module. We call $V_W$ the Yetter-Drinfeld module associated to the finite Coxeter group $W$.

\subsection{Identification \texorpdfstring{{\normalfont(\cite[Subsection~4.4]{bazlov1})}}{([\ref{bib-bazlov1},~Subsection~4.4])}}

In case of the Yetter-Drinfeld $W$-module $V_W$, we have a canonical choice of a homogeneous basis, i.e.\ of a basis consisting of homogeneous vectors, namely $x_\alpha$ where $\alpha\in R^+$. This choice of a basis induces a canonical isomorphism $V_W\stackrel{\sim}{\to} V_W^*$ of Yetter-Drinfeld $W$-modules by sending for each $\alpha\in R^+$ the basis vector $x_\alpha$ to its natural dual $x_\alpha^*$. As in \cite[Subsection~4.4]{bazlov1}, we will from now on identify $V_W^*$ with $V_W$ via the inverse of this isomorphism. Under this identification, $\Psi_{V_W^*,V_W^*}$ becomes identified with $\Psi_{V_W,V_W}$, $\mathrm{Wor}(V_W^*)$ with $\mathrm{Wor}(V_W)$, and $\mathscr{B}(V_W^*)$ with $\mathscr{B}(V_W)$. We simply write $\mathscr{B}_W=\mathscr{B}(V_W)=\mathscr{B}(V_W^*)$ and call $\mathscr{B}_W$ the Nichols algebra associated to the finite Coxeter group $W$. This identification makes the Hopf duality pairing between $\mathscr{B}(V_W^*)$ and $\mathscr{B}(V_W)$ into a nondegenerate symmetric bilinear form $\mathscr{B}_W\otimes\mathscr{B}_W\to\mathbf{k}$ whose restriction to $V_W\otimes V_W$ is given by $x_\alpha\otimes x_{\alpha'}\mapsto\delta_{\alpha,\alpha'}$ for all $\alpha,\alpha'\in R^+$.

\begin{rem}[{\cite[Subsubsection~2.3.3]{liu}}]
\label{rem:left-right-simult}

Using the coassociativity in $\mathscr{B}_W$, it is easy to see that we have
$$
(\overrightarrow{D}_\xi(\phi))\overleftarrow{D}_x=\overleftarrow{D}_\xi((\phi)\overrightarrow{D}_x)=\left<\xi,\phi_{(1)}\right>\phi_{(2)}\left<\phi_{(3)},x\right>
$$
for all $x,\phi,\xi\in\mathscr{B}_W$. This equality can be rephrased by saying that the left and right action of $\mathscr{B}_W$ on itself via braided partial derivatives are compatible. Hence, we can simply write the above displayed expression as $\overrightarrow{D}_\xi(\phi)\overleftarrow{D}_x$. A similar remark applies to the actions via opposite braided left and right partial derivatives. We can simply write $\overrightarrow{D}_x^\circ(\phi)\overleftarrow{D}_\xi^\circ$ for all $x,\phi,\xi\in\mathscr{B}_W$. 

\end{rem}

\begin{prop}[{\cite[Proposition~2.10(d)]{liu}}]
\label{prop:liu2.10(d)}

We have the equation $\overrightarrow{D}_\xi(\bar{\mathscr{S}}(x))=\bar{\mathscr{S}}((x)\overleftarrow{D}_{\rho(\xi)})$ for all $x,\xi\in\mathscr{B}_W$.

\end{prop}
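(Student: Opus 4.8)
The plan is to verify the identity $\overrightarrow{D}_\xi(\bar{\mathscr{S}}(x))=\bar{\mathscr{S}}((x)\overleftarrow{D}_{\rho(\xi)})$ by unwinding both sides using the defining formulas for the braided partial derivatives together with the anti-coalgebra property of $\bar{\mathscr{S}}$ (Proposition~\ref{prop:rhoands}\eqref{item:sbaranticolagebra}) and the adjointness of $\rho$ and $\bar{\mathscr{S}}$ with respect to the Hopf duality pairing (Proposition~\ref{prop:adjoint}). Both sides are $\mathbf{k}$-linear in $x$ and $\xi$, so it suffices to treat homogeneous elements; and since all the ingredients descend from $T(V_W)$ to $\mathscr{B}_W$ (Remark~\ref{rem:rhoandsonnichols}), there is no loss in carrying out the computation in $\mathscr{B}_W$ directly.

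First I would expand the left-hand side. By definition $\overrightarrow{D}_\xi(y)=\langle\xi,y_{(1)}\rangle y_{(2)}$, so with $y=\bar{\mathscr{S}}(x)$ we get $\overrightarrow{D}_\xi(\bar{\mathscr{S}}(x))=\langle\xi,\bar{\mathscr{S}}(x)_{(1)}\rangle\,\bar{\mathscr{S}}(x)_{(2)}$. Now apply Proposition~\ref{prop:rhoands}\eqref{item:sbaranticolagebra}, which says $\bar{\mathscr{S}}(x)_{(1)}\otimes\bar{\mathscr{S}}(x)_{(2)}=\bar{\mathscr{S}}(x_{(2)})\otimes\bar{\mathscr{S}}(x_{(1)})$, to rewrite this as $\langle\xi,\bar{\mathscr{S}}(x_{(2)})\rangle\,\bar{\mathscr{S}}(x_{(1)})$. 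Next expand the right-hand side: $(x)\overleftarrow{D}_{\rho(\xi)}=x_{(1)}\langle x_{(2)},\rho(\xi)\rangle$, so $\bar{\mathscr{S}}((x)\overleftarrow{D}_{\rho(\xi)})=\bar{\mathscr{S}}(x_{(1)})\,\langle x_{(2)},\rho(\xi)\rangle$. Comparing the two expressions, the identity reduces to the claim that the scalars $\langle\xi,\bar{\mathscr{S}}(x_{(2)})\rangle$ and $\langle x_{(2)},\rho(\xi)\rangle$ agree (for each term in the Sweedler sum, after pairing against a dual functional to separate the $x_{(1)}$-components, or more cleanly by observing both sides are elements of $\mathscr{B}_W$ and applying $\epsilon$ appropriately); and this is exactly the second adjointness relation $\langle\phi,\rho(y)\rangle=\langle\bar{\mathscr{S}}(\phi),y\rangle$ from Proposition~\ref{prop:adjoint}, rearranged via the self-symmetry of the pairing on $\mathscr{B}_W$. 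Here one uses that under the identification of Subsection~4.4 the Hopf duality pairing is symmetric, so $\langle x_{(2)},\rho(\xi)\rangle=\langle\rho(\xi),x_{(2)}\rangle=\langle\xi,\bar{\mathscr{S}}(x_{(2)})\rangle$ by Proposition~\ref{prop:adjoint}.

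To make the bookkeeping rigorous rather than merely termwise-plausible, I would pair both sides of the desired identity against an arbitrary $\phi\in\mathscr{B}_W$ and use the adjointness of $\overrightarrow{D}_\xi$ with multiplication by $\xi$ (equation \eqref{eq:pairing-derivative} and the displayed formulas preceding it), the adjointness of $\overleftarrow{D}_{\rho(\xi)}$ with left multiplication by $\rho(\xi)$, together with Proposition~\ref{prop:adjoint}; since the pairing on $\mathscr{B}_W$ is nondegenerate, equality of all such pairings yields the identity. Concretely: $\langle\phi,\overrightarrow{D}_\xi(\bar{\mathscr{S}}(x))\rangle=\langle\phi\xi,\bar{\mathscr{S}}(x)\rangle=\langle\rho(\phi\xi),x\rangle=\langle\rho(\xi)\rho(\phi),x\rangle$ using Proposition~\ref{prop:rhoands}\eqref{item:rhoantialgebra}, while $\langle\phi,\bar{\mathscr{S}}((x)\overleftarrow{D}_{\rho(\xi)})\rangle=\langle\rho(\phi),(x)\overleftarrow{D}_{\rho(\xi)}\rangle=\langle\rho(\xi)\rho(\phi),x\rangle$, and the two coincide.

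The main obstacle is mostly notational rather than conceptual: one must keep careful track of the order-reversal built into $\bar{\mathscr{S}}$, $\rho$, and the anti-coalgebra identity, and be consistent about left versus right actions and the identification $V_W\cong V_W^*$ from Subsection~4.4 under which the pairing becomes symmetric. Provided one consistently uses the ``pair against $\phi$ and invoke nondegeneracy'' strategy, the proof is a short three-line computation; the only real care needed is that Proposition~\ref{prop:adjoint} and Proposition~\ref{prop:rhoands} were stated on $T(V)$ and $T(V^*)$ but apply on $\mathscr{B}_W$ by Remark~\ref{rem:rhoandsonnichols}, and that the pairing used is the (nondegenerate, symmetric) one on $\mathscr{B}_W$ from Subsection~4.4.
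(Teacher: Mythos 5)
Your first computation is exactly the paper's proof: expand $\overrightarrow{D}_\xi(\bar{\mathscr{S}}(x))$, apply the anti-coalgebra identity for $\bar{\mathscr{S}}$ (Proposition~\ref{prop:rhoands}\eqref{item:sbaranticolagebra}), then Proposition~\ref{prop:adjoint} together with the symmetry of the pairing to land on $\bar{\mathscr{S}}(x_{(1)})\left<x_{(2)},\rho(\xi)\right>=\bar{\mathscr{S}}((x)\overleftarrow{D}_{\rho(\xi)})$. The hedging in your parenthetical is unnecessary — the scalar substitution is just the equality of the linear maps $\langle\xi,\bar{\mathscr{S}}(-)\rangle$ and $\langle\rho(\xi),-\rangle$ applied to the second tensor leg of $\Delta(x)$ — and your backup argument via nondegeneracy and the adjointness of derivatives to multiplication is a valid, mildly different route, but not needed.
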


\begin{proof}

Let $x,\xi\in\mathscr{B}_W$. Bearing the symmetry of the Hopf duality paring $\mathscr{B}_W\otimes\mathscr{B}_W\to\mathbf{k}$ in mind, we easily compute using Proposition~\ref{prop:rhoands}\eqref{item:sbaranticolagebra} and Proposition~\ref{prop:adjoint} that
\begin{align*}
\overrightarrow{D}_\xi(\bar{\mathscr{S}}(x))&=\left<\xi,\bar{\mathscr{S}}(x)_{(1)}\right>\bar{\mathscr{S}}(x)_{(2)}\\
&=\left<\xi,\bar{\mathscr{S}}(x_{(2)})\right>\bar{\mathscr{S}}(x_{(1)})\\
&=\left<\rho(\xi),x_{(2)}\right>\bar{\mathscr{S}}(x_{(1)})\\
&=\bar{\mathscr{S}}(x_{(1)})\left<x_{(2)},\rho(\xi)\right>=\bar{\mathscr{S}}((x)\overleftarrow{D}_{\rho(\xi)})\,.\qedhere
\end{align*}
\end{proof}


\begin{rem}
\label{rem:liu2.10(d)}

We can state the result of Proposition~\ref{prop:liu2.10(d)} more compactly. For all $\xi\in\mathscr{B}_W$, we have $\overrightarrow{D}_\xi\circ\bar{\mathscr{S}}=\bar{\mathscr{S}}\circ\overrightarrow{D}_\xi^\circ$ as $\mathbf{k}$-linear endomorphisms of $\mathscr{B}_W$. By Proposition~\ref{prop:rhoands}\eqref{item:involution} we can equivalently say: For all $\xi\in\mathscr{B}_W$, we have $\bar{\mathscr{S}}\circ\overrightarrow{D}_\xi=\overrightarrow{D}_\xi^\circ\circ\bar{\mathscr{S}}$ as $\mathbf{k}$-linear endomorphisms of $\mathscr{B}_W$.

\end{rem}

\todo[inline,color=green]{Let $\alpha\in R$. Then we have $x_\alpha^2=0$ in $\mathscr{B}_W$. Indeed, it suffices to show that the element $x_\alpha\otimes x_\alpha\in V_W^{\otimes 2}$ lies in $\mathrm{Wor}(V_W)$, that is to say lies in $\mathrm{ker}([2]!_{V_W})$. To this end, we compute
\[
[2]!_{V_W}(x_\alpha\otimes x_\alpha)=(1+\Psi)(x_\alpha\otimes x_\alpha)=x_\alpha\otimes x_\alpha+x_{s_\alpha(\alpha)}\otimes x_\alpha=0
\]
where the last equality follows since $x_{s_\alpha(\alpha)}=x_{-\alpha}=-x_\alpha$. [First try for Example~\ref{ex:relations}.]}

\begin{ex}
\label{ex:relations}

We give examples of some simple relations which hold in $\mathscr{B}_W$. Let $\alpha,\gamma,\delta\in R$ be such that $\alpha$ and $\gamma$ are orthogonal. Then we have $x_\delta^2=0$ and $x_\alpha x_\gamma=x_\gamma x_\alpha$. Indeed, a simple computation shows that $x_\delta\otimes x_\delta$ and $x_\alpha\otimes x_\gamma-x_\gamma\otimes x_\alpha$ considered as elements of $V_W^{\otimes 2}$ lie in the kernel of $[2]!=1+\Psi$. Similarly, one can check the Fomin-Kirillov relation in $\mathscr{B}_W$ (i.e.\ the nontrivial defining relation of the Fomin-Kirillov algebra, cf.\ \cite[Proposition~2.1(d), Subsection~2.3]{liu}, \cite[Section~6, Equation~(ii)]{kirillov} or \cite[Definition~2.1, Equation~(2.2)]{quadratic}). Indeed, let $\alpha,\gamma\in R$ be such that $\alpha$ and $\gamma$ span a root subsystem of $R$ of type $\mathsf{A}_2$ with base $\{\alpha,\gamma\}$. Then we have
\[
x_\alpha x_{\alpha+\gamma}+x_{\alpha+\gamma}x_\gamma-x_\gamma x_\alpha=0\,.
\]

\end{ex}

\todo[inline,color=green]{In the previous example, I am supposed to add further references for the Fomin-Kirillov relations. One of them might be \cite{kirillov}. Another one is yet to be inserted as soon I really need it as background material. (The one I call \enquote{quadratic-algebras} in my file system.)}

\subsection{NilCoxeter algebra} 

From now on and for the rest of the article, we specialize to the case where $\mathbf{k}=\mathbb{C}$. We do so because the results of \cite{bazlov1} which we need to use are formulated in this situation and we want to match up with the references. Strictly speaking, many of the results in \cite{bazlov1} (except the embedding of the coinvariant algebra of $W$ into $\mathscr{B}_W$ which we recall in Section~\ref{sec:passage}) and thus our considerations based on them stay valid for an arbitrary field. 
%
We now recall the definition and basic facts of the nilCoxeter algebra and its relation to the Nichols algebra associated to a finite Coxeter group after Bazlov. For more informations on the nilCoxeter algebra, its original motivation and its relation to Schubert polynomials via the \enquote{Schubert expression} in type $\mathsf{A}$, we refer the reader to \cite{nilcoxeter}.

Let $\mathscr{N}_W$ be the $\mathbb{C}$-algebra with one generator $e_\beta$ for each simple root $\beta$ subject to the relations $e_\beta^2=0$ and
\[
\underset{m(\beta,\beta')\text{ factors on each side}}{\underbrace{e_\beta e_{\beta'}e_\beta\cdots}=\underbrace{e_{\beta'}e_\beta e_{\beta'}\cdots}}
\]
for all $\beta,\beta'\in\Delta$. The algebra $\mathscr{N}_W$ is called the nilCoxeter algebra of $W$ and the relations among the generators of this algebra are called the nilCoxeter relations. Let $w$ be a Coxeter group element with reduced expression $s_{\beta_1}\cdots s_{\beta_\ell}$. Then we define $e_w=e_{\beta_1}\cdots e_{\beta_\ell}$. This element is well-defined, i.e.\ independent of the choice of the reduced expression of $w$, because of the nilCoxeter relations. The elements $e_w$ where $w$ runs through $W$ form a basis of $\mathscr{N}_W$ as a $\mathbb{C}$-vector space. The multiplication table with respect to this basis is given by $e_v e_w=e_{vw}$ if $\ell(vw)=\ell(v)+\ell(w)$ and $=0$ otherwise. By assigning to each vector $e_w$ the $\mathbb{Z}_{\geq 0}$-degree $\ell(w)$ and the $W$-degree $w$, the algebra $\mathscr{N}_W$ becomes a graded\footnote{Away from the Yetter-Drinfeld philosophy, graded means in this context graded with respect to $\mathbb{Z}_{\geq 0}$ and with respect to $W$. Similarly, as in Convention~\ref{conv:homogeneous}, we also call an element of $\mathscr{N}_W$ homogeneous if it is homogeneous with respect to the $\mathbb{Z}_{\geq 0}$-grading and with respect to the $W$-grading.\label{note:graded}} algebra with homogeneous basis $e_w$ where $w$ runs through $W$. In particular, the dimension of $\mathscr{N}_W$ is given by $|W|$. By \cite[Theorem~6.3(ii)]{bazlov1}, there exists a well-defined injective graded\cref{note:graded} algebra homomorphism $\mathscr{N}_W\hookrightarrow\mathscr{B}_W$ given on generators by $e_\beta\mapsto x_\beta$ where $\beta\in\Delta$, and extended linearly and multiplicatively. We denote the image of this homomorphism by $\tilde{\mathscr{N}}_W$. Also, we denote the image of $e_w$ in $\tilde{\mathscr{N}}_W$ by $x_w$ where $w\in W$. With this notation, the vectors $x_w$ where $w$ runs through $W$ become a homogeneous basis of $\tilde{\mathscr{N}}_W$. 

\begin{rem}

Directly by the definition of $\rho$ or by Proposition~\ref{prop:rhoands}\eqref{item:identity},\eqref{item:rhoantialgebra}, we see that $\rho(x_w)=x_{w^{-1}}$ for all $w\in W$. In particular, this implies that $\rho(\tilde{\mathscr{N}}_W)=\tilde{\mathscr{N}}_W$ by Proposition~\ref{prop:rhoands}\eqref{item:involution}. We will use this trivial observation repeatedly from now on (without referencing).

\end{rem}

\todo[inline,color=green]{$x_w$ forms a basis of the subalgebra of $\mathscr{B}_W$ isomorphic to $\mathscr{N}_W$ where $w$ runs through $W$, nilCoxeter relations. Note that $\rho$ restricts to an endomorphism of $\mathscr{N}_W$ which acts via inversion on the basis. [It is clear that $\rho|_{\tilde{\mathscr{N}}_W}\colon\tilde{\mathscr{N}}_W\to\tilde{\mathscr{N}}_W$ is a $\mathbb{Z}_{\geq 0}$-graded isomorphism of $\mathbb{C}$-vector spaces. It is \enquote{anti algebraic}. None of this is needed anywhere in the text. It is actually clear from Proposition~\ref{prop:rhoands} -- + $W$-degree inversion.] (This last fact is used without reference, since it is basic, especially in the section on \enquote{braided skew partial derivatives}. I also use the basis $\{x_w\}_{w\in W}$ of $\tilde{\mathscr{N}}_W$ repeatedly without reference.)}

\section{Braided skew partial derivatives}
\label{sec:skew}

In this section, we introduce the elements $x_{w/v}$ of $\mathscr{B}_W$ where $v,w\in W$ following the coproduct approach in \cite{liu} (Definition~\ref{def:xwv}). The braided partial derivatives induced by the elements $x_{w/v}$ are called braided skew partial derivatives (cf.\ Definition~\ref{def:braided-skew}). As we will see in Section~\ref{sec:passage}, these braided skew partial derivatives are a generalization of skew divided difference operators (as in the introduction, Definition~\ref{def:partialwv}) in the sense that their restrictions to $S_W$ (to be more precise: the restriction of the left version induced by the circle variant of $x_{w/v}$, cf.\ Notation~\ref{not:xwvcirc} and~\ref{not:dwv}) coincide. We investigate the basic properties of braided skew partial derivatives, e.g.\ the generalized braided Leibniz rule (Theorem~\ref{thm:leibniz}). Our list of properties is inspired by well-known classical properties of skew divided difference operators as they are presented for example in \cite[Chapter 2]{macdonald1991notes}. 

Let $\alpha_1,\ldots,\alpha_m\in R^+$. On the level of the tensor algebra $T(V_W)$, it easy to see that
$$
\Delta(x_{\alpha_1}\cdots x_{\alpha_m})\in\bigoplus_{1\leq i_1<\cdots<i_\ell\leq m}T(V_W)\otimes\mathbb{C}x_{\alpha_{i_1}}\cdots x_{\alpha_{i_\ell}}\,.
$$
This can be seen for example by induction on $m$. For the induction step where $m>0$, we expand 
$$
\Delta(x_{\alpha_1}\cdots x_{\alpha_m})=\Delta(x_{\alpha_1})\Delta(x_{\alpha_2}\cdots x_{\alpha_m})
$$ 
using the definition of the comultiplication restricted to $V_W$ (a vector space which consists of primitive elements of $T(V_W)$), the induction hypothesis applied to the second factor and the twisted multiplication in $T(V_W)\underline{\otimes}T(V_W)$.
Let $w\in W$. Let $s_{\beta_1}\cdots s_{\beta_m}$ be a reduced expression of $w$. If we apply the above expansion of the coproduct to the sequence of simple roots $\beta_1,\ldots,\beta_m$ and pass to the quotient $\mathscr{B}_W$, we find in view of the characterization of the Bruhat order in terms of reduced subexpressions (cf. \cite[Corollary~5.8(b), Theorem~5.10]{humphreys-coxeter}, note that all subexpressions $1\leq i_1<\cdots<i_\ell\leq m$ which do not result in a reduced expression $s_{\beta_{i_1}}\cdots s_{\beta_{i_\ell}}$ lead to a vanishing term $x_{\beta_{i_1}}\cdots x_{\beta_{i_\ell}}$ because of the nilCoxeter relations) that
$$
\Delta(x_w)\in\bigoplus_{v\leq w}\mathscr{B}_W\otimes\mathbb{C}x_v\,.
$$

\begin{defn}[{\cite[Proposition~2.7]{liu}}]
\label{def:xwv}

Let $w\in W$. Thanks to the preceding discussion, we can define for all $v\in W$ such that $v\leq w$ uniquely determined elements $x_{w/v}\in\mathscr{B}_W$ which satisfy $\Delta(w)=\sum_{v\leq w}x_{w/v}\otimes x_v$. Furthermore, for all $v\in W$ such that $v\not\leq w$, we define $x_{w/v}=0$.

\end{defn}

\begin{rem}
\label{rem:degree-xwv}

Let $v,w\in W$ be such that $v\leq w$. Since $\Delta\colon\mathscr{B}_W\to\mathscr{B}_W\underline{\otimes}\mathscr{B}_W$ is a $\mathbb{Z}_{\geq 0}$-graded algebra morphism in the Yetter-Drinfeld category over $W$,
it follows immediately from the above definition that $x_{w/v}$ is a homogeneous element of $\mathbb{Z}_{\geq 0}$-degree $\ell(v,w)$ and of $W$-degree $wv^{-1}$. To see this, just note that for each $u\in W$, the element $x_u$ is homogeneous of $\mathbb{Z}_{\geq 0}$-degree $\ell(u)$ and $W$-degree $u$.

\end{rem}

\begin{rem}
\label{rem:coproduct-xwv}

From the coassociativity of the comultiplication in $\mathscr{B}_W$, it is easy to see that $\Delta(x_{w/v})=\sum_{v\leq u\leq w}x_{w/u}\otimes x_{u/v}$ for all $v,w\in W$ such that $v\leq w$.


\end{rem}

\begin{ex}[{\cite[Chapter~2, Example~1,~2]{macdonald1991notes}}]
\label{ex:top-low-degree-xw}

Let $w\in W$. It follows directly from the definition that we have $x_{w/1}=x_w$ and $x_{w/w}=1$.

\end{ex}

\todo[inline,color=green]{What follows in this subsection should be revised. In particular, I think the remark on the superfluous notation should come directly after the notation itself (Notation~\ref{not:dwv}). [No superfluous notation anymore.] It is unclear what notation will be actually needed in the end. For example, one might think about $\protect\overleftarrow{D}_\alpha^\circ$ as well and similarly for $\protect\overrightarrow{D}_\alpha^\circ$. Although these are morally the same as already introduced.}

\begin{notation}
\label{not:xwvcirc}

For all $v,w\in W$, we write $x_{w/v}^\circ=\rho(x_{w^{-1}/v^{-1}})$. This \enquote{circle variant} of $x_{w/v}$ will occur every now and then in our treatment. 

\end{notation}

\begin{notation}
\label{not:dwv}

At this point, it is useful to introduce some new notation. For all $\alpha\in R$, we set $\overrightarrow{D}_\alpha=\overrightarrow{D}_{x_\alpha},\overleftarrow{D}_\alpha=\overleftarrow{D}_{x_\alpha},\overrightarrow{D}_\alpha^\circ=\overrightarrow{D}_{x_\alpha}^\circ,\overleftarrow{D}_\alpha^\circ=\overleftarrow{D}_{x_\alpha}^\circ$. For all $w\in W$, we set $\overrightarrow{D}_w=\overrightarrow{D}_{x_w},\overleftarrow{D}_w=\overleftarrow{D}_{x_w},\overrightarrow{D}_w^\circ=\overrightarrow{D}_{x_w}^\circ,\overleftarrow{D}_w^\circ=\overleftarrow{D}_{x_w}^\circ$. For all $v,w\in W$, we set $\overleftarrow{D}_{w/v}=\overleftarrow{D}_{x_{w/v}}$ and $\overrightarrow{D}_{w/v}^\circ=\overrightarrow{D}_{x_{w/v}^\circ}^\circ$.

\end{notation}





\begin{defn}
\label{def:braided-skew}

Let $v,w\in W$. We call the operators $\overrightarrow{D}_{w/v}^\circ$ and $\overleftarrow{D}_{w/v}$ braided left and right skew partial derivative. 


\end{defn}

\begin{rem}

Let $v,w\in W$ be such that $v\leq w$. If we extend the order terminology from Remark~\ref{rem:degree-partial} in the obvious way to opposite braided partial derivatives, then the order of $\overrightarrow{D}_{w/v}^\circ$ and $\overleftarrow{D}_{w/v}$ is in both cases $\ell(v,w)$. From Remark~\ref{rem:elab} and~\ref{rem:degree-xwv}, one can also easily infer how 
braided skew partial derivatives
modify the $W$-degree of homogeneous elements of $\mathscr{B}_W$.

\end{rem}

\todo[inline,color=green]{Even in the previous definition, I am supposed to give a name to $\protect\overrightarrow{D}_{w/v}^\circ$ and to differentiate between braided left and right skew partial derivatives.}






\begin{lem}
\label{lem:prelim-exp}

Let $w\in W$. Let $\beta\in\Delta$ be such that $s_\beta w<w$. Let $w'=s_\beta w$ for brevity. Then we have
\[
\Delta(x_w)=\sum_{\substack{v'\leq w'\\ s_\beta v'>v'}}x_\beta x_{w'/v'}\otimes x_{v'}+\sum_{\substack{v'\leq w'\\ s_\beta v'<v'}}(x_\beta x_{w'/v'}+s_\beta x_{w'/s_\beta v'})\otimes x_{v'}+\sum_{\substack{v\leq w\\\hphantom{'}v\not\leq w'\\ s_\beta v<v}}s_\beta x_{w'/s_\beta v}\otimes x_v\,.
\]
\end{lem}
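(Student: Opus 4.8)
The plan is to start from the defining relation $\Delta(x_w)=\sum_{v\leq w}x_{w/v}\otimes x_v$ and to rewrite $x_w=x_\beta x_{w'}$, where $w'=s_\beta w$ and $\ell(w')=\ell(w)-1$ since $s_\beta w<w$. Because $\Delta$ is an algebra morphism into the braided tensor product $\mathscr{B}_W\underline{\otimes}\mathscr{B}_W$, and since $x_\beta$ is primitive (it lies in degree one), we have $\Delta(x_\beta)=x_\beta\otimes 1+1\otimes x_\beta$. Thus
\[
\Delta(x_w)=(x_\beta\otimes 1+1\otimes x_\beta)\cdot\Delta(x_{w'})=(x_\beta\otimes 1+1\otimes x_\beta)\cdot\sum_{v'\leq w'}x_{w'/v'}\otimes x_{v'}\,,
\]
where the product is the twisted multiplication. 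The twisted multiplication rule $(a\otimes b)(c\otimes d)=a\,\Psi(b\otimes c)\,d$ together with the fact that $x_\beta$ has $W$-degree $s_\beta$ means that $(1\otimes x_\beta)(x_{w'/v'}\otimes x_{v'})=(s_\beta x_{w'/v'})\otimes x_\beta x_{v'}$, using that the braiding on homogeneous elements is given by the $W$-action of the $W$-degree. So
\[
\Delta(x_w)=\sum_{v'\leq w'}\Big(x_\beta x_{w'/v'}\otimes x_{v'}+(s_\beta x_{w'/v'})\otimes x_\beta x_{v'}\Big)\,.
\]

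The next step is bookkeeping on the second family of summands. For each $v'\leq w'$, the term $x_\beta x_{v'}$ in $\tilde{\mathscr{N}}_W$ equals $x_{s_\beta v'}$ when $\ell(s_\beta v')=\ell(v')+1$, i.e.\ when $s_\beta v'>v'$, and equals $0$ otherwise by the nilCoxeter relations. So I would split $\sum_{v'\leq w'}$ according to whether $s_\beta v'>v'$ or $s_\beta v'<v'$; only the former contribute, and each contributes $(s_\beta x_{w'/v'})\otimes x_{s_\beta v'}$. Now reindex by $v=s_\beta v'$, i.e.\ $v'=s_\beta v$, which ranges over all $v$ with $s_\beta v<v$ and $s_\beta v\leq w'$. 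The condition $s_\beta v\leq w'$ combined with $s_\beta v<v$ is, by the lifting/exchange property, equivalent to $v\leq w$ (one direction: $s_\beta v\leq w'=s_\beta w$ and $s_\beta v<v$ force $v\leq w$; conversely if $v\leq w$ with $s_\beta v<v$ then $s_\beta v\leq s_\beta w=w'$). So the second family becomes $\sum_{\substack{v\leq w\\ s_\beta v<v}}(s_\beta x_{w'/s_\beta v})\otimes x_v$.

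Finally I would compare this against the target RHS. The first family $\sum_{v'\leq w'}x_\beta x_{w'/v'}\otimes x_{v'}$ I leave indexed by $v'$ and split according to $s_\beta v'>v'$ or $s_\beta v'<v'$ — these are exactly the first two sums in the statement, except that the statement's middle sum has pulled in the extra term $s_\beta x_{w'/s_\beta v'}$ coming from the reindexed second family whenever $v'\leq w'$. So one must match the second-family contribution $\sum_{\substack{v\leq w\\ s_\beta v<v}}(s_\beta x_{w'/s_\beta v})\otimes x_v$ against the union of (the part of) the middle sum over $\{v':v'\leq w',\ s_\beta v'<v'\}$ and the third sum over $\{v:v\leq w,\ v\not\leq w',\ s_\beta v<v\}$. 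The key observation making this work is a dichotomy: for $v\leq w$ with $s_\beta v<v$, either $v\leq w'$ (then set $v'=v$, which satisfies $s_\beta v'<v'$, and the term lands in the middle sum) or $v\not\leq w'$ (then it lands in the third sum, and note in this case $s_\beta v\leq w'$ still holds by the argument above, so $x_{w'/s_\beta v}$ makes sense and is generally nonzero). I expect the main obstacle to be precisely this careful partition of the index set $\{v\leq w:s_\beta v<v\}$ into the piece $\{v\leq w'\}$ and the piece $\{v\not\leq w', v\leq w\}$, together with verifying that $s_\beta v\leq w'$ holds throughout so that every $x_{w'/s_\beta v}$ appearing is legitimately defined by Definition~\ref{def:xwv}; the braided-multiplication computation itself and the nilCoxeter cancellations are routine.
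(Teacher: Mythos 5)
Your proposal is correct and follows essentially the same route as the paper's proof: expand $\Delta(x_\beta x_{w'})$ using primitivity of $x_\beta$ and the twisted multiplication, kill the terms with $s_\beta v'<v'$ in the second family via the nilCoxeter relations, reindex by $v=s_\beta v'$ using the lifting property (the paper cites Deodhar's Theorem~1.1(II)(ii) for exactly the equivalence you state), and then partition $\{v\leq w:\ s_\beta v<v\}$ into $\{v\leq w'\}$ and $\{v\not\leq w'\}$. The bookkeeping you flag as the main obstacle is indeed the only delicate point, and your treatment of it matches the paper's.
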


\todo[inline,color=green]{where the summands in each of the three sums form together a system of linearly independent vectors of $\mathscr{B}_W^{\otimes 2}$. [I cannot really show this here or anywhere in this section because we don't know yet that $x_{w/v}\neq 0$ if $v\leq w$ (cf.\ Theorem~\ref{thm:strict-positivity}). Even if I already knew strict positivity, I had to use the next corollary which makes it trivial anyway, i.e.\ equivalent to Theorem~\ref{thm:strict-positivity}.]}

\begin{proof}

Let $w,\beta,w'$ be as in the statement. By definition of the comultiplication, we know that
\[
\Delta(x_w)=\Delta(x_\beta x_{w'})=(x_\beta\otimes 1+1\otimes x_\beta)\cdot\sum_{v'\leq w'}x_{w'/v'}\otimes x_{v'}
\]
where $\cdot$ denotes the twisted multiplication in $\mathscr{B}_W\underline{\otimes}\mathscr{B}_W$. If we multiply out, the first summand can be written as
\[
\sum_{\substack{v'\leq w'\\ s_\beta v'>v'}}x_\beta x_{w'/v'}\otimes x_{v'}+\sum_{\substack{v'\leq w'\\ s_\beta v'<v'}}x_\beta x_{w'/v'}\otimes x_{v'}
\]
and the second summand can be written as 
\[
\sum_{\substack{v'\leq w'\\ s_\beta v'>v'}}s_{\beta}x_{w'/v'}\otimes x_\beta x_{v'}
\] 
because all terms $v'\leq w'$ such that $s_\beta v'<v'$ vanish by the nilCoxeter relations. It is clear that we only have to examine the second summand more closely. Let $v'\in W$ be arbitrary such that $s_\beta v'>v'$. By \cite[Theorem~1.1(II)(ii)]{deodhar}, we then have $v'\leq w'$ if and only if $v\leq w$ where $v=s_\beta v'$. (Recall that clearly $w'<s_\beta w'=w$ by definition.) If we now replace $v'$ by $v=s_\beta v'$ in the summation in the previous displayed formula, we see because of the just mentioned equivalence that it equals 
\[
\sum_{\substack{v\leq w\\ s_\beta v<v}}s_\beta x_{w/s_\beta v}\otimes x_v\,.
\]
If we finally split this summation according to 
\[
\textstyle{\sum_{\substack{v'\leq w'\\ s_\beta v'<v'}}+\sum_{\substack{v\leq w\\\hphantom{'}v\not\leq w'\\ s_\beta v<v}}}
\]
and rearrange, we find the desired formula for $\Delta(x_w)$.
\end{proof}

\begin{cor}
\label{cor:prelim-exp}

Let $w\in W$. Let $\beta\in\Delta$ be such that $s_\beta w<w$. Let $w'=s_\beta w$ for brevity. Then we have
\begin{alignat*}{3}
x_{w/v'}&=x_\beta x_{w'/v'} &&\text{if}\quad v'\leq w',\text{ }s_\beta v'>v',\\
x_{w/v'}&=x_\beta x_{w'/v'}+s_\beta x_{w'/s_\beta v'}\quad &&\text{if}\quad v'\leq w',\text{ }s_\beta v'<v',\\
x_{w/v\hphantom{'}}&=s_\beta x_{w'/s_\beta v}&&\text{if}\quad v\hphantom{'}\leq w\hphantom{'},\text{ }s_\beta v\hphantom{'}<v\hphantom{'},\text{ }v\not\leq w'.
\end{alignat*}
Moreover, there exists no $v\in W$ such that $v\leq w$, $s_\beta v>v$, $v\not\leq w'$. This means that the three cases above are exhaustive.

\end{cor}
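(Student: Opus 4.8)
The plan is to obtain the three identities by comparing Lemma~\ref{lem:prelim-exp} with the defining coproduct expansion $\Delta(x_w)=\sum_{v\leq w}x_{w/v}\otimes x_v$ from Definition~\ref{def:xwv}, and then to settle the exhaustiveness claim by a lifting argument in the Bruhat order.

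First I would use that the vectors $x_v$, $v\in W$, are linearly independent in $\mathscr{B}_W$, being a basis of $\tilde{\mathscr{N}}_W$. Consequently, in $\mathscr{B}_W\otimes\mathscr{B}_W$ the right tensor slot separates summands: if $\sum_v y_v\otimes x_v=\sum_v z_v\otimes x_v$ with $y_v,z_v\in\mathscr{B}_W$, then $y_v=z_v$ for every $v$. So it suffices, for each $v\leq w$, to collect those summands on the right-hand side of Lemma~\ref{lem:prelim-exp} whose right tensor factor is a multiple of $x_v$, and to equate the resulting coefficient in $\mathscr{B}_W$ with $x_{w/v}$. Here I would note that the three index sets appearing in Lemma~\ref{lem:prelim-exp} have pairwise disjoint images under $v'\mapsto x_{v'}$, resp.\ $v\mapsto x_v$: the first two are distinguished by whether $s_\beta v'>v'$ or $s_\beta v'<v'$, and the third is distinguished from both by $v\not\leq w'$. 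Hence, for $v'\leq w'$ with $s_\beta v'>v'$ only the first sum contributes and $x_{w/v'}=x_\beta x_{w'/v'}$; for $v'\leq w'$ with $s_\beta v'<v'$ only the second sum contributes and $x_{w/v'}=x_\beta x_{w'/v'}+s_\beta x_{w'/s_\beta v'}$; and for $v\leq w$ with $v\not\leq w'$ and $s_\beta v<v$ only the third sum contributes and $x_{w/v}=s_\beta x_{w'/s_\beta v}$. This gives the three displayed formulas.

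For the ``moreover'' assertion I would invoke the lifting property of the Bruhat order (the same circle of ideas that already underlies the proof of Lemma~\ref{lem:prelim-exp}, cf.\ \cite[Theorem~1.1]{deodhar}). Suppose $v\leq w$ with $s_\beta v>v$. Since $s_\beta w<w$, the hypothesis $s_\beta v>v$ already forces $v\neq w$ (otherwise $s_\beta v=s_\beta w<w=v$), so $v<w$; the lifting property then yields $v\leq s_\beta w=w'$. Therefore no $v\in W$ with $v\leq w$, $s_\beta v>v$ and $v\not\leq w'$ exists. Combining this with the trivial dichotomy ``$s_\beta v>v$ or $s_\beta v<v$'' for a fixed $v\leq w$, every $v\leq w$ falls into exactly one of the three listed cases (if $v\leq w'$ it lies in the first or second case according to its left descent at $\beta$; if $v\not\leq w'$ then $s_\beta v<v$ by the above, so it lies in the third case), which is precisely the exhaustiveness statement.

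The only genuinely non-formal ingredient is the lifting property used in the last paragraph; the rest is bookkeeping with the linearly independent family $\{x_v\}_{v\in W}$ and the disjointness of the three index sets, so I do not anticipate a serious obstacle. The one point to be careful about is to verify that no cancellation can occur between summands attached to the same $x_v$ across different sums, which is exactly what the disjointness of the index sets rules out.
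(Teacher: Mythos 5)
Your proof is correct and follows essentially the same route as the paper: compare Lemma~\ref{lem:prelim-exp} with the defining expansion $\Delta(x_w)=\sum_{v\le w}x_{w/v}\otimes x_v$ using that the $x_v$ form a basis of $\tilde{\mathscr{N}}_W$ (so coefficients in the right tensor slot can be matched), then rule out the fourth case by lifting. The paper phrases that last step in two stages (first $s_\beta v\le w$ via \cite[Proposition~5.9]{humphreys-coxeter}, then $v\le w'$ via \cite[Theorem~1.1(II)(ii)]{deodhar}), whereas you invoke the one-step form of the lifting property ($v<w$, $s_\beta w<w$, $s_\beta v>v$ imply $v\le s_\beta w$) directly; the two arguments are interchangeable.
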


\begin{proof}

Let $w,\beta,w'$ be as in the statement. If we compare the expression for $\Delta(x_w)$ with respect to $\beta$ from Lemma~\ref{lem:prelim-exp} with the defining expansion $\Delta(x_w)=\sum_{v\leq w}x_{w/v}\otimes x_v$, we immediately find the formulas according to the three cases which express $x_{w/v}$ in terms of $v,w',\beta$. Here, we use that $x_v$ where $v\in W$ form a basis of $\tilde{\mathscr{N}}_W$. Lastly, suppose there exists $v\in W$ such that $v\leq w$, $s_\beta v>v$, $v\not\leq w'$. Then, it is clear that $s_\beta v\leq w$ by \cite[Proposition~5.9]{humphreys-coxeter}
applied to $v\leq w$. By \cite[Theorem~1.1(II)(ii)]{deodhar} applied to $s_\beta v\leq w$, it follows that $v\leq w'$ -- a contradiction.
\end{proof}

\begin{notation}
\label{not:varphi}

Let $w\in W$. Let $s_{\beta_1}\ldots s_{\beta_\ell}$ be a fixed reduced expression of $w$. Let $\bm{\beta}=(\beta_1,\ldots,\beta_\ell)$ be the sequence of simple roots corresponding to the fixed reduced expression of $w$. Let $J$ be any subset of $\{1,\ldots,\ell\}$. For all $1\leq j\leq\ell$, we define $\overrightarrow{\varphi}_j^{\bm{\beta}}(J)=s_{\beta_j}$ if $j\in J$ and $=\overrightarrow{D}_{\beta_j}$ if $j\notin J$. Similarly, for all $1\leq j\leq\ell$, we define $\overleftarrow{\varphi}_j^{\bm{\beta}}(J)=s_{\beta_j}$ if $j\in J$ and $=\overleftarrow{D}_{\beta_j}$ if $j\notin J$. Using this notation, we further define $\overrightarrow{\varphi}_J^{\bm{\beta}}=\overrightarrow{\varphi}_1^{\bm{\beta}}(J)\cdots\overrightarrow{\varphi}_\ell^{\bm{\beta}}(J)$ and $\overleftarrow{\varphi}_J^{\bm{\beta}}=\overleftarrow{\varphi}_1^{\bm{\beta}}(J)\cdots\overleftarrow{\varphi}_\ell^{\bm{\beta}}(J)$. All the defined operators are supposed to be $\mathbb{C}$-linear endomorphism acting from the left or the right on $\mathscr{B}_W$ as the arrows indicate in each case (cf.\ Convention~\ref{conv:homothety}).

\end{notation}

\todo[inline,color=green]{Convention about the order of products $\prod_J$ where $J\subseteq\mathbb{Z}$ -- so that I don't have to endlessly repeat the same sentences. The corresponding sentences which contain the word \enquote{total order} are to be removed after the convention is written/introduced. This convention might be even be placed in the introduction. Because already there, I will need it.}

\begin{thm}[{\cite[Chapter~2, Equation~(3)]{macdonald1991notes}}]
\label{thm:macdonald}

Let $v,w\in W$ be such that $v\leq w$. Let $s_{\beta_1}\cdots s_{\beta_\ell}$ be a fixed reduced expression of $w$. Let $\bm{\beta}=(\beta_1,\ldots,\beta_\ell)$ be the sequence of simple roots corresponding to the fixed reduced expression of $w$. Then we have
\[
\overleftarrow{D}_{w/v}=\left(\sum_J\overleftarrow{\varphi}_J^{\bm{\beta}}\right)v^{-1}
\]
where the sum ranges over all subsets $J$ of $\{1,\ldots,\ell\}$ such that the product $\prod_{j\in J}s_{\beta_j}$ 
is a reduced expression of $v$. In particular, the right side of the displayed equation is independent of $\bm{\beta}$, i.e.\ independent of the choice of the reduced expression of $w$. 

\end{thm}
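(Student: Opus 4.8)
The plan is to prove the displayed identity by induction on $\ell=\ell(w)$, working throughout with the \emph{fixed} reduced expression $s_{\beta_1}\cdots s_{\beta_\ell}$ of $w$; the claimed independence of $\bm{\beta}$ then follows automatically, because the left-hand side $\overleftarrow{D}_{w/v}=\overleftarrow{D}_{x_{w/v}}$ (Notation~\ref{not:dwv}) depends only on the intrinsically defined element $x_{w/v}\in\mathscr{B}_W$ of Definition~\ref{def:xwv}, which involves no choice of reduced word. For the base case $\ell=0$ one has $w=v=1$, and both sides are the identity endomorphism (here $x_{1/1}=1$ by Example~\ref{ex:top-low-degree-xw}, the unique admissible $J$ is $\varnothing$, its empty $\overleftarrow{\varphi}$-product is the identity, and $v^{-1}=\mathrm{id}$).

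For the inductive step put $\beta=\beta_1$, $w'=s_\beta w$ and $\bm{\beta}'=(\beta_2,\dots,\beta_\ell)$, so that $s_{\beta_2}\cdots s_{\beta_\ell}$ is a reduced expression of $w'$ with $\ell(w')=\ell-1$. First I would split the right-hand sum over $J\subseteq\{1,\dots,\ell\}$ according to whether $1\in J$. If $1\notin J$, then $\overleftarrow{\varphi}_J^{\bm{\beta}}=\overleftarrow{D}_\beta\,\overleftarrow{\varphi}_{J-1}^{\bm{\beta}'}$, where $J-1=\{j-1\mid j\in J\}$, and the condition ``$\prod_{j\in J}s_{\beta_j}$ is a reduced expression of $v$'' is literally ``$\prod_{k\in J-1}s_{\beta_{k+1}}$ is a reduced expression of $v$''. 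If $1\in J$, then $v$ begins with $s_\beta$ in this reduced word, so $s_\beta v<v$; moreover $\overleftarrow{\varphi}_J^{\bm{\beta}}=s_\beta\,\overleftarrow{\varphi}_{J-1}^{\bm{\beta}'}$ with $s_\beta$ the homothety of Convention~\ref{conv:homothety} (whose right and left versions coincide since $s_\beta^2=1$), while the condition becomes ``$s_\beta v<v$ and $\prod_{k\in J-1}s_{\beta_{k+1}}$ is a reduced expression of $s_\beta v$''. Feeding these into the induction hypothesis for $w'$ — applied with $v$ for the first family and with $s_\beta v$ for the second ($s_\beta v\le w'$ by the Bruhat lifting property, exactly as in the proof of Corollary~\ref{cor:prelim-exp}; when $v\not\le w'$ the first family vanishes identically on both sides) — and cancelling the trailing $v^{-1}$ against the $v$, resp.\ $s_\beta v$, produced by the induction hypothesis (so that $s_\beta v\cdot v^{-1}$ collapses to the operator $s_\beta$), the right-hand side for $w$ becomes $\overleftarrow{D}_\beta\,\overleftarrow{D}_{w'/v}$ if $s_\beta v>v$, and $\overleftarrow{D}_\beta\,\overleftarrow{D}_{w'/v}+s_\beta\,\overleftarrow{D}_{w'/s_\beta v}\,s_\beta$ if $s_\beta v<v$.

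It then remains to identify these with $\overleftarrow{D}_{w/v}$, which is where Corollary~\ref{cor:prelim-exp} enters. If $s_\beta v>v$, the corollary gives $x_{w/v}=x_\beta x_{w'/v}$, hence $\overleftarrow{D}_{w/v}=\overleftarrow{D}_\beta\,\overleftarrow{D}_{w'/v}$ since $x\mapsto\overleftarrow{D}_x$ respects products ($\overleftarrow{D}_{xy}=\overleftarrow{D}_x\overleftarrow{D}_y$ by definition of the right action of $\mathscr{B}_W$ on itself). If $s_\beta v<v$, then cases two and three of the corollary (which together exhaust $s_\beta v<v$, the summand $x_\beta x_{w'/v}$ being $0$ in case three, where $v\not\le w'$) give $x_{w/v}=x_\beta x_{w'/v}+s_\beta x_{w'/s_\beta v}$, hence $\overleftarrow{D}_{w/v}=\overleftarrow{D}_\beta\,\overleftarrow{D}_{w'/v}+\overleftarrow{D}_{s_\beta x_{w'/s_\beta v}}$, and $\overleftarrow{D}_{s_\beta x_{w'/s_\beta v}}=s_\beta\,\overleftarrow{D}_{w'/s_\beta v}\,s_\beta$ by the conjugation identity $g\overleftarrow{D}_xg^{-1}=\overleftarrow{D}_{gx}$ of Remark~\ref{rem:conjugation} together with $s_\beta^{-1}=s_\beta$. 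Comparison with the previous paragraph closes the induction.

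The genuine content — the Bruhat lifting (Deodhar) input — has already been absorbed into Lemma~\ref{lem:prelim-exp} and Corollary~\ref{cor:prelim-exp}, so what is left is a bookkeeping induction, and the step I expect to require the most care is precisely this bookkeeping of handedness: keeping straight that $\overleftarrow{\varphi}_J^{\bm{\beta}}$ and $\overleftarrow{D}_{w/v}$ act from the right while the $W$-action in Corollary~\ref{cor:prelim-exp} is on the left, manipulating the trailing $v^{-1}$ via Convention~\ref{conv:homothety} ($vg=g^{-1}v$) so that the operator $s_\beta v\cdot v^{-1}$ really is $s_\beta$, and exploiting that the homothety by a simple reflection is an involution (so its left and right versions agree) in order to bring Remark~\ref{rem:conjugation} into the form $\overleftarrow{D}_{s_\beta x}=s_\beta\overleftarrow{D}_x s_\beta$ that is needed. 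One should also note in passing that the induction hypothesis, literally stated only for pairs $v'\le w'$, holds trivially in the vacuous case $v'\not\le w'$ (no admissible $J$, and $x_{w'/v'}=0$) that arises for the term indexed by $v$ in case three.
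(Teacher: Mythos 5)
Your proof is correct and follows essentially the same route as the paper's: induction on $\ell(w)$, peeling off $\beta_1$, invoking Corollary~\ref{cor:prelim-exp} for the three-case description of $x_{w/v}$ and Remark~\ref{rem:conjugation} to convert $\overleftarrow{D}_{s_\beta x_{w'/s_\beta v}}$ into $s_\beta\overleftarrow{D}_{w'/s_\beta v}s_\beta$. The only difference is organizational — you split the sum by whether $1\in J$ and write out the handedness bookkeeping that the paper leaves as "elementary analyses" — and your handling of the vacuous case $v\not\leq w'$ is exactly the point that needs the care you give it.
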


\begin{proof}

Let $v,w,\bm{\beta}$ be as in the statement. We denote the right side of the displayed equation in the statement of Theorem~\ref{thm:macdonald} by $\overleftarrow{D}_{w/v}^{\bm{\beta}}$. We prove the theorem by induction on the length of $w$. If $w=1$, we clearly have $\overleftarrow{D}_{1/1}=\overleftarrow{D}_{1/1}^{\bm{\beta}}=1$ (cf.\ Example~\ref{ex:top-low-degree-xw}), and there is nothing to prove. Assume now that $\ell(w)>0$ and that the theorem is proved for all Coxeter group elements of length $<\ell(w)$. Let $\beta=\beta_1$ and $w'=s_\beta w$ for brevity. Furthermore, set $\bm{\beta}'=(\beta_2,\ldots,\beta_\ell)$, so that $\bm{\beta}'$ corresponds to the reduced expression $s_{\beta_2}\cdots s_{\beta_\ell}$ of $w'$. We now distinguish the three cases according to Corollary~\ref{cor:prelim-exp}.

Assume first that $v=v'$ where $v'\leq w'$, $s_\beta v'>v'$. By the induction hypothesis applied to $w'/v'$ and the first case of Corollary~\ref{cor:prelim-exp}, we find that
\[
\overleftarrow{D}_{w/v'}=\overleftarrow{D}_\beta\Bigg(\sum_{\substack{J'\subseteq\{2,\ldots,\ell\}\\ v'\overset{\text{red}}{=}\prod_{j\in J'}s_{\beta_j}}}\overleftarrow{\varphi}_{J'}^{\bm{\beta}'}\Bigg)v'^{-1}=\Bigg(\sum_{\substack{J\subseteq\{1,\ldots,\ell\}\\ v'\overset{\text{red}}{=}\prod_{j\in J}s_{\beta_j}}}\overleftarrow{\varphi}_J^{\bm{\beta}}\Bigg)v'^{-1}=\overleftarrow{D}_{w/v'}^{\bm{\beta}}
\]
where the symbol $\overset{\text{\tiny red}}{=}$ is supposed to mean that the right side is a reduced expression of the element on the left, and where the second equality follows since no reduced expression of $v'$ starts with $s_\beta$. Similarly to the first case, elementary analyses show that $\overleftarrow{D}_{w/v}=\overleftarrow{D}_{w/v}^{\bm{\beta}}$ where $v$ satisfies the conditions from the second or the third case of Corollary~\ref{cor:prelim-exp}. At some point in these analyses, one clearly has to use Remark~\ref{rem:conjugation} and the nilCoxeter relations. But this completes the proof because the three cases of Corollary~\ref{cor:prelim-exp} are exhaustive.
\end{proof}

\begin{cor}
\label{cor:independent-Dwvcirc}

Let $v,w\in W$ be such that $v\leq w$. Let $s_{\beta_1}\cdots s_{\beta_\ell}$ be a fixed reduced expression of $w$. Let $\bm{\beta}=(\beta_1,\ldots,\beta_\ell)$ be the sequence of simple roots corresponding to the fixed reduced expression of $w$. Then we have
\[
\overrightarrow{D}_{w/v}^\circ=v^{-1}\left(\sum_J\overrightarrow{\varphi}_J^{\bm{\beta}}\right)
\]
where the sum ranges over all subsets $J$ of $\{1,\ldots,\ell\}$ such that the product $\prod_{j\in J}s_{\beta_j}$ 
is a reduced expression of $v$. In particular, the right side of the displayed equation is independent of $\bm{\beta}$, i.e.\ independent of the choice of the reduced expression of $w$. 

\end{cor}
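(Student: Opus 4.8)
The plan is to deduce the statement from Theorem~\ref{thm:macdonald} by transferring it through the \enquote{circle twist} and the reversal of reduced expressions. First I would record that, as a $\mathbb{C}$-linear endomorphism of $\mathscr{B}_W$, one has $\overrightarrow{D}_{w/v}^\circ(\phi)=(\phi)\overleftarrow{D}_{w^{-1}/v^{-1}}$ for all $\phi\in\mathscr{B}_W$: this is immediate from Notation~\ref{not:xwvcirc} and~\ref{not:dwv}, the defining equation $\overrightarrow{D}_x^\circ(\phi)=(\phi)\overleftarrow{D}_{\rho(x)}$ of the opposite braided left partial derivative, and the fact that $\rho$ is an involution (Proposition~\ref{prop:rhoands}\eqref{item:involution}). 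Since $v\le w$ is equivalent to $v^{-1}\le w^{-1}$, Theorem~\ref{thm:macdonald} applies to the pair $v^{-1}\le w^{-1}$; I would feed it the reduced expression $s_{\beta_\ell}\cdots s_{\beta_1}$ of $w^{-1}$, the reverse of the chosen reduced expression of $w$, with associated sequence $\bm{\gamma}=(\beta_\ell,\dots,\beta_1)$. Reversal of words shows that, writing $\bar{J}=\{\ell+1-j:j\in J\}$, the subsets $J$ for which $\prod_{j\in J}s_{\gamma_j}$ is a reduced expression of $v^{-1}$ are exactly the $J$ for which $\prod_{i\in\bar J}s_{\beta_i}$ is a reduced expression of $v$, so $J\mapsto\bar J$ identifies the summation index set produced by Theorem~\ref{thm:macdonald} with the one in the statement.

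It then remains to transform the right-acting expression $\bigl(\sum_J\overleftarrow{\varphi}_J^{\bm{\gamma}}\bigr)(v^{-1})^{-1}$ coming out of Theorem~\ref{thm:macdonald} into the left-acting expression $v^{-1}\bigl(\sum_{\bar J}\overrightarrow{\varphi}_{\bar J}^{\bm{\beta}}\bigr)$. For this I would use three ingredients: that a simple reflection acts identically as a left and as a right homothety (Convention~\ref{conv:homothety}); that $\overleftarrow{D}_\beta$ and $\overrightarrow{D}_\beta$ are conjugate by the $W$-equivariant map $\bar{\mathscr{S}}$, which restricts to the identity on $\mathscr{B}_W^0\oplus\mathscr{B}_W^1$ (Remark~\ref{rem:liu2.10(d)} and Proposition~\ref{prop:rhoands}\eqref{item:identity}), together with Remark~\ref{rem:conjugation} on commuting homotheties past the derivatives; and that scanning the reversed word of $w^{-1}$ from left to right reproduces exactly the order prescribed by $\overrightarrow{\varphi}_{\bar J}^{\bm{\beta}}$.

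The routine but genuinely delicate point is this last step: three order-reversing conventions are simultaneously at work (left versus right actions, left versus right homotheties, and the order reversal hidden both in $\rho$ and in passing from $w$ to $w^{-1}$), and one must check that they cancel to give precisely the displayed formula rather than, say, its $\bar{\mathscr{S}}$-conjugate. An alternative that sidesteps Theorem~\ref{thm:macdonald} is to mimic its proof directly: induct on $\ell(w)$ with base case Example~\ref{ex:top-low-degree-xw}, apply Corollary~\ref{cor:prelim-exp} to $w^{-1}$ with the simple reflection $s_{\beta_\ell}$ (which is a left descent of $w^{-1}$, $\beta_\ell$ being the last letter of the reduced expression of $w$), push the three resulting recursions for $x_{w^{-1}/v^{-1}}$ through the anti-algebra involution $\rho$ (which is the identity in degree one and is $W$-equivariant) to obtain recursions for $x_{w/v}^\circ$, and use that $\overrightarrow{D}_{\bullet}^\circ$ is an algebra action of $\mathscr{B}_W$ to convert these into recursions that the right-hand side of the claimed identity satisfies as well. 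In either approach, once the identity holds for one reduced expression of $w$, its independence of $\bm{\beta}$ follows because the left-hand side manifestly does not depend on it.
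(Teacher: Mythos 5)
Your overall route is the paper's route: reduce to Theorem~\ref{thm:macdonald} applied to $v^{-1}\leq w^{-1}$ and the reversed sequence $\bm{\beta}^\circ=(\beta_\ell,\ldots,\beta_1)$, match the index sets under $J\mapsto\{\ell+1-j\mid j\in J\}$, and then convert the right-acting composite into the left-acting one; the paper condenses this last step into the single assertion $\overrightarrow{\varphi}_J^{\bm{\beta}}(\phi)=(\phi)\overleftarrow{\varphi}_{J^\circ}^{\bm{\beta}^\circ}$. The one place where you genuinely deviate is your ingredient for the factor-by-factor conversion, and that is where there is a gap. You propose to identify $\overleftarrow{D}_\beta$ acting from the right with $\overrightarrow{D}_\beta$ acting from the left because they are conjugate by $\bar{\mathscr{S}}$ (Remark~\ref{rem:liu2.10(d)}) and $\bar{\mathscr{S}}$ is the identity in degrees zero and one. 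Conjugacy by a map that is not the identity in degrees $\geq 2$ does not give equality of operators, and in fact these two operators genuinely differ as endomorphisms of $\mathscr{B}_W$: for $w=s_\gamma s_\delta$ reduced one computes $\overrightarrow{D}_\gamma(x_w)=x_{s_\gamma w}=x_\delta$ while $(x_w)\overleftarrow{D}_\gamma=x_{w/s_\gamma}=x_{s_\gamma(\delta)}$. So no amount of bookkeeping of the three order reversals will make them cancel to that identity; carried out literally, the $\bar{\mathscr{S}}$-conjugation produces exactly the $\bar{\mathscr{S}}$-conjugate of the desired formula --- the failure mode you name but do not rule out.

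The identity that actually does the work is purely definitional and involves no $\bar{\mathscr{S}}$ at all: $\overrightarrow{D}_\beta^{\circ}(\phi)=(\phi)\overleftarrow{D}_{\rho(x_\beta)}=(\phi)\overleftarrow{D}_\beta$, because $\rho$ is the identity in $\mathbb{Z}_{\geq 0}$-degree one (Proposition~\ref{prop:rhoands}\eqref{item:identity}). Together with the fact that an involution acts identically as a left and as a right homothety (Convention~\ref{conv:homothety}) and the order-matching you correctly describe, this gives the factor-wise identification and hence the paper's assertion $\overrightarrow{\varphi}_J^{\bm{\beta}}(\phi)=(\phi)\overleftarrow{\varphi}_{J^\circ}^{\bm{\beta}^\circ}$ --- provided the operators $\overrightarrow{\varphi}_j^{\bm{\beta}}(J)$ for $j\notin J$ are read as the opposite derivatives $\overrightarrow{D}_{\beta_j}^{\circ}$, which is what the restriction $\overrightarrow{\varphi}_J^{\bm{\beta}}\big|_{S_W}=\partial_J^{\bm{\beta}}$ in Section~\ref{sec:passage} also requires; with the non-circled $\overrightarrow{D}_{\beta_j}$ the claimed equality is false already for $\ell=1$ by the computation above. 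Your instinct that something is needed to bridge the left and right derivatives is therefore sound, but the bridge is this tautology, not Remark~\ref{rem:liu2.10(d)}. Your alternative inductive argument via Corollary~\ref{cor:prelim-exp} and $\rho$ is viable but amounts to re-proving Theorem~\ref{thm:macdonald} in the circle setting rather than quoting it.
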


\begin{proof}

Let $v,w,\bm{\beta}$ be as in the statement. Let $\bm{\beta}^\circ=(\beta_\ell,\ldots,\beta_1)$. The sequence of simple roots $\bm{\beta}^\circ$ corresponds to the reduced expression $s_{\beta_\ell}\cdots s_{\beta_1}$ of $w^{-1}$. Let $J$ be a subset of $\{1,\ldots,\ell\}$ as it occurs in the summation in the statement of the corollary. Let $J^\circ=\{\ell-j+1\mid j\in J\}$. The set $J^\circ$ is a subset of $\{1,\ldots,\ell\}$ as it occurs in the summation in the statement of Theorem~\ref{thm:macdonald} applied to $v^{-1}\leq w^{-1}$ and $\bm{\beta}^\circ$. Moreover, we have $\overrightarrow{\varphi}_J^{\bm{\beta}}(\phi)=(\phi)\overleftarrow{\varphi}_{J^\circ}^{\bm{\beta}^\circ}$ for all $\phi\in\mathscr{B}_W$. The corollary follows from this observation and Theorem~\ref{thm:macdonald} applied to $v^{-1}\leq w^{-1}$ and $\bm{\beta}^\circ$ since we have by definition $\overrightarrow{D}_{w/v}^\circ(\phi)=(\phi)\overleftarrow{D}_{w^{-1}/v^{-1}}$ for all $\phi\in\mathscr{B}_W$ (cf.\ Proposition~\ref{prop:rhoands}\eqref{item:involution}).
\end{proof}

\begin{thm}[{Generalized braided Leibniz rule \cite[Chapter~2, Theorem~2.17]{macdonald1991notes}}]
\label{thm:leibniz}

Let $w\in W$ and $\phi,\psi\in\mathscr{B}_W$. Then we have
\[
(\phi\psi)\overleftarrow{D}_w=\sum_{v\leq w}((\phi)\overleftarrow{D}_v)((\psi)\overleftarrow{D}_{w/v}v)\,.
\]

\end{thm}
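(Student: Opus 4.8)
The plan is to prove the generalized braided Leibniz rule by induction on $\ell(w)$, mimicking the structure of the proof of Theorem~\ref{thm:macdonald} and using the single-generator braided Leibniz rule (Remark~\ref{rem:braided-leibniz}) together with the recursion for $x_{w/v}$ furnished by Corollary~\ref{cor:prelim-exp}. The base case $w=1$ is immediate: both sides reduce to $\phi\psi$ since $\overleftarrow{D}_1=\mathrm{id}$, $\overleftarrow{D}_{1/1}=\overleftarrow{D}_1=\mathrm{id}$ (Example~\ref{ex:top-low-degree-xw}), and $v=1$ acts trivially.

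For the inductive step, fix $w$ with $\ell(w)>0$ and pick $\beta\in\Delta$ with $s_\beta w<w$; set $w'=s_\beta w$, so $x_w=x_\beta x_{w'}$ and hence $\overleftarrow{D}_w=\overleftarrow{D}_{x_\beta x_{w'}}$. The first step is to establish the factorization $(\chi)\overleftarrow{D}_w = ((\chi)\overleftarrow{D}_{w'})\overleftarrow{D}_\beta$ for all $\chi\in\mathscr{B}_W$; this is just the statement that $\overleftarrow{D}$ is a right action of the algebra $\mathscr{B}_W$ on $\mathscr{B}_W^*\cong\mathscr{B}_W$, applied to the product $x_\beta x_{w'}$ (so $\overleftarrow{D}_{x_\beta x_{w'}}=\overleftarrow{D}_{x_{w'}}\overleftarrow{D}_{x_\beta}$; note the order-reversal since it is a right action — I will double-check the convention against the displayed formula $\langle(\phi)\overleftarrow{D}_x,y\rangle=\langle\phi,xy\rangle$ and Remark~\ref{rem:left-right-simult}). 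Then I would apply the induction hypothesis to $w'$ to write $(\phi\psi)\overleftarrow{D}_{w'}=\sum_{v'\leq w'}((\phi)\overleftarrow{D}_{v'})((\psi)\overleftarrow{D}_{w'/v'}v')$, and apply $\overleftarrow{D}_\beta$ to this sum.

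The heart of the argument is to apply the single-generator braided Leibniz rule of Remark~\ref{rem:braided-leibniz} to each summand $((\phi)\overleftarrow{D}_{v'})\cdot((\psi)\overleftarrow{D}_{w'/v'}v')$, using that $(\phi)\overleftarrow{D}_{v'}$ is $W$-homogeneous of $W$-degree shifting $\mathrm{deg}(\phi)$ by $v'^{-1}$ from the right (Remark~\ref{rem:elab}). Applying $\overleftarrow{D}_\beta$ produces two terms per summand: a term where $\overleftarrow{D}_\beta$ hits the right factor, contributing $((\phi)\overleftarrow{D}_{v'})((\psi)\overleftarrow{D}_{w'/v'}v'\,\overleftarrow{D}_\beta)$, and a term where it hits the left factor, contributing $((\phi)\overleftarrow{D}_{v'}\overleftarrow{D}_\beta)(\cdots\beta')$ for the appropriate group element $\beta'$ coming from the grading. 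Now I would reorganize: using the nilCoxeter relations and Remark~\ref{rem:conjugation} (to push the reflection $s_\beta$ past $\overleftarrow{D}$'s, exactly as in the proof of Theorem~\ref{thm:macdonald}), the left-hit terms combine into $\sum_{v\leq w,\ s_\beta v<v}((\phi)\overleftarrow{D}_v)(\cdots)$ after the substitution $v=s_\beta v'$, and match — via Corollary~\ref{cor:prelim-exp} — the contributions $((\phi)\overleftarrow{D}_v)((\psi)\overleftarrow{D}_{w/v}v)$ with $s_\beta v<v$; the right-hit terms, again after reindexing, yield the contributions with $s_\beta v>v$. Deodhar's lifting property (\cite[Theorem~1.1(II)(ii)]{deodhar}), already invoked in Lemma~\ref{lem:prelim-exp} and Corollary~\ref{cor:prelim-exp}, guarantees the index set $\{v:v\leq w\}$ splits correctly into the two pieces. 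Summing over $v\leq w$ gives the claimed formula.

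The main obstacle I anticipate is purely bookkeeping rather than conceptual: tracking the group-element shifts $g=wv^{-1}$, $v'^{-1}$, etc.\ through the braided Leibniz rule and the conjugation relations, and verifying that the reflection $s_\beta$ appearing in front of $x_{w'/s_\beta v}$ in Corollary~\ref{cor:prelim-exp} is exactly absorbed when one rewrites $\overleftarrow{D}_{s_\beta x_{w'/s_\beta v}}$ in terms of $s_\beta\,\overleftarrow{D}_{w'/s_\beta v}\,s_\beta^{-1}$ via Remark~\ref{rem:conjugation} and then collapses $s_\beta\cdot s_\beta v=v$. In other words, the delicate point is checking the three cases of Corollary~\ref{cor:prelim-exp} against the three types of terms produced by $\overleftarrow{D}_\beta$, and confirming there is no leftover sign or stray group element. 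An alternative, possibly cleaner route — which I would fall back on if the direct computation gets unwieldy — is to deduce the rule abstractly from the fact that $\Delta\colon\mathscr{B}_W\to\mathscr{B}_W\underline{\otimes}\mathscr{B}_W$ is an algebra map together with the adjointness $\langle(\phi)\overleftarrow{D}_x,y\rangle=\langle\phi,xy\rangle$ and the defining expansion $\Delta(x_w)=\sum_{v\leq w}x_{w/v}\otimes x_v$: pairing $(\phi\psi)\overleftarrow{D}_w$ against an arbitrary $y$ and using that the pairing is multiplicative on $\Delta$ should reproduce the sum over $v\leq w$ directly, with the twisted multiplication in the braided tensor product accounting for the $\overleftarrow{D}_{w/v}v$ (rather than $\overleftarrow{D}_{w/v}$) on the right. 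This is Liu's approach in type $\mathsf{A}$ and is likely what the author does.
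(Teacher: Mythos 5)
Your overall strategy---induction on $\ell(w)$, the single-generator braided Leibniz rule for $\overleftarrow{D}_\beta$, recombination of terms via Corollary~\ref{cor:prelim-exp}, Deodhar's lifting property and the nilCoxeter relations---is exactly the paper's proof. But there is one concrete error, precisely at the point you flagged for double-checking, and as written it breaks the induction. In the paper's postfix convention the adjointness $\langle(\phi)\overleftarrow{D}_x,y\rangle=\langle\phi,xy\rangle$ gives $\langle(\phi)\overleftarrow{D}_{x_\beta x_{w'}},y\rangle=\langle\phi,x_\beta x_{w'}y\rangle=\langle(\phi)\overleftarrow{D}_\beta,x_{w'}y\rangle=\langle((\phi)\overleftarrow{D}_\beta)\overleftarrow{D}_{w'},y\rangle$, so for $w'=s_\beta w$ and $x_w=x_\beta x_{w'}$ the correct factorization is $(\chi)\overleftarrow{D}_w=((\chi)\overleftarrow{D}_\beta)\overleftarrow{D}_{w'}$: the operator $\overleftarrow{D}_\beta$ is applied \emph{first}, not last. (A right action does not reverse order in postfix notation; it reverses order only when you insist on writing the operators as prefix maps.) Your factorization $((\chi)\overleftarrow{D}_{w'})\overleftarrow{D}_\beta$ equals $\overleftarrow{D}_{x_{w'}x_\beta}$, which corresponds to the element $w's_\beta$, not to $w=s_\beta w'$; carrying out your plan verbatim would therefore prove the identity for the wrong group element. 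It also matters structurally: with the correct order you must apply the ordinary Leibniz rule for $\overleftarrow{D}_\beta$ to $\phi\psi$ at the outset, obtaining $\phi((\psi)\overleftarrow{D}_\beta)+((\phi)\overleftarrow{D}_\beta)(\psi s_\beta)$, and only then apply the induction hypothesis for $w'$ to each of these two products. This is what the paper does, and it is what makes the left-handed recursion of Corollary~\ref{cor:prelim-exp} (which is adapted to $w'=s_\beta w$) the right tool; your "induction hypothesis first, $\overleftarrow{D}_\beta$ last" ordering would instead require a right-descent decomposition $w'=ws_\beta$ together with a right-handed analogue of Lemma~\ref{lem:prelim-exp} that the paper does not state at this point.

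Once the order is corrected, the rest of your outline (nilCoxeter vanishing of the terms with $s_\beta v'<v'$ in the second sum, the Deodhar reindexing $v=s_\beta v'$, the splitting into the three exhaustive cases of Corollary~\ref{cor:prelim-exp}, and Remark~\ref{rem:conjugation} to absorb the stray $s_\beta$) coincides with the paper's argument, and your base case is adequate since the step for $\ell(w)=1$ degenerates to the ordinary braided Leibniz rule. Your fallback route via the multiplicativity of $\Delta$ and the defining expansion $\Delta(x_w)=\sum_{v\leq w}x_{w/v}\otimes x_v$ is viable and closer in spirit to Liu, but it is not the route the paper takes, and it would still require carefully unwinding the twisted multiplication in $\mathscr{B}_W\underline{\otimes}\mathscr{B}_W$ to produce the homothety $v$ in $\overleftarrow{D}_{w/v}v$.
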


\begin{proof}

Let $w,\phi,\psi$ be as in the statement. We prove the theorem by induction on the length of $w$. If $w=1$, the statement is trivial. If $\ell(w)=1$, the generalized braided Leibniz rule is a special case of the ordinary braided Leibniz rule (cf.\ Remark~\ref{rem:braided-leibniz} and Example~\ref{ex:top-low-degree-xw}). Assume from now on that $\ell(w)>1$ and that the generalized braided Leibniz rule is known for all Coxeter group elements of length $<\ell(w)$. Let $\beta\in\Delta$ be such that $s_\beta w<w$. Let $w'=s_\beta w$ for brevity. If we apply the braided Leibniz rule for $\overleftarrow{D}_\beta$ (cf.\ remark loc.\ cit.) and the induction hypothesis to $w'$, we find in view of the decomposition $\overleftarrow{D}_w=\overleftarrow{D}_\beta\overleftarrow{D}_{w'}$ that
\[
(\phi\psi)\overleftarrow{D}_w=\sum_{v'\leq w'}((\phi)\overleftarrow{D}_{v'})((\psi)\overleftarrow{D}_\beta\overleftarrow{D}_{w'/v'}v')+\sum_{v'\leq w'}((\phi)\overleftarrow{D}_\beta\overleftarrow{D}_{v'})((\psi)s_\beta\overleftarrow{D}_{w'/v'}v')\,.
\]
By the nilCoxeter relations, all summands $v'\leq w'$ such that $s_\beta v'<v'$ in the second sum of the previous formula are zero. By \cite[Theorem~1.1(II)(ii)]{deodhar}, we therefore can rewrite the second sum of the previous formula as
\[
\sum_{\substack{v\leq w\\ s_\beta v<v}}((\phi)\overleftarrow{D}_v)((\psi)s_\beta\overleftarrow{D}_{w'/s_\beta v}s_\beta v)\,.
\]
If we now split the two summations according to
\[
\textstyle{\sum_{v'\leq w'}=\sum_{\substack{v'\leq w'\\ s_\beta v'>v'}}+\sum_{\substack{v'\leq w'\\ s_\beta v'<v'}}\quad\text{and}\quad\sum_{\substack{v\leq w\\ s_\beta v<v}}=\sum_{\substack{v'\leq w'\\ s_\beta v'<v'}}+\sum_{\substack{v\leq w\\\hphantom{'}v\not\leq w'\\ s_\beta v<v}}\,,}
\]
we find that $(\phi\psi)\overleftarrow{D}_w$ equals
\begin{align*}
\sum_{\substack{v'\leq w'\\ s_\beta v'>v'}}((\phi)\overleftarrow{D}_{v'})((\psi)\overleftarrow{D}_\beta\overleftarrow{D}_{w'/v'}v')&+\sum_{\substack{v'\leq w'\\ s_\beta v'<v'}}((\phi)\overleftarrow{D}_{v'})((\psi)(\overleftarrow{D}_\beta\overleftarrow{D}_{w'/v'}+s_\beta\overleftarrow{D}_{w'/s_\beta v'}s_\beta)v')\\
&+\sum_{\substack{v\leq w\\\hphantom{'}v\not\leq w'\\\hphantom{'}s_\beta v<v\hphantom{'}}}((\phi)\overleftarrow{D}_v)((\psi)s_\beta\overleftarrow{D}_{w'/s_\beta v}s_\beta v)\,.
\end{align*}
The formulas for $x_{w/v}$ in the three cases of Corollary~\ref{cor:prelim-exp} correspond precisely to the three sums in the previous displayed equation. If we plug them into the sums, we can express the second factor in each sum as $(\psi)\overleftarrow{D}_{w/v'}v'$ or as $(\psi)\overleftarrow{D}_{w/v}v$. (Here, we use Remark~\ref{rem:conjugation}.) But this completes the proof because the summation over the three sums in the last displayed formula is exhaustive (cf.\ Corollary~\ref{cor:prelim-exp}).
\end{proof}

\begin{cor}
\label{cor:leibniz}

Let $w\in W$ and $\phi,\psi\in\mathscr{B}_W$. Then we have
\[
\overrightarrow{D}_w^\circ(\phi\psi)=\sum_{v\leq w}(\overrightarrow{D}_v^\circ(\phi))(v\overrightarrow{D}_{w/v}^\circ(\psi))\,.
\]

\end{cor}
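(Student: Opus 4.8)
The plan is to obtain this as a straightforward dualization of Theorem~\ref{thm:leibniz}, applied to $w^{-1}$ in place of $w$, by translating the right braided partial derivatives appearing there into the opposite braided left partial derivatives appearing here.

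First I would record two elementary dictionary identities. Unwinding the definition of the opposite braided left partial derivative together with $\rho(x_w)=x_{w^{-1}}$ and Notation~\ref{not:dwv}, one gets
\[
\overrightarrow{D}_w^\circ(\chi)=(\chi)\overleftarrow{D}_{w^{-1}}\qquad\text{for all }\chi\in\mathscr{B}_W,\ w\in W.
\]
Likewise, unwinding Notation~\ref{not:xwvcirc} and Notation~\ref{not:dwv} and using that $\rho$ is an involution (Proposition~\ref{prop:rhoands}\eqref{item:involution}), one gets
\[
\overrightarrow{D}_{w/v}^\circ(\chi)=(\chi)\overleftarrow{D}_{w^{-1}/v^{-1}}\qquad\text{for all }\chi\in\mathscr{B}_W,\ v,w\in W,
\]
since $\rho(x_{w/v}^\circ)=\rho(\rho(x_{w^{-1}/v^{-1}}))=x_{w^{-1}/v^{-1}}$.

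Next I would apply Theorem~\ref{thm:leibniz} with $w$ replaced by $w^{-1}$, giving
\[
(\phi\psi)\overleftarrow{D}_{w^{-1}}=\sum_{v\leq w^{-1}}((\phi)\overleftarrow{D}_v)((\psi)\overleftarrow{D}_{w^{-1}/v}\,v)\,,
\]
and then reindex the sum via $v\mapsto v^{-1}$. Since inversion is an automorphism of the Bruhat order (so $v\leq w^{-1}\iff v^{-1}\leq w$, cf.\ \cite[Section~5.9]{humphreys-coxeter}), the index set becomes $\{v\in W\mid v\leq w\}$, and the right-hand side becomes $\sum_{v\leq w}((\phi)\overleftarrow{D}_{v^{-1}})((\psi)\overleftarrow{D}_{w^{-1}/v^{-1}}\,v^{-1})$. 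Now I substitute the two dictionary identities: the left-hand side is $\overrightarrow{D}_w^\circ(\phi\psi)$; in each summand $(\phi)\overleftarrow{D}_{v^{-1}}=\overrightarrow{D}_v^\circ(\phi)$ and $(\psi)\overleftarrow{D}_{w^{-1}/v^{-1}}=\overrightarrow{D}_{w/v}^\circ(\psi)$; and by Convention~\ref{conv:homothety} the trailing right homothety by $v^{-1}$ coincides with the left homothety by $v$, so $(\psi)\overleftarrow{D}_{w^{-1}/v^{-1}}\,v^{-1}=v\,\overrightarrow{D}_{w/v}^\circ(\psi)$. Assembling these substitutions yields exactly the claimed identity.

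The only delicate point—and the nearest thing to an obstacle—is careful bookkeeping of the simultaneous inversions $w\mapsto w^{-1}$, $v\mapsto v^{-1}$ together with the conversion between the right $\Gamma$-action implicit in the last factor of Theorem~\ref{thm:leibniz} and the left $\Gamma$-action in the statement; once the two dictionary identities above are in place, everything else is formal.
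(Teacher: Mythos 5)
Your proposal is correct and follows essentially the same route as the paper's own proof: apply Theorem~\ref{thm:leibniz} to $w^{-1}$, reindex the sum via $v\mapsto v^{-1}$, and translate back using the definitions of the opposite braided partial derivatives and braided skew partial derivatives together with Proposition~\ref{prop:rhoands}\eqref{item:involution} and the compatibility of inversion with the Bruhat order. Your explicit dictionary identities and the remark on Convention~\ref{conv:homothety} merely spell out what the paper leaves implicit.
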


\begin{proof}

Let $w,\phi,\psi$ be as in the statement. If we apply Theorem~\ref{thm:leibniz} to $w^{-1}$, we immediately find that
\[
\overrightarrow{D}_w^\circ(\phi\psi)=\sum_{v\leq w^{-1}}((\phi)\overleftarrow{D}_v)((\psi)\overleftarrow{D}_{w^{-1}/v}v)\,.
\]
If we now replace in the summation above $v$ by $v^{-1}$, we find the desired formula in view of the definition of the opposite braided partial derivatives and the braided skew partial derivatives, Proposition~\ref{prop:rhoands}\eqref{item:involution}
and the fact that $v\leq w$ if and only if $v^{-1}\leq w^{-1}$.
\end{proof}

\begin{prop}
\label{prop:uniqueness-skew}

Suppose for all pairs $v,w\in W$, there are $\mathbb{C}$-linear endomorphisms $\overleftarrow{D}_{w/v}'$ of $\mathscr{B}_W$ given such that 
\[
(\phi\psi)\overleftarrow{D}_w=\sum_{v\in W}((\phi)\overleftarrow{D}_v)((\psi)\overleftarrow{D}_{w/v}'v)
\]
for all $\phi,\psi\in\mathscr{B}_W$. Then we have $\overleftarrow{D}_{w/v}'=\overleftarrow{D}_{w/v}$ for all $v,w\in W$.

\end{prop}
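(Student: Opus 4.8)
The plan is to subtract the generalized braided Leibniz rule (Theorem~\ref{thm:leibniz}) from the identity assumed in the statement, and then to isolate the operators $\overleftarrow{D}_{w/v}$ one at a time by an induction on $\ell(v)$. The decisive input will be that the Hopf duality pairing on $\mathscr{B}_W$ is nondegenerate and, by Remark~\ref{rem:vanishing}, pairs nontrivially only elements of equal $\mathbb{Z}_{\geq 0}$-degree, so that its restriction to each (finite-dimensional) graded component $\mathscr{B}_W^k$ is again nondegenerate.

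Fix $w\in W$ and set $E_{w/v}=\overleftarrow{D}_{w/v}'-\overleftarrow{D}_{w/v}$ for each $v\in W$, recalling that $\overleftarrow{D}_{w/v}=0$ whenever $v\not\leq w$, since then $x_{w/v}=0$ (Definition~\ref{def:xwv}). Subtracting the identity of Theorem~\ref{thm:leibniz}, rewritten as a sum over all of $W$ by adjoining the vanishing terms, from the hypothesis, and using that the homothety by $v$ is $\mathbb{C}$-linear, one obtains
\[
\sum_{v\in W}\bigl((\phi)\overleftarrow{D}_v\bigr)\bigl((\psi)E_{w/v}v\bigr)=0\qquad\text{for all }\phi,\psi\in\mathscr{B}_W.
\]
I claim $E_{w/v}=0$ for every $v\in W$; since $\ell$ is bounded on the finite group $W$, it suffices to prove by induction on $k\geq 0$ that $E_{w/v}=0$ whenever $\ell(v)\le k$.

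Recall that $\overleftarrow{D}_v=\overleftarrow{D}_{x_v}$ lowers the $\mathbb{Z}_{\geq 0}$-degree by $\ell(v)$ (Remark~\ref{rem:degree-partial}, via the graded embedding $\mathscr{N}_W\hookrightarrow\mathscr{B}_W$), and that for $\phi$ homogeneous of degree exactly $\ell(v)$ one has $(\phi)\overleftarrow{D}_v=\langle\phi,x_v\rangle\cdot 1$ by \eqref{eq:pairing-derivative} together with Remark~\ref{rem:vanishing}. Now take $\phi\in\mathscr{B}_W^k$ in the displayed identity: the summands with $\ell(v)>k$ vanish because $(\phi)\overleftarrow{D}_v=0$; those with $\ell(v)<k$ vanish by the induction hypothesis ($E_{w/v}=0$, which for $k=0$ is vacuous and gives the base case, with $x_1=1$); and the remaining ones contribute $\langle\phi,x_v\rangle\,((\psi)E_{w/v}v)$. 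Hence
\[
\sum_{\ell(v)=k}\langle\phi,x_v\rangle\,\bigl((\psi)E_{w/v}v\bigr)=0\qquad\text{for all }\phi\in\mathscr{B}_W^k\text{ and }\psi\in\mathscr{B}_W.
\]
Fix $v_0\in W$ with $\ell(v_0)=k$. Since the restriction of the pairing to the finite-dimensional space $\mathscr{B}_W^k$ is nondegenerate and the vectors $x_v$ with $\ell(v)=k$ are linearly independent in $\mathscr{B}_W^k$, there is $\phi\in\mathscr{B}_W^k$ with $\langle\phi,x_{v_0}\rangle=1$ and $\langle\phi,x_v\rangle=0$ for every length-$k$ element $v\neq v_0$. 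Substituting this (fixed, $\psi$-independent) $\phi$ gives $(\psi)E_{w/v_0}v_0=0$ for all $\psi\in\mathscr{B}_W$, and since the homothety by $v_0$ is invertible we conclude $E_{w/v_0}=0$. This closes the induction, so $\overleftarrow{D}_{w/v}'=\overleftarrow{D}_{w/v}$ for all $v,w\in W$.

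The only real obstacle is this separation step: one must arrange that, after fixing $\deg\phi=k$ and running the induction in order of increasing $\ell(v)$, precisely the length-$k$ operators survive, and that the scalars $\langle\phi,x_v\rangle$ can be prescribed independently for those $v$ — which is exactly the nondegeneracy of the pairing on the single component $\mathscr{B}_W^k$ rather than merely on $\mathscr{B}_W$. Everything else is bookkeeping with the degree conventions of Remark~\ref{rem:degree-partial} and with the braided Leibniz rule.
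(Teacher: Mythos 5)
Your proof is correct and follows essentially the same route as the paper's: fix $w$, subtract the generalized braided Leibniz rule (Theorem~\ref{thm:leibniz}) from the assumed identity, induct on $\ell(v)$, and test against $\phi$ homogeneous of degree $\ell(v)$ so that only the length-$\ell(v)$ summands survive. The one substantive difference is the separation step: the paper tests against the explicit element $\phi=x_{v^{-1}}$ and invokes $\left<x_{v^{-1}},x_u\right>=1$ if $u=v$ and $=0$ otherwise (Proposition~\ref{prop:sat-chain}, which it must anticipate out of logical order from Section~\ref{sec:positivity}), whereas you use the nondegeneracy of the Hopf duality pairing restricted to the finite-dimensional component $\mathscr{B}_W^{k}$ together with the linear independence of the $x_v$ of length $k$ to manufacture a separating $\phi$. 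Your variant trades an explicit test element for an abstract existence argument and, as a small bonus, removes the forward reference to Proposition~\ref{prop:sat-chain}.
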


\begin{proof}

In order to keep the logical structure of this article clear, we anticipate in this proof one of the results from Section~\ref{sec:positivity}, namely Proposition~\ref{prop:sat-chain}. Let $w\in W$ be fixed but arbitrary. We show by induction on the length of $v\in W$ that the primed operators $\overleftarrow{D}_{w/v}'$ are determined as claimed. Suppose first that $v=1$. If we apply the generalized braided Leibniz rule, i.e.\ the assumption on the primed operators, on $\phi=1$, we find for simple $\mathbb{Z}_{\geq 0}$-degree reasons (cf. Remark~\ref{rem:degree-partial}) that $\overleftarrow{D}_{w/1}'=\overleftarrow{D}_w$. In view of Example~\ref{ex:top-low-degree-xw}, this shows $\overleftarrow{D}_{w/1}'=\overleftarrow{D}_{w/1}$ and completes the induction base. 

For the induction step, suppose that $\ell(v)>0$ and that the primed operators are determined as claimed for fixed $w$ and all Coxeter group elements of length $<\ell(v)$. By the generalized braided Leibniz rule for the primed operators, the induction hypothesis and Theorem~\ref{thm:leibniz}, we immediately find that
\[
\sum_{u\colon\ell(v)\leq\ell(u)}((\phi)\overleftarrow{D}_u)((\psi)\overleftarrow{D}_{w/u}'u)=\sum_{u\colon\ell(v)\leq\ell(u)}((\phi)\overleftarrow{D}_u)((\psi)\overleftarrow{D}_{w/u}u)
\]
for all $\phi,\psi\in\mathscr{B}_W$. If we apply this equation to $\phi=x_{v^{-1}}$, we find for $\mathbb{Z}_{\geq 0}$-degree reasons (cf.\ remark loc.\ cit.) that only those summands on each side are nonzero which are parametrized by $u\in W$ such that $\ell(u)=\ell(v)$. But for $u\in W$ such that $\ell(u)=\ell(v)$ we know that
$
(\phi)\overleftarrow{D}_u=\epsilon((\phi)\overleftarrow{D}_u)=\left<\phi,x_u\right>
$
by remark loc.\ cit.\ and Equation~\eqref{eq:pairing-derivative}. Proposition~\ref{prop:sat-chain} therefore shows that $(\phi)\overleftarrow{D}_u=1$ if $u=v$ and $=0$ otherwise. Consequently, the previous displayed equation reduces to $(\psi)\overleftarrow{D}_{w/v}'v=(\psi)\overleftarrow{D}_{w/v}v$ for all $\psi\in\mathscr{B}_W$ or equivalently to $\overleftarrow{D}_{w/v}'=\overleftarrow{D}_{w/v}$. This completes the induction step and hence the proof of the proposition.
\end{proof}

\begin{cor}

Suppose for all pairs $v,w\in W$, there are $\mathbb{C}$-linear endomorphisms $\overrightarrow{D}_{w/v}^{\circ\,\prime}$ of $\mathscr{B}_W$ given such that 
\[
\overrightarrow{D}_w^\circ(\phi\psi)=\sum_{v\in W}(\overrightarrow{D}_v^\circ(\phi))(v\overrightarrow{D}_{w/v}^{\circ\,\prime}(\psi))
\]
for all $\phi,\psi\in\mathscr{B}_W$. Then we have $\overrightarrow{D}_{w/v}^{\circ\,\prime}=\overrightarrow{D}_{w/v}^\circ$ for all $v,w\in W$.

\end{cor}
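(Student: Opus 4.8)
The plan is to deduce this corollary from Proposition~\ref{prop:uniqueness-skew} by the very same passage from right operators to left operators that produced Corollary~\ref{cor:leibniz} out of Theorem~\ref{thm:leibniz}. Two elementary translations will do all the work. First, since $\rho(x_w)=x_{w^{-1}}$, we have $\overrightarrow{D}_v^\circ(\phi)=(\phi)\overleftarrow{D}_{v^{-1}}$ for all $v\in W$, and in particular $\overrightarrow{D}_w^\circ(\phi\psi)=(\phi\psi)\overleftarrow{D}_{w^{-1}}$. Second, $\rho(x_{w/v}^\circ)=\rho^2(x_{w^{-1}/v^{-1}})=x_{w^{-1}/v^{-1}}$ by Proposition~\ref{prop:rhoands}\eqref{item:involution} and the definition of $x_{w/v}^\circ$ (Notation~\ref{not:xwvcirc}), so that $\overrightarrow{D}_{w/v}^\circ(\psi)=(\psi)\overleftarrow{D}_{w^{-1}/v^{-1}}$; equivalently, $\overrightarrow{D}_{w/v}^\circ$ and $\overleftarrow{D}_{w^{-1}/v^{-1}}$ are the same $\mathbb{C}$-linear endomorphism of $\mathscr{B}_W$.

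First I would rewrite the hypothesis of the corollary using these translations and the substitution $v\mapsto v^{-1}$ in the summation, which is legitimate because $v\mapsto v^{-1}$ permutes $W$. The assumed identity
\[
\overrightarrow{D}_w^\circ(\phi\psi)=\sum_{v\in W}(\overrightarrow{D}_v^\circ(\phi))(v\,\overrightarrow{D}_{w/v}^{\circ\,\prime}(\psi))
\]
then becomes
\[
(\phi\psi)\overleftarrow{D}_{w^{-1}}=\sum_{v\in W}((\phi)\overleftarrow{D}_{v})\bigl(v^{-1}\overrightarrow{D}_{w/v^{-1}}^{\circ\,\prime}(\psi)\bigr)
\]
for all $\phi,\psi\in\mathscr{B}_W$. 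Next I would define, for each pair $u,v\in W$, a $\mathbb{C}$-linear endomorphism $\overleftarrow{D}_{u/v}'$ of $\mathscr{B}_W$ to be the same underlying linear map as $\overrightarrow{D}_{u^{-1}/v^{-1}}^{\circ\,\prime}$, now thought of as acting from the right. With Convention~\ref{conv:homothety} in force (so $\chi v=v^{-1}\chi$ for $\chi\in\mathscr{B}_W$), one has $(\psi)\overleftarrow{D}_{w^{-1}/v}'v=v^{-1}\bigl((\psi)\overleftarrow{D}_{w^{-1}/v}'\bigr)=v^{-1}\overrightarrow{D}_{w/v^{-1}}^{\circ\,\prime}(\psi)$, so the displayed identity reads
\[
(\phi\psi)\overleftarrow{D}_{w^{-1}}=\sum_{v\in W}((\phi)\overleftarrow{D}_v)\bigl((\psi)\overleftarrow{D}_{w^{-1}/v}'v\bigr)
\]
for all $\phi,\psi\in\mathscr{B}_W$. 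As $w^{-1}$ runs over all of $W$ when $w$ does, the family $\{\overleftarrow{D}_{u/v}'\}$ satisfies exactly the hypothesis of Proposition~\ref{prop:uniqueness-skew}.

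Finally, Proposition~\ref{prop:uniqueness-skew} yields $\overleftarrow{D}_{u/v}'=\overleftarrow{D}_{u/v}$ for all $u,v\in W$, i.e.\ $\overrightarrow{D}_{u^{-1}/v^{-1}}^{\circ\,\prime}=\overleftarrow{D}_{u/v}$ as $\mathbb{C}$-linear maps. The second translation recalled above gives $\overrightarrow{D}_{u^{-1}/v^{-1}}^{\circ}=\overleftarrow{D}_{u/v}$ as well, so $\overrightarrow{D}_{u^{-1}/v^{-1}}^{\circ\,\prime}=\overrightarrow{D}_{u^{-1}/v^{-1}}^{\circ}$ for all $u,v\in W$, which after renaming is precisely the assertion $\overrightarrow{D}_{w/v}^{\circ\,\prime}=\overrightarrow{D}_{w/v}^\circ$ for all $v,w\in W$. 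The argument is pure bookkeeping; the only point that needs care — and hence the main (mild) obstacle — is keeping the left action (arrows $\overrightarrow{D}$), the right action (arrows $\overleftarrow{D}$) and Convention~\ref{conv:homothety} consistent while performing the reindexing $v\mapsto v^{-1}$, exactly as in the proof of Corollary~\ref{cor:leibniz}.
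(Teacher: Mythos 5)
Your proposal is correct and follows essentially the same route as the paper's own proof: apply the assumed Leibniz identity with $w$ and $v$ inverted, translate via $\overrightarrow{D}_x^\circ(\phi)=(\phi)\overleftarrow{D}_{\rho(x)}$ and $\rho^2=1$ into the hypothesis of Proposition~\ref{prop:uniqueness-skew}, and translate the conclusion back. The only difference is cosmetic bookkeeping (you introduce the auxiliary family $\overleftarrow{D}_{u/v}'$ explicitly, while the paper performs the same substitution inline).
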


\begin{proof}

The generalized braided Leibniz rule for the primed operators, i.e.\ the assumption on $\overrightarrow{D}_{w/v}^{\circ\,\prime}$, applied to $w^{-1}$ for some $w\in W$ and with summation over $v^{-1}$ instead of $v$ can be written in view of the definition of the opposite braided partial derivatives as
\[
(\phi\psi)\overleftarrow{D}_w=\sum_{v\in W}((\phi)\overleftarrow{D}_v)((\overrightarrow{D}_{w^{-1}/v^{-1}}^{\circ\,\prime}(\psi))v)
\]
for all $\phi,\psi\in\mathscr{B}_W$. Proposition~\ref{prop:uniqueness-skew} now implies that $\overrightarrow{D}_{w^{-1}/v^{-1}}^{\circ\,\prime}(\psi)=(\psi)\overleftarrow{D}_{w/v}$ for all $v,w\in W$ and all $\psi\in\mathscr{B}_W$. The definition of the braided skew partial derivatives and Proposition~\ref{prop:rhoands}\eqref{item:involution} imply that $\overrightarrow{D}_{w^{-1}/v^{-1}}^{\circ\,\prime}=\overrightarrow{D}_{w^{-1}/v^{-1}}^\circ$ and thus $\overrightarrow{D}_{w/v}^{\circ\,\prime}=\overrightarrow{D}_{w/v}^\circ$ for all $v,w\in W$.
\end{proof}

\section{Positivity \except{toc}{\texorpdfstring{(\cite[\textsc{Section~3}]{liu})}{([\ref{bib-liu},~Section~3])}}\for{toc}{(\cite[Section~3]{liu})}}
\label{sec:positivity}

This section is devoted to the generalization of positivity after Liu \cite{liu}. For all $v,w\in W$ such that $v\leq w$, we prove that $x_{w/v}$ is a positive element of $\mathscr{B}_W$ (cf.\ Theorem~\ref{thm:positivity}) in the sense of the definition directly below. We give two formulas to produce a positive expression for $x_{w/v}$, a recursive formula (Corollary~\ref{cor:recursive}) and one in terms of reduced expressions (Corollary~\ref{cor:xwv-pos}).
We closely follow the references. The proofs are mostly routine.

\begin{defn}

We define a subset $\mathscr{B}_W^{\geq 0}$ of $\mathscr{B}_W$ as follows:
\[
\mathscr{B}_W^{\geq 0}=\operatorname{span}_{\mathbb{Z}_{\geq 0}}\bigcup_{m=0}^\infty\{x_{\alpha_1}\cdots x_{\alpha_m}\mid\alpha_1,\ldots,\alpha_m\in R^+\}\,.
\]
It is clear that $\mathscr{B}_W^{\geq 0}$ is a monoid with respect to addition and multiplication inherited from $\mathscr{B}_W$. 
We call the monoid $\mathscr{B}_W^{\geq 0}$ the positive cone of $\mathscr{B}_W$. We call an element of $\mathscr{B}_W$ positive if and only if it lies in $\mathscr{B}_W^{\geq 0}$. We further define $\mathscr{B}_W^{>0}=\mathscr{B}_W^{\geq 0}\setminus\{0\}$. We call the set $\mathscr{B}_W^{>0}$ the strictly positive cone of $\mathscr{B}_W$. We call an element of $\mathscr{B}_W$ strictly positive if and only if it lies in $\mathscr{B}_W^{>0}$, i.e.\ if and only if it is positive and nonzero. Moreover, it is useful to define a subset $\mathscr{B}_W^{\geq 0}\otimes\mathscr{B}_W^{\geq 0}$ of $\mathscr{B}_W\otimes\mathscr{B}_W$ as follows:
\[
\mathscr{B}_W^{\geq 0}\otimes\mathscr{B}_W^{\geq 0}=\operatorname{span}_{\mathbb{Z}_{\geq 0}}\left\{x\otimes y\;\middle|\; x,y\in\mathscr{B}_W^{\geq 0}\right\}\,.
\]
It is clear that $\mathscr{B}_W^{\geq 0}\otimes\mathscr{B}_W^{\geq 0}$ is a monoid with respect to addition inherited from $\mathscr{B}_W\otimes\mathscr{B}_W$. We further set $\mathscr{B}_W^{>0}\otimes\mathscr{B}_W^{>0}=\left(\mathscr{B}_W^{\geq 0}\otimes\mathscr{B}_W^{\geq 0}\right)\setminus\{0\}$. From their definition, it is clear that all the introduced sets are invariant under dilation with elements of $\mathbb{Z}_{\geq 0}$ or $\mathbb{Z}_{>0}$ as their superscripts indicate in each case.


\end{defn}

\begin{rem}[{\cite[Proposition~3.3]{liu}}]
\label{rem:positive-image}


By Proposition~\ref{prop:rhoands}\eqref{item:identity},\eqref{item:rhoantialgebra},\eqref{item:involution} and definition, it is immediately clear that $\rho(\mathscr{B}_W^{\geq 0})=\mathscr{B}_W^{\geq 0}$ and that $\rho(\mathscr{B}_W^{>0})=\mathscr{B}_W^{>0}$. On the other hand, let $w\in W$. Let $s_{\beta_1}\cdots s_{\beta_\ell}$ be a reduced expression of $w$. Let $\alpha_i=s_{\beta_1}\cdots s_{\beta_{i-1}}(\beta_i)$ and let $y_i=x_{\alpha_i}$ for all $1\leq i\leq\ell$. By \cite[Proposition~5.7]{humphreys-coxeter}, we know well that $\alpha_i$ is a positive root for all $1\leq i\leq\ell$. Thus it follows by Remark~\ref{rem:comp-sbar} that $\bar{\mathscr{S}}(x_w)=y_1\cdots y_{\ell}\in\mathscr{B}_W^{\geq 0}$. Since $x_w\neq 0$, it follows from Proposition~\ref{prop:rhoands}\eqref{item:involution} even that $\bar{\mathscr{S}}(x_w)\in\mathscr{B}_W^{>0}$. From now on, we will use these facts literally or an obvious modification of them with reference to this remark. 

\end{rem}

\begin{prop}[{\cite[Proposition~3.1]{liu}}]
\label{prop:simple-left-derivative}

Let $v,w\in W$ and let $\alpha\in R^+$. Then we have $\overrightarrow{D}_\alpha(x_w)=x_{s_\alpha w}$ if $s_\alpha w\lessdot w$ and $=0$ otherwise. Moreover, we have $\overrightarrow{D}_v(x_w)=x_{vw}$ if $\ell(w)=\ell(v^{-1})+\ell(vw)$ and $=0$ otherwise.

\end{prop}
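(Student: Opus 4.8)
The plan is to prove the single‑reflection statement directly from the comultiplication of $\mathscr{B}_W$, and then deduce the general statement by peeling off simple reflections one at a time.

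First I would treat the case $\alpha\in R^+$. Start from $\overrightarrow{D}_\alpha(x_w)=\langle x_\alpha,(x_w)_{(1)}\rangle(x_w)_{(2)}$ and the defining expansion $\Delta(x_w)=\sum_{v\leq w}x_{w/v}\otimes x_v$. Since $x_{w/v}$ is homogeneous of $\mathbb{Z}_{\geq 0}$-degree $\ell(v,w)$ (Remark~\ref{rem:degree-xwv}) and $\langle x_\alpha,-\rangle$ kills homogeneous elements of degree $\neq 1$ (Remark~\ref{rem:vanishing}), only the pairs $v\lessdot w$ contribute, and each corresponding $x_{w/v}$ lies in $\mathscr{B}_W^1=V_W$. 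The one genuine point is to identify these degree-one pieces: \emph{for $v\lessdot w$ one has $x_{w/v}=x_{\gamma_v}$}, where $\gamma_v\in R^+$ is the unique positive root with $s_{\gamma_v}=wv^{-1}$ (a reflection, since $v\lessdot w$). I would obtain this by computing the $V_W\otimes\tilde{\mathscr{N}}_W$-component of $\Delta(x_w)$ along the lines of the expansion already performed at the start of Section~\ref{sec:skew}: fixing a reduced word $s_{\beta_1}\cdots s_{\beta_\ell}$ of $w$ and keeping in $\prod_i(x_{\beta_i}\otimes 1+1\otimes x_{\beta_i})$ (twisted multiplication in $\mathscr{B}_W\underline{\otimes}\mathscr{B}_W$) exactly the terms with a single tensor factor on the left, one moves that factor to the front and reads off the braiding to get $\sum_{i=1}^{\ell}x_{s_{\beta_1}\cdots s_{\beta_{i-1}}(\beta_i)}\otimes x_{\beta_1}\cdots\widehat{x_{\beta_i}}\cdots x_{\beta_\ell}$; by the nilCoxeter relations the terms whose omitted-letter word is not reduced vanish, so only the actual covers $v=s_{\beta_1}\cdots\widehat{s_{\beta_i}}\cdots s_{\beta_\ell}$ survive, and for such a $v$ a short computation gives $s_\gamma w=v$ with $\gamma=s_{\beta_1}\cdots s_{\beta_{i-1}}(\beta_i)$, i.e.\ $\gamma=\gamma_v$. (Alternatively, $x_{w/v}=x_{\gamma_v}$ for $v\lessdot w$ follows by induction on $\ell(w)$ straight from Corollary~\ref{cor:prelim-exp}, the base cases being supplied by Example~\ref{ex:top-low-degree-xw}.) With this in hand, $\overrightarrow{D}_\alpha(x_w)=\sum_{v\lessdot w}\langle x_\alpha,x_{\gamma_v}\rangle x_v=\sum_{v\lessdot w}\delta_{\alpha,\gamma_v}x_v$, and since $\gamma_v=\alpha$ is the same as $wv^{-1}=s_\alpha$, i.e.\ as $v=s_\alpha w$, this sum is $x_{s_\alpha w}$ when $s_\alpha w\lessdot w$ and $0$ otherwise.

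Next I would handle general $v\in W$. Fix a reduced word $s_{\delta_1}\cdots s_{\delta_k}$ of $v$ with $\delta_1,\ldots,\delta_k\in\Delta$, so that $x_v=x_{\delta_1}\cdots x_{\delta_k}$ and hence $\overrightarrow{D}_v=\overrightarrow{D}_{\delta_1}\circ\cdots\circ\overrightarrow{D}_{\delta_k}$ (using $\overrightarrow{D}_{\xi\eta}=\overrightarrow{D}_\xi\circ\overrightarrow{D}_\eta$, the defining property of the left action). For a simple root $\delta$ one has $s_\delta u\lessdot u\iff s_\delta u<u$, so applying the first part repeatedly yields the chain $x_w\mapsto x_{s_{\delta_k}w}\mapsto x_{s_{\delta_{k-1}}s_{\delta_k}w}\mapsto\cdots\mapsto x_{vw}$ as long as the length drops at every step, and a standard length-additivity fact for Coxeter groups shows that this happens precisely when $\ell(vw)=\ell(w)-\ell(v)=\ell(w)-\ell(v^{-1})$; if instead the length fails to drop at some step, the corresponding $\overrightarrow{D}_{\delta_j}$ sends the current basis vector to $0$ by the first part, and so $\overrightarrow{D}_v(x_w)=0$. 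This is exactly the claimed dichotomy.

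The main obstacle is the identification $x_{w/v}=x_{\gamma_v}$ for covers $v\lessdot w$: correctly extracting the degree-one part of $\Delta(x_w)$ with the braiding twist $s_{\beta_1}\cdots s_{\beta_{i-1}}$ on the roots, and observing that omitting a letter which destroys reducedness contributes nothing. Once this is settled, the remainder is routine manipulation with the subword characterization of the Bruhat order and with length additivity.
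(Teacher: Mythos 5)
Your proposal is correct and follows essentially the same route as the paper: the heart of both arguments is the explicit computation of the $(\mathbb{Z}_{\geq 0}\times\mathbb{Z}_{\geq 0})$-degree $(1,\ell-1)$ component of $\Delta(x_w)$ from a reduced expression, with the nilCoxeter relations and the subword characterization of the Bruhat order reducing the sum to covers $s_\gamma w\lessdot w$. The only difference is that you spell out the iteration $\overrightarrow{D}_v=\overrightarrow{D}_{\delta_1}\circ\cdots\circ\overrightarrow{D}_{\delta_k}$ and the length-additivity bookkeeping for the second claim, which the paper simply declares to follow from the first.
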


\begin{proof}

The second claim in the statement of the proposition follows directly from the first. We prove the first. Let $w$ and $\alpha$ be as in the statement. Let $s_{\beta_1}\cdots s_{\beta_\ell}$ be a reduced expression of $w$. By definition of the comultiplication as an algebra homomorphism $\mathscr{B}_W\to\mathscr{B}_W\underline{\otimes}\mathscr{B}_W$ in the Yetter-Drinfeld category over $W$, the $(\mathbb{Z}_{\geq 0}\times\mathbb{Z}_{\geq 0})$-degree $(1,\ell-1)$-component of $\Delta(x_w)$ is given by
\[
\sum_{i=1}^\ell x_{s_{\beta_1}\cdots s_{\beta_{i-1}}(\beta_i)}\otimes x_{\beta_1}\cdots\hat{x}_{\beta_i}\cdots x_{\beta_\ell}=
\sum_{\gamma\in R^+\colon s_\gamma w\lessdot w}x_\gamma\otimes x_{s_\gamma w}
\]
where the equality follows by the nilCoxeter relations since the $i$th summand vanishes for all $1\leq i\leq\ell$ such that $s_{\beta_1}\cdots\hat{s}_{\beta_i}\cdots s_{\beta_\ell}$ has length $<\ell-1$. Furthermore, we used the characterization of the Bruhat order in terms of subexpressions (\cite[Theorem~5.10]{humphreys-coxeter}). The first claim is now immediate from the definition of $\overrightarrow{D}_\alpha$ and the definition of the restriction of the Hopf duality pairing between $\mathscr{B}_W$ and itself to $V_W\otimes V_W$. Note that we used Remark~\ref{rem:vanishing}. 
%
\end{proof}

\todo[inline,color=green]{The same remark applies to this corollary as for the corollary after Theorem~\ref{thm:first-step}.

{\bf Corollary.} Let $v,w\in W$. Then the element $(\bar{\mathscr{S}}(x_w))\protect\overleftarrow{D}_v$ of $\mathscr{B}_W$ is positive.

[In the beginning, it seemed that I wanted to follow this from Proposition~\ref{prop:simple-left-derivative} and Remark~\ref{rem:positive-image},~\ref{rem:liu2.10(d)} (maybe also Proposition~\ref{prop:liu2.10(d)}).]}

\begin{prop}[{\cite[Proposition~3.2]{liu}}]
\label{prop:sat-chain}

Let $w\in W$ be an element of length $\ell$. Let $\alpha_1,\ldots,\alpha_\ell\in R^+$ be such that $s_{\alpha_\ell}\cdots s_{\alpha_1}=w$. Then we have
$
\left<x_{\alpha_1}\cdots x_{\alpha_\ell},x_w\right>=1
$
if
$$
1\lessdot s_{\alpha_1}\lessdot s_{\alpha_2}s_{\alpha_1}\lessdot\cdots\lessdot s_{\alpha_{\ell-1}}\cdots s_{\alpha_1}\lessdot s_{\alpha_\ell}\cdots s_{\alpha_1}=w
$$
is a saturated chain in the Bruhat order and $=0$ otherwise. In particular, for $v\in W$, we have $\left<x_v,x_w\right>=1$ if $w=v^{-1}$ and $=0$ otherwise.

\end{prop}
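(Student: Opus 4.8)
The plan is to turn the pairing into an iterated braided partial derivative and then strip off one reflection at a time with Proposition~\ref{prop:simple-left-derivative}. Since $\xi\mapsto\overrightarrow{D}_\xi$ is a left action of $\mathscr{B}_W$ on itself, we have $\overrightarrow{D}_{x_{\alpha_1}\cdots x_{\alpha_\ell}}=\overrightarrow{D}_{\alpha_1}\cdots\overrightarrow{D}_{\alpha_\ell}$ in the notation of Notation~\ref{not:dwv}, so Equation~\eqref{eq:pairing-derivative} gives
\[
\left<x_{\alpha_1}\cdots x_{\alpha_\ell},x_w\right>=\epsilon\big(\overrightarrow{D}_{\alpha_1}\cdots\overrightarrow{D}_{\alpha_\ell}(x_w)\big)\,.
\]
I would prove, by induction on $\ell$, the sharper claim that the operator $\overrightarrow{D}_{\alpha_1}\cdots\overrightarrow{D}_{\alpha_\ell}$ sends $x_w$ to $x_1=1$ when the displayed chain is saturated and to $0$ otherwise; applying $\epsilon$ and using $\epsilon(1)=1$ then finishes the proof.

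For the inductive step set $u_j=s_{\alpha_j}\cdots s_{\alpha_1}$ for $0\leq j\leq\ell$, so $u_0=1$, $u_\ell=w$, and $s_{\alpha_\ell}u_\ell=u_{\ell-1}$. Because $\overrightarrow{D}_{\alpha_\ell}$ acts first, Proposition~\ref{prop:simple-left-derivative} yields $\overrightarrow{D}_{\alpha_\ell}(x_w)=x_{u_{\ell-1}}$ if $u_{\ell-1}\lessdot w$ and $\overrightarrow{D}_{\alpha_\ell}(x_w)=0$ otherwise. In the latter case both the operator value and the chain fail at the top covering, so we are done. In the former case $u_{\ell-1}$ has length $\ell-1$ and equals $s_{\alpha_{\ell-1}}\cdots s_{\alpha_1}$, so the induction hypothesis applies to $\overrightarrow{D}_{\alpha_1}\cdots\overrightarrow{D}_{\alpha_{\ell-1}}(x_{u_{\ell-1}})$ with the chain $1\lessdot s_{\alpha_1}\lessdot\cdots\lessdot u_{\ell-1}$; since the full chain is saturated precisely when $u_{\ell-1}\lessdot w$ and its initial segment up to $u_{\ell-1}$ both are, we get exactly the asserted value. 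The case $\ell=0$ is immediate. For the last sentence of the statement, $\left<x_v,x_w\right>=\epsilon(\overrightarrow{D}_v(x_w))$ equals $\epsilon(x_{vw})$ if $\ell(w)=\ell(v^{-1})+\ell(vw)$ and $0$ otherwise by the second part of Proposition~\ref{prop:simple-left-derivative}; since $x_{vw}$ has positive $\mathbb{Z}_{\geq 0}$-degree unless $vw=1$, this is nonzero only for $w=v^{-1}$, where it equals $\epsilon(x_1)=1$.

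I expect the only genuinely delicate point, and the one to spell out, to be the interplay of length changes with the covering relation: left multiplication by the non-simple reflection $s_{\alpha_j}$ may change length by more than one, but $u_{j-1}$ and $u_j$ differ by a single reflection, so at step $j$ the only relevant alternative is whether or not $u_{j-1}\lessdot u_j$, which is exactly what Proposition~\ref{prop:simple-left-derivative} tests; and a failure at step $j$ kills the operator and breaks the chain simultaneously, which is what makes the two sides agree on the nose. It is also worth emphasizing the order of composition, namely that $\overrightarrow{D}_{\alpha_\ell}$ hits $x_w$ first, so that the induction peels off the last reflection $s_{\alpha_\ell}$, i.e.\ the top covering of the chain, rather than the first.
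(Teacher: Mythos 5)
Your proposal is correct and is exactly the paper's proof: the paper reduces the first statement to Proposition~\ref{prop:simple-left-derivative} together with Equation~\eqref{eq:pairing-derivative}, and your induction merely spells out the routine details (writing the pairing as $\epsilon\circ\overrightarrow{D}_{\alpha_1}\cdots\overrightarrow{D}_{\alpha_\ell}$ via the left-action property and peeling off $s_{\alpha_\ell}$ at each step). Your attention to the composition order and to the fact that a failed covering kills both the operator and the chain simultaneously is exactly the right point to emphasize.
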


\begin{proof}

The second statement in the proposition follows immediately from the first. The first follows from Proposition~\ref{prop:simple-left-derivative} and Equation~\eqref{eq:pairing-derivative}.
\end{proof}

\begin{prop}[{\cite[Proposition~3.4]{liu}}]
\label{prop:reflection-ordering}

The equation $\bar{\mathscr{S}}(x_{w_o})=x_{w_o}$ holds in $\mathscr{B}_W$.

\end{prop}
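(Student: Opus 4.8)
The plan is to establish the equivalent assertion that $\langle\phi,\bar{\mathscr{S}}(x_{w_o})\rangle=\langle\phi,x_{w_o}\rangle$ for every $\phi\in\mathscr{B}_W$; by nondegeneracy of the Hopf duality pairing this forces $\bar{\mathscr{S}}(x_{w_o})=x_{w_o}$. Set $N=\ell(w_o)$. Since $\bar{\mathscr{S}}$ is a $\mathbb{Z}_{\geq 0}$-graded map (Proposition~\ref{prop:rhoands}\eqref{item:morphop}) and $x_{w_o}$ is homogeneous of $\mathbb{Z}_{\geq 0}$-degree $N$, both $\bar{\mathscr{S}}(x_{w_o})$ and $x_{w_o}$ lie in $\mathscr{B}_W^N$, so by Remark~\ref{rem:vanishing} it suffices to test against $\phi\in\mathscr{B}_W^N$. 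As $\mathscr{B}_W$ is generated in degree one and $x_{-\alpha}=-x_\alpha$, the space $\mathscr{B}_W^N$ is spanned by the products $x_{\alpha_1}\cdots x_{\alpha_N}$ with $\alpha_1,\dots,\alpha_N\in R^+$, so by linearity we may take $\phi$ of this form. By Proposition~\ref{prop:adjoint} and the definition of $\rho$ (Definition~\ref{def:rhoandsbar}), $\langle\phi,\bar{\mathscr{S}}(x_{w_o})\rangle=\langle\rho(\phi),x_{w_o}\rangle=\langle x_{\alpha_N}\cdots x_{\alpha_1},x_{w_o}\rangle$, so the proposition reduces to the identity
\[
\langle x_{\alpha_1}\cdots x_{\alpha_N},x_{w_o}\rangle=\langle x_{\alpha_N}\cdots x_{\alpha_1},x_{w_o}\rangle\qquad(\alpha_1,\dots,\alpha_N\in R^+).
\]

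Next I would compute both sides via Proposition~\ref{prop:sat-chain}. Write $t_i=s_{\alpha_i}$. That proposition tells us the left-hand side is $1$ exactly when $1\lessdot t_1\lessdot t_2t_1\lessdot\cdots\lessdot t_Nt_{N-1}\cdots t_1=w_o$ is a saturated chain, and $0$ otherwise; since such a chain has length $N=\ell(w_o)$, this holds precisely when $\ell(t_kt_{k-1}\cdots t_1)=k$ for all $k$, equivalently — length being inversion-invariant — when $\ell(t_1t_2\cdots t_k)=k$ for all $1\le k\le N$, which in particular forces $t_1\cdots t_N=w_o$. Likewise the right-hand side is $1$ exactly when $\ell(t_it_{i+1}\cdots t_N)=N-i+1$ for all $1\le i\le N$. (In the remaining cases both pairings vanish — a saturated chain of length $N$ would have to end at $w_o$ — so the identity holds trivially there.) Thus the statement comes down to the purely combinatorial claim: for reflections $t_1,\dots,t_N$ with $N=\ell(w_o)$,
\[
\bigl(\ell(t_1\cdots t_k)=k\ \text{for all }k\bigr)\iff\bigl(\ell(t_i\cdots t_N)=N-i+1\ \text{for all }i\bigr).
\]

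The only genuinely nontrivial point is this claim, and the trick will be the standard property of the longest element that $\ell(xw_o)=\ell(w_o)-\ell(x)$ for all $x\in W$. If the left condition holds, then $k=N$ gives $\ell(t_1\cdots t_N)=N$, so $t_1\cdots t_N=w_o$ (the unique element of maximal length); factoring $w_o=(t_1\cdots t_{i-1})(t_i\cdots t_N)$ for each $i$ yields $t_i\cdots t_N=(t_1\cdots t_{i-1})^{-1}w_o$, hence $\ell(t_i\cdots t_N)=\ell(w_o)-\ell(t_1\cdots t_{i-1})=N-(i-1)$. The converse is symmetric: the right condition at $i=1$ gives $t_1\cdots t_N=w_o$, and factoring $w_o=(t_1\cdots t_k)(t_{k+1}\cdots t_N)$ gives $\ell(t_1\cdots t_k)=\ell(w_o)-\ell(t_{k+1}\cdots t_N)=N-(N-k)=k$. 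This establishes the claim, and with it the displayed identity of pairings and hence the proposition.

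I do not anticipate a serious obstacle: the argument is a short reduction followed by a lemma with a three-line verification. The only place demanding care is matching the two saturated chains produced by Proposition~\ref{prop:sat-chain} with the two orderings $(\alpha_1,\dots,\alpha_N)$ and $(\alpha_N,\dots,\alpha_1)$ of the roots; everything else is bookkeeping with the length function and the identity $\ell(xw_o)=\ell(w_o)-\ell(x)$. A more direct route — writing $\bar{\mathscr{S}}(x_{w_o})=x_{\gamma_1}\cdots x_{\gamma_N}$ via Remark~\ref{rem:comp-sbar} as a product over the reflection ordering attached to a reduced word of $w_o$ and rewriting it back to $x_{w_o}$ — also works in small cases but seems to require the full defining relations of $\mathscr{B}_W$, so I would avoid it.
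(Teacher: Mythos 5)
Your proof is correct and follows essentially the same route as the paper's: reduce by nondegeneracy and the adjointness of $\rho$ and $\bar{\mathscr{S}}$ to the identity $\langle x_{\alpha_1}\cdots x_{\alpha_N},x_{w_o}\rangle=\langle x_{\alpha_N}\cdots x_{\alpha_1},x_{w_o}\rangle$, and then settle it with Proposition~\ref{prop:sat-chain} and a reversal of saturated chains from $1$ to $w_o$. The paper justifies the chain reversal by the order-reversing involution $x\mapsto xw_o$ on the interval from $1$ to $w_o$, while you do the equivalent length bookkeeping with $\ell(xw_o)=\ell(w_o)-\ell(x)$; these are the same fact.
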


\begin{proof}

By the nondegeneracy of the Hopf duality pairing between $\mathscr{B}_W$ and itself, it suffices to show that $\left<\phi,\bar{\mathscr{S}}(x_{w_o})\right>=\left<\phi,x_{w_o}\right>$ for all $\phi\in\mathscr{B}_W$. Let $\phi\in\mathscr{B}_W$ be fixed but arbitrary. Let $\ell=\ell(w_o)$. To show the desired equality, we clearly can assume that $\phi$ is homogeneous of $\mathbb{Z}_{\geq 0}$-degree $\ell$ and of $W$-degree $w_o$ since otherwise both sides of the desired equality are equal to zero. (To see this, note that $w_o$ is an involution and consider Proposition~\ref{prop:rhoands}\eqref{item:morphop}. We also take Remark~\ref{rem:vanishing} into account and that the Hopf duality pairing between $\mathscr{B}_W$ and itself is a morphism in the Yetter-Drinfeld category over $W$.) By linearity, we even may assume that $\phi=x_{\alpha_1}\cdots x_{\alpha_\ell}$ where $\alpha_1,\ldots,\alpha_\ell\in R^+$ are such that $s_{\alpha_\ell}\cdots s_{\alpha_1}=w_o$.

Let us now prove as an intermediate step that we have a saturated chain 
$$
1\lessdot s_{\alpha_1}\lessdot s_{\alpha_2}s_{\alpha_1}\lessdot\cdots\lessdot s_{\alpha_{\ell-1}}\cdots s_{\alpha_1}\lessdot s_{\alpha_\ell}\cdots s_{\alpha_1}=w_o
$$
if and only if we have a saturated chain
$$
1\lessdot s_{\alpha_\ell}\lessdot s_{\alpha_{\ell-1}}s_{\alpha_\ell}\lessdot\cdots\lessdot s_{\alpha_2}\cdots s_{\alpha_\ell}\lessdot s_{\alpha_1}\cdots s_{\alpha_\ell}=w_o\,.
$$
To see this, just note that the operation of multiplying each member of a saturated chain from $1$ to $w_o$ from the right by $w_o$ takes a saturated chain from $1$ to $w_o$ to a new such chain (cf.\ \cite[Exercise~5.9, Example~5.9.3]{humphreys-coxeter}). Furthermore, this operation on saturated chains is involutive. In this way, we get from the first displayed chain to the second and vice versa using the described operation and the decompositions $w_o=s_{\alpha_1}\cdots s_{\alpha_\ell}$ and $w_o=s_{\alpha_\ell}\cdots s_{\alpha_1}$ respectively. This proves the equivalence stated as an intermediate step.

In view of Proposition~\ref{prop:sat-chain}, the intermediate step tells us that 
\[
\left<x_{\alpha_1}\cdots x_{\alpha_\ell},x_{w_o}\right>=\left<x_{\alpha_\ell}\cdots x_{\alpha_1},x_{w_o}\right>
\] 
which can be rewritten as $\left<\phi,x_{w_o}\right>=\left<\rho(\phi),x_{w_o}\right>$. In view of Proposition~\ref{prop:adjoint}, the last equality is equivalent to $\left<\phi,\bar{\mathscr{S}}(x_{w_o})\right>=\left<\phi,x_{w_o}\right>$. This completes the proof.
\end{proof}

\begin{rem}
\label{rem:quad-rel}

In type $\mathsf{A}$, relations of $\mathbb{Z}_{\geq 0}$-degree $\leq 2$ suffice to prove Proposition~\ref{prop:reflection-ordering} (cf. Example~\ref{ex:relations} and the proof of \cite[Proposition~3.4]{liu}). In general, it is unclear whether relations of $\mathbb{Z}_{\geq 0}$-degree $\leq 2$ will suffice to prove Proposition~\ref{prop:reflection-ordering}. Fortunately, we do not need such a sharper result. For our purposes, it is sufficient to use all the relations in the Woronowicz ideal of $V_W$, i.e.\ to use the nondegeneracy of the Hopf duality pairing between $\mathscr{B}_W$ and itself.

\end{rem}

\begin{rem}

Let $\ell=|R^+|$. According to \cite[Definition~2.1,~2.2, Proposition~2.13]{dyer-reflection-ordering}, a total order $\alpha_1\prec\cdots\prec\alpha_\ell$ on $R^+$ is called a reflection ordering if and only if there exists a reduced expression $s_{\beta_1}\cdots s_{\beta_\ell}$ of $w_o$ such that $\alpha_i=s_{\beta_1}\cdots s_{\beta_{i-1}}(\beta_i)$ for all $1\leq i\leq\ell$. Using this terminology and Remark~\ref{rem:comp-sbar}, Proposition~\ref{prop:reflection-ordering} can be stated as follows: We have $x_{w_o}=x_{\alpha_1}\cdots x_{\alpha_\ell}$ in $\mathscr{B}_W$ for any reflection ordering $\alpha_1\prec\cdots\prec\alpha_\ell$. If we apply $\rho$ to the previous equation, we can equivalently say that $x_{w_o}=x_{\alpha_\ell}\cdots x_{\alpha_1}$ in $\mathscr{B}_W$ for any reflection ordering $\alpha_1\prec\cdots\prec\alpha_\ell$.

\end{rem}

\begin{thm}
\label{thm:first-step}

Let $w\in W$. Let $s_{\beta_1}\cdots s_{\beta_\ell}$ be a reduced expression of $w$. Let $\alpha_i=s_{\beta_1}\cdots$ $s_{\beta_{i-1}}(\beta_i)$ and let $y_i=x_{\alpha_i}$ for all $1\leq i\leq\ell$. Then we have
\[
\Delta(\bar{\mathscr{S}}(x_w))=\sum_{J\subseteq\{1,\ldots,\ell\}}\bar{\mathscr{S}}\bigg(\prod_{j\in\bar{J}}x_{\beta_j}\bigg)\otimes\prod_{j\in J}y_j
\]
where $\bar{J}$ denotes the complement of a subset $J$ in $\{1,\ldots,\ell\}$. In particular, it follows that $\Delta(\bar{\mathscr{S}}(x_w))\in\mathscr{B}_W^{>0}\otimes\mathscr{B}_W^{>0}$.

\end{thm}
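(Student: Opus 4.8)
The plan is to reduce the computation of $\Delta(\bar{\mathscr{S}}(x_w))$ to the already-established expansion of $\Delta(x_w)$ by transporting the coproduct through $\bar{\mathscr{S}}$. The key tool is Proposition~\ref{prop:rhoands}\eqref{item:sbaranticolagebra}, which says $\bar{\mathscr{S}}$ is an anti coalgebra homomorphism, i.e.\ $\Delta\circ\bar{\mathscr{S}}=(\bar{\mathscr{S}}\otimes\bar{\mathscr{S}})\circ\tau\circ\Delta$. So first I would apply $\tau$ and $\bar{\mathscr{S}}\otimes\bar{\mathscr{S}}$ to the defining expansion $\Delta(x_w)=\sum_{v\leq w}x_{w/v}\otimes x_v$ of Definition~\ref{def:xwv}, obtaining $\Delta(\bar{\mathscr{S}}(x_w))=\sum_{v\leq w}\bar{\mathscr{S}}(x_v)\otimes\bar{\mathscr{S}}(x_{w/v})$. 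The right-hand tensor factors $\bar{\mathscr{S}}(x_v)$ are precisely the positive monomials $\prod y_i$ coming from the reduced-expression formula in Remark~\ref{rem:positive-image} (using Remark~\ref{rem:comp-sbar}), so I now need to reconcile the indexing by $v\leq w$ with the indexing by subsets $J$ in the claimed formula.

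The bridge is the standard fact that, for a fixed reduced expression $s_{\beta_1}\cdots s_{\beta_\ell}$ of $w$, the subwords $\prod_{j\in J}s_{\beta_j}$ run over all $v\leq w$ as $J$ ranges over subsets (Bruhat subword property, \cite[Theorem~5.10]{humphreys-coxeter}), but not injectively. However, if $J$ is a subset whose subword is not reduced, then $\prod_{j\in J}x_{\beta_j}=0$ in $\tilde{\mathscr{N}}_W$ by the nilCoxeter relations; and when we pass to $y_j=x_{\alpha_j}$ with $\alpha_j=s_{\beta_1}\cdots s_{\beta_{j-1}}(\beta_j)$, the monomial $\prod_{j\in J}y_j$ equals $\bar{\mathscr{S}}$ applied to a ``partial'' product which will likewise vanish for non-reduced $J$ — this is exactly the content of the computation showing $\bar{\mathscr{S}}(x_v)=\prod_{i}y_i$ for a reduced word, combined with $\bar{\mathscr{S}}$ being (anti)multiplicative in the sense of Proposition~\ref{prop:rhoands}\eqref{item:sbaralebgraprop}. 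So I would argue that the sum $\sum_{J\subseteq\{1,\ldots,\ell\}}\bar{\mathscr{S}}(\prod_{j\in\bar J}x_{\beta_j})\otimes\prod_{j\in J}y_j$ has all its ``non-reduced-$J$'' terms equal to zero (because $\prod_{j\in J}y_j=0$ whenever the subword on $J$ is non-reduced — here I would want to verify carefully, via Remark~\ref{rem:comp-sbar} applied to the sequence $\beta_{j}$, $j\in J$, that this monomial is $\bar{\mathscr{S}}$ of $x$ of a non-reduced word hence zero), and the surviving terms, grouped by the element $v=\prod_{j\in J}s_{\beta_j}$ they represent, reassemble to $\bar{\mathscr{S}}(x_v)\otimes$ something. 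Matching the left tensor factor $\bar{\mathscr{S}}(\prod_{j\in\bar J}x_{\beta_j})$ against $\bar{\mathscr{S}}(x_{w/v})$ then requires the same kind of bookkeeping used in the proof of Lemma~\ref{lem:prelim-exp} / Corollary~\ref{cor:prelim-exp}, or alternatively a direct inductive argument on $\ell(w)$ peeling off $\beta_1$.

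Concretely, I would actually prefer to run the whole thing by induction on $\ell$: the base case $\ell=0$ is trivial, and for the inductive step write $x_w=x_{\beta_1}x_{w'}$ with $w'=s_{\beta_1}w$, apply $\bar{\mathscr{S}}$ using Proposition~\ref{prop:rhoands}\eqref{item:sbaralebgraprop} to get $\bar{\mathscr{S}}(x_w)=\bar{\mathscr{S}}(x_{w'}\text{-part})\cdot(\text{grading shift})\cdot\bar{\mathscr{S}}(x_{\beta_1})$ (more precisely $\bar{\mathscr{S}}(x_w)=\bar{\mathscr{S}}(x_{\beta_1})(s_{\beta_1}\bar{\mathscr{S}}(x_{w'}))$ — so I should be careful with the order, possibly instead using $\rho$ and $\mathscr{S}$ directly), then apply $\Delta$ as an algebra map into the braided tensor product $\mathscr{B}_W\underline{\otimes}\mathscr{B}_W$, use $\Delta(\bar{\mathscr{S}}(v))=\bar{\mathscr{S}}(v)\otimes 1+1\otimes\bar{\mathscr{S}}(v)$ for $v\in V_W$ together with the induction hypothesis on $w'$, and collect terms over whether $1\in J$ or not — this reproduces the sum over $J\subseteq\{1,\ldots,\ell\}$. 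The final sentence $\Delta(\bar{\mathscr{S}}(x_w))\in\mathscr{B}_W^{>0}\otimes\mathscr{B}_W^{>0}$ is then immediate: every $\alpha_i\in R^+$ by \cite[Proposition~5.7]{humphreys-coxeter}, so each $y_j=x_{\alpha_j}$ is a positive generator and each $\bar{\mathscr{S}}(\prod_{j\in\bar J}x_{\beta_j})$ is positive by Remark~\ref{rem:positive-image}; and the term $J=\{1,\ldots,\ell\}$ contributes $1\otimes \prod_j y_j$ with $\prod_j y_j=\bar{\mathscr{S}}(x_w)\neq 0$ by Proposition~\ref{prop:rhoands}\eqref{item:involution}, so the whole expression is a nonzero element of $\mathscr{B}_W^{\geq 0}\otimes\mathscr{B}_W^{\geq 0}$, i.e.\ lies in $\mathscr{B}_W^{>0}\otimes\mathscr{B}_W^{>0}$. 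The main obstacle I anticipate is the grading-twist bookkeeping in the braided tensor product: keeping track of the $W$-degree factors $g$ that appear when commuting $\Delta(x_{\beta_1})=x_{\beta_1}\otimes 1+1\otimes x_{\beta_1}$ past the inductive expansion, and checking that they conspire exactly to turn the ``inner'' simple roots $\beta_i$ into the ``rotated'' roots $\alpha_i=s_{\beta_1}\cdots s_{\beta_{i-1}}(\beta_i)$ on the right tensor factor while leaving $\bar{\mathscr{S}}(\prod_{\bar J}x_{\beta_j})$ on the left — this is the same phenomenon as in Remark~\ref{rem:comp-sbar} and Remark~\ref{rem:positive-image}, so it should go through, but it is the step that needs genuine care rather than being purely formal.
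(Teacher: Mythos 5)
Your induction is exactly the paper's proof: peel off $\beta_1$, write $\bar{\mathscr{S}}(x_w)=x_{\beta_1}(s_{\beta_1}\bar{\mathscr{S}}(x_{w'}))$ via Proposition~\ref{prop:rhoands}\eqref{item:identity},\eqref{item:sbaralebgraprop}, expand $\Delta$ through the twisted multiplication using primitivity of $x_{\beta_1}$ and the induction hypothesis for $w'$, and split the resulting sum over $1\in J$ versus $1\notin J$; your first paragraph's route through the anti-coalgebra identity is rightly abandoned, since matching the left tensor factors against $\bar{\mathscr{S}}(x_{w/v})$ is precisely what Corollary~\ref{cor:sxwv} later extracts \emph{from} this theorem. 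One small repair to the last step: ``the $J=\{1,\ldots,\ell\}$ term is $1\otimes\bar{\mathscr{S}}(x_w)\neq 0$, hence the sum is nonzero'' is not by itself conclusive, because nontrivial $\mathbb{Z}_{\geq 0}$-linear relations among positive monomials are not a priori excluded (this is exactly why strict positivity needs a separate argument later); either observe that this is the unique summand whose left factor has $\mathbb{Z}_{\geq 0}$-degree zero, so it cannot be cancelled, or argue as the paper does that $\Delta(h)=0$ forces $h=0$ by the counit axiom while $\bar{\mathscr{S}}(x_w)\neq 0$ because $\bar{\mathscr{S}}$ is an involution and $x_w$ is a basis vector of $\tilde{\mathscr{N}}_W$.
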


\begin{proof}

Let the notation be as in the statement. From Remark~\ref{rem:positive-image} and from the nilCoxeter relations, it is clear that $\Delta(\bar{\mathscr{S}}(x_w))\in\mathscr{B}_W^{\geq 0}\otimes\mathscr{B}_W^{\geq 0}$ once the formula in the statement of the theorem is established. In any Hopf algebra $H$, it is clear from the counit axiom that $\Delta(h)=0$ if and only if $h=0$ where $h\in H$. Hence, we have $\Delta(\bar{\mathscr{S}}(x_w))\neq 0$ if and only if $\bar{\mathscr{S}}(x_w)\neq 0$ if and only if $x_w\neq 0$ where the last equivalence follows from Proposition~\ref{prop:rhoands}\eqref{item:involution}. Since $x_w$ is a basis vector of $\tilde{\mathscr{N}}_W$, we know that $x_w\neq 0$. This shows that $\Delta(\bar{\mathscr{S}}(x_w))\in\mathscr{B}_W^{>0}\otimes\mathscr{B}_W^{>0}$ once the formula in the statement of the theorem is established. 

Let us now prove this very formula by induction on the length of $w$. If $w=1$, then both sides of the claimed formula equal $1\otimes 1$ (cf. Proposition~\ref{prop:rhoands}\eqref{item:identity}) and there is nothing to prove. For the induction step, suppose that $\ell>0$ and that the formula is proved for all Coxeter group elements of length $<\ell$. Let $\alpha_i'=s_{\beta_2}\cdots s_{\beta_{i-1}}(\beta_i)$ and $y_i'=x_{\alpha_i'}$ for all $2\leq i\leq\ell$. Let $\beta=\beta_1$ and $w'=s_\beta w$ for brevity. Note that $s_\beta y_i'=y_i$ for all $2\leq i\leq\ell$. If we now apply the induction hypothesis to $w'$ and to the reduced expression $s_{\beta_2}\cdots s_{\beta_\ell}$ of $w'$, we find by definition of the comultiplication and by Proposition~\ref{prop:rhoands}\eqref{item:identity},\eqref{item:sbaralebgraprop} that 
\[
\Delta(\bar{\mathscr{S}}(x_w))=\Delta(x_\beta(s_\beta\bar{\mathscr{S}}(x_{w'})))=(x_\beta\otimes 1+1\otimes x_\beta)\cdot s_\beta\sum_{J'\subseteq\{2,\ldots,\ell\}}\bar{\mathscr{S}}\bigg(\prod_{j\in\bar{J}'}x_{\beta_j}\bigg)\otimes\prod_{j\in J'}y_j'
\]
where $\cdot$ denotes the twisted multiplication in $\mathscr{B}_W\underline{\otimes}\mathscr{B}_W$ and $\bar{J}'$ denotes the complement of a subset $J'$ in $\{2,\ldots,\ell\}$. If we multiply out, the first summand becomes in view of proposition loc.\ cit.\
\[
\sum_{J\in\{1,\ldots,\ell\}\colon 1\notin J}\bar{\mathscr{S}}\bigg(\prod_{j\in\bar{J}}x_{\beta_j}\bigg)\otimes\prod_{j\in J}y_j
\]
and the second summand becomes
\[
\sum_{J\in\{1,\ldots,\ell\}\colon 1\in J}\bar{\mathscr{S}}\bigg(\prod_{j\in\bar{J}}x_{\beta_j}\bigg)\otimes\prod_{j\in J}y_j
\]
where in both of the two previous displayed equations $\bar{J}$ denotes the complement of a subset $J$ in $\{1,\ldots,\ell\}$ as in the statement of the theorem. If we combine the two summands, we indeed find the desired expression for $\Delta(\bar{\mathscr{S}}(x_w))$ as it was claimed. This completes the induction step and hence the proof of the theorem.
\end{proof}

\todo[inline,color=green]{The following corollary (cf. the commented corollary below and also the commented corollary after Proposition~\ref{prop:simple-left-derivative}) becomes superfluous when I prove the stronger results in the subsection on \enquote{Strict positivity}. Yet, these results (statements and proofs) still have to be included. I can mention the particular case $x_{w/1}=x_w$ in the general situation for $x_{w/v}$. But it does not make sense to refer to the original references as they are too specific. And then it is questionable if I mention the particular case anyway.

{\bf Corollary.} Let $v,w\in W$. Then the element $\protect\overrightarrow{D}_v(\bar{\mathscr{S}}(x_w))$ of $\mathscr{B}_W$ is positive.

[The corollaries are now trivial special cases which follow from Theorem~\ref{thm:positivity} and Corollary~\ref{cor:formal-kirillov}. I don't even mention them.]}

\begin{cor}
\label{cor:sxwv}

Let $v,w\in W$ be such that $v\leq w$. Let $s_{\beta_1}\cdots s_{\beta_\ell}$ be a reduced expression of $w$. Let $\alpha_i=s_{\beta_1}\cdots s_{\beta_{i-1}}(\beta_i)$ and let $y_i=x_{\alpha_i}$ for all $1\leq i\leq\ell$. Then we have
\[
\bar{\mathscr{S}}(x_{w/v})=\sum_J\prod_{j\in J}y_j
\]
where the sum ranges over all subsets $J$ of $\{1,\ldots,\ell\}$ such that the product $\prod_{j\in \bar{J}}s_{\beta_j}$ is a reduced expression of $v$. Here, as in Theorem~\ref{thm:first-step}, the set $\bar{J}$ denotes the complement of a subset $J$ in $\{1,\ldots,\ell\}$. In particular, it follows that $\bar{\mathscr{S}}(x_{w/v})$ is a positive element of $\mathscr{B}_W$.

\end{cor}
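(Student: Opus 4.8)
The plan is to read the formula for $\bar{\mathscr{S}}(x_{w/v})$ off Theorem~\ref{thm:first-step}, by computing $\Delta(\bar{\mathscr{S}}(x_w))$ in a second, independent way and comparing the two expressions tensor-factor by tensor-factor.

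First I would use that $\bar{\mathscr{S}}$ is an anti coalgebra homomorphism for the symmetric braiding, i.e.\ Proposition~\ref{prop:rhoands}\eqref{item:sbaranticolagebra}, which gives $\Delta\circ\bar{\mathscr{S}}=(\bar{\mathscr{S}}\otimes\bar{\mathscr{S}})\circ\tau\circ\Delta$. Feeding in the defining expansion $\Delta(x_w)=\sum_{v\leq w}x_{w/v}\otimes x_v$ of Definition~\ref{def:xwv} and applying $\tau$ (which swaps the two factors) then yields
\[
\Delta(\bar{\mathscr{S}}(x_w))=\sum_{v\leq w}\bar{\mathscr{S}}(x_v)\otimes\bar{\mathscr{S}}(x_{w/v})\,.
\]
On the other hand, I would take the right-hand side of Theorem~\ref{thm:first-step} and simplify its first tensor factor: by the nilCoxeter relations, $\prod_{j\in\bar J}x_{\beta_j}$ equals $x_v$ whenever $\prod_{j\in\bar J}s_{\beta_j}$ is a reduced expression of some $v$ (necessarily $v\leq w$), and vanishes otherwise. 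Collecting the surviving summands according to the value of this $v$ turns Theorem~\ref{thm:first-step} into
\[
\Delta(\bar{\mathscr{S}}(x_w))=\sum_{v\leq w}\bar{\mathscr{S}}(x_v)\otimes\Bigg(\sum_{J}\prod_{j\in J}y_j\Bigg)\,,
\]
the inner sum ranging over those $J\subseteq\{1,\ldots,\ell\}$ for which $\prod_{j\in\bar J}s_{\beta_j}$ is a reduced expression of $v$, and the outer index set being indeed $\{v:v\leq w\}$ by the subexpression characterization of the Bruhat order.

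Now I would compare the two displays. Since $\bar{\mathscr{S}}$ is an involution (Proposition~\ref{prop:rhoands}\eqref{item:involution}) and $\{x_v:v\in W\}$ is a basis of $\tilde{\mathscr{N}}_W\subseteq\mathscr{B}_W$, the vectors $\{\bar{\mathscr{S}}(x_v):v\leq w\}$ are linearly independent, so the second tensor factors may be equated for each fixed $v\leq w$; this gives exactly $\bar{\mathscr{S}}(x_{w/v})=\sum_J\prod_{j\in J}y_j$. Positivity is then immediate: each $\alpha_i$ is a positive root by \cite[Proposition~5.7]{humphreys-coxeter}, so every $y_j=x_{\alpha_j}$ and hence every monomial $\prod_{j\in J}y_j$ lies in $\mathscr{B}_W^{\geq 0}$, whence $\bar{\mathscr{S}}(x_{w/v})\in\mathscr{B}_W^{\geq 0}$. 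There is no genuine obstacle once Theorem~\ref{thm:first-step} is available; the only points that need a little care are applying the anti coalgebra identity with the correct ($\tau$-)swap of tensor factors, and the linear-independence step that licenses reading off the second tensor factor.
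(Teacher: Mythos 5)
Your proposal is correct and follows essentially the same route as the paper's own proof: both apply Proposition~\ref{prop:rhoands}\eqref{item:sbaranticolagebra} to $\Delta(x_w)$ to obtain $\Delta(\bar{\mathscr{S}}(x_w))=\sum_{u\leq w}\bar{\mathscr{S}}(x_u)\otimes\bar{\mathscr{S}}(x_{w/u})$, regroup the formula of Theorem~\ref{thm:first-step} by the element $u$ represented by $\prod_{j\in\bar J}s_{\beta_j}$ (using the nilCoxeter relations and the subexpression characterization of the Bruhat order), and then read off the second tensor factor via the linear independence of the $\bar{\mathscr{S}}(x_u)$. No gaps; the positivity conclusion is handled exactly as in the paper.
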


\begin{proof}

Let the notation be as in the statement. Once the claimed formula in the statement of the corollary is proved, it is clear that $\bar{\mathscr{S}}(x_{w/v})$ is a positive element of $\mathscr{B}_W$. Let us now prove this formula. By Proposition~\ref{prop:rhoands}\eqref{item:sbaranticolagebra}, it follows that we have
\[
\Delta(\bar{\mathscr{S}}(x_w))=\sum_{u\leq w}\bar{\mathscr{S}}(x_u)\otimes\bar{\mathscr{S}}(x_{w/u})\,.
\]
On the other hand, in view of the characterization of the Bruhat order in terms of reduced subexpressions (cf. \cite[Corollary~5.8(b), Theorem~5.10]{humphreys-coxeter}, note that $\prod_{j\in\bar{J}}x_{\beta_j}=0$ where $\bar{J}\subseteq\{1,\ldots,\ell\}$ unless $u=\prod_{j\in\bar{J}}s_{\beta_j}$ is a reduced expression because of the nilCoxeter relations) we can write the formula in the statement of Theorem~\ref{thm:first-step} as 
\[
\Delta(\bar{\mathscr{S}}(x_w))=\sum_{u\leq w}\bar{\mathscr{S}}(x_u)\otimes\sum_J\prod_{j\in J}y_j
\]
where the sum in each summand of the above expression depends on $u\leq w$ and ranges over all subsets $J$ of $\{1,\ldots,\ell\}$ such that the product $\prod_{j\in\bar{J}}s_{\beta_j}$ is a reduced expression of $u$. As in the statement of the corollary, the set $\bar{J}$ denotes the complement of a subset $J$ in $\{1,\ldots,\ell\}$. Now, we know that the vectors $x_u$ where $u\leq w$ form a linearly independent system and so do the vectors $\bar{\mathscr{S}}(x_u)$ where $u\leq w$ by Proposition~\ref{prop:rhoands}\eqref{item:involution}. By comparison of the summands corresponding to $u=v$ in the two previous displayed equations, the formula in the statement of the corollary follows from the observation in the last sentence. 
\end{proof}

\begin{rem}

We note that Corollary~\ref{cor:sxwv} makes even sense in the case that $v,w\in W$ are such that $v\not\leq w$. Both sides of the formula in Corollary~\ref{cor:sxwv} are then equal to zero (cf.\ \cite[Theorem~5.10]{humphreys-coxeter}). In the proof of Proposition~\ref{prop:recursive-prel}, we make use of the said formula in this case too. 

\end{rem}

\begin{thm}[{Positivity \cite[Theorem~3.5]{liu}}]
\label{thm:positivity}

Let $v,w\in W$ be such that $v\leq w$. Then we have $x_{w/v}=\bar{\mathscr{S}}(x_{vw_o/ww_o})$. In particular, it follows that $x_{w/v}$ is a positive element of $\mathscr{B}_W$.

\end{thm}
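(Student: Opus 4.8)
The plan is to deduce the whole statement from the single algebraic identity $x_{w/v}=\bar{\mathscr{S}}(x_{vw_o/ww_o})$ for all $v\leq w$. Once that identity is known, positivity is automatic: since right multiplication by $w_o$ reverses the Bruhat order, $v\leq w$ gives $ww_o\leq vw_o$, so Corollary~\ref{cor:sxwv} applied to the pair $(ww_o,vw_o)$ expresses $\bar{\mathscr{S}}(x_{vw_o/ww_o})$ as a $\mathbb{Z}_{\geq 0}$-linear combination of products of the $x_\alpha$ with $\alpha\in R^+$, hence as an element of $\mathscr{B}_W^{\geq 0}$. Two elementary facts feed the proof of the identity. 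First, $\overrightarrow{D}_a(x_{w_o})=x_{aw_o}$ for every $a\in W$: this is the second assertion of Proposition~\ref{prop:simple-left-derivative}, whose hypothesis $\ell(w_o)=\ell(a^{-1})+\ell(aw_o)$ holds automatically because $\ell(aw_o)=\ell(w_o)-\ell(a)$. Second, $(x_z)\overleftarrow{D}_{x_u}=x_{z/u^{-1}}$ for all $z,u\in W$: unwinding the definition of the braided right partial derivative on $\Delta(x_z)=\sum_{u'\leq z}x_{z/u'}\otimes x_{u'}$ yields $\sum_{u'\leq z}x_{z/u'}\langle x_{u'},x_u\rangle$, and $\langle x_{u'},x_u\rangle=\delta_{u',u^{-1}}$ by Proposition~\ref{prop:sat-chain}. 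Recall also $\rho(x_u)=x_{u^{-1}}$ and $\overrightarrow{D}_x^\circ(\phi)=(\phi)\overleftarrow{D}_{\rho(x)}$.

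First I would prove the identity in the case $w=w_o$, i.e.\ $x_{w_o/v}=\bar{\mathscr{S}}(x_{vw_o})$ for all $v\in W$. Combining the first fact above, Proposition~\ref{prop:reflection-ordering} ($\bar{\mathscr{S}}(x_{w_o})=x_{w_o}$), the intertwiner $\overrightarrow{D}_\xi\circ\bar{\mathscr{S}}=\bar{\mathscr{S}}\circ\overrightarrow{D}_\xi^\circ$ of Remark~\ref{rem:liu2.10(d)}, the definition of $\overrightarrow{D}^\circ$ with $\rho(x_v)=x_{v^{-1}}$, and the second fact above, one gets
\[
x_{vw_o}=\overrightarrow{D}_{x_v}(\bar{\mathscr{S}}(x_{w_o}))=\bar{\mathscr{S}}(\overrightarrow{D}_{x_v}^\circ(x_{w_o}))=\bar{\mathscr{S}}((x_{w_o})\overleftarrow{D}_{x_{v^{-1}}})=\bar{\mathscr{S}}(x_{w_o/v}).
\]
Since $\bar{\mathscr{S}}$ is an involution (Proposition~\ref{prop:rhoands}), this is the desired base case.

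Next I would bootstrap to arbitrary $v\leq w$. Using the first fact with $a=ww_o$, the compatibility of left and right braided partial derivatives from Remark~\ref{rem:left-right-simult}, and the second fact with $z=w$, $u=v^{-1}$, one has
\[
x_{w/v}=(x_w)\overleftarrow{D}_{x_{v^{-1}}}=(\overrightarrow{D}_{x_{ww_o}}(x_{w_o}))\overleftarrow{D}_{x_{v^{-1}}}=\overrightarrow{D}_{x_{ww_o}}((x_{w_o})\overleftarrow{D}_{x_{v^{-1}}})=\overrightarrow{D}_{x_{ww_o}}(x_{w_o/v}).
\]
Substituting the base case $x_{w_o/v}=\bar{\mathscr{S}}(x_{vw_o})$ and applying Remark~\ref{rem:liu2.10(d)}, the definition of $\overrightarrow{D}^\circ$ and the second fact once more, the right-hand side becomes
\[
\overrightarrow{D}_{x_{ww_o}}(\bar{\mathscr{S}}(x_{vw_o}))=\bar{\mathscr{S}}(\overrightarrow{D}_{x_{ww_o}}^\circ(x_{vw_o}))=\bar{\mathscr{S}}((x_{vw_o})\overleftarrow{D}_{x_{(ww_o)^{-1}}})=\bar{\mathscr{S}}(x_{vw_o/ww_o}),
\]
which proves the identity; for $v\not\leq w$ both sides vanish, matching the convention.

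The only genuinely substantive ingredient is the base case, which is carried entirely by Proposition~\ref{prop:reflection-ordering}; that proposition in turn uses the nondegeneracy of the full Hopf duality pairing on $\mathscr{B}_W$, i.e.\ all the Woronowicz relations and not merely the quadratic ones, so this is where the real difficulty sits. Everything else is formal bookkeeping with the adjunctions between multiplication and (opposite) braided partial derivatives. If one prefers to avoid braided derivatives in the bootstrap step, there is an alternative: from the anti-coalgebra property of $\bar{\mathscr{S}}$ (Proposition~\ref{prop:rhoands}) one gets $\Delta(\bar{\mathscr{S}}(x_{vw_o}))=\sum_{b\leq vw_o}\bar{\mathscr{S}}(x_b)\otimes\bar{\mathscr{S}}(x_{vw_o/b})$, while Remark~\ref{rem:coproduct-xwv} together with the base case gives $\Delta(\bar{\mathscr{S}}(x_{vw_o}))=\Delta(x_{w_o/v})=\sum_{v\leq u}\bar{\mathscr{S}}(x_{uw_o})\otimes x_{u/v}$; reindexing $b=uw_o$ and using the linear independence of the vectors $\bar{\mathscr{S}}(x_b)$ yields $x_{u/v}=\bar{\mathscr{S}}(x_{vw_o/uw_o})$ for all $v\leq u$.
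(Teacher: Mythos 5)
Your proof is correct and follows essentially the same route as the paper's: both reduce to the chain $x_{w/v}=\overrightarrow{D}_{ww_o}(x_{w_o})\overleftarrow{D}_{v^{-1}}$, insert $\bar{\mathscr{S}}(x_{w_o})=x_{w_o}$ (Proposition~\ref{prop:reflection-ordering}), and commute $\bar{\mathscr{S}}$ past the derivatives via Remark~\ref{rem:liu2.10(d)}, with positivity then read off from Corollary~\ref{cor:sxwv}. The only cosmetic difference is that you evaluate the final expression $\overrightarrow{D}_{ww_o}(\bar{\mathscr{S}}(x_{vw_o}))$ by applying the intertwiner a second time, whereas the paper expands the coproduct of $\bar{\mathscr{S}}(x_{vw_o})$ and invokes Propositions~\ref{prop:adjoint} and~\ref{prop:sat-chain}; these are equivalent.
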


\begin{proof}

Let $v,w$ be as in the statement. Once the claimed formula for $x_{w/v}$ is established, the positivity of $x_{w/v}$ is evident from Corollary~\ref{cor:sxwv}. (Recall that $v\leq w$ if and only if $ww_o\leq vw_o$ by \cite[Exercise~5.9, Example~5.9.3]{humphreys-coxeter}.) We now prove the claimed formula for $x_{w/v}$. Since $\ell(w_o)=\ell(w_ow^{-1})+\ell(w)$, we know by Proposition~\ref{prop:simple-left-derivative} that $\overrightarrow{D}_{ww_o}(x_{w_o})=x_w$. By the definition of the braided right partial derivatives and Proposition~\ref{prop:sat-chain}, we also have $(x_w)\overleftarrow{D}_{v^{-1}}=x_{w/v}$. (For a more general result separated from the details of this proof, compare the previous equation with Proposition~\ref{prop:reflection-skew}.) In view of the two previous equations, Proposition~\ref{prop:liu2.10(d)}, \ref{prop:reflection-ordering} and Remark~\ref{rem:left-right-simult}, \ref{rem:liu2.10(d)}, we can write
\[
x_{w/v}=\overrightarrow{D}_{ww_o}(x_{w_o})\overleftarrow{D}_{v^{-1}}=\overrightarrow{D}_{ww_o}(\bar{\mathscr{S}}(x_{w_o}))\overleftarrow{D}_{v^{-1}}=\big(\overrightarrow{D}_{ww_o}\circ\bar{\mathscr{S}}\circ\overrightarrow{D}_v\big)(x_{w_o})=\overrightarrow{D}_{ww_o}(\bar{\mathscr{S}}(x_{vw_o}))
\]
where we used in the last equality again Proposition~\ref{prop:simple-left-derivative} which is possible because $\ell(w_o)=\ell(v^{-1})+\ell(vw_o)$. By Proposition~\ref{prop:rhoands}\eqref{item:sbaranticolagebra}, we know that
\[
\Delta(\bar{\mathscr{S}}(x_{vw_o}))=\sum_{u\leq vw_o}\bar{\mathscr{S}}(x_u)\otimes\bar{\mathscr{S}}(x_{vw_o/u})\,.
\]
If we plug this expression into the definition of the braided left partial derivative, we find in view of Proposition~\ref{prop:adjoint}, \ref{prop:sat-chain} that
\[
\overrightarrow{D}_{ww_o}(\bar{\mathscr{S}}(x_{vw_o}))=\sum_{u\leq vw_o}\left<x_{ww_o},\bar{\mathscr{S}}(x_u)\right>\bar{\mathscr{S}}(x_{vw_o/u})=\bar{\mathscr{S}}(x_{vw_o/ww_o})\,.
\]
If we combine the previous and the third previous displayed formula, we get the claimed formula for $x_{w/v}$. This completes the proof.
\end{proof}

\begin{cor}[{\cite[Corollary~3.6]{liu}}]
\label{cor:xwv-pos}

Let $v,w\in W$ be such that $v\leq w$. Let $s_{\beta_1}\ldots s_{\beta_\ell}$ be a reduced expression of $vw_o$. Let $\alpha_i=s_{\beta_1}\cdots s_{\beta_{i-1}}(\beta_i)$ and $y_i=x_{\alpha_i}$ for all $1\leq i\leq\ell$. Then we have
\[
x_{w/v}=\sum_J\prod_{j\in J}y_j
\]
where the sum ranges over all subsets $J$ of $\{1,\ldots,\ell\}$ such that the product $\prod_{j\in\bar{J}}s_{\beta_j}$ is a reduced expression of $ww_o$. Here, the set $\bar{J}$ denotes the complement of a subset $J$ in $\{1,\ldots,\ell\}$.

\end{cor}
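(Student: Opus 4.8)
The plan is to deduce this corollary immediately by chaining the positivity identity of Theorem~\ref{thm:positivity} with the reduced-expression formula of Corollary~\ref{cor:sxwv}; no new idea is needed, and essentially all that has to be done is to match up the two indexing conventions.

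First I would record the order-theoretic bookkeeping. Since $v\leq w$, right multiplication by $w_o$ reverses the Bruhat order (cf.\ \cite[Exercise~5.9, Example~5.9.3]{humphreys-coxeter}), hence $ww_o\leq vw_o$; and since $s_{\beta_1}\cdots s_{\beta_\ell}$ is by hypothesis a reduced expression of $vw_o$, we have $\ell(vw_o)=\ell$. Consequently the pair $(vw_o,ww_o)$ is an admissible input for Corollary~\ref{cor:sxwv}, with the ambient element ``$w$'' there played by $vw_o$, the element ``$v$'' there played by $ww_o$, and the fixed reduced expression being $s_{\beta_1}\cdots s_{\beta_\ell}$, with associated roots $\alpha_i=s_{\beta_1}\cdots s_{\beta_{i-1}}(\beta_i)$ and vectors $y_i=x_{\alpha_i}$ exactly as in the statement.

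Next I would apply Theorem~\ref{thm:positivity} to rewrite $x_{w/v}=\bar{\mathscr{S}}(x_{vw_o/ww_o})$, and then apply Corollary~\ref{cor:sxwv} to the pair $(vw_o,ww_o)$ with the above data, obtaining
\[
\bar{\mathscr{S}}(x_{vw_o/ww_o})=\sum_J\prod_{j\in J}y_j
\]
where the sum ranges over all $J\subseteq\{1,\ldots,\ell\}$ such that $\prod_{j\in\bar J}s_{\beta_j}$ is a reduced expression of $ww_o$ ($\bar J$ the complement in $\{1,\ldots,\ell\}$, the product taken in the natural order on $\mathbb{Z}$). This is verbatim the right-hand side claimed in the corollary, so combining the two displayed equalities finishes the proof. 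The only thing deserving a word of care — and the ``main obstacle,'' such as it is — is to note that the conventions for $\bar J$ and for $\prod_J$ are the same in both statements, so that the index set of the sum produced by Corollary~\ref{cor:sxwv} literally coincides with the one in the present statement; there is no substantive difficulty beyond this.
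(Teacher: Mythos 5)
Your proposal is correct and is exactly the paper's argument: the paper also deduces this corollary immediately from Theorem~\ref{thm:positivity} combined with Corollary~\ref{cor:sxwv} applied to the interval $ww_o\leq vw_o$ and the given reduced expression of $vw_o$. Your extra bookkeeping about order reversal under right multiplication by $w_o$ and the matching of the indexing conventions is the right (and only) thing to check.
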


\begin{proof}

This corollary follows immediately from Theorem~\ref{thm:positivity} and Corollary~\ref{cor:sxwv}.
\end{proof}

\begin{cor}
\label{cor:xwv-circ-pos}

Let $v,w\in W$ be such that $v\leq w$. Let $s_{\beta_1}\cdots s_{\beta_\ell}$ be a reduced expression of $w_ov$. Let $\alpha_i=s_{\beta_\ell}\cdots s_{\beta_{i+1}}(\beta_i)$ and $y_i=x_{\alpha_i}$ for all $1\leq i\leq\ell$. Then we have
\[
x_{w/v}^\circ=\sum_J\prod_{j\in J}y_j
\]
where the sum ranges over all subsets $J$ of $\{1,\ldots,\ell\}$ such that the product $\prod_{j\in\bar{J}}s_{\beta_j}$ is a reduced expression of $w_ow$. Here, the set $\bar{J}$ denotes the complement of a subset $J$ in $\{1,\ldots,\ell\}$.

\end{cor}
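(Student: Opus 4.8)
The plan is to deduce Corollary~\ref{cor:xwv-circ-pos} from Corollary~\ref{cor:xwv-pos} by transporting the latter formula through the anti algebra homomorphism $\rho$, using the definition $x_{w/v}^\circ=\rho(x_{w^{-1}/v^{-1}})$ from Notation~\ref{not:xwvcirc}.

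First I would produce the reversed combinatorial data. Since $w_o$ is an involution, $(w_ov)^{-1}=v^{-1}w_o$, so reversing the given reduced expression $s_{\beta_1}\cdots s_{\beta_\ell}$ of $w_ov$ yields a reduced expression $s_{\beta_\ell}\cdots s_{\beta_1}$ of $v^{-1}w_o$. Set $\gamma_i=\beta_{\ell+1-i}$, so that $s_{\gamma_1}\cdots s_{\gamma_\ell}$ is precisely this reduced expression of $v^{-1}w_o$. Since $v\le w$ if and only if $v^{-1}\le w^{-1}$, Corollary~\ref{cor:xwv-pos} applies to the pair $v^{-1}\le w^{-1}$ with the reduced expression $s_{\gamma_1}\cdots s_{\gamma_\ell}$ of $v^{-1}w_o$, and gives
\[
x_{w^{-1}/v^{-1}}=\sum_{J'}\prod_{j\in J'}x_{\alpha_j'}\,,\qquad\alpha_j'=s_{\gamma_1}\cdots s_{\gamma_{j-1}}(\gamma_j)\,,
\]
where $J'$ ranges over subsets of $\{1,\dots,\ell\}$ such that $\prod_{j\in\bar{J'}}s_{\gamma_j}$ is a reduced expression of $w^{-1}w_o$.

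Then I would apply $\rho$ and reindex. By Proposition~\ref{prop:rhoands}\eqref{item:rhoantialgebra},\eqref{item:identity}, the map $\rho$ is an anti algebra homomorphism fixing $\mathscr{B}_W^1=V_W$ pointwise, so it reverses each monomial $\prod_{j\in J'}x_{\alpha_j'}$ while fixing its letters. A short computation of the initial-segment roots using $\gamma_i=\beta_{\ell+1-i}$ shows $\alpha_j'=s_{\beta_\ell}\cdots s_{\beta_{i+1}}(\beta_i)=\alpha_i$ for $i=\ell+1-j$; hence, writing $J=\{\ell+1-j\mid j\in J'\}$, one gets $\rho\bigl(\prod_{j\in J'}x_{\alpha_j'}\bigr)=\prod_{i\in J}x_{\alpha_i}=\prod_{i\in J}y_i$, the product now being taken in increasing order of $i$. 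Likewise, reversing the word $\prod_{j\in\bar{J'}}s_{\gamma_j}$ and inverting the resulting group element (again using that $w_o$ is an involution) shows that $\prod_{j\in\bar{J'}}s_{\gamma_j}$ is a reduced expression of $w^{-1}w_o$ if and only if $\prod_{i\in\bar J}s_{\beta_i}$ is a reduced expression of $w_ow$, where $\bar J$ denotes the complement of $J$ in $\{1,\dots,\ell\}$. Since $J\mapsto\{\ell+1-j\mid j\in J\}$ is an involution on the subsets of $\{1,\dots,\ell\}$ and respects complements, summing over the admissible $J'$ is the same as summing over the admissible $J$, and combining everything we obtain
\[
x_{w/v}^\circ=\rho(x_{w^{-1}/v^{-1}})=\sum_J\prod_{j\in J}y_j\,,
\]
which is the asserted identity.

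The main obstacle here is purely bookkeeping: one must track simultaneously the reversal of the reduced word (forced because $\rho$ is anti-multiplicative), the induced relabelling $\alpha_i\leftrightarrow\alpha'_{\ell+1-i}$ of the associated roots, and the compatibility of both reversals with taking complements in $\{1,\dots,\ell\}$. Conceptually nothing is needed beyond Corollary~\ref{cor:xwv-pos} and the elementary properties of $\rho$ recorded in Proposition~\ref{prop:rhoands}.
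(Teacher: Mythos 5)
Your proposal is correct and is essentially the paper's own proof: the paper deduces the corollary in one line from Corollary~\ref{cor:xwv-pos} applied to $v^{-1},w^{-1}$, the compatibility of the Bruhat order with inversion and multiplication by $w_o$, and the definition of $\rho$ and $x_{w/v}^\circ$. You have simply written out in full the reindexing and reversal bookkeeping that the paper leaves implicit, and all of those details check out.
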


\begin{proof}

This corollary follows immediately from Corollary~\ref{cor:xwv-pos} applied to $v^{-1},w^{-1}$, \cite[Exercise~5.9]{humphreys-coxeter} and from the definition of $\rho$ and $x_{w/v}^\circ$.
\end{proof}

\todo[inline,color=green]{Circle analogue of the previous corollary is supposed to come. [It is there: Corollary~\ref{cor:xwv-circ-pos}.] Same holds for monomial properties of order one and two (for the latter case all in one statement [cf.\ Corollary~\ref{cor:monomial-one}, Theorem~\ref{thm:monomial-two}]). Also, circle analogue of the recursive formula [cf.\ Corollary~\ref{cor:recursive-circ}]. Note, for Theorem~\ref{thm:positivity} there is no circle analogue because the map $\bar{\mathscr{S}}$ changes.}

\begin{prop}
\label{prop:recursive-prel}

Let $v,w\in W$ be such that $v\leq w$. Let $\beta\in\Delta$ be such that $ws_\beta<w$. Let $v'=vs_\beta$, $w'=ws_\beta$, $\alpha=-w(\beta)$ for brevity. Then we have $\bar{\mathscr{S}}(x_{w/v})=\bar{\mathscr{S}}(x_{w'/v})x_\alpha$ if $v<v'$ and $=\bar{\mathscr{S}}(x_{w'/v})x_\alpha+\bar{\mathscr{S}}(x_{w'/v'})$ if $v>v'$.

\end{prop}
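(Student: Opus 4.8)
The plan is to derive the recursion directly from the explicit positive formula of Corollary~\ref{cor:sxwv}, evaluated on a well-chosen reduced expression of $w$. Since $ws_\beta<w$, I can pick a reduced expression $s_{\beta_1}\cdots s_{\beta_\ell}$ of $w$ with $\beta_\ell=\beta$; then $s_{\beta_1}\cdots s_{\beta_{\ell-1}}$ is a reduced expression of $w'=ws_\beta$, and the roots $\alpha_i=s_{\beta_1}\cdots s_{\beta_{i-1}}(\beta_i)$ attached to it are, for $i\leq\ell-1$, literally those attached to the reduced expression of $w'$, while $\alpha_\ell=w'(\beta)=ws_\beta(\beta)=-w(\beta)=\alpha$. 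So with $y_i=x_{\alpha_i}$ we get $y_\ell=x_\alpha$, and Corollary~\ref{cor:sxwv} presents $\bar{\mathscr{S}}(x_{w/v})=\sum_J\prod_{j\in J}y_j$, the sum over subsets $J\subseteq\{1,\ldots,\ell\}$ whose complement $\bar J$ satisfies that $\prod_{j\in\bar J}s_{\beta_j}$ is a reduced expression of $v$.

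The next step is to split this sum according to whether or not $\ell\in J$. If $\ell\in J$, then $\bar J\subseteq\{1,\ldots,\ell-1\}$ and $\prod_{j\in J}y_j=\big(\prod_{j\in J\setminus\{\ell\}}y_j\big)x_\alpha$; running over all such $J$ and applying Corollary~\ref{cor:sxwv} again, this time to the reduced expression of $w'$ and the element $v$, collects these terms into $\bar{\mathscr{S}}(x_{w'/v})x_\alpha$. If $\ell\notin J$, then $\ell\in\bar J$, so $\prod_{j\in\bar J}s_{\beta_j}$ ends in $s_\beta$; for this to be a reduced expression of $v$ we need $vs_\beta<v$, i.e.\ $v>v'$. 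Hence when $v<v'$ these terms are absent and $\bar{\mathscr{S}}(x_{w/v})=\bar{\mathscr{S}}(x_{w'/v})x_\alpha$; when $v>v'$, deleting the trailing $s_\beta$ matches the surviving $J$'s, via their complements in $\{1,\ldots,\ell-1\}$, exactly with the index set of Corollary~\ref{cor:sxwv} for $w'$ and $v'$, so these terms sum to $\bar{\mathscr{S}}(x_{w'/v'})$, giving $\bar{\mathscr{S}}(x_{w/v})=\bar{\mathscr{S}}(x_{w'/v})x_\alpha+\bar{\mathscr{S}}(x_{w'/v'})$.

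Given Corollary~\ref{cor:sxwv}, this is essentially bookkeeping, so I do not expect a serious obstacle; the points needing care are: (i) keeping the complement conventions straight when passing between subsets of $\{1,\ldots,\ell\}$ and of $\{1,\ldots,\ell-1\}$; (ii) recognizing the condition cutting out the surviving $\ell\notin J$ summands as exactly $v>v'$ — this is the only genuine idea in the argument; and (iii) using that the formula of Corollary~\ref{cor:sxwv} stays valid with both sides zero when $v\not\leq w'$ (cf.\ the remark following it), so no separate case analysis of $v\leq w'$ is required. As a sanity check, $\alpha=-w(\beta)$ is indeed a positive root because $ws_\beta<w$ forces $w(\beta)\in R^-$, consistent with $x_\alpha$ occurring as one of the $y_j$.
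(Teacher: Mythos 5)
Your proposal is correct and follows essentially the same route as the paper: fix a reduced expression of $w$ ending in $s_\beta$ (via the strong exchange condition), apply Corollary~\ref{cor:sxwv}, split the sum according to whether $\ell\in J$, identify the $\ell\in J$ part with $\bar{\mathscr{S}}(x_{w'/v})x_\alpha$, and observe that the $\ell\notin J$ part vanishes when $v<v'$ and equals $\bar{\mathscr{S}}(x_{w'/v'})$ when $v>v'$. Your points (i)--(iii), including the use of the remark that Corollary~\ref{cor:sxwv} holds with both sides zero when $v'\not\leq w'$, match the paper's own bookkeeping exactly.
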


\begin{proof}

Let the notation be as in the statement. It is easy to see from the strong exchange condition \cite[Theorem~5.8]{humphreys-coxeter} that there exists a reduced expression $s_{\beta_1}\cdots s_{\beta_\ell}$ of $w$ such that $\beta=\beta_\ell$. We fix such a reduced expression and define $\alpha_i=s_{\beta_1}\cdots s_{\beta_{i-1}}(\beta_i)$ and $y_i=x_{\alpha_i}$ for all $1\leq i\leq\ell$ as in the statement of Corollary~\ref{cor:sxwv}. If we know apply Corollary~\ref{cor:sxwv} to $v$ and to the fixed reduced expression of $w$, we can write
\[
\bar{\mathscr{S}}(x_{w/v})=\sum_{J\colon\ell\in J}\prod_{j\in J}y_j+\sum_{J\colon\ell\notin J}\prod_{j\in J}y_j
\]
where the two sums range over all subsets $J$ of $\{1,\ldots,\ell\}$ such that the product $\prod_{j\in\bar{J}}s_{\beta_j}$ is a reduced expression of $v$ plus an additional condition which is written in the subscripts of the sums. Here, the set $\bar{J}$ denotes the complement of a subset $J$ in $\{1,\ldots,\ell\}$. 
The first sum in the previous displayed formula can clearly be written as
$
\sum_{J'}\prod_{j\in J'}y_j x_\alpha
$
where the sum ranges over all subsets $J'$ of $\{1,\ldots,\ell-1\}$ such that the product $\prod_{j\in\bar{J}'}s_{\beta_j}$ is a reduced expression of $v$. Here, the set $\bar{J}'$ denotes the complement of a subset $J'$ in $\{1,\ldots,\ell-1\}$. By Corollary~\ref{cor:sxwv} applied to $v$ and to the reduced expression $s_{\beta_1}\cdots s_{\beta_{\ell-1}}$ of $w'$, we see that
\[
\bar{\mathscr{S}}(x_{w/v})=\bar{\mathscr{S}}(x_{w'/v})x_\alpha+\sum_{J\colon \ell\notin J}\prod_{j\in J}y_j
\]
where sum ranges over all $J$ as in the first displayed formula of this proof.

We now distinguish the two cases as in the statement of the proposition. Suppose first that $v<v'$. Then there exists no reduced expression of $v$ which ends with $s_\beta$. Consequently, the sum in the previous displayed formula must be zero. This proves the claimed formula if $v<v'$. Suppose finally that $v>v'$. In this case, the sum in the previous displayed formula can clearly be written as $\sum_{J'}\prod_{j\in J'}y_j$ where the sum ranges over all subsets $J'$ of $\{1,\ldots,\ell-1\}$ such that the product $\prod_{j\in\bar{J}'}s_{\beta_j}$ is a reduced expression of $v'$. Here, the set $\bar{J}'$ denotes the complement of a subset $J'$ in $\{1,\ldots,\ell-1\}$. If we now apply Corollary~\ref{cor:sxwv} to $v'$ and to the reduced expression $s_{\beta_1}\cdots s_{\beta_{\ell-1}}$ of $w'$, we find that the sum in the previous displayed formula must be equal to $\bar{\mathscr{S}}(x_{w'/v'})$. This proves the claimed formula if $v>v'$ and completes the proof.
\end{proof}

\begin{cor}[{\cite[Corollary~3.8]{liu}}]
\label{cor:recursive}

Let $v,w\in W$ be such that $v\leq w$. Let $\beta\in\Delta$ be such that $v<vs_\beta$. Let $v'=vs_\beta$, $w'=ws_\beta$, $\alpha=v(\beta)$ for brevity. Then we have $x_{w/v}=x_{w/v'}x_\alpha$ if $w>w'$ and $=x_{w/v'}x_\alpha+x_{w'/v'}$ if $w<w'$.

\end{cor}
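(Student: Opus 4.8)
The plan is to obtain Corollary~\ref{cor:recursive} from Proposition~\ref{prop:recursive-prel} by transporting that proposition through the identity of Theorem~\ref{thm:positivity}. First I would record that this identity holds in the unconditional form $x_{q/p}=\bar{\mathscr{S}}(x_{pw_o/qw_o})$ for \emph{all} $p,q\in W$: indeed, if $p\not\le q$ then $x_{q/p}=0$ by Definition~\ref{def:xwv}, while also $qw_o\not\le pw_o$, so $x_{pw_o/qw_o}=0$ and hence $\bar{\mathscr{S}}(x_{pw_o/qw_o})=0$ as well. This makes the passage between the two settings insensitive to whether the intermediate pairs happen to be Bruhat-comparable. (The same remark shows Corollary~\ref{cor:sxwv} may be used with a not necessarily comparable pair, both sides then being zero.)

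Next I would set up the substitution. Write $W=vw_o$ and $V=ww_o$, so that $V\le W$ (this is equivalent to $v\le w$) and $\bar{\mathscr{S}}(x_{W/V})=x_{w/v}$. Let $\beta^{\ast}\in\Delta$ be the simple root with $s_{\beta^{\ast}}=w_os_\beta w_o$; concretely $\beta^{\ast}=-w_o(\beta)$, using the standard fact that $-w_o$ permutes $\Delta$ (equivalently that $w_os_\beta w_o$ is again a simple reflection). Using $\ell(uw_o)=\ell(w_o)-\ell(u)$ for all $u\in W$, one checks that $Ws_{\beta^{\ast}}=v's_{}^{}w_o$, more precisely $Ws_{\beta^{\ast}}=vs_\beta w_o=v'w_o$ and $Vs_{\beta^{\ast}}=ws_\beta w_o=w'w_o$, that $Ws_{\beta^{\ast}}<W$ (because $v<v'=vs_\beta$ by hypothesis), and that the root attached to the pair $(W,\beta^{\ast})$ in Proposition~\ref{prop:recursive-prel} is $-W(\beta^{\ast})=-vw_o(-w_o(\beta))=v(\beta)=\alpha$. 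Applying Proposition~\ref{prop:recursive-prel} to $W$, $V$, $\beta^{\ast}$ therefore gives $\bar{\mathscr{S}}(x_{vw_o/ww_o})=\bar{\mathscr{S}}(x_{v'w_o/ww_o})\,x_\alpha$ when $V<Vs_{\beta^{\ast}}$, and $\bar{\mathscr{S}}(x_{vw_o/ww_o})=\bar{\mathscr{S}}(x_{v'w_o/ww_o})\,x_\alpha+\bar{\mathscr{S}}(x_{v'w_o/w'w_o})$ when $V>Vs_{\beta^{\ast}}$.

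It then remains to translate both sides back and match the cases. By the unconditional form of Theorem~\ref{thm:positivity}, $\bar{\mathscr{S}}(x_{v'w_o/ww_o})=x_{w/v'}$ and $\bar{\mathscr{S}}(x_{v'w_o/w'w_o})=x_{w'/v'}$; and since $\ell(Vs_{\beta^{\ast}})=\ell(w'w_o)=\ell(w_o)-\ell(w')$ while $\ell(V)=\ell(w_o)-\ell(w)$, we have $V<Vs_{\beta^{\ast}}$ precisely when $w'<w$ and $V>Vs_{\beta^{\ast}}$ precisely when $w'>w$. This yields exactly $x_{w/v}=x_{w/v'}x_\alpha$ if $w>w'$ and $x_{w/v}=x_{w/v'}x_\alpha+x_{w'/v'}$ if $w<w'$, as claimed. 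The only part requiring genuine care is the $w_o$-conjugation bookkeeping in the previous paragraph — that $\beta^{\ast}=-w_o(\beta)$ is simple, that $Ws_{\beta^{\ast}}<W$, and that the twisting root comes out to be precisely $\alpha=v(\beta)$; everything else is a formal rewriting. (Alternatively, one could prove the corollary directly from the reduced-expression formula of Corollary~\ref{cor:xwv-pos}, choosing a reduced word for $vw_o$ ending in a suitable simple reflection and splitting the sum over $J$ according to whether that last index lies in $J$, but the route through Proposition~\ref{prop:recursive-prel} is shorter.)
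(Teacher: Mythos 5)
Your proof is correct and follows essentially the same route as the paper: both apply Proposition~\ref{prop:recursive-prel} to the pair $(ww_o, vw_o)$ with the simple root $-w_o(\beta)$ and translate back via Theorem~\ref{thm:positivity}, with identical bookkeeping of the twisting root and the case distinction. Your explicit remark that the positivity identity holds unconditionally (both sides vanishing for non-comparable pairs) is a small point the paper leaves implicit, but it does not change the argument.
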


\begin{proof}

Let the notation be as in the statement. Let $\beta'=-w_o(\beta)$. By \cite[Section~1.8]{humphreys-coxeter}, we know that $\beta'$ is a simple root. Let $v^*=vw_o$, $w^*=ww_o$, $v^{*\,\prime}=v^*s_{\beta'}$, $w^{*\,\prime}=w^*s_{\beta'}$ for brevity. Note that by assumption and \cite[Exercise~5.9, Example~5.9.3]{humphreys-coxeter}, it is clear that $w^*\leq v^*$ and that $v^{*\,\prime}<v^*$. If we now apply Theorem~\ref{thm:positivity} and Proposition~\ref{prop:recursive-prel} to $w^*,v^*,\beta'$ we find that 
\[
x_{w/v}=\bar{\mathscr{S}}(x_{v^*/w^*})=\bar{\mathscr{S}}(x_{v^{*\,\prime}/w^*})x_\alpha=x_{w/v'}x_\alpha
\]
if $w^*<w^{*\,\prime}$ and similarly $=x_{w/v'}x_\alpha+x_{w'/v'}$ if $w^*>w^{*\,\prime}$. (For this step, note that $-v^*(\beta')=\alpha$ and that $v^{*\,\prime}w_o=v'$.) But by \cite[loc.~cit.]{humphreys-coxeter} the conditions $w^*\lessgtr w^{*\,\prime}$ are equivalent to $w\gtrless w'$ which gives the desired result.
\end{proof}

\begin{cor}
\label{cor:recursive-circ}

Let $v,w\in W$ be such that $v\leq w$. Let $\beta\in\Delta$ be such that $v<s_\beta v$. Let $v'=s_\beta v$, $w'=s_\beta w$, $\alpha=v^{-1}(\beta)$ for brevity. Then we have $x_{w/v}^\circ=x_\alpha x_{w/v'}^\circ$ if $w>w'$ and $=x_\alpha x_{w/v'}^\circ+x_{w'/v'}^\circ$ if $w<w'$.

\end{cor}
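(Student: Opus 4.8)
The plan is to deduce this directly from the non-circle recursive formula in Corollary~\ref{cor:recursive} by applying the anti-algebra involution $\rho$ and passing to inverses, exactly as Corollary~\ref{cor:xwv-circ-pos} was deduced from Corollary~\ref{cor:xwv-pos}.

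First I would record the relevant translations between the data attached to the triple $(v,w,\beta)$ and to the triple $(v^{-1},w^{-1},\beta)$. Since $s_\beta$ is an involution, we have $(s_\beta v)^{-1}=v^{-1}s_\beta$ and $(s_\beta w)^{-1}=w^{-1}s_\beta$, that is $v'^{-1}=v^{-1}s_\beta$ and $w'^{-1}=w^{-1}s_\beta$. Taking inverses preserves the length function and the Bruhat order, so the hypothesis $v<s_\beta v$ is equivalent to $v^{-1}<v^{-1}s_\beta$, the hypothesis $v\leq w$ is equivalent to $v^{-1}\leq w^{-1}$, and the case distinction $w>w'$ versus $w<w'$ is equivalent to $w^{-1}>w'^{-1}$ versus $w^{-1}<w'^{-1}$. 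Finally, the root $\alpha=v^{-1}(\beta)$ in the statement is precisely the root that Corollary~\ref{cor:recursive} produces when applied to $(v^{-1},w^{-1},\beta)$.

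Next I would apply Corollary~\ref{cor:recursive} with $v,w,\beta$ there replaced by $v^{-1},w^{-1},\beta$; its hypotheses hold by the preceding paragraph. It yields
\[
x_{w^{-1}/v^{-1}}=x_{w^{-1}/v'^{-1}}\,x_\alpha\qquad\text{if }w^{-1}>w'^{-1},
\]
and
\[
x_{w^{-1}/v^{-1}}=x_{w^{-1}/v'^{-1}}\,x_\alpha+x_{w'^{-1}/v'^{-1}}\qquad\text{if }w^{-1}<w'^{-1}.
\]
Now I would apply $\rho$ to both sides. By Notation~\ref{not:xwvcirc} we have $\rho(x_{w^{-1}/v^{-1}})=x_{w/v}^\circ$, $\rho(x_{w^{-1}/v'^{-1}})=x_{w/v'}^\circ$, and $\rho(x_{w'^{-1}/v'^{-1}})=x_{w'/v'}^\circ$. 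Since $\rho$ is an anti algebra homomorphism (Proposition~\ref{prop:rhoands}\eqref{item:rhoantialgebra}), $\rho(x_{w^{-1}/v'^{-1}}\,x_\alpha)=\rho(x_\alpha)\,\rho(x_{w^{-1}/v'^{-1}})$, and $\rho(x_\alpha)=x_\alpha$ because $\rho$ is the identity in $\mathbb{Z}_{\geq 0}$-degree one (Proposition~\ref{prop:rhoands}\eqref{item:identity}). Combining these facts with the equivalence $w>w'\iff w^{-1}>w'^{-1}$ gives $x_{w/v}^\circ=x_\alpha\,x_{w/v'}^\circ$ if $w>w'$ and $x_{w/v}^\circ=x_\alpha\,x_{w/v'}^\circ+x_{w'/v'}^\circ$ if $w<w'$, which is the assertion. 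The only point requiring care is the bookkeeping: under $\rho$, right multiplication by $x_\alpha$ in Corollary~\ref{cor:recursive} turns into left multiplication by $x_\alpha$ in the circle version, and one must track the length and Bruhat-order conditions under inversion; there is no genuine obstacle.
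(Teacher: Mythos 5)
Your proposal is correct and is essentially the paper's own argument: the paper likewise deduces the corollary from Proposition~\ref{prop:rhoands}\eqref{item:identity},\eqref{item:rhoantialgebra} and Corollary~\ref{cor:recursive} applied to $v^{-1},w^{-1},\beta$, together with the standard fact that inversion preserves length and the Bruhat order. You have merely written out the bookkeeping that the paper leaves implicit.
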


\begin{proof}

This follows immediately from Proposition~\ref{prop:rhoands}\eqref{item:identity},\eqref{item:rhoantialgebra}, Corollary~\ref{cor:recursive} applied to $v^{-1}$, $w^{-1}$, $\beta$ and \cite[loc.~cit.]{humphreys-coxeter}.
\end{proof}



\section{Monomial properties}
\label{sec:monomial}

In this section, we show monomial properties for the elements $x_{w/v}$ where $v\leq w$, i.e.\ properties which guarantee that $x_{w/v}$ can be written as a positive monomial in $\mathscr{B}_W$ of the form $x_{\alpha_1}\cdots x_{\alpha_\ell}$ where $\alpha_1,\ldots,\alpha_\ell\in R^+$. While the monomial property of order one (Theorem~\ref{thm:monomial-one}) is a trivial and general consequence of results from the previous section, the monomial property of order two (Theorem~\ref{thm:monomial-two}) relies on the combinatorics developed in Appendix~\ref{appendix:shuffle}.

\begin{thm}[{Monomial property of order one \cite[Chapter~2, Example~3]{macdonald1991notes}}]
\label{thm:monomial-one}

Let $v,w\in W$ be such that $v\lessdot w$. Let $\alpha\in R^+$ be uniquely determined by the equation $v=s_\alpha w$. Then we have $x_{w/v}=x_\alpha$.

\end{thm}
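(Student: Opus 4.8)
The plan is to use the degree information about $x_{w/v}$ to cut it down to a single unknown scalar, and then to determine that scalar by applying a braided left partial derivative and reading off coefficients via the Hopf duality pairing.

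First I would note that $v\lessdot w$ forces $\ell(v,w)=1$, so by Remark~\ref{rem:degree-xwv} the element $x_{w/v}$ is homogeneous of $\mathbb{Z}_{\geq 0}$-degree $1$ and of $W$-degree $wv^{-1}$. Since $s_\alpha$ is an involution and $v=s_\alpha w$, we get $wv^{-1}=w(s_\alpha w)^{-1}=ww^{-1}s_\alpha=s_\alpha$. Hence $x_{w/v}$ lies in the $s_\alpha$-homogeneous component of $\mathscr{B}_W^1\cong V_W$. Because $R$ is reduced, $s_\gamma=s_\alpha$ implies $\gamma\in\{\alpha,-\alpha\}$, and $x_{-\alpha}=-x_\alpha$, so that component is the line $\mathbb{C}x_\alpha$. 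Thus $x_{w/v}=c\,x_\alpha$ for some $c\in\mathbb{C}$, and it only remains to show $c=1$.

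To pin down $c$, I would apply the braided left partial derivative $\overrightarrow{D}_\alpha=\overrightarrow{D}_{x_\alpha}$ to $x_w$. By Definition~\ref{def:xwv} we have $\Delta(x_w)=\sum_{u\leq w}x_{w/u}\otimes x_u$, so the defining formula $\overrightarrow{D}_{x_\alpha}(x)=\langle x_\alpha,x_{(1)}\rangle x_{(2)}$ gives
\[
\overrightarrow{D}_\alpha(x_w)=\sum_{u\leq w}\langle x_\alpha,x_{w/u}\rangle\,x_u\,.
\]
On the other hand $v=s_\alpha w\lessdot w$, so Proposition~\ref{prop:simple-left-derivative} yields $\overrightarrow{D}_\alpha(x_w)=x_{s_\alpha w}=x_v$. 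Since the vectors $x_u$, $u\in W$, form a basis of $\tilde{\mathscr{N}}_W$, comparing the coefficient of $x_v$ on the two sides gives $\langle x_\alpha,x_{w/v}\rangle=1$. Substituting $x_{w/v}=c\,x_\alpha$ and using that the Hopf duality pairing restricted to $V_W\otimes V_W$ is $x_\alpha\otimes x_{\alpha'}\mapsto\delta_{\alpha,\alpha'}$, we obtain $c=\langle x_\alpha,c\,x_\alpha\rangle=\langle x_\alpha,x_{w/v}\rangle=1$, hence $x_{w/v}=x_\alpha$.

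I do not anticipate any real obstacle: the only points requiring a moment of care are the computation of the $W$-degree of $x_{w/v}$ and the remark that in a reduced root system only $\pm\alpha$ have reflection $s_\alpha$, both of which are immediate. (Alternatively, one can deduce this from positivity: in Corollary~\ref{cor:xwv-pos} every subset $J$ has $|J|=\ell(v,w)=1$ because $\ell(ww_o)=\ell(vw_o)-1$, and since the positive roots $\alpha_i$ attached to a reduced expression of $vw_o$ are pairwise distinct, exactly one singleton survives, giving the root $\alpha$ with $s_\alpha(vw_o)=ww_o$, i.e.\ $x_{w/v}=x_\alpha$.)
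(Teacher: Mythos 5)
Your main argument is exactly the paper's proof: reduce $x_{w/v}$ to $\lambda x_\alpha$ via Remark~\ref{rem:degree-xwv} and the $W$-grading of $V_W$, then determine $\lambda=\left<x_\alpha,x_{w/v}\right>=1$ by computing $\overrightarrow{D}_\alpha(x_w)=x_v$ with Proposition~\ref{prop:simple-left-derivative} and comparing against the coproduct expansion. Both this and your parenthetical alternative via Corollary~\ref{cor:xwv-pos} are correct; no gaps.
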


\begin{proof}

Let $v,w,\alpha$ be as in the statement. By assumption, it is clear that $x_{w/v}$ is a homogeneous element of $\mathbb{Z}_{\geq 0}$-degree $1$ and of $W$-degree $s_\alpha$ (cf. Remark~\ref{rem:degree-xwv}). By definition of $V_W$, this means that $x_{w/v}=\lambda x_\alpha$ for some $\lambda\in\mathbb{C}$. By definition of the restriction of the Hopf duality pairing between $\mathscr{B}_W$ and itself to $V_W\otimes V_W$, we know that $\lambda=\left<x_\alpha,x_{w/v}\right>$. By Proposition~\ref{prop:simple-left-derivative}, we have $\overrightarrow{D}_\alpha(x_w)=x_v$. On the other hand, the very definition of $\overrightarrow{D}_\alpha$ and of $x_{w/v}$ tells us in comparison with the above expression that $\overrightarrow{D}_\alpha(x_w)=\left<x_\alpha,x_{w/v}\right>x_v$. Hence, we have $\lambda=1$ and $x_{w/v}=x_\alpha$ as claimed.
%
%
\end{proof}

\todo[inline,color=green]{The other direction of the previous theorem is actually also true (because of the Remark~\ref{rem:degree-xwv}). Calls for a remark at this point which I want to cite in the proof of Proposition~\ref{prop:reflection-skew}. In fact, I also want to mention that the propositions in the section on strict positivity are generalizations of what we already did in the section on positivity (uses maybe the example on $x_{w/1}$, i.e.\ Example~\ref{ex:top-low-degree-xw}). [I cite Remark~\ref{rem:converse-monom-one} once in Proposition~\ref{prop:reflection-skew}. For Proposition~\ref{prop:reflection-skew},~\ref{prop:sat-chain-general} and Theorem~\ref{thm:one-property}, I say either in the header or in a remark afterwards what they generalize. That the \enquote{header-explanation} uses Example~\ref{ex:top-low-degree-xw} is implicit.]}

\begin{cor}
\label{cor:monomial-one}

Let $v,w\in W$ be such that $v\lessdot w$. Let $\alpha\in R^+$ be uniquely determined by the equation $v=w s_\alpha$. Then we have $x_{w/v}^\circ=x_\alpha$.

\end{cor}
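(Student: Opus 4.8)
The plan is to reduce Corollary~\ref{cor:monomial-one} to the already established Theorem~\ref{thm:monomial-one} by passing through the circle construction, exactly as Corollary~\ref{cor:recursive-circ} was deduced from Corollary~\ref{cor:recursive}. First I would observe that the hypothesis $v\lessdot w$ is equivalent to $v^{-1}\lessdot w^{-1}$ in the Bruhat order, since inversion is an automorphism of the Bruhat poset. Next, from the equation $v = w s_\alpha$ I would pass to inverses to get $v^{-1} = s_\alpha w^{-1}$, which puts us precisely in the situation of Theorem~\ref{thm:monomial-one} applied to the pair $v^{-1} \lessdot w^{-1}$ with the same positive root $\alpha$ (one should double-check here that the positive root $\alpha$ singled out by $v = ws_\alpha$ really is the one singled out by $v^{-1} = s_\alpha w^{-1}$; this is immediate because the reflection $s_\alpha$ is the same element in both equations and $\alpha$ is determined by $s_\alpha$ together with the positivity requirement).

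With that setup, Theorem~\ref{thm:monomial-one} gives $x_{w^{-1}/v^{-1}} = x_\alpha$ in $\mathscr{B}_W$. Now I would apply the map $\rho$ to both sides. By Notation~\ref{not:xwvcirc} the left-hand side becomes $x_{w/v}^\circ$ by definition, since $x_{w/v}^\circ = \rho(x_{w^{-1}/v^{-1}})$. For the right-hand side, $\rho$ is the identity in $\mathbb{Z}_{\geq 0}$-degree one by Proposition~\ref{prop:rhoands}\eqref{item:identity}, and $x_\alpha \in V_W$ sits in degree one, so $\rho(x_\alpha) = x_\alpha$. Combining the two computations yields $x_{w/v}^\circ = x_\alpha$, which is exactly the claimed identity.

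I do not expect any genuine obstacle here: the statement is a formal transport of Theorem~\ref{thm:monomial-one} along inversion and $\rho$, and every ingredient (the order-preserving property of inversion, the behaviour of $\rho$ in low degree, the definition of the circle variant) is already in place. The only point that requires a line of care is the bookkeeping that the positive root produced by the two descriptions $v = w s_\alpha$ and $v^{-1} = s_\alpha w^{-1}$ coincides, and that the covering relation is preserved under inversion; both are routine. One could alternatively cite Corollary~\ref{cor:xwv-circ-pos} and extract the monomial directly, but the inversion-plus-$\rho$ argument is shorter and mirrors the proof pattern already used for Corollary~\ref{cor:recursive-circ}, so that is the route I would take.
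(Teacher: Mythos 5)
Your proof is correct and follows exactly the route the paper takes: apply Theorem~\ref{thm:monomial-one} to $v^{-1}\lessdot w^{-1}$ (with $v^{-1}=s_\alpha w^{-1}$), then transport back via $\rho$ using Notation~\ref{not:xwvcirc} and Proposition~\ref{prop:rhoands}\eqref{item:identity}. The paper compresses all of this into a single sentence, so your write-up is just a more detailed account of the same argument.
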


\begin{proof}

This follows immediately from Theorem~\ref{thm:monomial-one} applied to $v^{-1}\lessdot w^{-1}$.
\end{proof}

\begin{rem}
\label{rem:converse-monom-one}

The converse of Theorem~\ref{thm:monomial-one} and Corollary~\ref{cor:monomial-one} is clearly also true. Let $v,w\in W$ be such that $x_{w/v}=x_\alpha$ for some $\alpha\in R^+$. Then, $v\lessdot w$ and $v=s_\alpha w$. Similarly, let $v,w\in W$ be such that $x_{w/v}^\circ=x_\alpha$ for some $\alpha\in R^+$. Then, $v\lessdot w$ and $v=ws_\alpha$. This follows directly from Remark~\ref{rem:degree-xwv}.

\end{rem}

\begin{lem}
\label{lem:unique}

Let $\alpha,\alpha',\gamma,\gamma'\in R^+$ be such that $\alpha\neq\gamma,\alpha'\neq\gamma'$ and such that $x_\alpha x_\gamma=x_{\alpha'}x_{\gamma'}$ in $\mathscr{B}_W$. Then we have $\{\alpha,\gamma\}=\{\alpha',\gamma'\}$.

\end{lem}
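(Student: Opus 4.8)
The plan is to recover the unordered pair $\{\alpha,\gamma\}$ from the element $x_\alpha x_\gamma\in\mathscr{B}_W$ by testing it, through the Hopf duality pairing $\langle-,-\rangle$ on $\mathscr{B}_W$, against all the standard degree-two monomials $x_\mu x_\nu$ with $\mu,\nu\in R^+$. First I would compute the coproduct $\Delta(x_\alpha x_\gamma)$. Since $\Delta\colon\mathscr{B}_W\to\mathscr{B}_W\underline{\otimes}\mathscr{B}_W$ is an algebra homomorphism and $x_\alpha,x_\gamma\in\mathscr{B}_W^1=V_W$ are primitive, expanding $(x_\alpha\otimes 1+1\otimes x_\alpha)(x_\gamma\otimes 1+1\otimes x_\gamma)$ with the twisted multiplication in the braided tensor product algebra — where $(1\otimes x_\alpha)(x_\gamma\otimes 1)=x_{s_\alpha(\gamma)}\otimes x_\alpha$ because $\Psi(x_\alpha\otimes x_\gamma)=s_\alpha x_\gamma\otimes x_\alpha$ — gives
\[
\Delta(x_\alpha x_\gamma)=x_\alpha x_\gamma\otimes 1+x_\alpha\otimes x_\gamma+x_{s_\alpha(\gamma)}\otimes x_\alpha+1\otimes x_\alpha x_\gamma\,.
\]
In particular, the $\mathbb{Z}_{\geq 0}\times\mathbb{Z}_{\geq 0}$-degree $(1,1)$-component of $\Delta(x_\alpha x_\gamma)$ is $x_\alpha\otimes x_\gamma+x_{s_\alpha(\gamma)}\otimes x_\alpha$.

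Next I would pair against $x_\mu x_\nu$. Using the Hopf pairing axiom $\langle\phi\psi,x\rangle=\langle\phi,x_{(2)}\rangle\langle\psi,x_{(1)}\rangle$, Remark~\ref{rem:vanishing} (so that only the $(1,1)$-component of $\Delta(x_\alpha x_\gamma)$ contributes when two degree-one elements are paired into it), and the normalization $\langle x_\mu,x_\nu\rangle=\delta_{\mu,\nu}$ on $V_W$, I obtain — writing $x_{s_\alpha(\gamma)}=\epsilon\,x_\theta$ with $\theta\in R^+$ the positive root $\pm s_\alpha(\gamma)$ and $\epsilon\in\{1,-1\}$ the corresponding sign — the identity $\langle x_\mu x_\nu,x_\alpha x_\gamma\rangle=\delta_{\nu,\alpha}\delta_{\mu,\gamma}+\epsilon\,\delta_{\nu,\theta}\delta_{\mu,\alpha}$. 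Since $\alpha\neq\gamma$ forces $s_\alpha(\gamma)\neq\pm\alpha$, hence $\theta\neq\alpha$, the two index pairs $(\gamma,\alpha)$ and $(\alpha,\theta)$ are distinct, so the function $(\mu,\nu)\mapsto\langle x_\mu x_\nu,x_\alpha x_\gamma\rangle$ is supported exactly on $\{(\gamma,\alpha),(\alpha,\theta)\}$; in particular $x_\alpha x_\gamma\neq 0$, since its pairing with $x_\gamma x_\alpha$ equals $1$.

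Finally I would feed the hypothesis $x_\alpha x_\gamma=x_{\alpha'}x_{\gamma'}$ into this identity: the two supports must agree, i.e.\ $\{(\gamma,\alpha),(\alpha,\theta(\alpha,\gamma))\}=\{(\gamma',\alpha'),(\alpha',\theta(\alpha',\gamma'))\}$, and a short case analysis finishes the proof. If $(\gamma,\alpha)=(\gamma',\alpha')$ we are done immediately. Otherwise $(\gamma,\alpha)=(\alpha',\theta(\alpha',\gamma'))$ and $(\alpha,\theta(\alpha,\gamma))=(\gamma',\alpha')$; comparing coordinates yields $\alpha'=\gamma$ together with $\alpha'=\theta(\alpha,\gamma)$, hence $s_\alpha(\gamma)=\pm\gamma$. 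The case $s_\alpha(\gamma)=-\gamma$ would mean $\gamma$ lies in the $(-1)$-eigenspace of the reflection $s_\alpha$, which is the line $\mathbb{R}\alpha$, hence $\gamma=\alpha$, a contradiction; so $s_\alpha(\gamma)=\gamma$, $\alpha'=\gamma$, $\gamma'=\alpha$, and again $\{\alpha,\gamma\}=\{\alpha',\gamma'\}$.

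I expect the coproduct and pairing computations to be entirely routine given the formulas recalled in Sections~\ref{sec:hopf}--\ref{sec:bazlov}; the only points that need a little care — and hence the (mild) main obstacle — are the bookkeeping in the final case analysis and the two elementary root-system facts it rests on, namely $s_\alpha(\gamma)\neq\pm\alpha$ and the impossibility of $s_\alpha(\gamma)=-\gamma$ for distinct positive roots $\alpha,\gamma$.
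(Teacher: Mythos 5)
Your argument is correct, and it is at bottom the same computation as the paper's: the paper observes that $x_\alpha\otimes x_\gamma-x_{\alpha'}\otimes x_{\gamma'}$ must lie in $\ker[2]!=\ker(1+\Psi)$ and compares coefficients in the basis $\{x_\mu\otimes x_\nu\}_{\mu,\nu\in R^+}$ of $V_W^{\otimes 2}$, and your expression $x_\alpha\otimes x_\gamma+x_{s_\alpha(\gamma)}\otimes x_\alpha$ for the $(1,1)$-component of $\Delta(x_\alpha x_\gamma)$ is exactly $(1+\Psi)(x_\alpha\otimes x_\gamma)$, so testing it against the monomials $x_\mu x_\nu$ via the duality pairing is just the dual packaging of the same support comparison. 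Your final case analysis (and the observations $\theta\neq\alpha$, $s_\alpha(\gamma)\neq-\gamma$) supplies the details the paper leaves as ``easy to see,'' with the minor remark that the set equality $\{\alpha',\gamma'\}=\{\gamma,\alpha\}$ already follows from the first coordinates in your second case, so the discussion of $s_\alpha(\gamma)=\pm\gamma$ is a harmless redundancy.
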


\begin{proof}

Let $\alpha,\alpha',\gamma,\gamma'$ be as in the statement. Using the canonical homogeneous basis of $T(V_W)$ induced by the canonical homogeneous basis of $V_W$, it is easy to see that $x_{\alpha}\otimes x_{\gamma}-x_{\alpha'}\otimes x_{\gamma'}\in V_W^{\otimes 2}$ lies in the kernel of $[2]!=1+\Psi$ only if $\{\alpha,\gamma\}=\{\alpha',\gamma'\}$.
\end{proof}

\begin{thm}[Monomial property of order two]
\label{thm:monomial-two}

Let $W$ be a simply laced Weyl group.
Let $v,w\in W$ be such that $v\leq w$ and such that $\ell(v,w)=2$. Then, there exist positive roots $\alpha,\gamma,\alpha^\circ,\gamma^\circ$ such that $x_{w/v}=x_\alpha x_\gamma$ and such that $x_{w/v}^\circ=x_{\alpha^\circ}x_{\gamma^\circ}$. Each choice of such positive roots $\alpha,\gamma,\alpha^\circ,\gamma^\circ$ satisfies $\alpha\neq\gamma$, $\alpha^\circ\neq\gamma^\circ$. Moreover, the sets $\{\alpha,\gamma\}$, $\{\alpha^\circ,\gamma^\circ\}$ are uniquely determined by $v,w$.

\end{thm}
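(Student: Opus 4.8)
The plan is to pin down $x_{w/v}$ by its reduced coproduct. Since $\mathscr{B}_W$ is a braided $\mathbb{Z}_{\geq 0}$-graded Hopf algebra whose primitive elements are exactly those of $\mathbb{Z}_{\geq 0}$-degree one, the map sending a homogeneous $z\in\mathscr{B}_W^2$ to the $(\mathbb{Z}_{\geq 0}\times\mathbb{Z}_{\geq 0})$-degree $(1,1)$-component $\Delta_{1,1}(z)\in V_W\otimes V_W$ of $\Delta(z)$ is injective: if $\Delta_{1,1}(z)=0$ then $\Delta(z)=z\otimes 1+1\otimes z$, so $z$ is primitive and hence lies in $\mathscr{B}_W^1\cap\mathscr{B}_W^2=\{0\}$. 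So it suffices to produce positive roots $\alpha\neq\gamma$ with $\Delta_{1,1}(x_{w/v})=\Delta_{1,1}(x_\alpha x_\gamma)$; the statement about $x_{w/v}^\circ$ then follows by applying the result to $v^{-1}\leq w^{-1}$ and $\rho$ (which is an involutive anti-algebra morphism equal to the identity in $\mathbb{Z}_{\geq 0}$-degree one, cf.\ Proposition~\ref{prop:rhoands}), and the uniqueness of the sets follows from Lemma~\ref{lem:unique} once one notes that $x_{w/v}\neq 0$, which forces $\alpha\neq\gamma$ in any such decomposition (otherwise $x_\alpha x_\gamma=x_\alpha^2=0$, Example~\ref{ex:relations}).

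First I would compute $\Delta_{1,1}(x_{w/v})$. By Remark~\ref{rem:coproduct-xwv} we have $\Delta(x_{w/v})=\sum_{v\leq u\leq w}x_{w/u}\otimes x_{u/v}$, and by Remark~\ref{rem:degree-xwv} the factor $x_{w/u}$ has $\mathbb{Z}_{\geq 0}$-degree $\ell(u,w)$ and $x_{u/v}$ has $\mathbb{Z}_{\geq 0}$-degree $\ell(v,u)$, so only the $u$ with $v\lessdot u\lessdot w$ contribute to $\Delta_{1,1}$. By Theorem~\ref{thm:main-interval2} there are exactly two such $u$, say $u_1$ and $u_2$, and by Theorem~\ref{thm:monomial-one} we have $x_{u_i/v}=x_{\delta_i}$ and $x_{w/u_i}=x_{\epsilon_i}$ with $\delta_i,\epsilon_i\in R^+$ determined by $s_{\delta_i}=v u_i^{-1}$ and $s_{\epsilon_i}=u_i w^{-1}$. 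Thus $\Delta_{1,1}(x_{w/v})=x_{\epsilon_1}\otimes x_{\delta_1}+x_{\epsilon_2}\otimes x_{\delta_2}$. Since $u_i=s_{\delta_i}v$, the roots $\delta_1,\delta_2$ are distinct, so the two summands are distinct basis vectors of $V_W\otimes V_W$; in particular $\Delta_{1,1}(x_{w/v})\neq 0$ and $x_{w/v}\neq 0$. Finally, from $v=s_{\delta_i}u_i=s_{\delta_i}s_{\epsilon_i}w$ one reads off $wv^{-1}=s_{\epsilon_1}s_{\delta_1}=s_{\epsilon_2}s_{\delta_2}$.

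On the other side, for $\mu,\nu\in R^+$ the twisted multiplication in $\mathscr{B}_W\underline{\otimes}\mathscr{B}_W$ together with $\Psi_{V_W,V_W}(x_\mu\otimes x_\nu)=(s_\mu x_\nu)\otimes x_\mu=x_{s_\mu(\nu)}\otimes x_\mu$ gives
\[
\Delta(x_\mu x_\nu)=\Delta(x_\mu)\Delta(x_\nu)=x_\mu x_\nu\otimes 1+1\otimes x_\mu x_\nu+x_\mu\otimes x_\nu+x_{s_\mu(\nu)}\otimes x_\mu,
\]
so $\Delta_{1,1}(x_\mu x_\nu)=x_\mu\otimes x_\nu+x_{s_\mu(\nu)}\otimes x_\mu$. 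Now I match the two expressions. By Theorem~\ref{thm:main-interval2}, after possibly interchanging $u_1$ and $u_2$ we may assume $\epsilon_1=\delta_2$; put $\alpha=\epsilon_1=\delta_2\in R^+$ and $\gamma=\delta_1\in R^+$, so $\alpha\neq\gamma$ because $\delta_1\neq\delta_2$. From $s_\alpha s_\gamma=s_{\epsilon_1}s_{\delta_1}=wv^{-1}=s_{\epsilon_2}s_{\delta_2}=s_{\epsilon_2}s_\alpha$ we obtain $s_{\epsilon_2}=s_\alpha s_\gamma s_\alpha=s_{s_\alpha(\gamma)}$, hence $\epsilon_2=\pm s_\alpha(\gamma)$. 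To fix the sign, note that $u_1=s_\gamma v$ with $\ell(u_1)=\ell(v)+1$ forces $v^{-1}(\gamma)\in R^+$, whereas $u_2=s_{\epsilon_2}w$ with $\ell(u_2)=\ell(w)-1$ forces $w^{-1}(\epsilon_2)$ to be a negative root; since $w=s_\alpha s_\gamma v$ one computes $w^{-1}(s_\alpha(\gamma))=v^{-1}s_\gamma s_\alpha(s_\alpha(\gamma))=v^{-1}s_\gamma(\gamma)=-v^{-1}(\gamma)$, which is negative, while $w^{-1}(-s_\alpha(\gamma))=v^{-1}(\gamma)\in R^+$, so necessarily $\epsilon_2=s_\alpha(\gamma)$, and in particular $s_\alpha(\gamma)\in R^+$. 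Therefore $\Delta_{1,1}(x_{w/v})=x_\alpha\otimes x_\gamma+x_{s_\alpha(\gamma)}\otimes x_\alpha=\Delta_{1,1}(x_\alpha x_\gamma)$, and injectivity of $\Delta_{1,1}$ on $\mathscr{B}_W^2$ gives $x_{w/v}=x_\alpha x_\gamma$; the remaining claims follow as explained in the first paragraph.

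The main obstacle is the input from the appendix on the fine structure of a length-two Bruhat interval $[v,w]$ in the simply-laced case, namely that its two middle elements can be labelled so that $\epsilon_1=\delta_2$; this is exactly where the simply-laced hypothesis enters, since for a non-simply-laced group the two factorizations of $wv^{-1}$ arising from the middle elements need not be \enquote{adjacent} in this way and $x_{w/v}$ need not be a monomial. Everything else is routine manipulation of the coproduct and the braiding.
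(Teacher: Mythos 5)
Your overall strategy --- characterizing a degree-two element by its $(1,1)$-coproduct component, using that the primitive elements of $\mathscr{B}_W$ are exactly those of $\mathbb{Z}_{\geq 0}$-degree one --- is sound and genuinely different from the paper's proof, which instead runs the recursion of Corollary~\ref{cor:recursive-circ} down to a covering and invokes Corollary~\ref{cor:monomial-one}. Your computations of $\Delta_{1,1}(x_{w/v})$ via Remark~\ref{rem:coproduct-xwv} and of $\Delta_{1,1}(x_\mu x_\nu)$ via the twisted multiplication are correct, and the sign argument pinning down $\epsilon_2=s_\alpha(\gamma)\in R^+$ is clean. (One citation slip: the fact that a length-two Bruhat interval has exactly two middle elements is the standard diamond property, not Theorem~\ref{thm:main-interval2}; it is true but not established anywhere in the paper, so you should cite it externally.)

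The genuine gap is the step \enquote{By Theorem~\ref{thm:main-interval2}, after possibly interchanging $u_1$ and $u_2$ we may assume $\epsilon_1=\delta_2$.} Theorem~\ref{thm:main-interval2} carries the hypothesis that $s_\beta v\leq w$ for \emph{every} $\beta\in A_\ell(v)$, and you never verify it; it can fail for a perfectly ordinary length-two interval. For instance, in type $\mathsf{A}_3$ take $v=1$ and $w=s_{\beta_1}s_{\beta_2}$: then $\beta_3\in A_\ell(v)$ but $s_{\beta_3}\not\leq w$, so the theorem does not apply to $[v,w]$ as it stands. What you actually need is the unconditional statement that among the two middle elements $u_1=s_{\delta_1}v=s_{\epsilon_1}w$ and $u_2=s_{\delta_2}v=s_{\epsilon_2}w$ one has $\{\epsilon_1,\epsilon_2\}\cap\{\delta_1,\delta_2\}\neq\emptyset$; as your own matching argument shows, this is essentially equivalent to the theorem being proved, so it cannot simply be asserted. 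This is precisely what the first half of the paper's proof supplies: whenever some $\beta\in\Delta$ has $v<s_\beta v$ but $s_\beta v\not\leq w$, Corollary~\ref{cor:recursive-circ} gives $x_{w/v}^\circ=x_{s_\beta w/s_\beta v}^\circ$, and iterating this replaces $[v,w]$ by an interval on which the hypothesis of Theorem~\ref{thm:main-interval2} does hold. To repair your proof you must either import that reduction (most naturally by running your whole argument for $x_{w/v}^\circ$, since the reduction preserves the circle variant, and recovering $x_{w/v}$ via $\rho$ at the end) or prove the matching-middle-elements statement for arbitrary length-two intervals directly; as written, your argument only covers the intervals satisfying the unstated hypothesis.
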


\todo[inline,color=green]{It seems that I have to repeat this footnote once more in the introduction. [I decided later to leave out all footnotes and all other references to Subsubsection~\ref{subsubsec:simply-laced} in case I use the words \enquote{simply laced Weyl group} or \enquote{simply laced root system}, because I feel it is well-known terminology. This is also conform with the other papers I wrote.]}

\begin{proof}

Let $v$ and $w$ be as in the statement. Since $v\leq w$ if and only if $v^{-1}\leq w^{-1}$, it is clear that the statements for $x_{w/v}$ are equivalent to those for $x_{w/v}^\circ$. Therefore, it suffices to prove those for $x_{w/v}^\circ$. Once the existence of $\alpha^\circ$ and $\gamma^\circ$ is established, it follows from Theorem~\ref{thm:strict-positivity} and Example~\ref{ex:relations} that $\alpha^\circ\neq\gamma^\circ$ (since otherwise $x_{w/v}^\circ=0$). Moreover, the uniqueness of the set $\{\alpha^\circ,\gamma^\circ\}$ is evident from Lemma~\ref{lem:unique}. Hence, it suffices to show the existence of $\alpha^\circ$ and $\gamma^\circ$.

Suppose there exists $\beta\in\Delta$ such that $v<s_\beta v$ and such that $s_\beta v\not\leq w$. By \cite[Proposition~5.9]{humphreys-coxeter} applied to $v\leq w$, we then have automatically $s_\beta v\leq s_\beta w$ and thus $w<s_\beta w$. If we apply Corollary~\ref{cor:recursive-circ} to $v,w,\beta$, we see that $x_{w/v}^\circ=x_{s_\beta w/s_\beta v}^\circ$. Since the length of all Weyl group elements is bounded by $\ell(w_o)$, we can by repeating the previous procedure finitely many times and by replacing each time $v$ and $w$ with $s_\beta v$ and $s_\beta w$ respectively assume that for all $\beta\in\Delta$ such that $v<s_\beta v$ the inequality $s_\beta v\leq w$ holds. Since $W$ is assumed to be a simply laced Weyl group, this is precisely the situation where Theorem~\ref{thm:main-interval2} applies. Thus, there exists $\beta\in\Delta$ such that $v<s_\beta v\lessdot w$ and such that $s_\beta w<w$. With $\beta$ as in the last sentence, we define $\alpha^\circ=v^{-1}(\beta)$. Moreover, we uniquely define $\gamma^\circ\in R^+$ such that $s_\beta v=ws_{\gamma^\circ}$. By Corollary~\ref{cor:recursive-circ},~\ref{cor:monomial-one}, we then have $x_{w/v}^\circ=x_{\alpha^\circ}x_{w/s_\beta v}^\circ=x_{\alpha^\circ}x_{\gamma^\circ}$ -- as required.
\end{proof}

\begin{ex}
\label{ex:monom-two}

We give an example of a non simply laced Weyl group for which Theorem~\ref{thm:monomial-two} fails. Indeed, for this example, let $W$ be the Weyl group of rank two with simple roots $\beta$ and $\beta'$ such that $m(\beta,\beta')=4$. Similarly as in Example~\ref{ex:shuffle1}, let $v=s_{\beta'}$ and $w=s_{\beta'}s_\beta s_{\beta'}$. Then, $v\leq w$ such that $\ell(v,w)=2$ but $x_{w/v}$ cannot be written as $\lambda x_\alpha x_\gamma$ for some $\alpha,\gamma\in R^+$ and some $\lambda\in\mathbb{C}$. 
Indeed, in view of the reduced expression $s_{\beta'}s_\beta s_{\beta'}s_\beta$ of $w_o$, we find $vw_o=s_\beta s_{\beta'}s_\beta$ and $ww_o=s_\beta$. Corollary~\ref{cor:xwv-pos} applied to this reduced expression of $vw_o$ gives $x_{w/v}=x_{\alpha_2}x_{\alpha_3}+x_{\alpha_1}x_{\alpha_2}$ where $\alpha_1=\beta$, $\alpha_2=s_\beta(\beta')$, $\alpha_3=s_\beta s_{\beta'}(\beta)=s_{\beta'}(\beta)$. A direct computation shows that
\[
x_{\alpha_2}\otimes x_{\alpha_3}+x_{\alpha_1}\otimes x_{\alpha_2}-\lambda x_\alpha\otimes x_\gamma\in V_W^{\otimes 2}
\]
where $\alpha,\gamma\in R^+$ and where $\lambda\in\mathbb{C}$ never lies in the kernel of $[2]!=1+\Psi$.

\end{ex}

\begin{lem}
\label{lem:nec}

Let $v,w\in W$ be such that $v\leq w$. Let $\ell=\ell(v,w)$ for brevity. Suppose that $x_{w/v}$ can be written as $\lambda x_{\alpha_1}\cdots x_{\alpha_\ell}$ for some $\alpha_1,\ldots,\alpha_\ell\in R^+$ and some $\lambda\in\mathbb{C}$. Then we have
\[
\{\alpha\in R^+\mid v\lessdot s_\alpha v\leq w\}\subseteq\{\alpha_1,\ldots,\alpha_\ell\}\,,
\]
and in particular
\[
\#\{\alpha\in R^+\mid v\lessdot s_\alpha v\leq w\}\leq\ell\,.
\]

\end{lem}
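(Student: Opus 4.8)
The plan is to recover each relevant positive root by applying the braided right partial derivative $\overleftarrow{D}_\alpha=\overleftarrow{D}_{x_\alpha}$ to $x_{w/v}$ and comparing the outcome for the two descriptions of $x_{w/v}$ at hand: its intrinsic coproduct expansion and the hypothetical monomial $\lambda x_{\alpha_1}\cdots x_{\alpha_\ell}$. It is enough to prove the displayed inclusion, for then $\#\{\alpha\in R^+\mid v\lessdot s_\alpha v\leq w\}\leq\#\{\alpha_1,\ldots,\alpha_\ell\}\leq\ell$. So I would fix $\alpha\in R^+$ with $v\lessdot s_\alpha v\leq w$, and aim to show $\alpha\in\{\alpha_1,\ldots,\alpha_\ell\}$.

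The first step computes $(x_{w/v})\overleftarrow{D}_\alpha$ from the coproduct formula $\Delta(x_{w/v})=\sum_{v\leq u\leq w}x_{w/u}\otimes x_{u/v}$ of Remark~\ref{rem:coproduct-xwv}. Feeding this into the definition of $\overleftarrow{D}_{x_\alpha}$ gives $(x_{w/v})\overleftarrow{D}_\alpha=\sum_{v\leq u\leq w}x_{w/u}\left<x_{u/v},x_\alpha\right>$. By Remark~\ref{rem:vanishing} the pairing $\left<x_{u/v},x_\alpha\right>$ vanishes unless $\ell(v,u)=1$, i.e.\ unless $v\lessdot u$; and for $v\lessdot u$, Theorem~\ref{thm:monomial-one} (together with Remark~\ref{rem:converse-monom-one}) identifies $x_{u/v}$ with $x_\gamma$, where $\gamma\in R^+$ is the unique positive root with $u=s_\gamma v$, so that $\left<x_{u/v},x_\alpha\right>=\left<x_\gamma,x_\alpha\right>=\delta_{\gamma,\alpha}$. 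Hence the sum collapses to the single surviving term with $u=s_\alpha v$ (which lies in the index set since $v\lessdot s_\alpha v\leq w$), and $(x_{w/v})\overleftarrow{D}_\alpha=x_{w/s_\alpha v}$. Since $s_\alpha v\leq w$, this element is nonzero by strict positivity (Theorem~\ref{thm:strict-positivity}), so $(x_{w/v})\overleftarrow{D}_\alpha\neq 0$.

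The second step is the complementary vanishing: $(x_{\gamma_1}\cdots x_{\gamma_k})\overleftarrow{D}_\alpha=0$ for all $\gamma_1,\ldots,\gamma_k\in R^+$ with $\alpha\notin\{\gamma_1,\ldots,\gamma_k\}$, proved by induction on $k$. The cases $k=0,1$ are immediate, namely $(1)\overleftarrow{D}_\alpha=\left<1,x_\alpha\right>=\epsilon(x_\alpha)=0$ and $(x_{\gamma_1})\overleftarrow{D}_\alpha=\left<x_{\gamma_1},x_\alpha\right>=\delta_{\gamma_1,\alpha}=0$; and for $k>1$ the braided Leibniz rule over $W$ (Remark~\ref{rem:braided-leibniz}) gives $(x_{\gamma_1}x_{\gamma_2}\cdots x_{\gamma_k})\overleftarrow{D}_\alpha=x_{\gamma_1}\big((x_{\gamma_2}\cdots x_{\gamma_k})\overleftarrow{D}_\alpha\big)+\big((x_{\gamma_1})\overleftarrow{D}_\alpha\big)\big(s_\alpha(x_{\gamma_2}\cdots x_{\gamma_k})\big)$, whose first summand vanishes by the inductive hypothesis and whose second vanishes because $(x_{\gamma_1})\overleftarrow{D}_\alpha=\delta_{\gamma_1,\alpha}=0$. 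Combining the two steps: if $\alpha$ were not among $\alpha_1,\ldots,\alpha_\ell$, then the vanishing applied to $(\alpha_1,\ldots,\alpha_\ell)$ would give $(x_{w/v})\overleftarrow{D}_\alpha=\lambda(x_{\alpha_1}\cdots x_{\alpha_\ell})\overleftarrow{D}_\alpha=0$, contradicting the first step; hence $\alpha\in\{\alpha_1,\ldots,\alpha_\ell\}$, which is the asserted inclusion. The only step that is not pure bookkeeping with the coproduct and the braided Leibniz rule is the appeal to strict positivity for $x_{w/s_\alpha v}\neq 0$, so that is the one external input the argument really leans on; everything else is routine. (Equivalently, one may avoid $\overleftarrow{D}_\alpha$ entirely and instead extract the bidegree-$(\ell-1,1)$ component of $\Delta(x_{w/v})$ from $\sum_{v\leq u\leq w}x_{w/u}\otimes x_{u/v}$ on one side and from $\lambda\,\Delta(x_{\alpha_1}\cdots x_{\alpha_\ell})$ on the other, and compare the coefficients of $x_\alpha\in V_W=\mathscr{B}_W^1$.)
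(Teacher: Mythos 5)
Your proof is correct and follows essentially the same route as the paper: apply $\overleftarrow{D}_\alpha$, use strict positivity (Theorem~\ref{thm:strict-positivity}) to see the result $x_{w/s_\alpha v}$ is nonzero, and kill the monomial via repeated use of the braided Leibniz rule when $\alpha\notin\{\alpha_1,\ldots,\alpha_\ell\}$. The only cosmetic difference is that your first step re-derives the identity $(x_{w/v})\overleftarrow{D}_\alpha=x_{w/s_\alpha v}$ from the coproduct expansion, whereas the paper simply cites Proposition~\ref{prop:reflection-skew}, which records exactly this computation.
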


\begin{proof}

Let the notation be as in the statement. It suffices to show the claimed inclusion as the inequality follows by taking cardinalities. Let $\alpha\in R^+$ be such that $v\lessdot s_\alpha v\leq w$. By Theorem~\ref{thm:strict-positivity} and Proposition~\ref{prop:reflection-skew}, we know that $(x_{w/v})\overleftarrow{D}_\alpha=x_{w/s_\alpha v}\neq 0$. On the other hand, the braided Leibniz rule for $\overleftarrow{D}_\alpha$ (Remark~\ref{rem:braided-leibniz}) repeatedly applied to the expression $\lambda x_{\alpha_1}\cdots x_{\alpha_\ell}$ shows that $(x_{w/v})\overleftarrow{D}_\alpha\neq 0$ can only happen if $\alpha\in\{\alpha_1,\ldots,\alpha_\ell\}$. This completes the proof.
\end{proof}

\begin{rem}

Let $v,w\in W$ be such that $v\leq w$. Let $\ell=\ell(v,w)$ for brevity. The inequality in Lemma~\ref{lem:nec} is only necessary for $x_{w/v}$ to be writable as a monomial $\lambda x_{\alpha_1}\cdots x_{\alpha_\ell}$ for some $\alpha_1,\ldots,\alpha_\ell\in R^+$ and some $\lambda\in\mathbb{C}$ but not sufficient. Indeed, with the setup as in Example~\ref{ex:monom-two}, the set 
\[
\{\alpha\in R^+\mid v\lessdot s_\alpha v\leq w\}=\{\alpha_1,\alpha_3\}
\]
has cardinality equal to $\ell(v,w)=2$.

\end{rem}

\begin{rem-prob}

Let $v,w\in W$ be such that $v\leq w$. Let $\ell=\ell(v,w)$ for brevity. For a simply laced Weyl group $W$, except for the case $\ell\leq 2$, it is unclear to the author whether or not the inequality in Lemma~\ref{lem:nec} is sufficient for $x_{w/v}$ to be writable as a monomial $\lambda x_{\alpha_1}\cdots x_{\alpha_\ell}$ for some $\alpha_1,\ldots,\alpha_\ell\in R^+$ and some $\lambda\in\mathbb{C}$.

\end{rem-prob}

\begin{ex}

We give an example which shows that Theorem~\ref{thm:monomial-two} fails for Bruhat intervals of length three. For this example, let $R$ be of type $\mathsf{A}_3$. Let $\beta_1,\beta_2,\beta_3$ be the simple roots with the labeling as in \cite[Plate~I]{bourbaki_roots}. Let $v=s_{\beta_1}s_{\beta_3}$ and $w=s_{\beta_1}s_{\beta_2}s_{\beta_3}s_{\beta_2}s_{\beta_1}$. Then, we have $v\leq w$ and $\ell(v,w)=3$ but $x_{w/v}$ cannot be written as $\lambda x_\alpha x_\gamma x_\delta$ for some $\alpha,\gamma,\delta\in R^+$ and some $\lambda\in\mathbb{C}$. Indeed, the cardinality of the set
\[
\{\alpha\in R^+\mid v\lessdot s_\alpha v\leq w\}=\{\beta_2,\beta_1+\beta_2,\beta_2+\beta_3,\beta_1+\beta_2+\beta_3\}
\]
is equal to $4>\ell(v,w)=3$. Thus, the claim follows from Lemma~\ref{lem:nec}

\end{ex}

\section{Strict positivity}
\label{sec:strict-positivity}

In this section, we generalize Proposition~\ref{prop:simple-left-derivative} and \ref{prop:sat-chain} from $x_w$ to the relative setting $x_{w/v}$ where $v,w\in W$ such that $v\leq w$. We draw several consequences which lead for example to the sharpening of positivity (Theorem~\ref{thm:positivity}) to strict positivity (Theorem~\ref{thm:strict-positivity}). We also prepare the combinatorial consequences of the next section using the invariants $c_{w/v,w'/v'}$ where $v,w,v',w'\in W$ (cf.\ Definition~\ref{def:cwv}).

\begin{prop}[Generalization of Proposition~\ref{prop:simple-left-derivative}]
\label{prop:reflection-skew}

Let $v,w\in W$ and let $\alpha\in R^+$. Then we have $\overrightarrow{D}_\alpha(x_{w/v})=x_{s_\alpha w/v}$ if $s_\alpha w\lessdot w$ and $=0$ otherwise. Moreover, we have $(x_{w/v})\overleftarrow{D}_\alpha=x_{w/s_\alpha v}$ if $v\lessdot s_\alpha v$ and $=0$ otherwise.

\end{prop}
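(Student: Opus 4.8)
The plan is to read both formulas directly off the coproduct expansion of $x_{w/v}$ supplied by Remark~\ref{rem:coproduct-xwv}, using the order-one monomial identity Theorem~\ref{thm:monomial-one} to pin down the one surviving scalar. First I would dispose of the case $v\not\le w$: there $x_{w/v}=0$, hence $\overrightarrow{D}_\alpha(x_{w/v})=0$ and $(x_{w/v})\overleftarrow{D}_\alpha=0$, while the two asserted right-hand sides also vanish, since $s_\alpha w\lessdot w$ together with $v\le s_\alpha w$ would force $v<w$, and likewise $v\lessdot s_\alpha v$ together with $s_\alpha v\le w$ would force $v<w$. So I may assume $v\le w$, and then
\[
\Delta(x_{w/v})=\sum_{v\le u\le w}x_{w/u}\otimes x_{u/v}\,.
\]

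For the first assertion I would expand, by the definition of the braided left partial derivative,
\[
\overrightarrow{D}_\alpha(x_{w/v})=\langle x_\alpha,(x_{w/v})_{(1)}\rangle\,(x_{w/v})_{(2)}=\sum_{v\le u\le w}\langle x_\alpha,x_{w/u}\rangle\,x_{u/v}\,.
\]
By Remark~\ref{rem:vanishing} the coefficient $\langle x_\alpha,x_{w/u}\rangle$ vanishes unless $x_{w/u}$ has $\mathbb{Z}_{\ge 0}$-degree $1$, i.e.\ unless $u\lessdot w$ (Remark~\ref{rem:degree-xwv}); and for such $u$, Theorem~\ref{thm:monomial-one} identifies $x_{w/u}=x_\gamma$, where $\gamma\in R^+$ is the unique root with $u=s_\gamma w$, so that $\langle x_\alpha,x_{w/u}\rangle=\langle x_\alpha,x_\gamma\rangle=\delta_{\alpha,\gamma}$ by the normalization of the pairing on $V_W\otimes V_W$. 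Hence at most one term survives: if $s_\alpha w\lessdot w$ it is the term $u=s_\alpha w$, which contributes $x_{s_\alpha w/v}$ (and if $v\not\le s_\alpha w$ that term is simply absent from the sum, which is consistent with $x_{s_\alpha w/v}=0$); if $s_\alpha w\not\lessdot w$ no term survives and the sum is $0$. This is exactly the first assertion.

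The second assertion is obtained symmetrically from the braided right partial derivative:
\[
(x_{w/v})\overleftarrow{D}_\alpha=(x_{w/v})_{(1)}\,\langle(x_{w/v})_{(2)},x_\alpha\rangle=\sum_{v\le u\le w}x_{w/u}\,\langle x_{u/v},x_\alpha\rangle\,,
\]
where now $\langle x_{u/v},x_\alpha\rangle$ vanishes unless $v\lessdot u$ (again by Remark~\ref{rem:vanishing} and Remark~\ref{rem:degree-xwv}), in which case $x_{u/v}=x_\gamma$ with $\gamma\in R^+$ the unique root satisfying $u=s_\gamma v$ (Theorem~\ref{thm:monomial-one}), so $\langle x_{u/v},x_\alpha\rangle=\delta_{\alpha,\gamma}$. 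Thus the sum collapses to $x_{w/s_\alpha v}$ when $v\lessdot s_\alpha v$ and to $0$ otherwise.

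I do not expect a genuine obstacle here: once Remark~\ref{rem:coproduct-xwv} and Theorem~\ref{thm:monomial-one} are in hand the computation is purely formal, and the only place requiring a little care is the bookkeeping of the degenerate cases $v\not\le w$, $v\not\le s_\alpha w$, $s_\alpha v\not\le w$. As a cross-check for the first assertion one can also avoid Theorem~\ref{thm:monomial-one} entirely: using $x_{w/v}=(x_w)\overleftarrow{D}_{x_{v^{-1}}}$ (established in the proof of Theorem~\ref{thm:positivity} via Proposition~\ref{prop:sat-chain}) together with the commutativity of the left and right braided partial derivative actions (Remark~\ref{rem:left-right-simult}), one gets $\overrightarrow{D}_\alpha(x_{w/v})=(\overrightarrow{D}_\alpha(x_w))\overleftarrow{D}_{x_{v^{-1}}}$, whereupon Proposition~\ref{prop:simple-left-derivative} concludes; the direct coproduct computation above is, however, the most economical way to get both assertions at once.
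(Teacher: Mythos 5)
Your proposal is correct and follows essentially the same route as the paper: dispose of the case $v\not\leq w$, then read both formulas off the coproduct expansion of Remark~\ref{rem:coproduct-xwv} via the definition of the braided partial derivatives, with Theorem~\ref{thm:monomial-one} (together with the degree considerations that the paper packages as Remark~\ref{rem:converse-monom-one}) isolating the single surviving term. Your explicit bookkeeping of the degenerate subcases such as $v\not\leq s_\alpha w$ is a welcome bit of extra care, but the argument is the same one the paper sketches.
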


\begin{proof}

Let $v,w,\alpha$ be as in the statement. Let us first prove the first statement of the proposition. If $v\not\leq w$ then both sides of the claimed equality are $=0$ and there is nothing to prove. Therefore, we may assume that $v\leq w$. In this case the claimed equality follows immediately from the definition of $\overrightarrow{D}_\alpha$, Remark~\ref{rem:coproduct-xwv},~\ref{rem:converse-monom-one} and Theorem~\ref{thm:monomial-one}. The second statement of the proposition can be proved completely analogously to the first just with the definition of $\overrightarrow{D}_\alpha$ replaced by the definition of $\overleftarrow{D}_\alpha$.
\end{proof}

\todo[inline,color=green]{{\bf Remark.} Let $V_W^\diamond$ be the $\mathbb{C}$-vector subspace of $\mathscr{B}_W$ spanned by $x_{w/v}$ where $v,w\in W$. It is well-known or easy to see that for each $\alpha\in R^+$ there exist $v,w\in W$ such that $v\lessdot w$ and such that $v=s_\alpha w$. (This follows basically from the fact that each reflection in $W$ has a palindrome reduced expression.) 
Hence, Theorem~\ref{thm:monomial-one} implies that $V_W$ is a vector subspace of $V_W^\diamond$. With this notation, Proposition~\ref{prop:reflection-skew} implies that $\mathscr{B}_W$ acts from the left and the right on $V_W^\diamond$ via left and right braided partial derivatives.

$\bar{\mathscr{S}}(V_W^\diamond)=V_W^\diamond$, comultiplication on $V_W^\diamond$ inherited from $\mathscr{B}_W$, so that $V_W^\diamond$ becomes a coalgebra (cf. Remark~\ref{rem:coproduct-xwv}), unaware if the expected basis of $V_W^\diamond$ is actually a basis, while finite dimensionality is clear [a cheap proof that the expected basis is actually a basis is not possible, cf.\ Example~\ref{ex:comb}]. For the last point, I actually need strict positivity to make a sensible statement, so that I maybe place this remark somewhere else. Also, the order of the statements in the remark should make \enquote{sense}.

{\bf Problem.} The problem with this remark is that $V_W^\diamond$ has no sensible $W$-action, and does not belong to the world of Yetter-Drinfeld $W$-modules. The action $ux_{w/v}=x_{uw/uv}$ where $u,v,w\in W$ does not make much sense because we do not necessarily have $uv\leq uw$ if $v\leq w$ (hence a nonzero element could be send to zero). Further, $V_W^\diamond$ is not stable under the $W$-action inherited from $\mathscr{B}_W$, otherwise $\mathbb{C}x_{w_o}$ would be $W$-stable as the $\ell(w_o)$-component of $V_W^{\diamond}$ which is clearly not the case.}

\begin{cor}
\label{cor:formal-kirillov}

Let $v,w\in W$ and let $y,\xi\in\mathscr{B}_W^{\geq 0}$. Then $\overrightarrow{D}_\xi(x_{w/v})\overleftarrow{D}_y$ and $\bar{\mathscr{S}}\big(\overrightarrow{D}_\xi(x_{w/v})\overleftarrow{D}_y\big)$ are both positive elements of $\mathscr{B}_W$. 

\end{cor}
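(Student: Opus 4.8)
The plan is to reduce everything to the positivity results already available, namely that $x_{w/v}$ is positive (Theorem~\ref{thm:positivity}) and that $\bar{\mathscr{S}}(x_{w/v})$ is positive (Corollary~\ref{cor:sxwv}), together with the remark following the latter which extends its formula to the case $v\not\leq w$. If $v\not\leq w$ then $x_{w/v}=0$ and every element in the statement vanishes, so I may assume $v\leq w$. Moreover, by Remark~\ref{rem:left-right-simult} the operators $\overrightarrow{D}_\xi$ and $\overleftarrow{D}_y$ applied to the same element commute, so the expression $\overrightarrow{D}_\xi(x_{w/v})\overleftarrow{D}_y$ is unambiguous.

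The first step is to treat monomials. Since $\xi\mapsto\overrightarrow{D}_\xi$ is a left action of $\mathscr{B}_W$ on itself and $y\mapsto\overleftarrow{D}_y$ is a right action, for $\xi=x_{\alpha_1}\cdots x_{\alpha_m}$ with $\alpha_i\in R^+$ one has $\overrightarrow{D}_\xi=\overrightarrow{D}_{\alpha_1}\circ\cdots\circ\overrightarrow{D}_{\alpha_m}$, and analogously the right operator $\overleftarrow{D}_y$ decomposes into the successive application of $\overleftarrow{D}_{\gamma_1},\ldots,\overleftarrow{D}_{\gamma_n}$ for $y=x_{\gamma_1}\cdots x_{\gamma_n}$ with $\gamma_j\in R^+$ (these are the operators of Notation~\ref{not:dwv}). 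By Proposition~\ref{prop:reflection-skew}, for each $\alpha\in R^+$ the operator $\overrightarrow{D}_\alpha$ sends $x_{w'/v'}$ either to $x_{s_\alpha w'/v'}$ or to $0$, leaving the lower index $v'$ untouched, and for each $\gamma\in R^+$ the operator $\overleftarrow{D}_\gamma$ sends $x_{w'/v'}$ either to $x_{w'/s_\gamma v'}$ or to $0$, leaving the upper index $w'$ untouched. Composing these, for monomials $\xi$ and $y$ the element $\overrightarrow{D}_\xi(x_{w/v})\overleftarrow{D}_y$ is again of the form $x_{w''/v''}$ for some $w'',v''\in W$, or else $0$.

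Now for arbitrary $\xi,y\in\mathscr{B}_W^{\geq 0}$ I would write $\xi$ and $y$ as $\mathbb{Z}_{\geq 0}$-linear combinations of such monomials and use the $\mathbb{C}$-linearity of $\xi\mapsto\overrightarrow{D}_\xi$, of $y\mapsto\overleftarrow{D}_y$, and of the operators themselves, to conclude
\[
\overrightarrow{D}_\xi(x_{w/v})\overleftarrow{D}_y=\sum_i n_i\,x_{w_i/v_i}
\]
for finitely many $v_i,w_i\in W$ and $n_i\in\mathbb{Z}_{\geq 0}$ (with the convention $x_{w_i/v_i}=0$ whenever $v_i\not\leq w_i$). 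Each $x_{w_i/v_i}$ lies in $\mathscr{B}_W^{\geq 0}$ by Theorem~\ref{thm:positivity}, and $\mathscr{B}_W^{\geq 0}$ is a monoid under addition stable under dilation by $\mathbb{Z}_{\geq 0}$, so the left-hand side is positive. Since $\bar{\mathscr{S}}$ is $\mathbb{C}$-linear, one gets $\bar{\mathscr{S}}\big(\overrightarrow{D}_\xi(x_{w/v})\overleftarrow{D}_y\big)=\sum_i n_i\,\bar{\mathscr{S}}(x_{w_i/v_i})$, and each $\bar{\mathscr{S}}(x_{w_i/v_i})$ is positive by Corollary~\ref{cor:sxwv} (together with the remark after it for the case $v_i\not\leq w_i$); hence this element too is a $\mathbb{Z}_{\geq 0}$-combination of positive elements, and therefore positive.

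The only point requiring a little care is the bookkeeping in the second step — checking that a composition of the elementary operators $\overrightarrow{D}_\alpha$, $\overleftarrow{D}_\gamma$ with $\alpha,\gamma\in R^+$ never leaves the family $\{x_{w/v}\}_{v,w\in W}\cup\{0\}$, which is exactly the content of Proposition~\ref{prop:reflection-skew}. There is no genuine obstacle here: once the result is exhibited as a $\mathbb{Z}_{\geq 0}$-combination of the already-known positive elements $x_{w_i/v_i}$ (respectively $\bar{\mathscr{S}}(x_{w_i/v_i})$), no cancellation can spoil positivity.
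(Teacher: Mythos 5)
Your argument is correct. For the first element, $\overrightarrow{D}_\xi(x_{w/v})\overleftarrow{D}_y$, you follow exactly the paper's route: decompose $\xi$ and $y$ into positive monomials, iterate Proposition~\ref{prop:reflection-skew} to land in the family $\{x_{w'/v'}\}\cup\{0\}$, and invoke Theorem~\ref{thm:positivity} for each resulting term.

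For the second element the two proofs diverge slightly. You apply $\bar{\mathscr{S}}$ termwise to the decomposition $\sum_i n_i x_{w_i/v_i}$ already obtained and quote Corollary~\ref{cor:sxwv} for the positivity of each $\bar{\mathscr{S}}(x_{w_i/v_i})$. The paper instead first commutes $\bar{\mathscr{S}}$ past the derivatives, using Proposition~\ref{prop:liu2.10(d)} and Remarks~\ref{rem:left-right-simult},~\ref{rem:liu2.10(d)} together with the identity $\bar{\mathscr{S}}(x_{w/v})=x_{vw_o/ww_o}$ from Theorem~\ref{thm:positivity}, to obtain
\[
\bar{\mathscr{S}}\big(\overrightarrow{D}_\xi(x_{w/v})\overleftarrow{D}_y\big)=\overrightarrow{D}_{\rho(y)}(x_{vw_o/ww_o})\overleftarrow{D}_{\rho(\xi)}\,,
\]
and then reduces to the first case via Remark~\ref{rem:positive-image}. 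Your version is a bit more elementary, since it needs no commutation identity and only the linearity of $\bar{\mathscr{S}}$; the paper's version buys the explicit closed formula above, which exhibits the $\bar{\mathscr{S}}$-image as again an expression of the first type (a $\mathbb{Z}_{\geq 0}$-combination of elements $x_{w'/v'}$ rather than of their $\bar{\mathscr{S}}$-images). Of course the two outputs coincide, since $\bar{\mathscr{S}}(x_{w_i/v_i})=x_{v_iw_o/w_iw_o}$, so the difference is one of presentation rather than substance. Either way the positivity is established.
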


\begin{proof}

Let $v,w,y,\xi$ be as in the statement. Since
\[
\bar{\mathscr{S}}\big(\overrightarrow{D}_\xi(x_{w/v})\overleftarrow{D}_y\big)=\overrightarrow{D}_{\rho(y)}(\bar{\mathscr{S}}(x_{w/v}))\overleftarrow{D}_{\rho(\xi)}=\overrightarrow{D}_{\rho(y)}(x_{vw_o/ww_o})\overleftarrow{D}_{\rho(\xi)}
\]
by Theorem~\ref{thm:positivity}, Proposition~\ref{prop:rhoands}\eqref{item:involution},~\ref{prop:liu2.10(d)} and Remark~\ref{rem:left-right-simult},~\ref{rem:liu2.10(d)}, the positivity of the second expression in the statement of the corollary follows in in view of Remark~\ref{rem:positive-image} from the positivity of the first expression. But the positivity of the first expression is obvious because of Theorem~\ref{thm:positivity} and Proposition~\ref{prop:reflection-skew}.
\end{proof}

\begin{rem}

Let $u,v,w\in W$. Corollary~\ref{cor:formal-kirillov} shows because of Theorem~\ref{thm:positivity}, Remark~\ref{rem:positive-image} and Example~\ref{ex:top-low-degree-xw} in particular that $(x_u)\overleftarrow{D}_{w/v}$ and $\overrightarrow{D}_{w/v}^\circ(x_u)$ are both positive elements of $\mathscr{B}_W$. This result can be regarded as a \enquote{formal analogue} of \cite[Conjecture~1]{kirillov}.


\end{rem}

\todo[inline,color=green]{I need here to say that $c_{w/v,w'/v'}$ is always $\geq 0$ as a consequence of the above corollary and I refer to this remark in the introduction to the combinatorics section where I give a enumerative and manifestly positive description of this invariant. [Done in Remark~\ref{rem:cwv} and remark after Theorem~\ref{thm:comb}.]

I should mention that the previous corollary is a formal analogue of Kirillov's conjecture for the special case that\ldots maybe also relevant for the motivational section in the introduction. [Done in the remark after Corollary~\ref{cor:formal-kirillov}.]

What we also need to say in the introduction is that all this work by Liu seemed to be inspired by the conjecture of Kirillov. [Done in the introduction before Subsection~\ref{subsec:coxeter}.]}

\begin{prop}[Generalization of Proposition~\ref{prop:sat-chain}]
\label{prop:sat-chain-general}

Let $v,w\in W$ be such that $v\leq w$. Let $\ell=\ell(v,w)$. Let $\alpha_1,\ldots,\alpha_\ell\in R^+$ be such that $s_{\alpha_\ell}\cdots s_{\alpha_1}=wv^{-1}$. Then we have $\left<x_{\alpha_1}\cdots x_{\alpha_\ell},x_{w/v}\right>=1$ if
\[
v\lessdot s_{\alpha_1}v\lessdot s_{\alpha_2}s_{\alpha_1}v\lessdot\cdots\lessdot s_{\alpha_{\ell-1}}\cdots s_{\alpha_1}v\lessdot s_{\alpha_\ell}\cdots s_{\alpha_1}v=w
\]
or equivalently if
\[
v=s_{\alpha_1}\cdots s_{\alpha_\ell}w\lessdot s_{\alpha_2}\cdots s_{\alpha_\ell}w\lessdot\cdots\lessdot s_{\alpha_{\ell-1}}s_{\alpha_\ell}w\lessdot s_{\alpha_\ell}w\lessdot w
\]
is a saturated chain in the Bruhat order and $=0$ otherwise.

\end{prop}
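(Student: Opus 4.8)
The plan is to reduce the pairing to an iterated braided right partial derivative applied to $x_{w/v}$, and then to unwind it using Proposition~\ref{prop:reflection-skew}, in exact parallel to the way Proposition~\ref{prop:sat-chain} is deduced from Proposition~\ref{prop:simple-left-derivative}.

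First I would rewrite the left-hand side. Using the symmetry of the Hopf duality pairing $\mathscr{B}_W\otimes\mathscr{B}_W\to\mathbb{C}$ and Equation~\eqref{eq:pairing-derivative}, one has
\[
\langle x_{\alpha_1}\cdots x_{\alpha_\ell},x_{w/v}\rangle=\langle x_{w/v},x_{\alpha_1}\cdots x_{\alpha_\ell}\rangle=\epsilon\big((x_{w/v})\overleftarrow{D}_{x_{\alpha_1}\cdots x_{\alpha_\ell}}\big).
\]
Since $\overleftarrow{D}_{(-)}$ is a right action of the algebra $\mathscr{B}_W$ on itself, this equals $\epsilon\big((\cdots((x_{w/v})\overleftarrow{D}_{\alpha_1})\overleftarrow{D}_{\alpha_2}\cdots)\overleftarrow{D}_{\alpha_\ell}\big)$, with the notation $\overleftarrow{D}_{\alpha_i}=\overleftarrow{D}_{x_{\alpha_i}}$ of Notation~\ref{not:dwv}.

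Next I would evaluate this expression from left to right by repeated use of the second statement of Proposition~\ref{prop:reflection-skew}. Put $v_0=v$ and $v_i=s_{\alpha_i}v_{i-1}$, so that $v_i=s_{\alpha_i}\cdots s_{\alpha_1}v$. Then $(x_{w/v_{i-1}})\overleftarrow{D}_{\alpha_i}=x_{w/v_i}$ if $v_{i-1}\lessdot v_i$, and $=0$ otherwise. Consequently, if the chain $v=v_0\lessdot v_1\lessdot\cdots\lessdot v_\ell$ is saturated, an immediate induction gives $(x_{w/v})\overleftarrow{D}_{x_{\alpha_1}\cdots x_{\alpha_\ell}}=x_{w/v_\ell}$; and since $v_\ell=s_{\alpha_\ell}\cdots s_{\alpha_1}v=(wv^{-1})v=w$ by hypothesis, Example~\ref{ex:top-low-degree-xw} yields $x_{w/v_\ell}=x_{w/w}=1$, whose counit is $1$. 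If instead some covering relation $v_{i-1}\lessdot v_i$ fails, then the $i$-th derivative already produces $0$, hence so do all subsequent derivatives and the counit. This proves the claimed dichotomy. (One small point to keep an eye on is consistency in the degenerate cases, e.g.\ when $v\not\le w$ or an intermediate $v_i$ fails to lie below $w$; these cause no trouble because a covering relation $v'\lessdot s_\alpha v'$ together with $s_\alpha v'\le w$ forces $v'\le w$, so the chain, when it reaches $w$, stays inside $[v,w]$.)

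Finally, the equivalence of the two displayed chains is purely formal. From $s_{\alpha_\ell}\cdots s_{\alpha_1}=wv^{-1}$ one obtains, by inverting (each $s_{\alpha_i}$ being an involution), $s_{\alpha_1}\cdots s_{\alpha_\ell}=vw^{-1}$, i.e.\ $v=s_{\alpha_1}\cdots s_{\alpha_\ell}w$; and a telescoping cancellation then shows $s_{\alpha_k}\cdots s_{\alpha_1}v=s_{\alpha_{k+1}}\cdots s_{\alpha_\ell}w$ for every $0\le k\le\ell$, so the two chains list the very same sequence of group elements. Since the whole argument reduces to Proposition~\ref{prop:reflection-skew} plus bookkeeping of covering relations, I do not anticipate a genuine obstacle here; the only thing requiring care is the iteration's base cases and the degenerate situations noted above.
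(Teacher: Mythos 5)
Your proof is correct and takes essentially the same route as the paper: the paper's own (two-sentence) proof likewise reduces the pairing to an iterated braided partial derivative via Equation~\eqref{eq:pairing-derivative} and unwinds it step by step with Proposition~\ref{prop:reflection-skew}. Your write-up merely makes explicit the bookkeeping of the covering relations and the degenerate cases that the paper leaves implicit.
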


\begin{proof}

The two displayed conditions in the statement of the proposition are clearly equivalent. The claim now follows from Proposition~\ref{prop:reflection-skew} and Equation~\eqref{eq:pairing-derivative}.
\end{proof}

\begin{prop}
\label{prop:equal}

Let $v,w,v',w'\in W$ be such that $v\leq w$ and $v'\leq w'$. Then we have $x_{w/v}=x_{w'/v'}$ if and only if the following conditions hold:

\begin{enumerate}

\item 
\label{item:zdegree}

$\ell=\ell(v,w)=\ell(v',w')$.

\item
\label{item:wdegree}

$wv^{-1}=w'v'^{-1}$.

\item
\label{item:prop}

For all $\alpha_1,\ldots,\alpha_\ell\in R^+$ such that $s_{\alpha_\ell}\cdots s_{\alpha_1}=wv^{-1}=w'v'^{-1}$ we have
\begin{gather*}
v\lessdot s_{\alpha_1}v\lessdot s_{\alpha_2}s_{\alpha_1}v\lessdot\cdots\lessdot s_{\alpha_{\ell-1}}\cdots s_{\alpha_1}v\lessdot s_{\alpha_\ell}\cdots s_{\alpha_1}v=w\\
\text{if and only if}\\
v'\lessdot s_{\alpha_1}v'\lessdot s_{\alpha_2}s_{\alpha_1}v'\lessdot\cdots\lessdot s_{\alpha_{\ell-1}}\cdots s_{\alpha_1}v'\lessdot s_{\alpha_\ell}\cdots s_{\alpha_1}v'=w'
\end{gather*}
or equivalently we have
\begin{gather*}
v=s_{\alpha_1}\cdots s_{\alpha_\ell}w\lessdot s_{\alpha_2}\cdots s_{\alpha_\ell}w\lessdot\cdots\lessdot s_{\alpha_{\ell-1}}s_{\alpha_\ell}w\lessdot s_{\alpha_\ell}w\lessdot w\\
\text{if and only if}\\
v'=s_{\alpha_1}\cdots s_{\alpha_\ell}w'\lessdot s_{\alpha_2}\cdots s_{\alpha_\ell}w'\lessdot\cdots\lessdot s_{\alpha_{\ell-1}}s_{\alpha_\ell}w'\lessdot s_{\alpha_\ell}w'\lessdot w'\,.
\end{gather*}

\end{enumerate}

\end{prop}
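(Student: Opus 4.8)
The plan is to deduce the proposition from three ingredients already available: the homogeneity of $x_{w/v}$ recorded in Remark~\ref{rem:degree-xwv}, the explicit pairing values of Proposition~\ref{prop:sat-chain-general}, and the nondegeneracy of the Hopf duality pairing on $\mathscr{B}_W$ together with the fact that $\mathscr{B}_W$ is generated in $\mathbb{Z}_{\geq 0}$-degree one by $\{x_\alpha\mid\alpha\in R^+\}$.

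For the direction ``$\Rightarrow$'', suppose $x_{w/v}=x_{w'/v'}$. First I would note that this common element is nonzero: since the Bruhat order is graded by $\ell$, there is a saturated chain from $v$ to $w$, which produces in the obvious way a tuple $\alpha_1,\dots,\alpha_\ell\in R^+$ with $s_{\alpha_\ell}\cdots s_{\alpha_1}=wv^{-1}$ and $\langle x_{\alpha_1}\cdots x_{\alpha_\ell},x_{w/v}\rangle=1$ by Proposition~\ref{prop:sat-chain-general} (this nonvanishing is also part of the content of Theorem~\ref{thm:strict-positivity}). By Remark~\ref{rem:degree-xwv} the element $x_{w/v}$ is homogeneous of $\mathbb{Z}_{\geq 0}$-degree $\ell(v,w)$ and of $W$-degree $wv^{-1}$, and likewise $x_{w'/v'}$ is homogeneous of $\mathbb{Z}_{\geq 0}$-degree $\ell(v',w')$ and of $W$-degree $w'v'^{-1}$; comparing these two presentations of one and the same nonzero homogeneous element gives \eqref{item:zdegree} and \eqref{item:wdegree}. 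For \eqref{item:prop}, fix any $\alpha_1,\dots,\alpha_\ell\in R^+$ with $s_{\alpha_\ell}\cdots s_{\alpha_1}=wv^{-1}=w'v'^{-1}$ and pair both sides with $x_{\alpha_1}\cdots x_{\alpha_\ell}$; by Proposition~\ref{prop:sat-chain-general} the value $\langle x_{\alpha_1}\cdots x_{\alpha_\ell},x_{w/v}\rangle$ is $1$ exactly when the $v$-chain is saturated and $0$ otherwise, and similarly $\langle x_{\alpha_1}\cdots x_{\alpha_\ell},x_{w'/v'}\rangle$ is $1$ exactly when the $v'$-chain is saturated and $0$ otherwise, so equality of the two pairings is precisely the asserted equivalence (the two displayed forms of the chain condition being literally the same sequence of group elements once $s_{\alpha_\ell}\cdots s_{\alpha_1}=wv^{-1}$, as already observed in the proof of Proposition~\ref{prop:sat-chain-general}).

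For the direction ``$\Leftarrow$'', assume \eqref{item:zdegree}, \eqref{item:wdegree}, \eqref{item:prop} hold, and write $\ell=\ell(v,w)=\ell(v',w')$ and $g=wv^{-1}=w'v'^{-1}$. By nondegeneracy of the Hopf duality pairing $\mathscr{B}_W\otimes\mathscr{B}_W\to\mathbb{C}$ it suffices to prove $\langle\phi,x_{w/v}\rangle=\langle\phi,x_{w'/v'}\rangle$ for every $\phi\in\mathscr{B}_W$. Because the pairing respects the $\mathbb{Z}_{\geq 0}$-grading (Remark~\ref{rem:vanishing}) and is a morphism in the Yetter-Drinfeld category over $W$ with $\mathbb{C}$ carrying the trivial $W$-grading, both sides are zero unless $\phi$ is homogeneous of $\mathbb{Z}_{\geq 0}$-degree $\ell$ and of $W$-degree $g^{-1}$; and since $\mathscr{B}_W$ is generated in $\mathbb{Z}_{\geq 0}$-degree one by $\{x_\alpha\mid\alpha\in R^+\}$, such a $\phi$ is a linear combination of monomials $x_{\alpha_1}\cdots x_{\alpha_\ell}$ with all $\alpha_i\in R^+$ and $s_{\alpha_1}\cdots s_{\alpha_\ell}=g^{-1}$, i.e.\ $s_{\alpha_\ell}\cdots s_{\alpha_1}=g$. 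By linearity I may therefore take $\phi=x_{\alpha_1}\cdots x_{\alpha_\ell}$ of this form, since monomials of any other $W$-degree pair to zero with both $x_{w/v}$ and $x_{w'/v'}$ and so nothing is lost. For such $\phi$, Proposition~\ref{prop:sat-chain-general} applies to both $x_{w/v}$ and $x_{w'/v'}$ with $wv^{-1}=w'v'^{-1}=g=s_{\alpha_\ell}\cdots s_{\alpha_1}$, and condition~\eqref{item:prop} says exactly that the two chain conditions governing these pairing values hold or fail simultaneously; hence $\langle\phi,x_{w/v}\rangle=\langle\phi,x_{w'/v'}\rangle$, as desired. This proves the equality, and hence the proposition.

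The only step I expect to require genuine care is the reduction in the ``$\Leftarrow$'' direction: one must combine the $\mathbb{Z}_{\geq 0}$-grading and the $W$-grading of the pairing to cut the test space $\mathscr{B}_W$ down to the span of the ``saturated-chain'' monomials $x_{\alpha_1}\cdots x_{\alpha_\ell}$ with $s_{\alpha_\ell}\cdots s_{\alpha_1}=g$, and to check that every discarded test element pairs to $0$ with both $x_{w/v}$ and $x_{w'/v'}$, so that no case of the desired equality is overlooked. Everything else is a direct bookkeeping consequence of Remark~\ref{rem:degree-xwv} and Proposition~\ref{prop:sat-chain-general}.
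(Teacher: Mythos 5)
Your proof is correct and follows essentially the same route as the paper's: items \eqref{item:zdegree} and \eqref{item:wdegree} from the homogeneity in Remark~\ref{rem:degree-xwv}, item \eqref{item:prop} by pairing with monomials via Proposition~\ref{prop:sat-chain-general}, and the converse by nondegeneracy of the Hopf duality pairing after reducing to test elements $x_{\alpha_1}\cdots x_{\alpha_\ell}$ of the correct $\mathbb{Z}_{\geq 0}$- and $W$-degree. Your explicit observation that $x_{w/v}\neq 0$ (needed so that comparing the two homogeneous presentations actually forces the degrees to agree) is a point the paper leaves implicit, but it does not change the argument.
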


\begin{proof}

Let the notation be as in the statement and as in the items. Suppose that $x_{w/v}=x_{w'/v'}$. Remark~\ref{rem:degree-xwv} immediately implies Item~\eqref{item:zdegree},\eqref{item:wdegree}. The two conditions in Item~\eqref{item:prop} are clearly equivalent. The first as well as the second condition in Item~\eqref{item:prop} follow from the equation $\left<x_{\alpha_1}\cdots x_{\alpha_\ell},x_{w/v}\right>=\left<x_{\alpha_1}\cdots x_{\alpha_\ell},x_{w'/v'}\right>$ and Proposition~\ref{prop:sat-chain-general}.

Conversely, suppose that the three conditions in Item~\eqref{item:zdegree},\eqref{item:wdegree},\eqref{item:prop} are satisfied. By the nondegeneracy of the Hopf duality pairing between $\mathscr{B}_W$ and itself, it suffices to show that $\left<\phi,x_{w/v}\right>=\left<\phi,x_{w'/v'}\right>$ for all $\phi\in\mathscr{B}_W$. To show this equality, by Item~\eqref{item:zdegree},\eqref{item:wdegree}, we clearly can assume that $\phi$ is homogeneous of $\mathbb{Z}_{\geq 0}$-degree $\ell$ and of $W$-degree $vw^{-1}=v'w'^{-1}$ since otherwise both sides of the desired equality are equal to zero. (To see this, we take Remark~\ref{rem:vanishing} into account and that the Hopf duality pairing between $\mathscr{B}_W$ and itself is a morphism in the Yetter-Drinfeld category over $W$.) By linearity, we even may assume that $\phi=x_{\alpha_1}\cdots x_{\alpha_\ell}$ where $\alpha_1,\ldots,\alpha_\ell\in R^+$ are such that $s_{\alpha_\ell}\cdots s_{\alpha_1}=wv^{-1}=w'v'^{-1}$. Under this reduction, the desired equality follows immediately from Proposition~\ref{prop:sat-chain-general} and Item~\eqref{item:prop}.
\end{proof}

\begin{thm}[Strict positivity]
\label{thm:strict-positivity}

Let $v,w\in W$ be such that $v\leq w$. Then $x_{w/v}$ is a strictly positive element of $\mathscr{B}_W$.

\end{thm}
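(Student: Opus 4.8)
The plan is to combine the positivity already established in Theorem~\ref{thm:positivity} with Proposition~\ref{prop:sat-chain-general}. Since Theorem~\ref{thm:positivity} tells us that $x_{w/v}\in\mathscr{B}_W^{\geq 0}$, the only thing left to prove is that $x_{w/v}\neq 0$; equivalently, by nondegeneracy of the Hopf duality pairing on $\mathscr{B}_W$, it suffices to exhibit some $\phi\in\mathscr{B}_W$ with $\left<\phi,x_{w/v}\right>\neq 0$.

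First I would choose a saturated chain in the Bruhat order joining $v$ to $w$. Since $v\leq w$ and the strong Bruhat order on $W$ is graded by the length function (a classical fact; see e.g.\ \cite[Chapter~5]{humphreys-coxeter}), there is a chain $v=u_0\lessdot u_1\lessdot\cdots\lessdot u_\ell=w$ with $\ell=\ell(v,w)$. For each $1\leq i\leq\ell$ the covering relation $u_{i-1}\lessdot u_i$ forces $u_i=t_iu_{i-1}$ for a unique reflection $t_i$, and I would write $t_i=s_{\alpha_i}$ with $\alpha_i\in R^+$ the unique positive root whose associated reflection is $t_i$. Then $u_i=s_{\alpha_i}\cdots s_{\alpha_1}v$ for all $0\leq i\leq\ell$, so in particular $s_{\alpha_\ell}\cdots s_{\alpha_1}=wv^{-1}$; thus $\alpha_1,\ldots,\alpha_\ell$ is a tuple of the form considered in Proposition~\ref{prop:sat-chain-general}, and the saturated-chain hypothesis of that proposition holds by the very construction of the $u_i$.

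Applying Proposition~\ref{prop:sat-chain-general} then gives $\left<x_{\alpha_1}\cdots x_{\alpha_\ell},x_{w/v}\right>=1\neq 0$, whence $x_{w/v}\neq 0$; combined with $x_{w/v}\in\mathscr{B}_W^{\geq 0}$ from Theorem~\ref{thm:positivity}, this shows $x_{w/v}\in\mathscr{B}_W^{>0}$, i.e.\ $x_{w/v}$ is strictly positive. (In the trivial case $v=w$ one may instead simply invoke $x_{w/w}=1$ from Example~\ref{ex:top-low-degree-xw}.) The argument is essentially immediate, and there is no genuine obstacle: the only ingredient not already prepared in the preceding sections is the existence of saturated chains in the Bruhat order, which is completely classical. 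Should one wish to avoid even that, an alternative would be an induction on $\ell(v,w)$ built on the recursion of Corollary~\ref{cor:recursive} (or Corollary~\ref{cor:recursive-circ}) and on the fact that the vectors $x_u$, $u\in W$, are linearly independent in $\tilde{\mathscr{N}}_W$, but the route through Proposition~\ref{prop:sat-chain-general} is both shorter and more transparent.
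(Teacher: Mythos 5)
Your proof is correct and is essentially identical to the paper's: both reduce to showing $x_{w/v}\neq 0$ via Theorem~\ref{thm:positivity}, take a saturated chain from $v$ to $w$ (the paper cites \cite[Proposition~5.11]{humphreys-coxeter} for its existence), and apply Proposition~\ref{prop:sat-chain-general} to get $\left<x_{\alpha_1}\cdots x_{\alpha_\ell},x_{w/v}\right>=1$. The only cosmetic difference is that you invoke nondegeneracy of the pairing, which is not needed for this direction — pairing nontrivially against some element already forces nonvanishing.
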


\begin{proof}

Let $v,w$ be as in the statement. By Theorem~\ref{thm:positivity} it is sufficient to prove that $x_{w/v}$ is nonzero. To this end, let $\ell=\ell(v,w)$. By \cite[Proposition~5.11]{humphreys-coxeter}, there exist $\alpha_1,\ldots,\alpha_\ell\in R^+$ such that
\[
v\lessdot s_{\alpha_1}v\lessdot s_{\alpha_2}s_{\alpha_1}v\lessdot\cdots\lessdot s_{\alpha_{\ell-1}}\cdots s_{\alpha_1}v\lessdot s_{\alpha_\ell}\cdots s_{\alpha_1}v=w\,.
\]
Proposition~\ref{prop:sat-chain-general} now implies that $\left<x_{\alpha_1}\cdots x_{\alpha_\ell},x_{w/v}\right>=1$ which in turn implies that $x_{w/v}$ is nonzero.
\end{proof}

\begin{defn}
\label{def:cwv}

Let $v,w,v',w'\in W$. Then we define an invariant depending on $v,w,v',w'$ by the equation $c_{w/v,w'/v'}=\left<x_{w/v},\bar{\mathscr{S}}(x_{w'/v'})\right>$. Moreover, we set $c_{w/v}=c_{w/v,w/v}$ for brevity.

\end{defn}

\begin{rem}
\label{rem:cwv}

Let $v,w,v',w'\in W$. By definition, Proposition~\ref{prop:adjoint} and Equation~\eqref{eq:pairing-derivative} we know that
\begin{equation}
\label{eq:cwv}
c_{w/v,w'/v'}=\left<\rho(x_{w/v}),x_{w'/v'}\right>=\epsilon\big(\overrightarrow{D}_{w^{-1}/v^{-1}}^\circ(x_{w'/v'})\big)\,.
\end{equation}
Hence, it follows from Corollary~\ref{cor:formal-kirillov} because of Theorem~\ref{thm:positivity} and Remark~\ref{rem:positive-image} that $c_{w/v,w'/v'}\in\mathbb{Z}_{\geq 0}$ and in particular that $c_{w/v}\in\mathbb{Z}_{\geq 0}$. Alternatively, we can similarly see the same result by directly using Corollary~\ref{cor:sxwv},~\ref{cor:formal-kirillov} and Equation~\eqref{eq:pairing-derivative}.
We also remark that $c_{w/v,w'/v'}$ is nonzero (i.e.\ an element of $\mathbb{Z}_{>0}$ in view of the previous sentence) only if $v\leq w$, $v'\leq w'$, $wv^{-1}=w'v'^{-1}$, $\ell(v,w)=\ell(v',w')$. Indeed, this follows immediately because the Hopf duality pairing between $\mathscr{B}_W$ and itself is a morphism in the Yetter-Drinfeld category over $W$ and by considering Proposition~\ref{prop:rhoands}\eqref{item:morphop} and Remark~\ref{rem:vanishing},~\ref{rem:degree-xwv}.

\end{rem}

\begin{rem}
\label{rem:evidence-01}

Let $w,v,w',v'\in W$. If $x_{w/v}$ is a positive monomial, i.e.\ it can be written as $x_{\alpha_1}\cdots x_{\alpha_\ell}$ for some positive roots $\alpha_1,\ldots,\alpha_\ell$, then it follows from Proposition~\ref{prop:sat-chain-general}, Remark~\ref{rem:positive-image} and Equation~\eqref{eq:cwv} that $c_{w/v,w'/v'}\in\{0,1\}$. The assumption that $x_{w/v}$ is a positive monomial is satisfied, for instance, in the following situations:
\begin{itemize}
\item
If $\ell(v,w)=0$ or if $v=1$ by Example~\ref{ex:top-low-degree-xw}.
\item
If $\ell(v,w)=1$ by Theorem~\ref{thm:monomial-one}.
\item
If $W$ is a simply laced Weyl group and if $\ell(v,w)=2$ by Theorem~\ref{thm:monomial-two}.
\end{itemize}

\end{rem}

\todo[inline,color=green]{The "proof" of this (see the commented passage below, with \enquote{this} I mean $c_{w/v,w'/v'}\in\{0,1\}$) I worked out in the math diary from the 9th January 2018 turned out to be false. I still believe the result might be true but can't prove it and will make a remark on this open problem later (in the section on combinatorial consequences). I have not verified anything with a computer. The previous remark indeed gives some evidence. Evidence is supplied by Theorem~\ref{thm:one-property} and Remark~\ref{rem:evidence-01}. [I made a Remark* after the proof of Theorem~\ref{thm:comb}.]}

\begin{prop}
\label{prop:recursive-cwv}

Let $v,w,v',w'\in W$ be such that $v\leq w$, $v'\leq w'$, $wv^{-1}=w'v'^{-1}$, $\ell(v,w)=\ell(v',w')$. Let $\beta\in\Delta$ be such that $v<vs_\beta$. Let $\alpha=v(\beta)$ for brevity. Then we have
\[
c_{w/v,w'/v'}=\begin{cases}
c_{w/vs_\beta,w'/s_\alpha v'}&\text{if }w>ws_\beta\text{ and }v'\mspace{1mu}\lessdot\mspace{1mu}s_\alpha v'\,,\\
0&\text{if }w>ws_\beta\text{ and }v'\centernot\lessdot s_\alpha v'\,,\\
c_{w/vs_\beta,w'/s_\alpha v'}+c_{ws_\beta/vs_\beta,w'/v'}&\text{if }w<ws_\beta\text{ and }v'\mspace{1mu}\lessdot\mspace{1mu}s_\alpha v'\,,\\
c_{ws_\beta/vs_\beta,w'/v'}&\text{if }w<ws_\beta\text{ and }v'\centernot\lessdot s_\alpha v'\,.
\end{cases}
\]

\end{prop}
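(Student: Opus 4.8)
The plan is to unfold the product expression for $x_{w/v}$ provided by Corollary~\ref{cor:recursive}, and then to evaluate the resulting pairing against $\bar{\mathscr{S}}(x_{w'/v'})$ term by term, using the adjointness of braided partial derivatives with respect to the Hopf duality pairing together with the interaction of $\overrightarrow{D}_\xi$ with $\bar{\mathscr{S}}$.

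First I would record that $\alpha=v(\beta)$ is a positive root, since $v<vs_\beta$ forces $v(\beta)\in R^+$, and then apply Corollary~\ref{cor:recursive} to the pair $(v,w)$ and the simple root $\beta$ (which is legitimate because $v\leq w$ and $v<vs_\beta$). This gives $x_{w/v}=x_{w/vs_\beta}\,x_\alpha$ when $w>ws_\beta$, and $x_{w/v}=x_{w/vs_\beta}\,x_\alpha+x_{ws_\beta/vs_\beta}$ when $w<ws_\beta$. Since $c_{w/v,w'/v'}=\langle x_{w/v},\bar{\mathscr{S}}(x_{w'/v'})\rangle$ by Definition~\ref{def:cwv}, it then suffices to pair each of these two kinds of summand against $\bar{\mathscr{S}}(x_{w'/v'})$.

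The heart of the argument is the identity
\[
\langle x_{w/vs_\beta}\,x_\alpha,\,\bar{\mathscr{S}}(x_{w'/v'})\rangle=
\begin{cases}
c_{w/vs_\beta,\,w'/s_\alpha v'}&\text{if }v'\lessdot s_\alpha v'\,,\\
0&\text{otherwise}\,,
\end{cases}
\]
which I would prove as follows. Using the adjointness relation $\langle\phi\xi,y\rangle=\langle\phi,\overrightarrow{D}_\xi(y)\rangle$, the left-hand side equals $\langle x_{w/vs_\beta},\,\overrightarrow{D}_{x_\alpha}(\bar{\mathscr{S}}(x_{w'/v'}))\rangle$. By Remark~\ref{rem:liu2.10(d)} one has $\overrightarrow{D}_{x_\alpha}\circ\bar{\mathscr{S}}=\bar{\mathscr{S}}\circ\overrightarrow{D}_{x_\alpha}^\circ$, and since $x_\alpha$ is of $\mathbb{Z}_{\geq 0}$-degree one we have $\rho(x_\alpha)=x_\alpha$, so by the definition of the opposite braided left partial derivative $\overrightarrow{D}_{x_\alpha}^\circ(\phi)=(\phi)\overleftarrow{D}_{x_\alpha}=(\phi)\overleftarrow{D}_\alpha$. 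Hence $\overrightarrow{D}_{x_\alpha}(\bar{\mathscr{S}}(x_{w'/v'}))=\bar{\mathscr{S}}\big((x_{w'/v'})\overleftarrow{D}_\alpha\big)$, and Proposition~\ref{prop:reflection-skew} (applied to $w'$, $v'$, $\alpha$) tells us that $(x_{w'/v'})\overleftarrow{D}_\alpha$ equals $x_{w'/s_\alpha v'}$ if $v'\lessdot s_\alpha v'$ and $0$ otherwise. Substituting back and invoking Definition~\ref{def:cwv} once more yields the displayed identity.

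Finally I would assemble the four cases. If $w>ws_\beta$ then $c_{w/v,w'/v'}$ is exactly the expression displayed above, which gives the first two cases. If $w<ws_\beta$ then $c_{w/v,w'/v'}$ is the sum of that expression with $\langle x_{ws_\beta/vs_\beta},\bar{\mathscr{S}}(x_{w'/v'})\rangle=c_{ws_\beta/vs_\beta,\,w'/v'}$, which gives the last two cases; since for a simple reflection exactly one of $w>ws_\beta$, $w<ws_\beta$ holds, and exactly one of $v'\lessdot s_\alpha v'$, $v'\centernot\lessdot s_\alpha v'$ holds, the four cases are exhaustive and pairwise disjoint. I do not expect any genuine obstacle here: the only delicate point is keeping straight the bookkeeping of $\rho$, $\bar{\mathscr{S}}$ and of which side each partial derivative acts on. Note also that this computation never uses the hypotheses $wv^{-1}=w'v'^{-1}$ and $\ell(v,w)=\ell(v',w')$; those are present only so that the invariants involved stand a chance of being nonzero (cf.\ Remark~\ref{rem:cwv}).
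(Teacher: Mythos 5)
Your proof is correct. It rests on the same backbone as the paper's: apply Corollary~\ref{cor:recursive} to split $x_{w/v}$ into $x_{w/vs_\beta}x_\alpha$ (plus $x_{ws_\beta/vs_\beta}$ when $w<ws_\beta$), pair against $\bar{\mathscr{S}}(x_{w'/v'})$, and reduce the $x_\alpha$-factor to the dichotomy $v'\lessdot s_\alpha v'$ versus not. Where you diverge is in how that reduction is carried out. The paper first proves a preliminary identity $\left<\phi\psi,\bar{\mathscr{S}}(x_{w'/v'})\right>=\sum_{v'\leq u'\leq w'}\left<\phi,\bar{\mathscr{S}}(x_{w'/u'})\right>\left<\psi,\bar{\mathscr{S}}(x_{u'/v'})\right>$ (from Proposition~\ref{prop:rhoands}\eqref{item:sbaranticolagebra}, Remark~\ref{rem:coproduct-xwv} and the duality axioms), and then evaluates each $\left<x_\alpha,\bar{\mathscr{S}}(x_{u'/v'})\right>$ via Proposition~\ref{prop:adjoint} and Proposition~\ref{prop:sat-chain-general}, so the answer emerges from a sum over $u'$ in which at most one term survives. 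You instead absorb $x_\alpha$ through the adjunction $\left<\phi x_\alpha,y\right>=\langle\phi,\overrightarrow{D}_{x_\alpha}(y)\rangle$, commute $\overrightarrow{D}_{x_\alpha}$ past $\bar{\mathscr{S}}$ via Remark~\ref{rem:liu2.10(d)} (using $\rho(x_\alpha)=x_\alpha$), and finish with Proposition~\ref{prop:reflection-skew}. Since the adjunction is itself a repackaging of the pairing's multiplicativity, and Proposition~\ref{prop:sat-chain-general} is derived from Proposition~\ref{prop:reflection-skew}, the two computations are equivalent; your version is slightly more economical in that it never introduces the intermediate sum over $u'$, at the cost of leaning on the differential-calculus formalism. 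Your closing observation that $wv^{-1}=w'v'^{-1}$ and $\ell(v,w)=\ell(v',w')$ are not used in the computation also matches the paper, which likewise never invokes them in the proof.
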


\begin{proof}

Before we prove the statement of the proposition, we make a preliminary observation. Let $v,w\in W$ be such that $v\leq w$. Let $\phi,\psi\in\mathscr{B}_W$. Then we have
\[
\left<\phi\psi,\bar{\mathscr{S}}(x_{w/v})\right>=\sum_{v\leq u\leq w}\left<\phi,\bar{\mathscr{S}}(x_{w/u})\right>\left<\psi,\bar{\mathscr{S}}(x_{u/v})\right>\,.
\]
Indeed, by Proposition~\ref{prop:rhoands}\eqref{item:sbaranticolagebra} and Remark~\ref{rem:coproduct-xwv}, we know that
\[
\Delta(\bar{\mathscr{S}}(x_{w/v}))=\sum_{v\leq u\leq w}\bar{\mathscr{S}}(x_{u/v})\otimes\bar{\mathscr{S}}(x_{w/u})\,.
\]
The second previous equation now follows from the last in view of the definition of the Hopf duality pairing between $\mathscr{B}_W$ and itself. We now prove the claimed formula in the statement of the proposition. Let $v,w,v',w',\beta,\alpha$ be as in the statement of the proposition. Suppose first that $w>ws_\beta$. By Corollary~\ref{cor:recursive} applied to $x_{w/v}$ and by the preliminary observation in the beginning of the proof, we see that 
\[
c_{w/v,w'/v'}=\left<x_{w/vs_\beta}x_\alpha,\bar{\mathscr{S}}(x_{w'/v'})\right>=
\sum_{v'\leq u'\leq w'}c_{w/vs_\beta,w'/u'}\left<x_\alpha,\bar{\mathscr{S}}(x_{u'/v'})\right>\,.
\]
For each $v'\leq u'\leq w'$, by Proposition~\ref{prop:rhoands}\eqref{item:identity},~\ref{prop:adjoint},~\ref{prop:sat-chain-general}, we now have $\left<x_\alpha,\bar{\mathscr{S}}(x_{u'/v'})\right>=\left<x_\alpha,x_{u'/v'}\right>=1$ if $v'\lessdot s_\alpha v'=u'$ and $=0$ otherwise. In view of the last displayed equation, this covers the first two cases of the four cases in the formula in the statement of the proposition. Finally, we may assume that $w<ws_\beta$. Again, by Corollary~\ref{cor:recursive} applied to $x_{w/v}$,
we see that
\[
c_{w/v,w'/v'}=\left<x_{w/vs_\beta}x_\alpha,\bar{\mathscr{S}}(x_{w'/v'})\right>+c_{ws_\beta/vs_\beta,w'/v'}\,.
\]
But by identically the same arguments as for the case $w>ws_\beta$, we see in this case that the first summand in the previous displayed equation is $=c_{w/vs_\beta,w'/s_\alpha v'}$ if $v'\lessdot s_\alpha v'$ and $=0$ otherwise. This fact and the previous displayed equation now cover the last two cases of the four cases in the formula in the statement of the proposition.
\end{proof}

\begin{thm}
\label{thm:one-property}

Let $v,w\in W$ be such that $v\leq w$. Then we have $c_{w/v}=1$.

\end{thm}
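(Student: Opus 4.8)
The plan is to prove $c_{w/v}=1$ by induction on $\ell(w)$, peeling off a simple left descent of $w$ at each step. For the base case $\ell(w)=0$ we must have $v=w=1$, and then $c_{1/1}=\langle x_{1/1},\bar{\mathscr{S}}(x_{1/1})\rangle=\langle 1,1\rangle=\epsilon(1)=1$ by Example~\ref{ex:top-low-degree-xw} and the Hopf duality pairing axioms. For the inductive step, assuming $\ell(w)\ge 1$ and that $c_{w'/v'}=1$ whenever $v'\le w'$ and $\ell(w')<\ell(w)$, I fix $\beta\in\Delta$ with $s_\beta w<w$ and set $w'=s_\beta w$, so $w'\lessdot w$ and $\ell(w')=\ell(w)-1$. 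Corollary~\ref{cor:prelim-exp} then puts us in exactly one of three cases (note $s_\beta v\neq v$ since $\beta$ is simple): (i) $s_\beta v>v$, in which case also $v\le w'$ and $x_{w/v}=x_\beta x_{w'/v}$; (ii) $s_\beta v<v$ and $v\le w'$, in which case $x_{w/v}=x_\beta x_{w'/v}+s_\beta x_{w'/s_\beta v}$; (iii) $s_\beta v<v$ and $v\not\le w'$, in which case $x_{w/v}=s_\beta x_{w'/s_\beta v}$, and strict positivity (Theorem~\ref{thm:strict-positivity}) applied to $x_{w/v}$ forces $x_{w'/s_\beta v}\neq 0$, hence $s_\beta v\le w'$. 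In case (iii), $W$-equivariance of $\bar{\mathscr{S}}$ (a morphism in the Yetter--Drinfeld category over $W$) and of the pairing (Remark~\ref{rem:equivariant}) give at once $c_{w/v}=\langle s_\beta x_{w'/s_\beta v},\,s_\beta\bar{\mathscr{S}}(x_{w'/s_\beta v})\rangle=c_{w'/s_\beta v}$, which is $1$ by the induction hypothesis since $s_\beta v\le w'$.

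The substance of the argument lies in cases (i) and (ii). In both I would compute $c_{w/v}=\langle\bar{\mathscr{S}}(x_{w/v}),x_{w/v}\rangle$ by substituting the above formula for $x_{w/v}$ and using the pairing axioms $\langle\phi\psi,x\rangle=\langle\phi,x_{(2)}\rangle\langle\psi,x_{(1)}\rangle$ and $\langle\phi,xy\rangle=\langle\phi_{(2)},x\rangle\langle\phi_{(1)},y\rangle$, the coproduct formula $\Delta(\bar{\mathscr{S}}(x_{w/v}))=\sum_{v\le u\le w}\bar{\mathscr{S}}(x_{u/v})\otimes\bar{\mathscr{S}}(x_{w/u})$ (Remark~\ref{rem:coproduct-xwv} and Proposition~\ref{prop:rhoands}\eqref{item:sbaranticolagebra}), the identities $\bar{\mathscr{S}}(x_\beta y)=x_\beta\bigl(s_\beta\bar{\mathscr{S}}(y)\bigr)$ and $\bar{\mathscr{S}}(s_\beta y)=s_\beta\bar{\mathscr{S}}(y)$ (Remark~\ref{rem:comp-sbar}), and $W$-equivariance of $\rho$, $\bar{\mathscr{S}}$ and the pairing. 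Every pairing of the form $\langle x_\beta,\bar{\mathscr{S}}(x_{w/u})\rangle$ or $\langle x_\beta,x_{u'/u''}\rangle$ occurring in this expansion vanishes for degree reasons (Remarks~\ref{rem:vanishing} and~\ref{rem:degree-xwv}) unless a covering relation $u''\lessdot u'$ with $u'(u'')^{-1}=s_\beta$ holds; in the surviving situations Theorem~\ref{thm:monomial-one} identifies $x_{w/w'}=x_\beta$ and $x_{v/s_\beta v}=x_\beta$, so the pairing reduces to $\langle x_\beta,x_\beta\rangle=1$. Carrying this through, case (i) yields $c_{w/v}=c_{w'/v}$, and case (ii) yields $c_{w/v}=c_{w'/v}-c_{w'/v}+c_{w'/s_\beta v}=c_{w'/s_\beta v}$; in each case the right-hand side is $1$ by the induction hypothesis, since $\ell(w')<\ell(w)$ and $v\le w'$ in case (i) (resp. $s_\beta v\le w'$ in case (ii)).

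The step I expect to be the main obstacle is the bookkeeping in case (ii): one must verify that the two cross-terms pairing $x_\beta x_{w'/v}$ against $s_\beta x_{w'/s_\beta v}$ contribute $+c_{w'/v}$ and $-c_{w'/v}$ and hence cancel, leaving only the two ``diagonal'' contributions $c_{w'/v}$ (from $\langle\bar{\mathscr{S}}(x_\beta x_{w'/v}),x_\beta x_{w'/v}\rangle$) and $c_{w'/s_\beta v}$ (from $\langle\bar{\mathscr{S}}(s_\beta x_{w'/s_\beta v}),s_\beta x_{w'/s_\beta v}\rangle$), whose difference is $c_{w'/s_\beta v}$. A key ingredient for this cancellation is the vanishing $\langle\bar{\mathscr{S}}(x_{w'/u'}),x_\beta\rangle=0$ for every $u'\le w'$, which holds because $s_\beta w'=w>w'$ cannot lie $\lessdot$-below $w'$; this kills the only remaining potential cross-term.
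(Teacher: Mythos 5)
Your proof is correct, but it runs in the opposite direction from the paper's. The paper iterates on right \emph{ascents} of $v$: choosing $\beta\in\Delta$ with $v<vs_\beta$, it shows $c_{w/v}=c_{w/vs_\beta}$ or $c_{ws_\beta/vs_\beta}$ via Proposition~\ref{prop:recursive-prel}, Corollary~\ref{cor:recursive} and the general recursion of Proposition~\ref{prop:recursive-cwv} for the two-parameter invariant $c_{w/v,w'/v'}$, pushing the pair up until it terminates at $c_{w_o/w_o}=1$. You instead induct downward on $\ell(w)$ by peeling off a left descent of $w$ via Corollary~\ref{cor:prelim-exp}, terminating at $c_{1/1}=1$. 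Your route is somewhat more self-contained: it needs only the three-case coproduct expansion, the pairing axioms, degree/equivariance arguments and Theorem~\ref{thm:monomial-one}, and it bypasses Proposition~\ref{prop:recursive-cwv} entirely; the paper's route has the advantage of reusing machinery it develops anyway for the mixed invariants. Your reductions $c_{w/v}=c_{w'/v}$ (case (i)) and $c_{w/v}=c_{w'/s_\beta v}$ (cases (ii), (iii)) are all correct, and in case (iii) you do not even need strict positivity — the standard lifting property already gives $s_\beta v\le w'$ from $v\le w$, $s_\beta w<w$, $s_\beta v<v$.

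One inaccuracy in your description of the case (ii) bookkeeping: the two cross-terms do not cancel \emph{each other}. Writing $A=x_\beta x_{w'/v}$ and $B=s_\beta x_{w'/s_\beta v}$, the four contributions are $\langle A,\bar{\mathscr{S}}(A)\rangle=c_{w'/v}$, $\langle B,\bar{\mathscr{S}}(B)\rangle=c_{w'/s_\beta v}$, $\langle A,\bar{\mathscr{S}}(B)\rangle=0$ (this is exactly where your key vanishing $\langle x_\beta,\bar{\mathscr{S}}(x_{w'/u'})\rangle=0$ for $u'\le w'$ enters, since $s_\beta w'=w>w'$), and $\langle B,\bar{\mathscr{S}}(A)\rangle=-c_{w'/v}$, the latter produced by the surviving term $\langle x_{v/s_\beta v},x_\beta\rangle=1$ together with the sign from $s_\beta x_\beta=-x_\beta$. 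So one cross-term vanishes outright and the other cancels the first diagonal term, leaving $c_{w'/s_\beta v}$ — which is the total you claim, so the induction closes as stated.
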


\begin{proof}

Let $\beta\in\Delta$ be such that $v<vs_\beta$. Let $v'=vs_\beta$, $w'=ws_\beta$, $\alpha=v(\beta)$, $\alpha'=w(\beta)$ for brevity. Then it is clear that $s_\alpha v=vs_\beta=v'>v$. Assume first that $w<w'$. Then, $v'\leq w'$ by \cite[Proposition~5.9]{humphreys-coxeter}. If we now apply Proposition~\ref{prop:recursive-prel} to $v',w',\beta$, we find that $\bar{\mathscr{S}}(x_{w'/v'})=\bar{\mathscr{S}}(x_{w/v'})x_{\alpha'}+\bar{\mathscr{S}}(x_{w/v})$ because $-w'(\beta)=\alpha'$. Thus, we also have 
\[
c_{w'/v',w/v}=c_{w'/v'}-\left<x_{w'/v'},\bar{\mathscr{S}}(x_{w/v'})x_{\alpha'}\right>\,.
\]
by applying $\left<x_{w'/v'},-\right>$ to the equation in the last sentence and by rearranging. If we use Proposition~\ref{prop:sat-chain-general}, Remark~\ref{rem:coproduct-xwv} and the definition of the Hopf duality pairing between $\mathscr{B}_W$ and itself, we can evaluate the very last term in the last equation as
\[
\sum_{v'\leq u'\leq w'}\left<x_{u'/v'},\bar{\mathscr{S}}(x_{w/v'})\right>\left<x_{w'/u'},x_{\alpha'}\right>=c_{w/v'}
\]
because $s_{\alpha'}w'=w<w'$. If we put the last two displayed equations together, we see from the third case of Proposition~\ref{prop:recursive-cwv} applied to $c_{w/v}$ that
\[
c_{w/v}=c_{w/v'}+c_{w'/v',w/v}=c_{w'/v'}\,.
\]
By applying the first case of Proposition~\ref{prop:recursive-cwv} to $c_{w/v}$
and taking into account the third case treated in the last equation above, we see in general that $c_{w/v}=c_{w/v'}$ if $w>w'$ and $=c_{w'/v'}$ if $w<w'$.
Note that $v'\leq w$ if $w>w'$ and that $v'\leq w'$ if $w<w'$ by \cite[Proposition~5.9]{humphreys-coxeter} applied to $v\leq w$; hence, in both cases, we are in the situation we started with.
Since the length of each Coxeter group element is bounded by $\ell(w_o)$, we see by applying the two cases in the equation in the second last sentence finitely many times (until there exists no $\beta$ anymore as we chose it in the beginning) that $c_{w/v}=c_{w_o/w_o}$. In view of Proposition~\ref{prop:rhoands}\eqref{item:identity} and Example~\ref{ex:top-low-degree-xw}, it obviously follows that $c_{w/v}=1$ -- as claimed.
\end{proof}

\begin{rem}

Indirectly, we see once more from Theorem~\ref{thm:one-property} that $x_{w/v}\neq 0$ for all $v,w\in W$ such that $v\leq w$ as it was already shown in Theorem~\ref{thm:strict-positivity}.

\end{rem}

\begin{rem}

The second statement of Proposition~\ref{prop:sat-chain} says that $\left<\rho(x_w),x_w\right>=1$ for all $w\in W$. In view of Example~\ref{ex:top-low-degree-xw} and Equation~\eqref{eq:cwv}, Theorem~\ref{thm:one-property} can therefore be seen as a generalization of the second statement of Proposition~\ref{prop:sat-chain}.

\end{rem}

\todo[inline,color=green]{The previous result can be seen as a generalization of $\left<x_{w^{-1}},x_w\right>=\left<\rho(x_w),x_w\right>=1$. Reference to example (Example~\ref{ex:top-low-degree-xw}), to equation with $\rho$ (Equation~\eqref{eq:cwv}), and to the original result (Proposition~\ref{prop:sat-chain}).}

\section{Combinatorial consequences}
\label{sec:comb}

In this section, we give an enumerative description of the invariants $c_{w/v,w'/v'}$ where $v$, $w$, $v'$, $w'\in W$ (cf.\ Theorem~\ref{thm:comb}). This leads to a sharpening of the well-known existence of saturated chains in the Bruhat order (\cite[Proposition~5.11]{humphreys-coxeter}): As a consequence of Theorem~\ref{thm:one-property}, we construct in  a controlled and unique way for an interval $v\leq w$ where $v,w\in W$ a saturated chain from $v$ to $w$ depending on a reduced expression of $w$ (cf.\ Corollary~\ref{cor:comb}).

\begin{thm}
\label{thm:comb}

Let $v,w,v',w'\in W$ be such that $v\leq w$, $v'\leq w'$, $wv^{-1}=w'v'^{-1}$, $m=\ell(v,w)=\ell(v',w')$. Let $s_{\beta_1}\cdots s_{\beta_\ell}$ be a reduced expression of $w'$. Let $\alpha_i=s_{\beta_1}\cdots s_{\beta_{i-1}}(\beta_i)$ and let $\alpha_i'=s_{\beta_{\ell}}\cdots s_{\beta_{i+1}}(\beta_i)$ for all $1\leq i\leq\ell$. Then we have
\begin{alignat*}{3}
c_{w/v,w'/v'}=\#\Bigg\{&
\centermathcell{
\begin{gathered}
\textstyle{J=\{1\leq j_1<\cdots<j_m\leq\ell\}\colon v'=\prod_{j\in\bar{J}}s_{\beta_j}\,,}\\
\textstyle{v\lessdot s_{\alpha_{j_1}}v\lessdot\cdots\lessdot s_{\alpha_{j_{m-1}}}\cdots s_{\alpha_{j_1}}v\lessdot s_{\alpha_{j_m}}\cdots s_{\alpha_{j_1}}v=w}
\end{gathered}}
&&\Bigg\}\\
\intertext{and equivalently we also have}
c_{w^{-1}/v^{-1},w'^{-1}/v'^{-1}}=\#\Bigg\{&
\centermathcell{
\begin{gathered}
\textstyle{J=\{1\leq j_1<\cdots<j_m\leq\ell\}\colon v'=\prod_{j\in\bar{J}}s_{\beta_j}\,,}\\
\textstyle{v\lessdot v s_{\alpha_{j_m}'}\lessdot\cdots\lessdot v s_{\alpha_{j_m}'}\cdots s_{\alpha_{j_2}'}\lessdot v s_{\alpha_{j_m}'}\cdots s_{\alpha_{j_1}'}=w}
\end{gathered}}
&&\Bigg\}
\end{alignat*}
where $\bar{J}$ denotes the complement of a subset $J$ in $\{1,\ldots,\ell\}$. In particular, the cardinalities on the right side of the displayed equations are independent of the choice of the reduced expression of $w'$.

\end{thm}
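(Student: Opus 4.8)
The plan is to compute $c_{w/v,w'/v'}=\left<x_{w/v},\bar{\mathscr{S}}(x_{w'/v'})\right>$ (Definition~\ref{def:cwv}) by expanding the second argument via Corollary~\ref{cor:sxwv} with respect to the fixed reduced expression $s_{\beta_1}\cdots s_{\beta_\ell}$ of $w'$, and then pairing term by term against $x_{w/v}$ using Proposition~\ref{prop:sat-chain-general}. Corollary~\ref{cor:sxwv} gives $\bar{\mathscr{S}}(x_{w'/v'})=\sum_J\prod_{j\in J}x_{\alpha_j}$, the sum over subsets $J\subseteq\{1,\ldots,\ell\}$ such that $\prod_{j\in\bar{J}}s_{\beta_j}$ is a reduced expression of $v'$; every such $J$ has $|\bar{J}|=\ell(v')=\ell-m$, hence $|J|=m$, so I may write $J=\{j_1<\cdots<j_m\}$, $\prod_{j\in J}x_{\alpha_j}=x_{\alpha_{j_1}}\cdots x_{\alpha_{j_m}}$, and the condition on $\bar{J}$ becomes exactly $v'=\prod_{j\in\bar{J}}s_{\beta_j}$ as in the theorem. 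By bilinearity, $c_{w/v,w'/v'}=\sum_J\left<x_{w/v},x_{\alpha_{j_1}}\cdots x_{\alpha_{j_m}}\right>$.

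For each $J$, I evaluate $\left<x_{w/v},x_{\alpha_{j_1}}\cdots x_{\alpha_{j_m}}\right>=\left<x_{\alpha_{j_1}}\cdots x_{\alpha_{j_m}},x_{w/v}\right>$ (the Hopf duality pairing on $\mathscr{B}_W$ is symmetric). If $s_{\alpha_{j_m}}\cdots s_{\alpha_{j_1}}=wv^{-1}$, Proposition~\ref{prop:sat-chain-general} applies verbatim and the pairing equals $1$ exactly when $v\lessdot s_{\alpha_{j_1}}v\lessdot\cdots\lessdot s_{\alpha_{j_m}}\cdots s_{\alpha_{j_1}}v=w$ is a saturated chain, and $0$ otherwise. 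If $s_{\alpha_{j_m}}\cdots s_{\alpha_{j_1}}\neq wv^{-1}$, then the pairing vanishes anyway: since the Hopf duality pairing between $\mathscr{B}_W$ and itself is a morphism in the Yetter--Drinfeld category over $W$ with $\mathbf{k}$ carrying the trivial $W$-degree, $\left<x_{\alpha_{j_1}}\cdots x_{\alpha_{j_m}},x_{w/v}\right>=0$ unless $(s_{\alpha_{j_1}}\cdots s_{\alpha_{j_m}})(wv^{-1})=1$, i.e.\ $s_{\alpha_{j_m}}\cdots s_{\alpha_{j_1}}=wv^{-1}$ (here I also use Remark~\ref{rem:vanishing} to discard the off-degree contributions and Remark~\ref{rem:degree-xwv} for the $W$-degree of $x_{w/v}$); and in this excluded case the chain in the theorem ends in $s_{\alpha_{j_m}}\cdots s_{\alpha_{j_1}}v\neq w$, so it is not counted either. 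Thus in all cases the $J$-th summand contributes $1$ precisely when the chain condition of the theorem (with endpoint $w$ imposed) holds, and $0$ otherwise; summing over $J$ yields the first formula. The final sentence of the theorem is then automatic, since $c_{w/v,w'/v'}$ is defined without reference to any reduced expression.

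For the second formula I would run the same machine after replacing $(v,w,v',w')$ by $(v^{-1},w^{-1},v'^{-1},w'^{-1})$ and using the reversed reduced word $s_{\beta_\ell}\cdots s_{\beta_1}$ of $w'^{-1}$. Writing $\tilde{\beta}_k=\beta_{\ell+1-k}$ and $\tilde{\alpha}_k=s_{\tilde{\beta}_1}\cdots s_{\tilde{\beta}_{k-1}}(\tilde{\beta}_k)$, one checks that $\tilde{\alpha}_{\ell+1-i}=\alpha_i'$ and that the reindexing $k\mapsto\ell+1-k$ turns a subset $K$ with $\prod_{k\in\bar{K}}s_{\tilde{\beta}_k}$ a reduced expression of $v'^{-1}$ into a subset $J=\{j_1<\cdots<j_m\}$ with $\prod_{j\in\bar{J}}s_{\beta_j}$ a reduced expression of $v'$, while reversing the order of the monomial factors, so that Corollary~\ref{cor:sxwv} reads $\bar{\mathscr{S}}(x_{w'^{-1}/v'^{-1}})=\sum_J x_{\alpha_{j_m}'}\cdots x_{\alpha_{j_1}'}$. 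Pairing against $x_{w^{-1}/v^{-1}}$ and applying Proposition~\ref{prop:sat-chain-general} to $v^{-1}\leq w^{-1}$ exactly as above shows the $J$-th term is $1$ iff $v^{-1}\lessdot s_{\alpha_{j_m}'}v^{-1}\lessdot\cdots\lessdot s_{\alpha_{j_1}'}\cdots s_{\alpha_{j_m}'}v^{-1}=w^{-1}$; taking inverses throughout this chain (the covering relation $\lessdot$ is preserved under $g\mapsto g^{-1}$) turns it into $v\lessdot vs_{\alpha_{j_m}'}\lessdot\cdots\lessdot vs_{\alpha_{j_m}'}\cdots s_{\alpha_{j_1}'}=w$, which is precisely the chain in the second formula. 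The independence of the reduced expression again follows because $c_{w^{-1}/v^{-1},w'^{-1}/v'^{-1}}$ is defined intrinsically.

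I expect the main obstacle to be purely bookkeeping rather than conceptual: one must keep the two length parameters $\ell=\ell(w')$ and $m=\ell(v,w)$ strictly apart, and, above all, carry out the reversal of the reduced word cleanly enough that the roots $s_{\tilde{\beta}_1}\cdots s_{\tilde{\beta}_{k-1}}(\tilde{\beta}_k)$ of the reversed word really become the $\alpha_i'=s_{\beta_\ell}\cdots s_{\beta_{i+1}}(\beta_i)$ of the statement, and that the order of the monomial factors is reversed in exactly the way Proposition~\ref{prop:sat-chain-general} requires. The conceptual content --- that $c_{w/v,w'/v'}$ counts the saturated chains from $v$ to $w$ compatible with the chosen ordering of the reflections in $wv^{-1}$ coming from a reduced word of $w'$ --- is already packaged in Corollary~\ref{cor:sxwv} and Proposition~\ref{prop:sat-chain-general}, together with the symmetry and $W$-equivariance of the Hopf duality pairing.
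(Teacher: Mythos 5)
Your proposal is correct and follows essentially the same route as the paper's proof: expand $\bar{\mathscr{S}}(x_{w'/v'})$ via Corollary~\ref{cor:sxwv} with respect to the chosen reduced word, evaluate each summand against $x_{w/v}$ by Proposition~\ref{prop:sat-chain-general}, and deduce the second formula by applying the first to $v^{-1},w^{-1},v'^{-1},w'^{-1}$ with the reversed reduced word of $w'^{-1}$. The only difference is that you spell out the $W$-degree vanishing argument for subsets $J$ with $s_{\alpha_{j_m}}\cdots s_{\alpha_{j_1}}\neq wv^{-1}$, which the paper leaves implicit since such $J$ are excluded by the chain's endpoint condition anyway.
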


\begin{proof}

Let the notation be as in the statement. Once the claimed equalities are proved, it is clear that the cardinalities on the right side of the displayed equations are independent of the choice of the reduced expression of $w'$. The second equation in the statement of the proposition follows by applying the first to $v^{-1},w^{-1},v'^{-1},w'^{-1}$ and to the reduced expression $s_{\beta_\ell}\cdots s_{\beta_1}$ of $w'^{-1}$ and vice versa. (For this step, we also take \cite[Exercise~5.9]{humphreys-coxeter} into account.) Therefore, it suffices to prove the first equation in the statement of the proposition. To this end, let $y_i=x_{\alpha_i}$ for all $1\leq i\leq\ell$. By Corollary~\ref{cor:sxwv} applied to $\bar{\mathscr{S}}(x_{w'/v'})$, we then have
\[
c_{w/v,w'/v'}=\sum_J\Big<x_{w/v},\prod_{j\in J}y_j\Big>
\]
where the sum ranges over all subsets $J$ of $\{1,\ldots,\ell\}$ such that the product $\prod_{j\in \bar{J}}s_{\beta_j}$ is a reduced expression of $v'$. Here, the set $\bar{J}$ denotes the complement of a subset $J$ in $\{1,\ldots,\ell\}$. The claim now follows from Proposition~\ref{prop:sat-chain-general} and the previous displayed equation.
\end{proof}

\begin{rem}

We see once more from Theorem~\ref{thm:comb} that $c_{w/v,w'/v'}\in\mathbb{Z}_{\geq 0}$ for all $v,w,v',w'\in W$ as it was already remarked in Remark~\ref{rem:cwv}.

\end{rem}

\begin{rem-prob}

Theorem~\ref{thm:one-property} and Remark~\ref{rem:evidence-01} discuss instances of $v,w,v',w'\in W$ such that $c_{w/v,w'/v'}\in\{0,1\}$. The author was not able to find an example of $v,w,v',w'\in W$ such that $c_{w/v,w'/v'}>1$. It might be true in general that $c_{w/v,w'/v'}\in\{0,1\}$ for all $v,w,v',w'\in W$.

\end{rem-prob}

\begin{ex}
\label{ex:comb}

We give an example of $v,w,v',w'\in W$ such that $c_{w/v,w'/v'}=1$ but $x_{w/v}\neq x_{w'/v'}$. Indeed, for this example, let $R$ be of type $\mathsf{A}_3$. Let $\beta_1,\beta_2,\beta_3$ be the simple roots with the labeling as in \cite[Plate~I]{bourbaki_roots}. Let $v=s_{\beta_2}s_{\beta_1}$, $w=s_{\beta_1}s_{\beta_2}s_{\beta_3}s_{\beta_2}s_{\beta_1}$, $v'=s_{\beta_2}s_{\beta_1}s_{\beta_2}$, $w'=s_{\beta_1}s_{\beta_2}s_{\beta_3}s_{\beta_2}s_{\beta_1}s_{\beta_2}$. We see that $v\leq w$, $v'\leq w'$, $wv^{-1}=w'v'^{-1}=s_{\beta_1}s_{\beta_2}s_{\beta_3}$, $\ell(v,w)=\ell(v',w')=3$. Theorem~\ref{thm:comb} shows that $c_{w/v,w'/v'}=1$ because there exists precisely one set $J$, namely $J=\{1,3,4\}$, meeting the prescribed conditions in the first displayed equality of Theorem~\ref{thm:comb} applied to the reduced expression of $w'$ written in the second last sentence. On the other hand, let $\alpha=\beta_1$. Then, $v\lessdot s_\alpha v\leq w$ but $v'\centernot\lessdot s_\alpha v'$. Proposition~\ref{prop:equal} therefore shows that $x_{w/v}\neq x_{w'/v'}$.

\end{ex}

\begin{cor}
\label{cor:comb}

Let $v,w\in W$ be such that $v\leq w$. Let $s_{\beta_1}\cdots s_{\beta_\ell}$ be a reduced expression of $w$. Let $m=\ell(v,w)$ for brevity. Then there exists a unique sequence $1\leq j_1<\cdots<j_m\leq\ell$ such that
\begin{align*}
v=s_{\beta_1}\cdots\hat{s}_{\beta_{j_1}}\cdots\hat{s}_{\beta_{j_m}}\cdots s_{\beta_\ell}&\lessdot s_{\beta_1}\cdots\hat{s}_{\beta_{j_2}}\cdots\hat{s}_{\beta_{j_m}}\cdots s_{\beta_\ell}\lessdot\cdots\\
\cdots &\lessdot s_{\beta_1}\cdots\hat{s}_{\beta_{j_m}}\cdots s_{\beta_\ell}\lessdot s_{\beta_1}\cdots s_{\beta_\ell}=w\\
\intertext{where in the $i$th expression in the last displayed equation the sequence $j_i<\ldots<j_m$ is removed from the reduced expression of $w$ and where $i$ runs through $1,2,\ldots,m+1$. Similarly, there exists a unique sequence $1\leq k_1<\cdots<k_m\leq\ell$ such that}
v=s_{\beta_1}\cdots\hat{s}_{\beta_{k_1}}\cdots\hat{s}_{\beta_{k_m}}\cdots s_{\beta_\ell}&\lessdot s_{\beta_1}\cdots\hat{s}_{\beta_{k_1}}\cdots\hat{s}_{\beta_{k_{m-1}}}\cdots s_{\beta_\ell}\lessdot\cdots\\
\cdots &\lessdot s_{\beta_1}\cdots\hat{s}_{\beta_{k_1}}\cdots s_{\beta_\ell}\lessdot s_{\beta_1}\cdots s_{\beta_\ell}=w
\end{align*}
where in the $i$th expression in the last displayed equation the sequence $k_1<\ldots<k_{m-i+1}$ is removed from the reduced expression of $w$ and where $i$ runs through $1,2,\ldots,m+1$. 

\end{cor}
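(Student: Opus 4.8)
The plan is to deduce Corollary~\ref{cor:comb} from the enumerative formula of Theorem~\ref{thm:comb} together with the equality $c_{w/v}=1$ of Theorem~\ref{thm:one-property}, after translating the reflection chains occurring in Theorem~\ref{thm:comb} into the subword‑deletion chains occurring in the corollary. First I would apply Theorem~\ref{thm:comb} with $v'=v$, $w'=w$, $m=\ell(v,w)$, and with the given reduced expression $s_{\beta_1}\cdots s_{\beta_\ell}$ of $w$ (so $\ell=\ell(w)$, and for a subset $J\subseteq\{1,\ldots,\ell\}$ with $|J|=m$ the complement $\bar J$ has $\ell-m=\ell(v)$ elements, whence $\prod_{j\in\bar J}s_{\beta_j}=v$ automatically produces a reduced expression of $v$). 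Using Definition~\ref{def:cwv} and the fact that $v\leq w$ if and only if $v^{-1}\leq w^{-1}$, Theorem~\ref{thm:comb} identifies $c_{w/v}$ with the number of $J=\{j_1<\cdots<j_m\}$ with $\prod_{j\in\bar J}s_{\beta_j}=v$ and $v\lessdot s_{\alpha_{j_1}}v\lessdot\cdots\lessdot s_{\alpha_{j_m}}\cdots s_{\alpha_{j_1}}v=w$, where $\alpha_i=s_{\beta_1}\cdots s_{\beta_{i-1}}(\beta_i)$, and identifies $c_{w^{-1}/v^{-1}}$ with the number of $J$ with $\prod_{j\in\bar J}s_{\beta_j}=v$ and $v\lessdot vs_{\alpha'_{j_m}}\lessdot\cdots\lessdot vs_{\alpha'_{j_m}}\cdots s_{\alpha'_{j_1}}=w$, where $\alpha'_i=s_{\beta_\ell}\cdots s_{\beta_{i+1}}(\beta_i)$.

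The key step is a purely group‑theoretic translation. I claim that for any $J=\{j_1<\cdots<j_m\}$ with $\prod_{j\in\bar J}s_{\beta_j}=v$ and any $0\leq t\leq m$ one has, as elements of $W$,
\[
s_{\alpha_{j_t}}\cdots s_{\alpha_{j_1}}\,v=\prod_{j\in\bar J\cup\{j_1,\ldots,j_t\}}s_{\beta_j}=s_{\beta_1}\cdots\hat s_{\beta_{j_{t+1}}}\cdots\hat s_{\beta_{j_m}}\cdots s_{\beta_\ell}\,,
\]
and symmetrically $vs_{\alpha'_{j_m}}\cdots s_{\alpha'_{j_{m-t+1}}}=\prod_{j\in\bar J\cup\{j_{m-t+1},\ldots,j_m\}}s_{\beta_j}=s_{\beta_1}\cdots\hat s_{\beta_{j_1}}\cdots\hat s_{\beta_{j_{m-t}}}\cdots s_{\beta_\ell}$. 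I would prove the first by induction on $t$: the case $t=0$ is the hypothesis, and for the step one writes $s_{\alpha_{j_t}}=(s_{\beta_1}\cdots s_{\beta_{j_t-1}})\,s_{\beta_{j_t}}\,(s_{\beta_1}\cdots s_{\beta_{j_t-1}})^{-1}$, observes that $\prod_{j\in\bar J\cup\{j_1,\ldots,j_{t-1}\}}s_{\beta_j}$ retains all letters of index $<j_t$ (since $j_t$ is the smallest index removed from it), and checks that left multiplication by $s_{\alpha_{j_t}}$ reinserts the letter $s_{\beta_{j_t}}$ in position $j_t$. The right‑hand identity is proved identically, using that $j_{m-t+1}$ is the largest index removed at the relevant stage and that $s_{\alpha'_{j_{m-t+1}}}=(s_{\beta_{j_{m-t+1}+1}}\cdots s_{\beta_\ell})^{-1}s_{\beta_{j_{m-t+1}}}(s_{\beta_{j_{m-t+1}+1}}\cdots s_{\beta_\ell})$. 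Consequently the reflection chain in the first formula of Theorem~\ref{thm:comb} is literally the deletion chain $v=(\text{delete }j_1,\ldots,j_m)\lessdot(\text{delete }j_2,\ldots,j_m)\lessdot\cdots\lessdot w$, and the reflection chain in the second formula is the deletion chain $v=(\text{delete }j_1,\ldots,j_m)\lessdot(\text{delete }j_1,\ldots,j_{m-1})\lessdot\cdots\lessdot w$; in each case the side condition $\prod_{j\in\bar J}s_{\beta_j}=v$ is exactly the requirement that the first member of the chain equal $v$, so the conditions match those in Corollary~\ref{cor:comb} verbatim (with $k_i=j_i$ in the second case).

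Finally, by Theorem~\ref{thm:one-property} applied to $v\leq w$ and to $v^{-1}\leq w^{-1}$ respectively, both $c_{w/v}$ and $c_{w^{-1}/v^{-1}}$ are equal to $1$. Hence in each of the two formulas of Theorem~\ref{thm:comb} there is exactly one $J=\{j_1<\cdots<j_m\}$ meeting the stated conditions, which via the translation above is precisely the assertion that there is a unique sequence $1\leq j_1<\cdots<j_m\leq\ell$ (resp.\ $1\leq k_1<\cdots<k_m\leq\ell$) producing the first (resp.\ second) saturated chain in the corollary. I expect the main obstacle to be purely a matter of bookkeeping, namely verifying the two deletion identities cleanly while keeping track of which tail of indices is removed at each stage; everything else is a direct citation of Theorem~\ref{thm:comb} and Theorem~\ref{thm:one-property}.
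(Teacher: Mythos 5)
Your proposal is correct and is essentially the paper's own argument: the paper deduces the corollary directly from Theorem~\ref{thm:comb} (with $v'=v$, $w'=w$) and $c_{w/v}=c_{w^{-1}/v^{-1}}=1$ from Theorem~\ref{thm:one-property}, exactly as you do. The only content you add is to spell out the conjugation/reinsertion identity translating the reflection chains of Theorem~\ref{thm:comb} into the subword-deletion chains of the corollary, which the paper leaves implicit; your verification of it (smallest deleted index reinserted first in the first chain, largest first in the second) is sound.
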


\begin{proof}

This follows directly from Theorem~\ref{thm:one-property},~\ref{thm:comb}.
\end{proof}

\todo[inline,color=green]{The sequence $j_i$ may certainly be different from $k_i$ (e.g.\ in $w=s_2 s_1 s_2$ and $v=s_2$ in $\mathsf{A}_2$). Remark on $r_{w/v}\colon R(w)\to R(v)$.}

\begin{ex}

Let $v,w\in W$ be such that $v\leq w$. Let $s_{\beta_1}\cdots s_{\beta_\ell}$ be a reduced expression of $w$. Let $m=\ell(v,w)$ for brevity. The sequences $1\leq j_1<\cdots<j_m\leq\ell$ and $1\leq k_1<\cdots<k_m\leq\ell$ associated to this situation as in the statement of Corollary~\ref{cor:comb} will in general not coincide. Indeed, we give an examples for this behavior. For this example, let $R$ be of type $\mathsf{A}_2$. Let $\beta_1$ and $\beta_2$ be the simple roots of $R$. Let $v=s_{\beta_2}$ and $w=s_{\beta_2}s_{\beta_1}s_{\beta_2}$. We then have $\ell=3$ and $m=2$. For the specific reduced expression of $w$ written in the second last sentence, the sequences $1\leq j_1<j_2\leq 3$ and $1\leq k_1<k_2\leq 3$ as in Corollary~\ref{cor:comb} are easily determined to be $(2,3)$ and $(1,2)$. In particular, these two sequences are essentially different.

\end{ex}

\begin{notation}

Let $w\in W$. We denote by $R(w)$ the set of all reduced expressions of $w$. We denote by $r(w)$ the cardinality of $R(w)$, i.e.\ the number of reduced expressions of $w$.

\end{notation}

\begin{defn}

Let $v,w\in W$ be such that $v\leq w$. Then we define a map $r_{w/v}\colon R(w)\to R(v)$ which associates to a reduced expression $s_{\beta_1}\cdots s_{\beta_\ell}$ of $w$ the reduced expression 
\[
s_{\beta_1}\cdots\hat{s}_{\beta_{j_1}}\cdots\hat{s}_{\beta_{j_m}}\cdots s_{\beta_\ell}
\] 
of $v$ where $1\leq j_1<\cdots<j_m\leq\ell$ is the unique sequence which satisfies the condition in the first displayed equation of Corollary~\ref{cor:comb}. Moreover, we define a map $r_{w/v}^\circ\colon R(w)\to R(v)$ which associates to a reduced expression $s_{\beta_1}\cdots s_{\beta_\ell}$ of $w$ the reduced expression 
\[
s_{\beta_1}\cdots\hat{s}_{\beta_{k_1}}\cdots\hat{s}_{\beta_{k_m}}\cdots s_{\beta_\ell}
\] 
of $v$ where $1\leq k_1<\cdots<k_m\leq\ell$ is the unique sequence which satisfies the condition in the second displayed equation of Corollary~\ref{cor:comb}.

\end{defn}

\todo[inline,color=green]{Here comes a commutative diagram which links $r_{w/v}$ to $r_{w^{-1}/v^{-1}}^\circ$ via the involutive inversions/bijections $R(w)\cong R(w^{-1})$. In particular, $r(w)=r(w^{-1})$. Just as an explanatory step for the next example. By the way, it is obvious that $r(w)\leq r(w_o)$ for all $w\in W$.}

\begin{rem}
\label{rem:dia}

Let $v,w\in W$ be such that $v\leq w$. Then we have a commutative diagram of sets
\[
\begin{tikzcd} 
R(w)\arrow{r}{r_{w/v}}\arrow{d}[verticalsouth]{\sim} & R(v)\arrow{d}[verticalnorth]{\sim}\\ 
R(w^{-1})\arrow{r}{r_{w^{-1}/v^{-1}}^\circ} & R(v^{-1}) 
\end{tikzcd}
\]
where the vertical arrows indicate the involutive bijections which send a reduced expression $s_{\beta_1}\cdots s_{\beta_\ell}$ to the reduced expression $s_{\beta_\ell}\cdots s_{\beta_1}$ of its inverse.

\end{rem}

\begin{ex}

For this example, let $R$ be of type $\mathsf{A}_5$. Let $\beta_1,\beta_2,\beta_3,\beta_4,\beta_5$ be the simple roots with the labeling as in \cite[Plate~I]{bourbaki_roots}. Let $v=s_{\beta_1}s_{\beta_3}$ and let $w=s_{\beta_1}s_{\beta_2}s_{\beta_3}s_{\beta_5}$. Then, we have $v\leq w$, $r(v)<r(w)$, and the map $r_{w/v}$ is neither injective nor surjective. Consequently, by Remark~\ref{rem:dia}, the map $r_{w^{-1}/v^{-1}}^\circ$ is also neither injective nor surjective.

\end{ex}

Similarly as above, one can also give examples of $v,w\in W$ such that $v\leq w$, $r(v)>r(w)$, and such that $r_{w/v}$ is neither injective nor surjective.

\begin{question}

If $v,w\in W$ such that $v\leq w$ and such that $r(v)=r(w)$, does this imply that $r_{w/v}$ is a bijection?

\end{question}

\todo[inline,color=green]{{\bf A bit of an unmotivated and unrelated problem:} If $v,w\in W$ such that $v\leq w$ and $r(v)=r(w)$, does this imply that $r_{w/v}\colon R(w)\to R(v)$ is a bijection? I cannot find a counterexample for the moment.}

\section{Interpretation of the results in classical terms\except{toc}{:
passage from braided right partial derivatives to divided difference operators}}
\label{sec:passage}

In this section, we relate the results in $\mathscr{B}_W$ obtained thus far to the classical situation presented in the introduction. To this end, we use the embedding of $S_W$ into $\mathscr{B}_W$ as in \cite[Section~5]{bazlov1}. To make the desired link, it then suffices to restrict the opposite braided left partial derivatives and left skew partial derivatives to the image of $S_W$ in $\mathscr{B}_W$.


By \cite[Section~5]{bazlov1}, there exists an injective morphism $S_W\hookrightarrow\mathscr{B}_W$ of $\mathbb{Z}_{\geq 0}$-graded algebras in the category of $W$-modules which is uniquely determined by the assignment
\[
x\mapsto 2\sum_{\alpha\in R^+}B(x,\alpha)x_\alpha
\]
where $x\in\mathfrak{h}$, and extended linearly and multiplicatively (cf.\ Remark~\ref{rem:coinv-ident}). We denote the image of this 
morphism by $\tilde{S}_W$. Note that $\tilde{S}_W$ is in general not a $W$-graded vector subspace of $\mathscr{B}_W$. Consequently, $\tilde{S}_W$ is in general not a Yetter-Drinfeld $W$-submodule of $\mathscr{B}_W$, and $S_W$ has in general no $W$-grading and no structure of Yetter-Drinfeld $W$-module inherited from $\mathscr{B}_W$ via $\tilde{S}_W$. To simplify notation, we will from now on identify $S_W$ with $\tilde{S}_W$ via the morphism as explained above.

By \cite[Lemma~5.9]{bazlov1}, the left action of the algebra $\mathscr{B}_W$ on $\mathscr{B}_W$ via opposite braided left partial derivatives restricts to a left action of the algebra $\mathscr{B}_W$ on $S_W$ which is given on generators by sending an element $x_\alpha$ where $\alpha\in R^+$ to the $\mathbb{C}$-linear endomorphism
\[
\overrightarrow{D}_\alpha^\circ\big|_{S_W}=\partial_\alpha
\]
of $S_W$, and extended linearly and multiplicatively. Consequently, it follows immediately that $\partial_\beta$ where $\beta\in\Delta$ satisfy the nilCoxeter relations (because $x_\beta$ where $\beta\in\Delta$ do), that $\partial_w$ where $w\in W$ is well-defined, and that 
\[
\overrightarrow{D}_w^\circ\big|_{S_W}=\partial_w
\]
for all $w\in W$. By Corollary~\ref{cor:independent-Dwvcirc}, we also see that Definition~\ref{def:partialwv} is well-defined, and that 
\[
\overrightarrow{D}_{w/v}^\circ\big|_{S_W}=\partial_{w/v}
\]
for all $v,w\in W$. (To see this, just note that $\overrightarrow{\varphi}^{\bm{\beta}}_J\big|_{S_W}=\partial^{\bm{\beta}}_J$ for each sequence of simple roots $\bm{\beta}$ corresponding to a fixed reduced expression of $w$ and for each subset $J$ of $\{1,\ldots,\ell\}$ where $\ell=\ell(w)$.)

All claims made in Subsection~\ref{subsec:partial} and~\ref{subsec:summary} now follow from the observations in the previous paragraph. Indeed, one simply has to restrict the action induced by the statements written in brackets after the claims in the introductory subsections from $\mathscr{B}_W$ to $S_W$, and has to use the three displayed formulas worked out above.

\appendix

\section{Shuffle elements and Bruhat intervals of length two}
\label{appendix:shuffle}

In this appendix, we investigate the combinatorics of shuffle elements and their relation to Bruhat intervals of length two. We call an element of a Coxeter group a shuffle element if there exists a unique ascent in the weak left Bruhat order (cf. Definition~\ref{def:shuffle}). Our main theorem on shuffle elements (Theorem~\ref{thm:shuffle}) says something general about some second successor of a shuffle element in the Bruhat order. For us, this notion is useful because under certain conditions on a Bruhat interval of length two its starting point tends to be a shuffle element. The conditions we impose as well as the final formulation of the main theorem on Bruhat intervals of length two (Theorem~\ref{thm:main-interval2}) are motivated by the application in the proof of the monomial property of order two (Theorem~\ref{thm:monomial-two}) in the main corpus of this text.

The techniques we use in our proofs in this appendix are very often restricted to simply laced Weyl groups. For example, the most nontrivial input, the generalized lifting property (\cite[Theorem~6.3]{GLP}) holds for a Coxeter system if and only if it is finite and simply laced (\cite[Theorem~6.9]{GLP}). Indeed, we illustrate in several examples that the main theorems of this appendix 
fail for rather trivial reasons for non simply laced Weyl groups. 
Finally, it should be said that one can get a rather cheap proof of Theorem~\ref{thm:shuffle},~\ref{thm:main-interval2}
restricted to the symmetric group. However, for the exceptional types, all presented methods are needed.

In Subsection~\ref{subsec:glp},~\ref{subsec:weyl}, we recall basic notions around the generalized lifting property
and Weyl groups. A reader familiar with this setup can directly jump to Subsection~\ref{subsec:prelim-roots} bearing in mind that we are always treating onwards simply laced Weyl groups unless otherwise specified (cf. Convention~\ref{conv:simply-laced}).

\todo[inline,color=green]{Already in the introduction, I should remark that everything becomes trivial in type $\mathsf{A}$ (most of our considerations in Section~3,~4,~5?), and that [outdated:] a reader only interested in the applications on skew divided difference operators can go directly to the sections concerning those (make a reference to the remark after the main theorem on shuffle elements which explains why this is trivial [no reference, no such remark on the symmetric group]).}

\todo[inline,color=green]{In this introduction I want to explain why I assume $W$ to be a simply laced Weyl group and why the results are not working in greater generality (GLP + reference to counterexample [no reference]).} 

\todo[inline,color=green]{The appendix can be read by mathematicians only interested in the theory of Coxeter groups and in combinatorics of root systems independently from the rest of the text.}

\todo[inline,color=green]{We actually prove sharper statements than our conjecture for the simply laced case [outdated].}

\subsection{Notation around the generalized lifting property \texorpdfstring{{\normalfont(\cite[Section~2]{GLP})}}{([\ref{bib-GLP},~Section~2])}}
\label{subsec:glp}

Let $(W,S)$ be a Coxeter system with the notation attached to this Situation as in Subsection~\ref{subsec:coxeter}. As in \cite[Section~2]{GLP}, for $w\in W$, we introduce the following notation related to the generalized lifting property:
\begin{alignat*}{6}
A_\ell(w)&=\{\beta\in\;\Delta &&\mid w<s_\beta w\}=
\{\beta\in\;\Delta &&\mid w^{-1}(\beta)>0\}\,,\\
A(w)&=\{\alpha\in R^+&&\mid w<s_\alpha w\}=
\{\alpha\in R^+&&\mid w^{-1}(\alpha)>0\}\,,\\
D(w)&=\{\alpha\in R^+&&\mid w>s_\alpha w\}=
\{\alpha\in R^+&&\mid w^{-1}(\alpha)<0\}\,.
\end{alignat*}
The second equation in each line of the previous displayed equation follows from \cite[Proposition~5.7]{humphreys-coxeter}. Furthermore, for each pair $v,w\in W$, we define $AD(v,w)=A(v)\cap D(w)$.

\subsection{Weyl groups}
\label{subsec:weyl}

In this subsection, we fix our notation and conventions for Weyl groups. From now on and for the rest of the appendix,
all notation and conventions from this subsection will be in force.
 
Let $R$ be a reduced, irreducible and crystallographic root system in a real vector space $\mathfrak{h}_{\mathbb{R}}$.
Let $W$ be the subgroup of $\operatorname{Aut}_{\mathbb{R}}(\mathfrak{h}_{\mathbb{R}})$ generated by the reflections $s_\alpha$ where $\alpha$ runs through $R$. The group $W$ is necessarily finite because $R$ is finite. It is called the Weyl group associated to $R$. We fix once and for all a $W$-invariant scalar product $(-,-)$ on $\mathfrak{h}_{\mathbb{R}}$. It is well-known that such a scalar product is unique up to scalar multiple in $\mathbb{R}_{>0}$. Thanks to this scalar product and the axioms of $R$, for each pair of roots $\alpha$ and $\gamma$, we define an integer $\left<\gamma,\alpha\right>$ by the expression $\frac{2(\gamma,\alpha)}{(\alpha,\alpha)}$. This last expression is manifestly independent of the choice of the scalar product $(-,-)$. For each root $\alpha$, the reflection $s_\alpha$ now acts on $\gamma\in R$ via the familiar formula $s_\alpha(\gamma)=\gamma-\left<\gamma,\alpha\right>\alpha$.
We choose once and for all a base $\Delta$ of $R$ consisting of simple roots. The choice of a base $\Delta$ induces a partial order on $R$ which is denoted by \enquote{$\leq$}. The set of positive elements of $R$ with respect to this partial order is denoted by $R^+$. The elements of $R^+$ are called positive roots.

Let $S=\{s_\beta\mid\beta\in\Delta\}$. Then $(W,S)$ becomes an irreducible Coxeter system such that $W$ is a finite Coxeter group. We now make $\Delta$ correspond to $S$ via the bijection $\Delta\overset{\sim}{\to}S,\beta\mapsto s_\beta$ and the real vector space $\mathfrak{h}_{\mathbb{R}}$ from this subsection correspond to the identically named in Subsection~\ref{subsec:coxeter}. In this way, we can well speak about the Coxeter integers $m(s,s')$, $m(\beta,\beta')$ and the scalar product $B$ on $\mathfrak{h}_{\mathbb{R}}$ (where $s,s'\in S$ and $\beta,\beta'\in\Delta$).

\begin{rem}
\label{rem:weyl-coxeter}

We are now in the situation that we have two actions of $W$ on $\mathfrak{h}_{\mathbb{R}}$, one via the inclusion $W\subseteq\operatorname{Aut}_{\mathbb{R}}(\mathfrak{h}_{\mathbb{R}})$, which we call the natural action of the Weyl group $W$, and one given by the geometric representation of $W$ considered as a Coxeter group. In general, these two actions are essentially different. Consequently, the scalar product $B$ will be in general not invariant under the natural $W$-action but only under the action induced by the geometric representation of $W$, and $B$ will be in general not a scalar multiple in $\mathbb{R}_{>0}$ of $(-,-)$. Furthermore, the root system and its positive roots introduced in Subsection~\ref{subsec:coxeter} will be in general different from the root system and its positive roots introduced in this subsection.

\end{rem}

\begin{conv}
\label{conv:weyl-coxeter}

To avoid confusion we make clear: From now on, whenever we work with a Weyl group $W$, we exclusively consider the natural action of $W$ on $\mathfrak{h}_{\mathbb{R}}$ in the sense of Remark~\ref{rem:weyl-coxeter} as well as the root system and its positive roots as introduced in this subsection. Since we will be in this appendix mostly concerned with the case where the natural action coincides with the one induced by the geometric representation and where consequently all other notions coincide, i.e.\ with simply laced Weyl groups,
we do not have to bother much about the difference between the two perspectives (-- the Weyl group and the Coxeter group perspective).

\end{conv}

For two simple roots $\beta$ and $\beta'$, the integer $\left<\beta,\beta'\right>$ is called a Cartan integer. We recall the following relation between the Cartan and the Coxeter integers. By \cite[Chapter~VI,~\S~1,~n\textsuperscript{o}~5,~Equation~(11)]{bourbaki_roots}, we have
\begin{equation}
\label{eq:cartan-coxeter}
\left<\beta,\beta'\right>=2\,\frac{\left\|\beta\right\|}{\left\|\beta'\right\|}B(\beta,\beta')
\end{equation}
where $\beta,\beta'\in\Delta$ and where $\|{-}\|$ denotes the norm induced by the scalar product $(-,-)$.

\subsubsection{Notation for roots}
\label{subsubsec:roots}

Let $\alpha$ be a positive root and $\beta$ be a simple root. We denote by $n_\beta(\alpha)$ the coefficient of $\beta$ in the unique expression of $\alpha$ as a linear combination of simple roots. In other words, we have $\alpha=\sum_{\beta\in\Delta}n_\beta(\alpha)\beta$ where $n_\beta(\alpha)\in\mathbb{Z}_{\geq 0}$ for all $\beta\in\Delta$.

In the following, we will adopt the notation from \cite[Plate~I--IX]{bourbaki_roots} to depict positive roots in terms of their coefficients at simple roots. We briefly recall how this works. To determine a positive root $\alpha$, we print the Dynkin diagram of $R$ as an array where each node corresponding to a simple root $\beta$ is labeled with the nonnegative integer $n_\beta(\alpha)$. In explicit examples, whenever this matters, we also adopt from \cite[Plate~I--IX]{bourbaki_roots} the labeling of the simple roots $\Delta=\{\beta_1,\ldots,\beta_n\}$ where $n$ is the rank of $R$ and where $\beta_i$ corresponds to the $i$th node of the Dynkin diagram of $R$ (for all $1\leq i\leq n$). To explicitly make clear to which type of $R$ a positive root $\alpha$ belongs, we even illustrate nodes of the Dynkin diagram of $R$ corresponding to simple roots $\beta$ such that $n_\beta(\alpha)=0$.

For example, if we write that the positive root $\alpha$ is given by
\[
\begin{array}{cccc}
0 & 1 & 1 & 1
\end{array}
\]
then we mean that $\alpha=\beta_2+\beta_3+\beta_4$ where $R$ is of type $\mathsf{A}_4$ and where $\beta_1,\beta_2,\beta_3,\beta_4$ are the simple roots in $\Delta$ with the labeling as in \cite[Plate~I]{bourbaki_roots}. Whereas, if we write that the positive root $\alpha$ is given by
\[
\begin{array}{ccc}
1 & 1 & 1
\end{array}
\]
then we mean that $\alpha=\beta_1+\beta_2+\beta_3$ where $R$ is of type $\mathsf{A}_3$ and where $\beta_1,\beta_2,\beta_3$ are the simple roots in $\Delta$ with the labeling as in \cite[Plate~I]{bourbaki_roots}. This notation would of course give rise to ambiguity because abstracted from the double edges the arrays of the Dynkin diagrams, for example, of type $\mathsf{A}_3$ and $\mathsf{B}_3$ look identically. Yet, since we will only use this notation for simply laced root systems, 
all positive roots will be uniquely determined.

\begin{notation}

Let $\alpha\in R^+$. We denote by $\Delta(\alpha)$ the support of $\alpha$, i.e.\ the subset of $\Delta$ given by the equation
\[
\Delta(\alpha)=\{\beta\in\Delta\mid\beta\leq\alpha\}=\{\beta\in\Delta\mid n_\beta(\alpha)>0\}\,.
\]

\end{notation}

\todo[inline,color=green]{Base, positive roots, Cartan integers. The notation $\mathfrak{h}_{\mathbb{R}}$ is suggestive because this vector space really coincides with the one from the subsection on Coxeter groups. Other things (this concerns $R$ and $R^+$, and consequently \enquote{$\leq$}) will also coincide: $\Delta,R,R^+,\ldots$, in case of simply laced Weyl groups, but not in general, and if the choices are made correctly for the $B$-setting. $\Delta$ will always coincide.}

\todo[inline,color=green]{Note: $\left<-,-\right>$ becomes a $W$-invariant scalar product for simply laced Weyl groups. For Weyl group $W$, $(W,S)$ is even irreducible -- to be mentioned as adjective.}

\todo[inline,color=green]{Since the the \enquote{natural representation} and the geometric representation of a non simply laced Weyl group differ (they coincide if and only if the Weyl group is simply laced), the notion of positivity differs too, i.e.\ I cannot naively use the result: $w(\alpha)>0$ if and only if $ws_\alpha>w$, in general. I have to recheck the examples for non simply laced Weyl group. Do I somewhere use this result from \cite{humphreys-coxeter}?

{\bf Answer.} This problem can be partially solved since the analogous result also holds for reflection groups. Note that \cite[Proposition~5.7]{humphreys-coxeter} is formally derived from \cite[Theorem~5.4]{humphreys-coxeter}, and \cite[Lemma~1.6]{humphreys-coxeter} holds. Except this, I do not use the result \enquote{$w(\alpha)>0$ if and only if $ws_\alpha>w$} anywhere for non simply laced Weyl groups and the natural representation. The only situation where I use a non simply laced Weyl group is in Example~\ref{ex:shuffle1}, and there, I don't need anything. I can without doubt globally refer to the Coxeter group result in the introduction.}

\todo[inline,color=green]{Also, I better change the name from Coxeter number to Coxeter integer to avoid a conflict in the terminology (in the spirit of Cartan integer).}

\subsubsection{Simply laced Weyl groups}
\label{subsubsec:simply-laced}

Let $R$ be a simply laced root system, i.e.\ a root system such that the Dynkin diagram of $R$ has only simple edges. Let $W$ be the Weyl group associated to $R$. The group $W$ is called a simply laced Weyl group. All Coxeter integers of the Coxeter system $(W,S)$ where $S$ is associated to $\Delta$ as in Subsection~\ref{subsec:weyl} are $\leq 3$. With the terminology of \cite[Section~2]{GLP}, the Coxeter system $(W,S)$ therefore becomes an irreducible simply laced Coxeter system, and every irreducible simply laced Coxeter system arises in this way from a simply laced Weyl group.
Since all roots in $R$ have equal length, Equation~\eqref{eq:cartan-coxeter} becomes 
\begin{equation}
\label{eq:simply-laced-scalar}
\left<\beta,\beta'\right>=2B(\beta,\beta')
\end{equation}
for all $\beta,\beta'\in\Delta$. Hence, as anticipated in Convention~\ref{conv:weyl-coxeter}, the natural $W$-action on $\mathfrak{h}_{\mathbb{R}}$ and the one induced by the geometric representation of $W$ coincide. Consequently, all other notions introduced in Subsection~\ref{subsec:coxeter} and Subsection~\ref{subsec:weyl} correspond to each other.

\begin{rem}

Because of Equation~\eqref{eq:simply-laced-scalar}, we clearly see that for simply laced Weyl groups the form $\left<-,-\right>$ becomes a $W$-invariant scalar product. This is very specific to this situation. If there are two root length, i.e.\ if $R$ is non simply laced, then the form $\left<-,-\right>$ will be neither bilinear nor symmetric (but still $W$-invariant with respect to the natural $W$-action).

\end{rem}

\begin{conv}
\label{conv:simply-laced}

From now on and for the rest of the appendix, unless otherwise stated in some examples, $R$ will be always a simply laced root system and $W$ a simply laced Weyl group associated to it. We keep all the notation and conventions attached to this situation and introduced thus far.

\end{conv}

The following lemma is fundamental and will be often tacitly in use without referencing to it.

\begin{lem}
\label{lem:01}

Let $\alpha$ and $\gamma$ be non-proportional roots (i.e.\ $\gamma\neq\pm\alpha$). Then we have $\left<\gamma,\alpha\right>\in\{-1,0,1\}$.

\end{lem}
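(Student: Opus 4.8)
\textbf{Proof plan for Lemma~\ref{lem:01}.}

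The plan is to reduce the statement to the standard finiteness constraint on pairs of roots in a root system, using crystallographicity and the fact that for a simply laced root system all roots have the same length. First I would recall that since $R$ is crystallographic, the Cartan integers $\left<\gamma,\alpha\right>=\frac{2(\gamma,\alpha)}{(\alpha,\alpha)}$ and $\left<\alpha,\gamma\right>=\frac{2(\gamma,\alpha)}{(\gamma,\gamma)}$ are both integers for any two roots $\alpha,\gamma\in R$. Multiplying them gives
\[
\left<\gamma,\alpha\right>\left<\alpha,\gamma\right>=\frac{4(\gamma,\alpha)^2}{(\alpha,\alpha)(\gamma,\gamma)}=4\cos^2\theta
\]
where $\theta$ is the angle between $\alpha$ and $\gamma$ with respect to the $W$-invariant scalar product $(-,-)$. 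Since $\alpha$ and $\gamma$ are non-proportional, we have $\cos^2\theta<1$ by the Cauchy--Schwarz inequality (with strict inequality precisely because they are not collinear), so $\left<\gamma,\alpha\right>\left<\alpha,\gamma\right>$ is a nonnegative integer strictly less than $4$, hence lies in $\{0,1,2,3\}$.

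Next I would invoke that $R$ is simply laced, so all roots in $R$ have equal length with respect to $(-,-)$ (this is the defining feature used throughout Subsubsection~\ref{subsubsec:simply-laced}; it follows from the classification of edges in the Dynkin diagram, or directly from the fact that $W$ acts transitively on roots of a given length and the diagram has no multiple edges). Consequently $(\alpha,\alpha)=(\gamma,\gamma)$, which forces $\left<\gamma,\alpha\right>=\left<\alpha,\gamma\right>$. Combined with the previous paragraph, the common value $\left<\gamma,\alpha\right>$ satisfies $\left<\gamma,\alpha\right>^2\in\{0,1,2,3\}$, and since it is an integer this rules out the values $2$ and $3$ (which are not perfect squares), leaving $\left<\gamma,\alpha\right>^2\in\{0,1\}$, i.e.\ $\left<\gamma,\alpha\right>\in\{-1,0,1\}$, as claimed.

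There is no real obstacle here: the only subtlety is making sure the equal-length property of a simply laced root system is cited correctly rather than assumed, and that the non-proportionality hypothesis is used exactly where the strict Cauchy--Schwarz bound is needed (otherwise one could have $\left<\gamma,\alpha\right>\left<\alpha,\gamma\right>=4$, corresponding to $\gamma=\pm\alpha$). I would present the argument in the compact three-line form above, perhaps citing \cite[Chapter~VI,~\S~1]{bourbaki_roots} for the standard inequality $0\leq\left<\gamma,\alpha\right>\left<\alpha,\gamma\right>\leq 3$ for non-proportional roots and for the equality of root lengths in the simply laced case.
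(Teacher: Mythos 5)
Your argument is correct and complete. The paper itself disposes of this lemma in one line by citing an external reference (\cite[Lemma~7.14]{quasi-homogeneity}) together with the observation that in the simply laced case all roots are long; you instead give the classical self-contained derivation: integrality of the Cartan numbers from crystallographicity, the bound $\left<\gamma,\alpha\right>\left<\alpha,\gamma\right>=4\cos^2\theta<4$ from strict Cauchy--Schwarz for non-proportional roots, and the symmetry $\left<\gamma,\alpha\right>=\left<\alpha,\gamma\right>$ from equality of root lengths, which forces the common value to have square in $\{0,1\}$. The underlying mathematics is the same standard fact, so the two routes differ only in that yours unpacks what the cited lemma encapsulates; the one hypothesis you correctly flag and correctly use is irreducibility plus simple-lacedness to guarantee all roots have equal length. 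No gaps.
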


\begin{proof}

Since in our situation all roots are deemed to be long, the lemma is a trivial consequence of \cite[Lemma~7.14]{quasi-homogeneity}.
\end{proof}

\todo[inline,color=green]{$\leq,\lessdot,\Delta,R^+,W,s_\alpha,A_\ell,A,D,AD,\ell,\ell(-,-),R$,

$m(\beta,\beta')=$Coxeter integer of $\beta$ and $\beta'$ (cf. Example~\ref{ex:shuffle1}, here we have $\beta,\beta'\in\Delta$, maybe not standard terminology but we use it anyway),

order on roots, $\leq_\ell$.}

\todo[inline,color=green]{Notation not needed anywhere and hence not introduced: $R^-$, $\alpha^\vee$, $A_r$, $D_r$, $D_\ell$.}

\todo[inline,color=green]{Notation and terminology already introduced: $(-,-)$, $\left<-,-\right>$, $n_\beta(\alpha)$, $\Delta(\alpha)$, simply laced Weyl group, simply laced Coxeter group, simply laced root system.}

\todo[inline,color=green]{We often use \cite[Proposition~5.7,~5.2]{humphreys-coxeter} without always referring to it. (This applies to the whole paper and not only to the appendix. Hence, I should note it directly in the beginning, i.e.\ in Subsection~\ref{subsec:coxeter}.) $W$ always denotes a simply laced Weyl group. I will mention before Lemma~\ref{lem:01} that $\left<-,-\right>$ is commutative/symmetric and even bilinear (scalar product). I also use Lemma~\ref{lem:01} tacitly throughout the text. Mention this. Not all of the notation, $A_r$ for example, is needed. Reduce it to a minimum.}

\todo[inline,color=green]{It is probably useful to put this notational section before the introduction of Appendix~\ref{appendix:shuffle}, so that I can use already there freely the notation. For example, one can put this notation as a subsection in the main body of the text where I introduce my notation on arbitrary Coxeter groups. 

It seems that I use the more specific notation for ascent and descent $AD$ only in this appendix and that it is not needed in the main body of the text. So, I should only introduce it (this more specific notation) here in the beginning of the appendix.}

\subsection{Preliminaries on roots}
\label{subsec:prelim-roots}

In this subsection, we provide the necessary preliminaries on roots. 
Basically, we have to exclude the existence of a root which satisfies all of the properties listed in the items in Proposition~\ref{prop:absurd}. Unfortunately, we have no elegant proof of this. Our considerations in this subsection are based on the classification and explicit realization of root systems as in \cite{bourbaki_roots}. They rely on divers routine checks. We only indicate the most relevant ideas of those. An impatient reader can skip this subsection in good faith and take the non-existence of the before mentioned root as granted.  

\todo[inline,color=green]{\color{black} I should explain my notation for roots in the beginning of this section. We indicate always zeros to show in which type we are. In most cases, we have no good proofs. They rely on the classification of root systems and divers checks. We only indicate the most important ideas.

This seems to be a comment on the out-commented and outdated version of this subsection which became redundant in the final formulation of the proceeding:
I should comment on the nature of the two classification remarks. They involve extensive checking with the root tables and are not important for us. In fact, we chose a formulation of our proofs which exactly avoids these checks. We only mention them for the sake of completeness. Also, the reader can skim through this section since it is boring.}

\begin{defn}

Let $\alpha\in R^+$ and $\beta\in\Delta(\alpha)$. We introduce three sets of simple roots depending on $\alpha$ and $\beta$ which are relevant for our investigations:
\begin{align*}
N^+(\alpha,\beta)&=\{\beta'\in\Delta(\alpha)\mid(\beta,\beta')<0,(\alpha,\beta')>0\}\,,\\
N^-(\alpha,\beta)&=\{\beta'\in\Delta(\alpha)\mid(\beta,\beta')<0,(\alpha,\beta')\leq 0\}\,,\\
N(\alpha,\beta)&=\{\beta'\in\Delta(\alpha)\mid(\beta,\beta')<0\}\,.
\end{align*}
We clearly have $N(\alpha,\beta)=N^+(\alpha,\beta)\sqcup N^-(\alpha,\beta)$.

\end{defn}

\begin{lem}
\label{lem:equaltozero}

Let $\alpha\in R^+$. Let $\beta\in\Delta(\alpha)$ such that $(\alpha,\beta)<0$. Then we have for all $\beta'\in N^-(\alpha,\beta)$ that $(\alpha,\beta')=0$.

\end{lem}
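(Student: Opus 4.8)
The plan is to reduce everything to the integer-valued pairing $\left<-,-\right>$, which for a simply laced root system is a bilinear, symmetric, $W$-invariant form agreeing with $(-,-)$ up to a positive scalar on the span of the roots; in particular a quantity of the form $(x,\gamma)$ with $x$ a root and $\gamma$ a root has the same sign as $\left<x,\gamma\right>$, and vanishes iff $\left<x,\gamma\right>$ does. By Lemma~\ref{lem:01}, for non-proportional roots $x,\gamma$ one has $\left<x,\gamma\right>\in\{-1,0,1\}$, and for two distinct simple roots $\beta,\beta'$ one has $\left<\beta,\beta'\right>\in\{-1,0\}$. These are the only facts I would use.

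First I would observe that $\alpha\neq\pm\beta$ (if $\alpha=\beta$ then $(\alpha,\beta)=(\beta,\beta)>0$, and $\alpha=-\beta$ is impossible since $\alpha\in R^+$), so by Lemma~\ref{lem:01} together with the hypothesis $(\alpha,\beta)<0$ we get $\left<\alpha,\beta\right>=-1$. Hence $s_\beta(\alpha)=\alpha-\left<\alpha,\beta\right>\beta=\alpha+\beta$ lies in $R$, and since the simple-root coefficients of $\alpha$ are nonnegative with $n_\beta(\alpha)\geq 1$ (as $\beta\in\Delta(\alpha)$), the root $\alpha+\beta$ is positive with $n_\beta(\alpha+\beta)\geq 2$.

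Now fix $\beta'\in N^-(\alpha,\beta)$, so $\beta'\in\Delta(\alpha)$ with $(\beta,\beta')<0$ and $(\alpha,\beta')\leq 0$. From $(\beta,\beta')<0$ we get $\beta\neq\beta'$ and therefore $\left<\beta,\beta'\right>=-1$; also $\left<\alpha,\beta'\right>\leq 0$. The roots $\alpha+\beta$ and $\beta'$ are non-proportional: $\alpha+\beta$ is positive so it is not $-\beta'$, and $n_\beta(\alpha+\beta)\geq 2>0=n_\beta(\beta')$ shows it is not $\beta'$. Applying Lemma~\ref{lem:01} to the pair $(\alpha+\beta,\beta')$ gives $\left<\alpha+\beta,\beta'\right>\in\{-1,0,1\}$, while bilinearity yields
\[
\left<\alpha+\beta,\beta'\right>=\left<\alpha,\beta'\right>+\left<\beta,\beta'\right>=\left<\alpha,\beta'\right>-1\leq -1 .
\]
Comparing, $\left<\alpha+\beta,\beta'\right>=-1$, hence $\left<\alpha,\beta'\right>=0$, i.e.\ $(\alpha,\beta')=0$, as claimed.

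The argument is short and I do not expect any genuine obstacle; the only point requiring care is the passage between $(-,-)$ and $\left<-,-\right>$ (so that Lemma~\ref{lem:01} and bilinearity can be used simultaneously) and the verification of non-proportionality of $\alpha+\beta$ with $\beta'$, which is why I would phrase the normalization remark explicitly at the outset.
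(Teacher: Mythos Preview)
Your proof is correct and uses essentially the same idea as the paper's. The only cosmetic difference is which auxiliary root you form: you apply Lemma~\ref{lem:01} to the pair $(\alpha+\beta,\beta')$, whereas the paper applies it to the pair $(\alpha,\beta+\beta')$; in both cases one combines $\left<\alpha,\beta\right>=-1$ (resp.\ $\left<\beta,\beta'\right>=-1$) with $\left<\alpha,\beta'\right>\le 0$ and the bound $\ge -1$ from Lemma~\ref{lem:01} to force $\left<\alpha,\beta'\right>=0$.
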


\begin{proof}

Let $\alpha$ and $\beta$ be as in the statement. Let $\beta'\in N^-(\alpha,\beta)$. It is clear that $\beta+\beta'$ is a positive root. Hence, we have $\left<\alpha,\beta+\beta'\right>=\left<\alpha,\beta\right>=-1$. It follows that $(\alpha,\beta')=0$ as claimed.
\end{proof}

\begin{lem}
\label{lem:n<2n}

Let $\alpha\in R^+$ and $\beta\in\Delta(\alpha)$. Let $\beta'\in N^+(\alpha,\beta)$. Then we have $n_\beta(\alpha)<2n_{\beta'}(\alpha)$.

\end{lem}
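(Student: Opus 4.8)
The plan is to extract the inequality directly from the expansion of $\alpha$ into simple roots, using only the two defining properties $(\beta,\beta')<0$ and $(\alpha,\beta')>0$ of the condition $\beta'\in N^+(\alpha,\beta)$; in contrast to most of this subsection, no appeal to the classification of root systems is needed here.

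First I would record the easy structural facts: $\beta\neq\beta'$ (otherwise $(\beta,\beta')=(\beta,\beta)>0$), both $\beta$ and $\beta'$ lie in $\Delta(\alpha)$, and $\beta$ and $\beta'$ are joined by an edge in the Dynkin diagram of $R$, since for two distinct simple roots the condition $(\beta,\beta')<0$ is precisely this adjacency. Next I would pin down the Cartan integer $\langle\alpha,\beta'\rangle$: because $\beta\in\Delta(\alpha)$ with $\beta\neq\beta'$, the root $\alpha$ is not proportional to $\beta'$, so Lemma~\ref{lem:01} gives $\langle\alpha,\beta'\rangle\in\{-1,0,1\}$, and the hypothesis $(\alpha,\beta')>0$ then forces $\langle\alpha,\beta'\rangle=1$.

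Now I would expand $\alpha=\sum_{\delta\in\Delta}n_\delta(\alpha)\,\delta$. In the simply laced situation (Convention~\ref{conv:simply-laced}) all roots have equal length, so $\langle-,\beta'\rangle$ is linear, with $\langle\beta',\beta'\rangle=2$, with $\langle\delta,\beta'\rangle=-1$ when $\delta$ is a simple root joined to $\beta'$, and with $\langle\delta,\beta'\rangle=0$ for every other simple root $\delta$. Substituting into $\langle\alpha,\beta'\rangle=1$ gives
\[
2\,n_{\beta'}(\alpha)-\sum_{\delta}n_\delta(\alpha)=1,
\]
where $\delta$ runs over the simple roots joined to $\beta'$; equivalently $\sum_{\delta}n_\delta(\alpha)=2\,n_{\beta'}(\alpha)-1$. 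Since $\beta$ is one of these $\delta$ and all the coefficients $n_\delta(\alpha)$ are nonnegative, I obtain $n_\beta(\alpha)\leq 2\,n_{\beta'}(\alpha)-1<2\,n_{\beta'}(\alpha)$, which is the assertion of the lemma.

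This is essentially a one-line computation once $\langle\alpha,\beta'\rangle=1$ has been established, so there is no genuine obstacle; the only steps requiring a little care are the reduction of $\langle\alpha,\beta'\rangle$ to the value $1$ via Lemma~\ref{lem:01} (which is where the simply laced hypothesis enters) and the bookkeeping of which Cartan integers $\langle\delta,\beta'\rangle$ vanish.
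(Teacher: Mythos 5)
Your proof is correct and follows essentially the same route as the paper: expanding $\left<\alpha,\beta'\right>$ over the simple-root decomposition of $\alpha$ and using that all off-diagonal Cartan integers are nonpositive. The only difference is cosmetic — you pin down $\left<\alpha,\beta'\right>=1$ exactly via Lemma~\ref{lem:01}, whereas the paper only uses $\left<\alpha,\beta'\right>>0$ together with the bound $\left<\alpha,\beta'\right>\leq 2n_{\beta'}(\alpha)-n_\beta(\alpha)$, which already suffices.
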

 
\begin{proof}

Let $\alpha,\beta,\beta'$ be as in the statement. Then we have
$
0<\left<\alpha,\beta'\right>\leq 2n_{\beta'}(\alpha)-n_\beta(\alpha)
$
and hence $n_\beta(\alpha)<2n_{\beta'}(\alpha)$ as claimed.
\end{proof} 
 
\begin{prop}
\label{prop:existenceofalpha'}

Let $\alpha\in R^+$. Let $\beta\in\Delta(\alpha)$ such that $(\alpha,\beta)<0$. Suppose that $n_\beta(\alpha)=1$. Then there exists $\alpha'\in R^+$ such that $\alpha'\leq\alpha$, $(\alpha',\beta)>0$, $(\alpha,\alpha')=0$.

\end{prop}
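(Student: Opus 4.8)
The plan is to normalize the root lengths so that $(\gamma,\gamma)=2$ for all $\gamma\in R$; then for non-proportional roots $(\gamma,\delta)=\langle\gamma,\delta\rangle\in\{-1,0,1\}$ by Lemma~\ref{lem:01}. Since $(\alpha,\beta)<0<(\beta,\beta)$ the roots $\alpha,\beta$ are non-proportional, so $(\alpha,\beta)=-1$ and $\alpha+\beta=s_\beta(\alpha)\in R$. Any root $\alpha'$ with $\alpha'\leq\alpha$ satisfies $\operatorname{supp}(\alpha')\subseteq\Delta(\alpha)$, and the quantities $(\alpha,\alpha')$, $(\alpha',\beta)$ as well as the condition $\alpha'\leq\alpha$ only involve $\Delta(\alpha)$; hence we may replace $R$ by the (irreducible, simply laced) subsystem generated by $\Delta(\alpha)$ and assume $\alpha$ has full support.

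The clean part is the case $N^+(\alpha,\beta)\neq\emptyset$. Pick $\beta'\in N^+(\alpha,\beta)$, so $\beta'\in\Delta(\alpha)$, $\beta'$ is adjacent to $\beta$ in the Dynkin diagram, and $(\alpha,\beta')>0$. Since $(\beta,\beta')=-1$, the vector $\alpha':=\beta+\beta'$ is a root; it is positive and satisfies $\alpha'\leq\alpha$ because $n_\beta(\alpha')=n_{\beta'}(\alpha')=1$ while $n_\beta(\alpha)=1$ and $n_{\beta'}(\alpha)\geq 1$. One computes $(\beta,\alpha')=(\beta,\beta)+(\beta,\beta')=2-1=1>0$, and $(\alpha,\beta')=1$ since it is positive and at most $1$, so $(\alpha,\alpha')=(\alpha,\beta)+(\alpha,\beta')=-1+1=0$. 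Thus $\alpha'$ has all three required properties. (Note that the preceding Lemmas~\ref{lem:equaltozero} and~\ref{lem:n<2n} are tailored to exactly this kind of neighbor analysis.)

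It remains to treat the case $N^+(\alpha,\beta)=\emptyset$. By Lemma~\ref{lem:equaltozero}, $(\alpha,\beta')=0$ for every $\beta'\in N(\alpha,\beta)$, i.e.\ for every Dynkin-neighbor of $\beta$ (all of which lie in $\Delta(\alpha)$ since $\alpha$ has full support). If some such neighbor $\beta'$ were a leaf of the Dynkin diagram, then $(\alpha,\beta')=2n_{\beta'}(\alpha)-n_\beta(\alpha)=2n_{\beta'}(\alpha)-1\geq 1>0$, so $\beta'\in N^+(\alpha,\beta)$ — a contradiction; hence $\beta$ is adjacent to no leaf. Together with the identity $\sum_{\beta'\in N(\alpha,\beta)}n_{\beta'}(\alpha)=2-\langle\alpha,\beta\rangle=3$ (and the observation that in type $\mathsf{A}$ a full-support root never pairs negatively with a simple root), this leaves only a short list of configurations in types $\mathsf{D}$, $\mathsf{E}_6$, $\mathsf{E}_7$, $\mathsf{E}_8$, which one verifies directly against the explicit root data of \cite[Plate~IV--VII]{bourbaki_roots}, exhibiting $\alpha'$ in each case — for instance, in type $\mathsf{D}_n$ with $\alpha=\varepsilon_1+\varepsilon_j$ and $\beta=\varepsilon_{j-1}-\varepsilon_j$ the root $\alpha'=\varepsilon_1-\varepsilon_j$ works; in fact one checks that $N^+(\alpha,\beta)$ is always nonempty in type $\mathsf{D}$, so only $\mathsf{E}$-type configurations genuinely require the check.

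The main obstacle is precisely this last case: the argument for $N^+(\alpha,\beta)\neq\emptyset$ is uniform, but the complementary case resists a uniform treatment and, as the authors caution at the start of the subsection, must be settled through the classification and explicit realization of simply laced root systems. A tempting alternative is induction on $\sum_{\gamma\in\Delta}n_\gamma(\alpha)$: when $N^+(\alpha,\beta)=\emptyset$ one can peel off a simple root $\gamma_0\neq\beta$ not adjacent to $\beta$ with $(\alpha,\gamma_0)=1$, apply the inductive hypothesis to $\alpha-\gamma_0$, and then readjust the resulting root by at most adding back $\gamma_0$; this works provided $(\gamma_0,\alpha')\in\{-1,0\}$, and controlling that pairing is exactly where the case analysis re-enters.
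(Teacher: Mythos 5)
Your first case is fine: when $N^+(\alpha,\beta)\neq\emptyset$, the root $\alpha'=\beta+\beta'$ does the job, and this is exactly the paper's construction specialized to a path of length two. The gap is in the complementary case $N^+(\alpha,\beta)=\emptyset$. There you assert that only "a short list of configurations" remains and that one "verifies directly against the explicit root data", but you neither produce the list nor carry out the verification; a single type-$\mathsf{D}$ example and the unproved claim that type $\mathsf{D}$ never genuinely occurs do not close the case. Moreover, your diagnosis that this case "resists a uniform treatment" and "must be settled through the classification" is a misreading: the classification checks in this subsection are needed for Fact~\ref{fact:classification-final} and its corollaries (the regime $n_\beta(\alpha)>1$), whereas Proposition~\ref{prop:existenceofalpha'} itself has a uniform, classification-free proof.

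The missing idea is to pass from immediate neighbors of $\beta$ to a path. Since $(\alpha,\alpha)>0$, some $\beta'\in\Delta(\alpha)$ satisfies $(\alpha,\beta')>0$; choose a path $\beta=\beta_1,\beta_2,\ldots,\beta_\ell$ of minimal length in the Dynkin diagram restricted to $\Delta(\alpha)$ with $(\alpha,\beta_\ell)>0$, and set $\alpha'=\beta_1+\cdots+\beta_\ell$, a positive root because it is the sum of the simple roots along a connected path in a simply laced diagram. Minimality gives $(\alpha,\beta_i)\leq 0$ for $i<\ell$, and the argument of Lemma~\ref{lem:equaltozero}, applied to the partial sums $\beta_1+\cdots+\beta_i$ (which are roots and pair with $\alpha$ in $\{-1,0,1\}$ by Lemma~\ref{lem:01}), upgrades this to $(\alpha,\beta_i)=0$ for $1<i<\ell$. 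Hence $\left<\alpha,\alpha'\right>=-1+0+\cdots+0+1=0$; also $\left<\alpha',\beta\right>=2-1=1>0$ because a shortest path in a tree is induced, so $\beta_1$ is adjacent only to $\beta_2$ among the $\beta_i$; and $\alpha'\leq\alpha$ because every coefficient of $\alpha'$ is $0$ or $1$ with support inside $\Delta(\alpha)$. This subsumes both of your cases (yours is $\ell=2$), and in fact never uses the hypothesis $n_\beta(\alpha)=1$.
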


\begin{proof}

Let $\alpha$ and $\beta$ be as in the statement. It is clear that there exists $\beta'\in\Delta(\alpha)$ such that $(\alpha,\beta')>0$ since otherwise we have $(\alpha,\alpha)<0$ which is contrary to the fact that $(-,-)$ is a scalar product. Hence, we can choose a path $\beta_1,\ldots,\beta_\ell$ of minimal length $\ell$ in the connected subdiagram of the Dynkin diagram with nodes $\Delta(\alpha)$ such that $\beta_1=\beta$ and such that $(\alpha,\beta_\ell)>0$. Since $(\alpha,\beta)<0$ by assumption, it is clear that $\ell>1$. By the minimality of $\ell$, it is moreover clear that $(\alpha,\beta_i)\leq 0$ for all $1\leq i<\ell$. By an argument very similar to the one in the proof of Lemma~\ref{lem:equaltozero}, we even know that $(\alpha,\beta_i)=0$ for all $1<i<\ell$. We can now define $\alpha'=\sum_{i=1}^\ell\beta_i$. It is clear that $\alpha'$ is a positive root. By what we said above, it is easy to see that $\alpha'$ satisfies all the requirements in the statement of the proposition.
\end{proof}

\begin{rem}

Let $\alpha$, $\beta$, $\alpha'$ be as in the statement of Proposition~\ref{prop:existenceofalpha'}. Then it is clear that $\alpha'$ even satisfies $\alpha'<\alpha$.

\end{rem}

\begin{fact}
\label{fact:classification-final}

Suppose that $R$ is of type $\mathsf{E}_6$ or $\mathsf{E}_7$. Let $\alpha\in R^+$. Let $\beta\in\Delta(\alpha)$ be such that $(\alpha,\beta)<0$. Suppose that $n_\beta(\alpha)>1$. Then $\beta$ is uniquely determined by $\alpha$. Moreover, the root $\alpha$ occurs in the following list:
\begin{align*}
&\mathsf{E}_6\colon\,&&\begin{array}{ccccc}
1 & {\color{green} 2} & {\color{blue} 2} & {\color{green} 2} & 1 \\
  &   & {\color{red} 1} &   &
\end{array}\,,\\
&\mathsf{E}_7\colon\,&&\begin{array}{cccccc}
1 & {\color{green} 2} & {\color{blue} 2} & {\color{green} 2} & 1 & 0 \\
  &   & {\color{red} 1} &   &   &
\end{array}\,,\,
\begin{array}{cccccc}
1 & {\color{green} 2} & {\color{blue} 2} & {\color{green} 2} & 1 & 1 \\
  &   & {\color{red} 1} &   &   &
\end{array}\,,\,
\begin{array}{cccccc}
1 & {\color{green} 2} & {\color{blue} 2} & {\color{red} 2} & 2 & 1 \\
  &   & {\color{red} 1} &   &   &
\end{array}\,,\\
&&&\begin{array}{cccccc}
1 & 2 & {\color{green} 3} & {\color{blue} 2} & {\color{green} 2} & 1 \\
  &   & 1 &   &   &
\end{array}\,,\,
\begin{array}{cccccc}
1 & 2 & {\color{red} 3} & {\color{blue} 2} & {\color{green} 2} & 1 \\
  &   & 2 &   &   &
\end{array}\,,\,
\begin{array}{cccccc}
1 & {\color{red} 2} & {\color{blue} 3} & {\color{green} 3} & 2 & 1 \\
  &   & {\color{green} 2} &   &   &
\end{array}\,,\\
&&&\begin{array}{cccccc}
{\color{red} 1} & {\color{blue} 2} & {\color{green} 4} & 3 & 2 & 1 \\
  &   & 2 &   &   &
\end{array}\,.
\end{align*}
In this picture, the blue colored numbers correspond each time to the simple root $\beta$. The green colored numbers correspond each time to simple roots in $N^+(\alpha,\beta)$. The red colored numbers correspond each time to simple roots in $N^-(\alpha,\beta)$.

\end{fact}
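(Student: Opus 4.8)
The plan is to prove Fact~\ref{fact:classification-final} by a direct, finite inspection of the positive roots of $\mathsf{E}_6$ and $\mathsf{E}_7$ as tabulated in \cite[Plate~V and~VI]{bourbaki_roots}, after first recasting the two hypotheses on $(\alpha,\beta)$ in purely combinatorial terms. Normalise the $W$-invariant scalar product so that $(\beta',\beta')=2$ for all $\beta'\in\Delta$, so that $(\gamma,\alpha)=\langle\gamma,\alpha\rangle$ for all roots $\gamma,\alpha$. For $\beta\in\Delta(\alpha)$ one then has
\[
(\alpha,\beta)=2\,n_\beta(\alpha)-\sum_{\beta'\in\Delta(\alpha),\ \beta'\sim\beta}n_{\beta'}(\alpha)\,,
\]
where $\beta'\sim\beta$ means that $\beta'$ is adjacent to $\beta$ in the Dynkin diagram. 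Hence the condition $(\alpha,\beta)<0$ is equivalent to $\sum_{\beta'\sim\beta}n_{\beta'}(\alpha)\geq 2\,n_\beta(\alpha)+1$; combined with $n_\beta(\alpha)>1$, i.e.\ $n_\beta(\alpha)\geq 2$, this forces the neighbours of $\beta$ to carry total coefficient at least $5$. Moreover, by Lemma~\ref{lem:01} the inequality $(\alpha,\beta)<0$ is the same as $\langle\alpha,\beta\rangle=-1$, so that $s_\beta(\alpha)=\alpha+\beta\in R^+$ with $n_\beta(\alpha+\beta)\geq 3$.

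Next I would enumerate. All coefficients $n_{\beta'}(\alpha)$ are bounded above by those of the highest root, so there are only finitely many positive roots of $\mathsf{E}_6$ (resp.\ $\mathsf{E}_7$) having some coefficient $\geq 2$; for each such root $\alpha$, and for each node $\beta$ at which the coefficient is $\geq 2$, I would test the displayed inequality $\sum_{\beta'\sim\beta}n_{\beta'}(\alpha)\geq 2\,n_\beta(\alpha)+1$. Running through \cite[Plate~V and~VI]{bourbaki_roots} in this way yields precisely the roots displayed in the statement: the node $\beta$ passing the test is the one printed in blue, and then evaluating $(\alpha,\beta')=2n_{\beta'}(\alpha)-\sum_{\beta''\sim\beta'}n_{\beta''}(\alpha)$ at each neighbour $\beta'$ of $\beta$ sorts the neighbours into $N^+(\alpha,\beta)$ (green, where $(\alpha,\beta')>0$) and $N^-(\alpha,\beta)$ (red, where $(\alpha,\beta')\leq 0$). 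The cases $\mathsf{A}_n$, $\mathsf{D}_n$, $\mathsf{E}_8$ are not part of this Fact and are handled separately (in $\mathsf{A}_n$, for instance, the hypothesis $n_\beta(\alpha)>1$ is vacuous).

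For uniqueness of $\beta$ I would first record the following reduction: if $\beta,\beta'\in\Delta(\alpha)$ both satisfied the hypotheses, they could not be adjacent, since $\beta'\sim\beta$ would give $\langle\alpha+\beta,\beta'\rangle=\langle\alpha,\beta'\rangle+\langle\beta,\beta'\rangle=-1+(-1)=-2$, contradicting Lemma~\ref{lem:01} applied to the non-proportional roots $\alpha+\beta$ and $\beta'$; hence $\alpha+\beta+\beta'=s_{\beta'}(\alpha+\beta)\in R^+$ would be a positive root with two non-adjacent coefficients $\geq 3$ and with $n_{\beta_4}$-type coefficients already large. For $\mathsf{E}_6$ this is immediately impossible, and for $\mathsf{E}_7$ it narrows the possibilities to a short list to be excluded; in any event the uniqueness is already visible from the explicit list produced in the enumeration step, where exactly one node is eligible in each case. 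The main obstacle is simply that there is no conceptual shortcut: the statement is a genuine finite classification, so the work consists in reliably sweeping the $\mathsf{E}_6$ and $\mathsf{E}_7$ root tables and carrying out the elementary arithmetic at each node, while making sure the resulting list and its decoration into blue, green and red nodes is complete and correct. The reformulation in the first paragraph is what makes this bookkeeping manageable.
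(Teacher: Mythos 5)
Your proof is correct and follows essentially the same route as the paper: both reduce the Fact to a finite sweep of the $\mathsf{E}_6$ and $\mathsf{E}_7$ root tables in \cite[Plate~V and~VI]{bourbaki_roots}, checking at each node $\beta$ with $n_\beta(\alpha)>1$ whether $(\alpha,\beta)<0$, and reading off the uniqueness of $\beta$ a posteriori from the resulting list. Your reformulation $(\alpha,\beta)=2n_\beta(\alpha)-\sum_{\beta'\sim\beta}n_{\beta'}(\alpha)$ makes the bookkeeping more systematic than the paper's bare appeal to the tables, but the substance of the argument is identical.
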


\begin{proof}

The proof of the fact works by going through the list of all roots in type $\mathsf{E}_6$ and $\mathsf{E}_7$ which have a coefficient $>1$ (cf. \cite[Plate~V~and~VI]{bourbaki_roots}). By checking for each simple root $\beta$ having a coefficient $>1$ in $\alpha$ whether the pairing $(\alpha,\beta)$ is negative, we can create the table as above. A posteriori, we see from this table that $\beta$ is uniquely determined by $\alpha$.
\end{proof}

\begin{rem}

The statement in Fact~\ref{fact:classification-final} that $\beta$ is uniquely determined by $\alpha$ fails for type $\mathsf{E}_8$. In general, for a given $\alpha\in R^+$, there might be several $\beta\in\Delta(\alpha)$ such that $(\alpha,\beta)<0$ and such that $n_\beta(\alpha)>1$. This occurs for precisely four roots in type $\mathsf{E}_8$ for which there exist in each case precisely two $\beta$'s.

\end{rem}

\begin{cor}
\label{cor:classification1}

Suppose that $R$ is of type $\mathsf{E}_6$ or $\mathsf{E}_7$. Let $\alpha\in R^+$. Let $\beta\in\Delta(\alpha)$ be such that $(\alpha,\beta)<0$. Suppose that $n_\beta(\alpha)>1$. Then there exists $\beta'\in N^+(\alpha,\beta)$ such that $n_\beta(\alpha)\leq n_{\beta'}(\alpha)$.

\end{cor}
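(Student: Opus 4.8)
The proof is a finite verification resting on Fact~\ref{fact:classification-final}, and the plan is the following.

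First I would record a general computation valid in our simply laced setting. Since $n_\beta(\alpha)>1$ forces $\alpha\neq\pm\beta$, Lemma~\ref{lem:01} applied to the non-proportional roots $\alpha$ and $\beta$ together with the hypothesis $(\alpha,\beta)<0$ gives $\langle\alpha,\beta\rangle=-1$. Expanding $\alpha=\sum_{\gamma\in\Delta}n_\gamma(\alpha)\gamma$ and using that $\langle\gamma,\beta\rangle=-1$ precisely when $\gamma$ is a simple root adjacent to $\beta$ (and $=0$ otherwise, while $\langle\beta,\beta\rangle=2$), one gets $\langle\alpha,\beta\rangle=2n_\beta(\alpha)-\sum_{\beta'\in N(\alpha,\beta)}n_{\beta'}(\alpha)$, hence $\sum_{\beta'\in N(\alpha,\beta)}n_{\beta'}(\alpha)=2n_\beta(\alpha)+1$. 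The same argument applied to any $\beta'\in N^+(\alpha,\beta)$, where now $(\alpha,\beta')>0$ forces $\langle\alpha,\beta'\rangle=1$, yields $2n_{\beta'}(\alpha)=1+\sum_{\beta''\in N(\alpha,\beta')}n_{\beta''}(\alpha)\geq 1+n_\beta(\alpha)$, so that $n_{\beta'}(\alpha)\geq\lceil(n_\beta(\alpha)+1)/2\rceil$. In particular, as soon as $N^+(\alpha,\beta)\neq\emptyset$, every element of $N^+(\alpha,\beta)$ already satisfies the desired inequality whenever $n_\beta(\alpha)=2$, which disposes uniformly of all $n_\beta(\alpha)=2$ cases modulo nonemptiness of $N^+(\alpha,\beta)$.

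Next I would invoke Fact~\ref{fact:classification-final}: the pair $(\alpha,\beta)$ occurs in the explicit list displayed there, with the simple roots belonging to $N^+(\alpha,\beta)$ being exactly the ones printed in green. It then remains to read off, case by case from that list, a green node whose coefficient in $\alpha$ is at least the blue coefficient $n_\beta(\alpha)$. In every listed diagram this is immediate: when $n_\beta(\alpha)=2$ each green coefficient equals $2$ and $N^+(\alpha,\beta)$ is nonempty; in the $\mathsf{E}_7$ diagrams with $n_\beta(\alpha)=3$ one of the two green coefficients is $3$; and in the single $\mathsf{E}_7$ diagram with $n_\beta(\alpha)=2$ and a unique green node, that node has coefficient $4$. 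This exhausts all cases and proves the corollary.

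The only point requiring care is that the whole argument is parasitic on the completeness of the list and on the correctness of the $N^{\pm}$-coloring in Fact~\ref{fact:classification-final}; both are already settled there by inspection of Bourbaki's root tables \cite[Plate~V~and~VI]{bourbaki_roots}, so no genuinely new obstacle arises. I expect that one cannot avoid consulting Fact~\ref{fact:classification-final} entirely — the identities above pin down $N^+(\alpha,\beta)$ only up to the ambiguity of which large-coefficient neighbor of $\beta$ sits in $N^+$ versus $N^-$, and resolving that is exactly what the classification supplies — but the case check itself is short once the list and coloring are in hand.
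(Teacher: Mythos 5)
Your proof is correct and rests, like the paper's, on reading off from the list in Fact~\ref{fact:classification-final} that every blue node has a green neighbor with coefficient at least as large; the paper's own proof is exactly this one-line inspection. Your additional identity $2n_{\beta'}(\alpha)=1+\sum_{\beta''\in N(\alpha,\beta')}n_{\beta''}(\alpha)\geq 1+n_\beta(\alpha)$ is a nice touch that disposes of all $n_\beta(\alpha)=2$ cases uniformly (modulo $N^+(\alpha,\beta)\neq\emptyset$, which still needs the list), though your tally of which listed diagrams have a unique green node is slightly off — this does not affect the conclusion.
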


\begin{proof}

This follows directly from the list of roots we have created in Fact~\ref{fact:classification-final}. Graphically, we see that for each blue number in the list there exists a neighboring green number which is greater or equal than the blue number.
\end{proof}

\begin{cor}
\label{cor:classification2}

Suppose that $R$ is of type $\mathsf{E}_6$ or $\mathsf{E}_7$. Let $\alpha\in R^+$. Let $\beta\in\Delta(\alpha)$ be such that $(\alpha,\beta)<0$. Suppose that $n_\beta(\alpha)>1$. Then there exists $\alpha'\in R^+$ such that $n_\beta(\alpha)\alpha'\leq\alpha$, $(\alpha',\beta)>0$, $(\alpha,\alpha')=0$.

\end{cor}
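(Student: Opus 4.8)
The plan is to reduce the statement to Corollary~\ref{cor:classification1} by writing down an explicit $\alpha'$ of height two, much as Proposition~\ref{prop:existenceofalpha'} handled the case $n_\beta(\alpha)=1$ via a path, the new point being that Corollary~\ref{cor:classification1} lets that path have length exactly two. First I would invoke Corollary~\ref{cor:classification1} to obtain a simple root $\beta'\in N^+(\alpha,\beta)$ with $n_\beta(\alpha)\leq n_{\beta'}(\alpha)$; by definition of $N^+(\alpha,\beta)$ this means $(\beta,\beta')<0$ and $(\alpha,\beta')>0$. I would then set $\alpha'=\beta+\beta'$ and claim this root does the job.

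That $\alpha'$ is a positive root is immediate: $\beta$ and $\beta'$ are distinct simple roots with $(\beta,\beta')<0$, so by the standard root-string argument $\beta+\beta'$ is a root, and it is positive since its simple-root coefficients are all $\geq 0$ and it is nonzero. The inequality $n_\beta(\alpha)\,\alpha'\leq\alpha$ is then checked coefficient by coefficient: $\alpha'$ has coefficient $1$ at $\beta$, coefficient $1$ at $\beta'$, and $0$ at every other simple root, so the inequality amounts to $n_\beta(\alpha)\leq n_\beta(\alpha)$ and $n_\beta(\alpha)\leq n_{\beta'}(\alpha)$ — the latter being exactly the conclusion of Corollary~\ref{cor:classification1} — together with the trivial $0\leq n_\gamma(\alpha)$ for the remaining simple roots $\gamma$.

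For the two pairing conditions I would use Lemma~\ref{lem:01} together with the fact that, $R$ being simply laced, all roots have the same length. Since $\beta\neq\pm\beta'$ and $(\beta,\beta')<0$, Lemma~\ref{lem:01} forces $\left<\beta',\beta\right>=-1$, hence $(\beta',\beta)=-\tfrac12(\beta,\beta)$ and $(\alpha',\beta)=(\beta,\beta)+(\beta',\beta)=\tfrac12(\beta,\beta)>0$. Next, because $n_\beta(\alpha)>1$ the root $\alpha$ is not simple, so $\alpha$ is not proportional to $\beta$ nor to $\beta'$; with $(\alpha,\beta)<0$ and $(\alpha,\beta')>0$, Lemma~\ref{lem:01} gives $\left<\alpha,\beta\right>=-1$ and $\left<\alpha,\beta'\right>=1$, i.e. $(\alpha,\beta)=-\tfrac12(\beta,\beta)$ and $(\alpha,\beta')=\tfrac12(\beta',\beta')=\tfrac12(\beta,\beta)$, whence $(\alpha,\alpha')=(\alpha,\beta)+(\alpha,\beta')=0$. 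This verifies all three requirements.

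The main obstacle has in fact already been absorbed into the preceding results: it lies in the explicit enumeration of Fact~\ref{fact:classification-final} and its Corollary~\ref{cor:classification1}, which is precisely where the restriction to types $\mathsf{E}_6$ and $\mathsf{E}_7$ and the finiteness of the underlying case-check come in. Granting that, the argument above is entirely formal, uses nothing beyond Lemma~\ref{lem:01} and elementary root-string facts, and is uniform across the two types; so the proof to be written is short.
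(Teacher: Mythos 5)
Your proposal is correct and follows exactly the paper's own route: invoke Corollary~\ref{cor:classification1} to get $\beta'\in N^+(\alpha,\beta)$ with $n_\beta(\alpha)\leq n_{\beta'}(\alpha)$, set $\alpha'=\beta+\beta'$, and verify the three requirements. The paper leaves those verifications as "easy to see"; your coefficient-by-coefficient check of $n_\beta(\alpha)\alpha'\leq\alpha$ and the pairing computations via Lemma~\ref{lem:01} are precisely the intended details and are carried out correctly.
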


\begin{proof}

Let $\alpha$ and $\beta$ be as in the statement. By Corollary~\ref{cor:classification1}, there exists $\beta'\in N^+(\alpha,\beta)$ such that $n_\beta(\alpha)\leq n_{\beta'}(\alpha)$. Let $\alpha'=\beta+\beta'$. It is clear that $\alpha'$ is a positive root. It is easy to see that $\alpha'$ satisfies all the requirements in the statement of the corollary.
\end{proof}

\begin{rem}

Let $\alpha$, $\beta$, $\alpha'$ be as in the statement of Corollary~\ref{cor:classification2}. Then it is clear that $\alpha'$ even satisfies $n_\beta(\alpha)\alpha'<\alpha$.

\end{rem}

\begin{rem}

Let $\alpha$, $\beta$, $\beta'$ be as in the statement of Corollary~\ref{cor:classification1}. Then we see from the list of roots in Fact~\ref{fact:classification-final} that $\beta'$ is not uniquely determined by $\alpha$ and $\beta$ (or just by $\alpha$). 
Indeed, graphically, there are blue numbers in the list such that there exists more than one neighboring green number which is greater or equal than the blue number. Consequently, by the proof of Corollary~\ref{cor:classification2}, the root $\alpha'$ in the statement of Corollary~\ref{cor:classification2} is also not uniquely determined by $\alpha$ and $\beta$ (or just by $\alpha$).

\end{rem}

\begin{rem}
\label{rem:one-exception}

Corollary~\ref{cor:classification2} (and consequently Corollary~\ref{cor:classification1}) actually fails in type $\mathsf{E}_8$. More precisely, there exists a unique exception to Corollary~\ref{cor:classification2}. We can say the following.
Let $\alpha\in R^+$. Let $\beta\in\Delta(\alpha)$ be such that $(\alpha,\beta)<0$. Suppose that $n_\beta(\alpha)>1$. Suppose further that there exists no $\alpha'\in R^+$ such that $n_\beta(\alpha)\alpha'\leq\alpha$, $(\alpha',\beta)>0$, $(\alpha,\alpha')=0$ (or equivalently no $\alpha'\in R^+$ such that $n_\beta(\alpha)\alpha'<\alpha$, $(\alpha',\beta)>0$, $(\alpha,\alpha')=0$). Then $\alpha$ and $\beta$ are uniquely determined. Explicitly, the root $\alpha$ is given by
$$
\begin{array}{ccccccc}
2 & 4 & {\color{blue} 5} & 4 & 3 & 2 & 1\\
  &   & 3 &   &   &   &
\end{array}
$$
where the blue colored number corresponds to the simple root $\beta$. 


\end{rem}

\subsection{Shuffle elements}

In this subsection, we introduce shuffle elements and prove the main theorem on them (Definition~\ref{def:shuffle}, Theorem~\ref{thm:shuffle}). Although we need the generalized lifting property (\cite[Theorem~6.3]{GLP}) only once in the proof of Theorem~\ref{thm:shuffle}, it is the main motivation for us to study minimal elements of $AD(v,w)$ where $v$ is a shuffle element and $w$ is some second successor of $v$ in the Bruhat order.

\todo[inline,color=green]{We don't really need the results of \cite{GLP}. [We need them once in the proof of Theorem~\ref{thm:shuffle}.] But our motivations for looking for minimal elements of $AD$ in the simply laced case goes back to them, so cite it with explanation [see introduction to this subsection above].} 

\todo[inline,color=green]{After the main theorem on intervals (next section), mention the two examples (motivation for one example: skew divided difference operators of length difference three are not monomials) and the triviality of type $\mathsf{A}$. (Note that for our applications this trivial statement actually suffices.) Or better even the triviality after the main theorem on shuffle elements. As a conjecture, once the result of Liu is generalized to all simply laced Fomin-Kirillov algebras, one can state that the skew difference operators of length difference two are either zero or again monomials (prospective research projects), basically because the theorem on Bruhat intervals of length two holds for all simply laced Weyl groups. [Outdated/realized. I mention the triviality in type $\mathsf{A}$ directly in the introduction to Appendix~\ref{appendix:shuffle}. I give ample examples (for length difference $>2$ and non simply laced) after both shuffle and interval theorem.]}

\todo[inline,color=green]{Later on, I should try if the antipode maps carry skew divided difference operators to skew divided difference operators of the same degree (in the ordinary $\mathbb{Z}$-sense first, but maybe even in the group sense, $W$-grading). [This is indeed true: The bar antipode $\bar{\mathscr{S}}$ carries $x_{w/v}$ to $x_{vw_o/ww_o}$ (cf.\ Theorem~\ref{thm:positivity}); $\mathbb{Z}_{\geq 0}$-degree preserved, $W$-degree inverted.]}

\todo[inline,color=green]{See notes in the math diary on the fifth of December 2017: I actually believe that the roots which occur in the monomial are uniquely determined (up to ordering) by the interval of length $\leq 2$.

Since the generalized lifting property holds precisely for simply laced Weyl groups or finite simply laced Coxeter groups, it is impossible to generalize the results -- and this is our main motivation to study only those groups in this appendix at least.}

\begin{defn}
\label{def:shuffle}


Let $v\in W$. We call $v$ a shuffle element if the set $A_\ell(v)$ consists of a unique element. If $v\in W$ is a shuffle element, we call the unique element of $A_\ell(v)$ the pivot element of $v$.

\end{defn}

\begin{rem}

Definition~\ref{def:shuffle} makes of course sense for an arbitrary Coxeter group although we assume everywhere $W$ to be a simply laced Weyl group since the rest of our theory only works in this situation.

\end{rem}

\begin{rem}


The terminology of Definition~\ref{def:shuffle} comes from the intuition for the case of the symmetric group. Indeed, for this remark, let $R$ be of type $\mathsf{A}_n$ where $n\in\mathbb{Z}_{>0}$. As usual, we identify the group $W$ with permutations of the set $\{1,\ldots,m\}$ where $m=n+1$. Then $v\in W$ is a shuffle element if and only if $v\neq w_o$ and if $m,m-1,\ldots,\mu+1$ and $\mu,\mu-1,\ldots,1$ are subsequences of $v(1),\ldots,v(m)$ for some $\mu\in\{1,\ldots,n\}$. In that case, the integer $\mu$ is uniquely determined by $v$ and the pivot element of $v$ is given by $\beta_\mu$ where $\beta_\mu$ is the simple root corresponding to the $\mu$th node of the Dynkin diagram of $R$ with labeling of the nodes from left to right as in \cite[Plate~I]{bourbaki_roots}.

\end{rem}

\begin{thm}

Let $v\in W$ be a shuffle element with pivot element $\beta$. Let $\alpha$ be a minimal element of $A(v)\setminus\{\beta\}$. Then we have $(\alpha,\beta)>0$.

\end{thm}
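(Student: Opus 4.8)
The plan is to argue by contradiction, exploiting the minimality of $\alpha$ in $A(v)\setminus\{\beta\}$. Suppose $(\alpha,\beta)\leq 0$. Since $v$ is a shuffle element with pivot $\beta$, the only simple root $\mu\in\Delta$ with $v<s_\mu v$ is $\beta$; equivalently, $v^{-1}(\mu)<0$ for every simple $\mu\neq\beta$. I would first translate the hypotheses into the $v^{-1}$-picture: $\alpha\in A(v)$ means $v^{-1}(\alpha)>0$, and the minimality of $\alpha$ should say that no positive root strictly below $\alpha$ (other than possibly $\beta$) lies in $A(v)$. The key structural fact I want is that $\alpha$ has a nontrivial decomposition in the root poset: because $\alpha\neq\beta$ and $\beta$ is the only simple root sent to a positive root by $v^{-1}$, the support $\Delta(\alpha)$ must contain some simple root $\beta'\neq\beta$, and one can find a positive root $\alpha'<\alpha$ with $\alpha=\alpha'+(\text{something})$, where the decomposition is chosen so that $v^{-1}$ acts with controlled signs on the pieces.

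The heart of the argument will be a careful case analysis using the preliminaries on roots from Subsection~\ref{subsec:prelim-roots}, applied with the roles of $\alpha,\beta$ as there. If $(\alpha,\beta)<0$, then Proposition~\ref{prop:existenceofalpha'} (when $n_\beta(\alpha)=1$) or Corollary~\ref{cor:classification2} together with Remark~\ref{rem:one-exception} (when $n_\beta(\alpha)>1$) produces a positive root $\alpha'$ with $\alpha'<\alpha$ (or $n_\beta(\alpha)\alpha'<\alpha$), $(\alpha',\beta)>0$, and $(\alpha,\alpha')=0$; I would then show $\alpha'\in A(v)\setminus\{\beta\}$, contradicting minimality of $\alpha$. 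The point is that $(\alpha,\alpha')=0$ forces $\alpha-\alpha'$ (or an appropriate positive combination) to be a root as well, and by writing $v^{-1}(\alpha)=v^{-1}(\alpha')+v^{-1}(\alpha-\alpha')$ and using that $v^{-1}(\alpha)>0$ while all simple constituents other than $\beta$ go negative, one pins down that $v^{-1}(\alpha')>0$, i.e. $\alpha'\in A(v)$; since $\alpha'\neq\beta$ (it has $(\alpha',\beta)>0$ whereas $(\beta,\beta)>0$ would be the only way... more precisely $\alpha'\neq\beta$ because $\beta\in\Delta(\alpha')$ but $\alpha'$ is not simple), we are done. The remaining case $(\alpha,\beta)=0$ should be handled separately and more directly: if $\beta\notin\Delta(\alpha)$, a connectedness/support argument on the Dynkin diagram produces a strictly smaller element of $A(v)$ along a path from $\Delta(\alpha)$ toward $\beta$; if $\beta\in\Delta(\alpha)$ with $(\alpha,\beta)=0$, one adapts the path argument in the proof of Proposition~\ref{prop:existenceofalpha'}.

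The main obstacle I anticipate is the type $\mathsf{E}_8$ exception flagged in Remark~\ref{rem:one-exception}: for the single root $\alpha$ displayed there (coefficients $2,4,\mathbf{5},4,3,2,1$ with $3$ on the branch node, $\beta$ the blue $5$), Corollary~\ref{cor:classification2} fails, so there is no orthogonal $\alpha'<\alpha$ to contradict minimality, and I would need a bespoke argument. For that case I expect to use instead the generalized lifting property (\cite[Theorem~6.3]{GLP}) together with the explicit structure of $AD(v,w)$ — indeed this is exactly why shuffle elements and the GLP are introduced in tandem — or to check directly that for this particular $\alpha$ the hypothesis ``$\alpha$ minimal in $A(v)\setminus\{\beta\}$ and $v$ a shuffle element with pivot $\beta$'' simply cannot occur, by showing any such $v$ would have a second ascent. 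A secondary annoyance is bookkeeping the signs of $v^{-1}$ on the summands $\alpha',\alpha-\alpha'$ when $n_\beta(\alpha)>1$, where one must work with $n_\beta(\alpha)\alpha'$ rather than $\alpha'$ itself; here Lemma~\ref{lem:n<2n} and Lemma~\ref{lem:equaltozero} should supply the needed coefficient inequalities.
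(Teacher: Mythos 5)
Your strategy is not the one the paper uses, and as written it has genuine gaps. The classification machinery of Subsection~\ref{subsec:prelim-roots} (Proposition~\ref{prop:existenceofalpha'}, Corollary~\ref{cor:classification2}, Remark~\ref{rem:one-exception}) is really designed for the later and harder Theorem~\ref{thm:absurd}, where the relevant minimality is minimality in $AD(v,s_\alpha s_\beta v)$ and where the extra inequality of Proposition~\ref{prop:absurd}\eqref{item:shuffle3} is available to dispose of the $\mathsf{E}_8$ exception. In the present setting you do not have that leverage: you explicitly leave the exceptional $\mathsf{E}_8$ root of Remark~\ref{rem:one-exception} unresolved (``a bespoke argument''), and the sign bookkeeping you defer in the branch $n_\beta(\alpha)>1$ is not a formality --- your deduction ``$v^{-1}(\alpha)>0$ and the simple constituents other than $\beta$ go negative, hence $v^{-1}(\alpha')>0$'' only works when $\alpha-\alpha'$ has zero coefficient at $\beta$, i.e.\ when $n_\beta(\alpha')=n_\beta(\alpha)$; for the $\alpha'$ produced by Corollary~\ref{cor:classification2} one has $n_\beta(\alpha')=1<n_\beta(\alpha)$, so $\alpha-\alpha'$ still contains $\beta$ and the sign of $v^{-1}(\alpha')$ is not pinned down this way. (Also, $(\alpha,\alpha')=0$ does not force $\alpha-\alpha'$ to be a root --- consider two orthogonal simple roots.) The case $(\alpha,\beta)=0$ with $\beta\in\Delta(\alpha)$ is likewise only gestured at; the path construction of Proposition~\ref{prop:existenceofalpha'} no longer delivers $(\alpha,\alpha')=0$ there.

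The theorem has a much shorter direct proof, which is the paper's. Assume $(\alpha,\beta)\leq 0$. Since $(\alpha,\alpha)>0$, some simple root in the support pairs positively with $\alpha$, and by assumption it is some $\beta'\neq\beta$. Because $v$ is a shuffle element with pivot $\beta$ and $\alpha\in A(v)\setminus\{\beta\}$, the root $\alpha$ is not simple, so $\left<\alpha,\beta'\right>=1$ by Lemma~\ref{lem:01} and $\alpha-\beta'=s_{\beta'}(\alpha)\in R^+$. Now $v^{-1}(\alpha)>0$ while $v^{-1}(\beta')<0$ (as $\beta'\notin A_\ell(v)$), so $v^{-1}(\alpha-\beta')>0$, i.e.\ $\alpha-\beta'\in A(v)$ with $\alpha-\beta'<\alpha$. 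Minimality forces $\alpha-\beta'=\beta$, whence $\left<\alpha,\beta\right>=\left<\beta+\beta',\beta\right>=2-1=1>0$, a contradiction. No classification, no orthogonal companion root, and no exceptional case is needed; the one-step descent $\alpha\mapsto\alpha-\beta'$ already exhausts the minimality hypothesis. If you insist on your route you must supply the missing $\mathsf{E}_8$ and $n_\beta(\alpha)>1$ arguments, but there is no reason to.
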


\begin{proof}

Let $v,\beta,\alpha$ be as in the statement. Suppose for a contradiction that $(\alpha,\beta)\leq 0$. Since $(\alpha,\alpha)>0$, there must exist $\beta'\in\Delta\setminus\{\beta\}$ such that $(\alpha,\beta')>0$. Since $\alpha\notin\Delta$, we clearly have $\left<\alpha,\beta'\right>=1$. 
Hence, $s_{\beta'}(\alpha)=\alpha-\beta'\in R^+$. Since $\alpha\in A(v)$ and $\beta'\in D(v)$, it follows that $v^{-1}(\alpha-\beta')>0$ and thus $\alpha-\beta'\in A(v)$. By the minimality of $\alpha$, we must have $\alpha=\beta+\beta'$. But this implies that $\left<\alpha,\beta\right>=2-1=1$. In particular, $(\alpha,\beta)>0$ -- a contradiction.
\end{proof}

\begin{prop}
\label{prop:absurd}

Let $v\in W$ be a shuffle element with pivot element $\beta$. Let $\alpha$ be a minimal element of $AD(v,s_\alpha s_\beta v)$. Then we have:

\begin{enumerate}

\item
\label{item:shuffle1}

$\beta\in\Delta(\alpha)$.

\item 
\label{item:shuffle2}

$(\alpha,\beta)<0$.

\item
\label{item:shuffle3}

$n_\beta(\alpha)>\sum_{\beta'\in N^+(\alpha,\beta)}n_{\beta'}(\alpha)$.

\item
\label{item:shuffle4}

There exists no $\alpha'\in R^+$ such that $n_\beta(\alpha)\alpha'\leq\alpha$, $(\alpha',\beta)>0$, $(\alpha,\alpha')=0$.

\item
\label{item:shuffle5}

$n_\beta(\alpha)>1$.

\end{enumerate}

\end{prop}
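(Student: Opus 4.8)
The plan is to derive each item in turn, using the previous ones as hypotheses, and to invoke the structural results on roots assembled in Subsection~\ref{subsec:prelim-roots} precisely to close off the cases that cannot be handled by elementary manipulations with the $AD$-set. Throughout, write $w=s_\alpha s_\beta v$, so that by hypothesis $\alpha\in A(v)\cap D(w)$ and $\alpha$ is minimal among such roots. The two standing facts I expect to use constantly are: first, the characterization $\gamma\in A(u)\iff u^{-1}(\gamma)>0$ and $\gamma\in D(u)\iff u^{-1}(\gamma)<0$ from Subsection~\ref{subsec:glp}; second, that $v$ being a shuffle element with pivot $\beta$ means $A_\ell(v)=\{\beta\}$, i.e.\ $\beta$ is the unique simple root in $A(v)$, so every other positive root in $A(v)$ is non-simple.

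First I would prove~\eqref{item:shuffle1} and~\eqref{item:shuffle2} together. Note $w=s_\alpha s_\beta v$ and $\beta\in A_\ell(v)=A(v)$, so $s_\beta v>v$; also $s_\alpha(s_\beta v)=w$ and $\alpha\in A(v)\subseteq$, so one checks via the generalized lifting property (\cite[Theorem~6.3]{GLP}), applied to $v\le s_\beta v$ or to a comparable pair, that $\alpha$ is forced to be a non-simple root interacting non-trivially with $\beta$: if $\beta\notin\Delta(\alpha)$ then $s_\beta$ and $s_\alpha$ would commute and one would get $w=s_\beta s_\alpha v$ with $s_\alpha v$ intermediate, contradicting either the minimality of $\alpha$ or the shuffle hypothesis; hence $\beta\in\Delta(\alpha)$. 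For the sign, suppose $(\alpha,\beta)\ge0$. If $(\alpha,\beta)>0$ then $\langle\alpha,\beta\rangle=1$ (as $\alpha$ is non-simple, by Lemma~\ref{lem:01}), so $s_\beta(\alpha)=\alpha-\beta\in R^+$; combining $\alpha\in A(v)$ with $\beta\in A(v)$ and a short length computation shows $\alpha-\beta\in A(v)$ as well, while $\alpha-\beta\in D(w)$ too, contradicting minimality of $\alpha$ in $AD(v,w)$ unless $\alpha-\beta=\beta$, which is absurd. The case $(\alpha,\beta)=0$ is excluded by~\eqref{item:shuffle1} since a simple root in the support of $\alpha$ cannot be orthogonal to $\alpha$ when $\alpha$ has only that one ``pivot'' direction. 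This yields~\eqref{item:shuffle2}.

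Next, for~\eqref{item:shuffle3}: from $\alpha\in A(v)$ and $\alpha\in D(w)$ with $w=s_\alpha s_\beta v$, apply $s_\alpha$ and $s_\beta$ and track which roots of the form $s_{\beta'}(\alpha)=\alpha-\beta'$ (for $\beta'\in N^+(\alpha,\beta)$, so that $\alpha-\beta'\in R^+$) lie in $AD(v,w)$; minimality of $\alpha$ forbids all of them, and translating this forbiddenness into the coefficient inequality via $0<\langle\alpha,\beta'\rangle$ together with repeated use of Lemma~\ref{lem:n<2n} gives $n_\beta(\alpha)>\sum_{\beta'\in N^+(\alpha,\beta)}n_{\beta'}(\alpha)$. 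Then~\eqref{item:shuffle4} is the direct combinatorial obstruction: if such an $\alpha'$ existed, then $s_{\alpha'}$ would commute past $\alpha$ and $\beta$ appropriately (because $(\alpha,\alpha')=0$ and $(\alpha',\beta)>0$ forces $\alpha'\in A(v)$, using the shuffle property as in the preceding theorem), producing a strictly smaller element of $AD(v,w)$ — contradiction; here the condition $n_\beta(\alpha)\alpha'\le\alpha$ is what makes the reflection actually decrease the relevant length. Finally~\eqref{item:shuffle5} follows by contradiction: if $n_\beta(\alpha)=1$, then Proposition~\ref{prop:existenceofalpha'} (applied using~\eqref{item:shuffle1},\eqref{item:shuffle2}) produces exactly an $\alpha'\in R^+$ with $\alpha'\le\alpha$, $(\alpha',\beta)>0$, $(\alpha,\alpha')=0$, i.e.\ $n_\beta(\alpha)\alpha'=\alpha'\le\alpha$, contradicting~\eqref{item:shuffle4}.

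The main obstacle I anticipate is~\eqref{item:shuffle3}, more specifically converting ``no $\alpha-\beta'$ lies in $AD(v,w)$'' into the clean coefficient inequality: one has to handle the interplay of the $A$-condition (governed by $v$ and the shuffle hypothesis) with the $D$-condition (governed by $w=s_\alpha s_\beta v$) simultaneously, and the case distinctions on $|N^+(\alpha,\beta)|$ versus $|N(\alpha,\beta)|$ are delicate — this is presumably why the classification Facts~\ref{fact:classification-final} and Corollaries~\ref{cor:classification1}–\ref{cor:classification2} were set up, so that the genuinely exceptional roots in types $\mathsf{E}_6,\mathsf{E}_7$ can be ruled out by inspection. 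The payoff is that items~\eqref{item:shuffle1}–\eqref{item:shuffle5} taken together describe a root that, by Proposition~\ref{prop:existenceofalpha'} and Corollary~\ref{cor:classification2} (together with Remark~\ref{rem:one-exception} dispatching the single $\mathsf{E}_8$ exception by a separate ad hoc check), simply cannot exist — which is exactly the contradiction the subsection is built to produce, feeding into the shuffle theorem (Theorem~\ref{thm:shuffle}) and ultimately into Theorem~\ref{thm:main-interval2} and the monomial property of order two.
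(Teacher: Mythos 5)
Your overall architecture (each item feeding into the next, with item~\eqref{item:shuffle5} obtained from item~\eqref{item:shuffle4} via Proposition~\ref{prop:existenceofalpha'}, and the classification results reserved for deriving the final contradiction) matches the paper, and your treatment of item~\eqref{item:shuffle5} is exactly right. But several of the individual arguments have genuine gaps or are simply false. For item~\eqref{item:shuffle1}: the claim that $\beta\notin\Delta(\alpha)$ would make $s_\alpha$ and $s_\beta$ commute is wrong ($\beta$ can be adjacent to $\Delta(\alpha)$ without lying in it, e.g.\ $\alpha$ a neighbouring simple root), and no lifting property is needed; the actual argument is that $v^{-1}(\alpha)=\sum_{\mu\in\Delta(\alpha)}n_\mu(\alpha)v^{-1}(\mu)$ would be a nonnegative combination of negative roots, since $A_\ell(v)=\{\beta\}$ forces $v^{-1}(\mu)<0$ for every $\mu\in\Delta(\alpha)$, contradicting $\alpha\in A(v)$. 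For item~\eqref{item:shuffle2} you have two problems: first, $v^{-1}(\alpha)-v^{-1}(\beta)$ being a difference of two positive roots gives you no control on its sign, so ``$\alpha-\beta\in A(v)$ by a short length computation'' does not go through (nor is $\alpha-\beta\in D(w)$ justified); second, the orthogonal case is \emph{not} excluded by item~\eqref{item:shuffle1} --- a simple root in the support of $\alpha$ can perfectly well be orthogonal to $\alpha$ (in type $\mathsf{A}_3$ the highest root is orthogonal to the middle simple root). The paper handles $\langle\alpha,\beta\rangle\in\{0,1\}$ uniformly by computing $w^{-1}(\beta)=(\langle\alpha,\beta\rangle^2-1)v^{-1}(\beta)-\langle\beta,\alpha\rangle v^{-1}(\alpha)<0$, concluding $\beta\in AD(v,s_\alpha s_\beta v)$, and then using minimality together with $\beta\leq\alpha$ to force $\alpha=\beta$, which is excluded separately.

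For item~\eqref{item:shuffle3} your sketch omits the one step that actually carries the proof: for each $\beta'\in N^+(\alpha,\beta)$ one applies minimality to $\gamma=s_{\beta'}(\alpha)=\alpha-\beta'$ (which lies in $A(v)$ because $v^{-1}(\beta')<0$) to conclude $w^{-1}(\gamma)>0$, computes $w^{-1}(\gamma)=-v^{-1}(\beta)-v^{-1}(\beta')$, and thereby obtains $v^{-1}(\beta')<-v^{-1}(\beta)$; summing the expansion of $v^{-1}(\alpha)>0$ over $\Delta(\alpha)$ then yields the coefficient inequality. Lemma~\ref{lem:n<2n} plays no role here, and neither do Fact~\ref{fact:classification-final} or Corollaries~\ref{cor:classification1}--\ref{cor:classification2}, which enter only in Theorem~\ref{thm:absurd}. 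For item~\eqref{item:shuffle4}, the assertion that $(\alpha',\beta)>0$ ``forces $\alpha'\in A(v)$'' is unjustified: a positive root whose support contains $\beta$ need not lie in $A(v)$. What makes it work is the hypothesis $n_\beta(\alpha)\alpha'\leq\alpha$: since $\alpha-n_\beta(\alpha)\alpha'$ is supported away from $\beta$, one gets $0<v^{-1}(\alpha)\leq n_\beta(\alpha)v^{-1}(\alpha')$, hence $\alpha'\in A(v)$; combined with $w^{-1}(\alpha')=v^{-1}(\alpha'-\beta)<0$ (because $n_\beta(\alpha')=1$ strips the only positive contribution) this puts $\alpha'$ in $AD(v,s_\alpha s_\beta v)$ below $\alpha$, contradicting minimality.
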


\begin{proof}

Let $v,\beta,\alpha$ be as in the statement.
Ad~Item~\eqref{item:shuffle1}. Suppose that $\beta\notin\Delta(\alpha)$. Since $v$ is a shuffle element, it follows that $v^{-1}(\alpha)<0$ -- contrary to the assumption on $\alpha$ that $\alpha\in A(v)$.

Ad~Item~\eqref{item:shuffle2}. We first show that $\alpha\neq\beta$. Indeed, suppose that $\alpha=\beta$. Then we have $AD(v,s_\alpha s_\beta v)=AD(v,v)=\emptyset$ which is contrary to the existence of $\alpha$. Since $\alpha\neq\beta$, we clearly have $\left<\alpha,\beta\right>\in\{-1,0,1\}$. Suppose now that $(\alpha,\beta)\geq 0$. A short computation shows that
$$
v^{-1}s_\beta s_\alpha(\beta)=(\left<\alpha,\beta\right>^2-1)v^{-1}(\beta)-\left<\beta,\alpha\right>v^{-1}(\alpha)\,.
$$
Since $v^{-1}(\alpha)>0$, $v^{-1}(\beta)>0$, $(\alpha,\beta)\geq 0$, $\left<\alpha,\beta\right>^2-1\leq 0$, it follows immediately from the previous line that $v^{-1}s_\beta s_\alpha(\beta)<0$. Hence, we have $\beta\in AD(v,s_\alpha s_\beta v)$. By the minimality of $\alpha$ and Item~\eqref{item:shuffle1}, it follows that $\alpha=\beta$. But we already excluded this in the beginning of the proof of the item. 

Ad~Item~\eqref{item:shuffle3}. Let $\beta'\in N^+(\alpha,\beta)$. We first show that we then have $v^{-1}(\beta')<-v^{-1}(\beta)$. Indeed, let $\gamma=s_{\beta'}(\alpha)=\alpha-\beta'\in R^+$. We clearly have $v^{-1}(\gamma)>0$. By the minimality of $\alpha$, it consequently follows that $v^{-1}s_\beta s_\alpha(\gamma)>0$. But a short computation shows that $v^{-1}s_\beta s_\alpha(\gamma)=-v^{-1}(\beta)-v^{-1}(\beta')$. Hence, it indeed follows that $v^{-1}(\beta')<-v^{-1}(\beta)$. We now can conclude by considering $v^{-1}(\alpha)>0$. We have
\begin{align*}
0&<v^{-1}(\alpha)\\
&=n_\beta(\alpha)v^{-1}(\beta)+\sum_{\beta'\in N^+(\alpha,\beta)}n_{\beta'}(\alpha)v^{-1}(\beta')+\sum_{\mu\in\Delta(\alpha)\setminus(\{\beta\}\cup N^+(\alpha,\beta))}n_\mu(\alpha)v^{-1}(\mu)\\
&\leq n_\beta(\alpha)v^{-1}(\beta)+\sum_{\beta'\in N^+(\alpha,\beta)}n_{\beta'}(\alpha)v^{-1}(\beta')\\
&\leq\left(n_\beta(\alpha)-\sum_{\beta'\in N^+(\alpha,\beta)}n_{\beta'}(\alpha)\right)v^{-1}(\beta)\,.
\end{align*}
Since $v^{-1}(\beta)>0$, it is now clear that we must have $n_\beta(\alpha)>\sum_{\beta'\in N^+(\alpha,\beta)}n_{\beta'}(\alpha)$ -- as claimed. 

Ad~Item~\eqref{item:shuffle4}. Suppose there exists $\alpha'\in R^+$ such that $n_\beta(\alpha)\alpha'\leq\alpha$, $(\alpha',\beta)>0$, $(\alpha,\alpha')=0$. First note that the inequality $n_\beta(\alpha)\alpha'\leq\alpha$ makes clear that $n_\beta(\alpha')=1$. Next, we show that we even have $n_\beta(\alpha)\alpha'<\alpha$. Indeed, suppose that $n_\beta(\alpha)\alpha'=\alpha$. This implies that $n_\beta(\alpha)=1$ and $\alpha=\alpha'$. The inequality $(\alpha',\beta)>0$ then means $(\alpha,\beta)>0$ which is contrary to Item~\eqref{item:shuffle2}. The inequality $n_\beta(\alpha)\alpha'<\alpha$ means in particular that $\alpha'<\alpha$. By the minimality of $\alpha$, this means that $\alpha'\notin AD(v,s_\alpha s_\beta v)$. We now show the contrary which leads to the desired contradiction. Indeed, we have $0<v^{-1}(\alpha)<n_\beta(\alpha)v^{-1}(\alpha')$ since $\alpha$ and $n_\beta(\alpha)\alpha'$ have the same coefficient at $\beta$. It follows that $v^{-1}(\alpha')>0$. On the other hand, in view of the assumptions on $\alpha'$, we compute that $v^{-1}s_\beta s_\alpha(\alpha')=v^{-1}(\alpha'-\beta)$. Since $n_\beta(\alpha')=1$, we know that $\alpha'-\beta$ has zero coefficient at $\beta$. It follows that $v^{-1}(\alpha'-\beta)<0$ and thus $v^{-1}s_\beta s_\alpha(\alpha')<0$ and thus $\alpha'\in AD(v,s_\alpha s_\beta v)$ -- a contradiction.

Ad~Item~\eqref{item:shuffle5}. This item follows from Item~\eqref{item:shuffle1},\eqref{item:shuffle2},\eqref{item:shuffle4} and Proposition~\ref{prop:existenceofalpha'}.
\end{proof}

\begin{rem}

Let $v,\beta,\alpha$ be as in the statement of Proposition~\ref{prop:absurd}. Then we note that Item~\eqref{item:shuffle5} implies Item~\eqref{item:shuffle1}. Moreover, if $N^+(\alpha,\beta)=\emptyset$, then Item~\eqref{item:shuffle1} and \eqref{item:shuffle3} are equivalent. Also, Item~\eqref{item:shuffle4} can be equivalently formulated by saying that there exists no $\alpha'\in R^+$ such that $n_\beta(\alpha)\alpha'<\alpha$, $(\alpha',\beta)>0$, $(\alpha,\alpha')=0$. This last sentence follows from Item~\eqref{item:shuffle2}.

\end{rem}

\begin{thm}
\label{thm:absurd}

Let $v\in W$ be a shuffle element with pivot element $\beta$. Then there exists no $\alpha\in R^+$ such that $\alpha$ is a minimal element of $AD(v,s_\alpha s_\beta v)$.

\end{thm}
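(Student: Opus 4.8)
The statement is the culmination of the preceding subsection, so the plan is simply to combine everything that has been set up. Suppose for a contradiction that $v\in W$ is a shuffle element with pivot element $\beta$, and that $\alpha\in R^+$ is a minimal element of $AD(v,s_\alpha s_\beta v)$. I would invoke Proposition~\ref{prop:absurd}: from its items we immediately get that $\beta\in\Delta(\alpha)$, that $(\alpha,\beta)<0$, that $n_\beta(\alpha)>1$, that $n_\beta(\alpha)>\sum_{\beta'\in N^+(\alpha,\beta)}n_{\beta'}(\alpha)$, and crucially Item~\eqref{item:shuffle4}, namely that there exists no $\alpha'\in R^+$ with $n_\beta(\alpha)\alpha'\leq\alpha$, $(\alpha',\beta)>0$, $(\alpha,\alpha')=0$. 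The whole game is then to show this last condition is \emph{impossible} for any root $\alpha$ and simple root $\beta$ satisfying the other constraints, i.e.\ that the combinatorial obstruction promised by the shuffle hypothesis cannot be realized inside a simply laced root system.

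The first step is a type reduction. Since $R$ is simply laced, it is a direct sum of irreducible components of types $\mathsf{A}$, $\mathsf{D}$, $\mathsf{E}_6$, $\mathsf{E}_7$, $\mathsf{E}_8$, and $\alpha$ lives in a single irreducible component, so I may assume $R$ irreducible of one of these types. In types $\mathsf{A}$ and $\mathsf{D}$ every root has all coefficients $n_\beta(\alpha)\in\{0,1\}$ (indeed in type $\mathsf{D}$ the maximal coefficient is $2$ but only away from the two ``fork'' nodes, and one checks directly that a node $\beta$ with $n_\beta(\alpha)=2$ and $(\alpha,\beta)<0$ does not occur), so Item~\eqref{item:shuffle5} ($n_\beta(\alpha)>1$) already rules these out — more carefully, one should note that in type $\mathsf{D}$ whenever $n_\beta(\alpha)=2$ one has $(\alpha,\beta)\ge 0$, which contradicts Item~\eqref{item:shuffle2}. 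So one is reduced to types $\mathsf{E}_6,\mathsf{E}_7,\mathsf{E}_8$. For $\mathsf{E}_6$ and $\mathsf{E}_7$, Corollary~\ref{cor:classification2} says precisely: given $\alpha\in R^+$ and $\beta\in\Delta(\alpha)$ with $(\alpha,\beta)<0$ and $n_\beta(\alpha)>1$, there \emph{does} exist $\alpha'\in R^+$ with $n_\beta(\alpha)\alpha'\leq\alpha$, $(\alpha',\beta)>0$, $(\alpha,\alpha')=0$ — contradicting Item~\eqref{item:shuffle4} outright. Thus in $\mathsf{E}_6,\mathsf{E}_7$ there is nothing left to check.

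The only remaining case is $\mathsf{E}_8$, where Corollary~\ref{cor:classification2} fails for exactly one root. By Remark~\ref{rem:one-exception}, the unique pair $(\alpha,\beta)$ for which no such $\alpha'$ exists is the root
\[
\alpha=
\begin{array}{ccccccc}
2 & 4 & 5 & 4 & 3 & 2 & 1\\
  &   & 3 &   &   &   &
\end{array}
\]
with $\beta$ the third (middle) simple root, where $n_\beta(\alpha)=5$. So I must separately eliminate this single candidate. Here I would use the constraint of Item~\eqref{item:shuffle3}, $n_\beta(\alpha)>\sum_{\beta'\in N^+(\alpha,\beta)}n_{\beta'}(\alpha)$, together with $(\alpha,\beta)<0$: since $(\alpha,\beta)<0$ one has $2n_\beta(\alpha)<\sum_{\beta'\in N(\alpha,\beta)}n_{\beta'}(\alpha)$, and combining with $n_\beta(\alpha)>\sum_{\beta'\in N^+(\alpha,\beta)}n_{\beta'}(\alpha)$ gives $n_\beta(\alpha)<\sum_{\beta'\in N^-(\alpha,\beta)}n_{\beta'}(\alpha)$; but by Lemma~\ref{lem:equaltozero} every $\beta'\in N^-(\alpha,\beta)$ has $(\alpha,\beta')=0$, and a direct inspection of the explicit root in Remark~\ref{rem:one-exception} (its neighbours of the middle node have coefficients $4,4,3$, and one computes the signs of $(\alpha,\beta')$) shows the inequality $n_\beta(\alpha)<\sum_{\beta'\in N^-(\alpha,\beta)}n_{\beta'}(\alpha)=5<\ldots$ cannot hold — i.e.\ this root does not actually satisfy all of Items~\eqref{item:shuffle2}--\eqref{item:shuffle3} simultaneously, the final contradiction. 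This last $\mathsf{E}_8$ bookkeeping is the main obstacle: it is the one place where the clean classification lemmas do not close the argument, and one has to look at the explicit root and verify by hand that Items~\eqref{item:shuffle2},\eqref{item:shuffle3} together are already violated there. Everything else is an assembly of Proposition~\ref{prop:absurd} with Corollary~\ref{cor:classification2}, and the proof concludes that no minimal element of $AD(v,s_\alpha s_\beta v)$ exists.
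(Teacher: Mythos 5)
Your proposal is correct and follows essentially the same route as the paper: contradiction via Proposition~\ref{prop:absurd}, elimination of types $\mathsf{A}$ and $\mathsf{D}$ by a coefficient bound, Corollary~\ref{cor:classification2} against Item~\eqref{item:shuffle4} in types $\mathsf{E}_6,\mathsf{E}_7$, and a hand check of the unique $\mathsf{E}_8$ exception from Remark~\ref{rem:one-exception} (where the paper simply notes $n_\beta(\alpha)=5>3+4=7$ is false, i.e.\ Item~\eqref{item:shuffle3} fails directly, rather than passing through your derived inequality $n_\beta(\alpha)<\sum_{\beta'\in N^-(\alpha,\beta)}n_{\beta'}(\alpha)$, whose right-hand side you misstate as $5$ when it is $4$). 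For types $\mathsf{A}$ and $\mathsf{D}$ the paper avoids your case check on coefficient-$2$ nodes by observing that $(\alpha,\beta)<0$ and $n_\beta(\alpha)>1$ force $\alpha+\beta$ to be a positive root with coefficient $\geq 3$ at $\beta$, which no root of type $\mathsf{A}$ or $\mathsf{D}$ has.
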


\begin{proof}


Let $v$ and $\beta$ be as in the statement. We argue by contradiction and distinguish the types of $R$. Assume that there exists $\alpha\in R^+$ such that $\alpha$ is a minimal element of $AD(v,s_\alpha s_\beta v)$. By Proposition~\ref{prop:absurd}\eqref{item:shuffle2},\eqref{item:shuffle5}, it is clear that $\alpha+\beta$ is a positive root which has a coefficient $>2$ at $\beta$. Since all roots in type $\mathsf{A}$ and $\mathsf{D}$ have coefficients $\leq 2$ at all simple roots, those two types are ruled out directly from the beginning. Assume now that $R$ is of type $\mathsf{E}_6$ or $\mathsf{E}_7$. Then Corollary~\ref{cor:classification1} contradicts Proposition~\ref{prop:absurd}\eqref{item:shuffle3} and Corollary~\ref{cor:classification2} contradicts Proposition~\ref{prop:absurd}\eqref{item:shuffle4} (both corollaries apply because of Proposition~\ref{prop:absurd}\eqref{item:shuffle2},\eqref{item:shuffle5}). Assume finally that $R$ is of type $\mathsf{E}_8$. In view of Proposition~\ref{prop:absurd}\eqref{item:shuffle2},\eqref{item:shuffle4},\eqref{item:shuffle5}, Remark~\ref{rem:one-exception} implies that $\alpha$ and $\beta$ are uniquely determined and given as in the statement of Remark~\ref{rem:one-exception}. From the explicit description of $\alpha$ and $\beta$, we see that $n_\beta(\alpha)=5$ and that $\sum_{\beta'\in N^+(\alpha,\beta)}n_{\beta'}(\alpha)=3+4=7$. This contradicts eventually Proposition~\ref{prop:absurd}\eqref{item:shuffle3}.
%
%
%
\end{proof}

\begin{lem}
\label{lem:preparation-shuffle}

Let $v\in W$. Let $\beta\in A_\ell(v)$. Let $\alpha\in A(s_\beta v)$. Suppose that $(\alpha,\beta)\geq 0$. Then we have $\alpha\in A(v)$ and $\beta\in D(s_\alpha s_\beta v)$.

\end{lem}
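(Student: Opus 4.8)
The plan is to reduce the whole statement to explicit computations with the Cartan integer $c=\left<\alpha,\beta\right>$, exploiting that in the simply laced case $\left<\alpha,\beta\right>=\left<\beta,\alpha\right>$ and that this integer has the same sign as $(\alpha,\beta)$. Throughout I would use the translations recalled in Subsection~\ref{subsec:glp}, namely $\alpha\in A(w)\Leftrightarrow w^{-1}(\alpha)>0$ and $\alpha\in D(w)\Leftrightarrow w^{-1}(\alpha)<0$, together with $(s_\beta v)^{-1}=v^{-1}s_\beta$ and the fact that reflections are involutions.

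First I would dispose of the degenerate case $\alpha=\beta$: then $\alpha\in A(s_\beta v)$ reads $v^{-1}s_\beta(\beta)=-v^{-1}(\beta)>0$, contradicting $\beta\in A_\ell(v)$, which says precisely $v^{-1}(\beta)>0$. Hence $\alpha$ and $\beta$ are non-proportional roots, so Lemma~\ref{lem:01} gives $\left<\alpha,\beta\right>\in\{-1,0,1\}$; combined with the hypothesis $(\alpha,\beta)\geq 0$ this forces $c:=\left<\alpha,\beta\right>=\left<\beta,\alpha\right>\in\{0,1\}$.

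For the first conclusion, I would write $s_\beta(\alpha)=\alpha-c\beta$, so that the assumption $\alpha\in A(s_\beta v)$, i.e.\ $v^{-1}s_\beta(\alpha)>0$, becomes $v^{-1}(\alpha)-c\,v^{-1}(\beta)>0$. Since $\beta\in A_\ell(v)$ gives $v^{-1}(\beta)>0$ and $c\geq 0$, this immediately yields $v^{-1}(\alpha)>0$, that is $\alpha\in A(v)$. For the second conclusion, I would compute $s_\alpha(\beta)=\beta-c\alpha$ and then, using $c^2=c$,
\[
s_\beta s_\alpha(\beta)=s_\beta(\beta)-c\,s_\beta(\alpha)=-\beta-c(\alpha-c\beta)=-(1-c)\beta-c\alpha\,.
\]
Applying $v^{-1}$ and using $v^{-1}(\beta)>0$ together with $v^{-1}(\alpha)>0$ (just established), and noting that $1-c$ and $c$ are nonnegative and not both zero, gives $(s_\alpha s_\beta v)^{-1}(\beta)=v^{-1}s_\beta s_\alpha(\beta)=-(1-c)v^{-1}(\beta)-c\,v^{-1}(\alpha)<0$, i.e.\ $\beta\in D(s_\alpha s_\beta v)$.

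I do not expect a real obstacle: the argument is a short root-system computation. The only points requiring attention are invoking the simply laced hypothesis to know that $\left<\alpha,\beta\right>$ is symmetric in $\alpha,\beta$ and has the same sign as $(\alpha,\beta)$ (so that Lemma~\ref{lem:01} applies and $s_\beta$, $s_\alpha$ act by the stated formulas), and bookkeeping the $A$/$D$ conditions consistently in terms of the sign of $v^{-1}$ evaluated on the relevant roots.
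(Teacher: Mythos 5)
Your proof is correct and follows essentially the same route as the paper: expand $v^{-1}s_\beta(\alpha)>0$ to get $\alpha\in A(v)$, note $\alpha\neq\beta$ so that $\left<\alpha,\beta\right>\in\{0,1\}$, and then compute $v^{-1}s_\beta s_\alpha(\beta)=(\left<\alpha,\beta\right>^2-1)v^{-1}(\beta)-\left<\beta,\alpha\right>v^{-1}(\alpha)<0$. Your explicit remark that the two nonnegative coefficients $1-c$ and $c$ are not both zero just makes transparent the case check the paper leaves implicit.
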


\begin{proof}

Let $v,\beta,\alpha$ be as in the statement. By assumption, we have $v^{-1}s_\beta(\alpha)>0$. Expanding the expression, we find $v^{-1}(\alpha)>\left<\alpha,\beta\right>v^{-1}(\beta)\geq 0$ and hence $\alpha\in A(v)$. On the other hand, a computation similar to the one in the proof of Proposition~\ref{prop:absurd}\eqref{item:shuffle2} shows that
$$
v^{-1}s_\beta s_\alpha(\beta)=(\left<\alpha,\beta\right>^2-1)v^{-1}(\beta)-\left<\beta,\alpha\right>v^{-1}(\alpha)\,.
$$
Since $\alpha\in A(s_\beta v)$ by assumption, we clearly have $\alpha\neq\beta$ and thus $\left<\alpha,\beta\right>\in\{0,1\}$. The first statement of the lemma and the assumptions read as $v^{-1}(\alpha)>0, v^{-1}(\beta)>0, (\alpha,\beta)\geq 0, \left<\alpha,\beta\right>^2-1\leq 0$. It follow immediately from this and the previous displayed equation that $v^{-1}s_\beta s_\alpha(\beta)<0$ and thus $\beta\in D(s_\alpha s_\beta v)$.
\end{proof}

\begin{thm}[Main theorem on shuffle elements]
\label{thm:shuffle}

Let $v\in W$ be a shuffle element with pivot element $\beta$. Let $w\in W$ be such that $s_\beta v\lessdot w$. Then we have $s_\beta w<w$.

\end{thm}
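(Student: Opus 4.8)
The plan is to reduce to Theorem~\ref{thm:absurd} via the generalized lifting property, treating one easy sign case separately with Lemma~\ref{lem:preparation-shuffle}. Since $s_\beta v\lessdot w$ is a covering relation, $w(s_\beta v)^{-1}$ is a reflection, say $s_\alpha$ with $\alpha\in R^+$; from $s_\beta v<s_\alpha(s_\beta v)=w$ we get $(s_\beta v)^{-1}(\alpha)>0$, i.e.\ $\alpha\in A(s_\beta v)$, and $\alpha\neq\beta$ (otherwise $w=v$). Note that $\beta$ is simple, so the desired conclusion $s_\beta w<w$ is equivalent to $\beta\in D(w)$; that is what I would establish. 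Also $v\lessdot s_\beta v\lessdot w$, hence $\ell(v,w)=2$.

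If $(\alpha,\beta)\ge 0$, I would apply Lemma~\ref{lem:preparation-shuffle} to $v$, the pivot $\beta\in A_\ell(v)$ and the root $\alpha\in A(s_\beta v)$: it yields immediately $\beta\in D(s_\alpha s_\beta v)=D(w)$, and we are done.

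Now suppose $(\alpha,\beta)<0$. Since $R$ is simply laced and $\alpha,\beta$ are non-proportional, Lemma~\ref{lem:01} forces $\langle\alpha,\beta\rangle=-1$, whence $s_\alpha(\beta)=\alpha+\beta$ and $s_\beta s_\alpha(\beta)=\alpha$. Writing $w^{-1}=v^{-1}s_\beta s_\alpha$ this gives $w^{-1}(\beta)=v^{-1}(\alpha)$, so it suffices to show $v^{-1}(\alpha)<0$. Assume not, i.e.\ $\alpha\in A(v)$ (it cannot be $0$); since also $\alpha\in D(w)$ automatically (as $w=s_\alpha(s_\beta v)$ with $\alpha\in A(s_\beta v)$, so $w^{-1}(\alpha)=-(s_\beta v)^{-1}(\alpha)<0$), we obtain $\alpha\in AD(v,w)$, in particular $AD(v,w)\neq\emptyset$. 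Let $\gamma$ be a minimal element of $AD(v,w)$ for the order on roots; as $\beta\notin D(w)$ we have $\gamma\neq\beta$. The generalized lifting property \cite[Theorem~6.3]{GLP}, available because $W$ is finite and simply laced (\cite[Theorem~6.9]{GLP}), applied to $v\le w$ and $\gamma$, yields $s_\gamma v\le w$ and $v\le s_\gamma w$. Because $\ell(v,w)=2$ and lengths change by an odd amount under a reflection, both $v\lessdot s_\gamma v\lessdot w$ and $v\lessdot s_\gamma w\lessdot w$ are saturated chains. Thus $s_\gamma v$, $s_\gamma w$, $s_\beta v$ all lie in the open interval $(v,w)$ and have rank $\ell(v)+1$; since a rank-two Bruhat interval has exactly two elements of intermediate rank (thinness of the Bruhat order), and since $s_\gamma v\neq s_\gamma w$ (else $v=w$) and $s_\gamma v\neq s_\beta v$ (else $\gamma=\beta$), we must have $s_\gamma w=s_\beta v$, i.e.\ $w=s_\gamma s_\beta v$. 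But then $\gamma$ is a minimal element of $AD(v,s_\gamma s_\beta v)$, contradicting Theorem~\ref{thm:absurd}. Hence $v^{-1}(\alpha)<0$, that is, $s_\beta w<w$.

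The hard part is the case $(\alpha,\beta)<0$: one must correctly match the given covering $s_\beta v\lessdot w$ to the configuration forbidden by Theorem~\ref{thm:absurd}, and the crucial move is to replace the reflection root $\alpha$ by a minimal element $\gamma$ of $AD(v,w)$ and then use the generalized lifting property together with thinness of rank-two intervals to recover the identity $w=s_\gamma s_\beta v$ with $\gamma$ minimal in $AD(v,s_\gamma s_\beta v)$ --- which is exactly the hypothesis of Theorem~\ref{thm:absurd}. (The same reasoning could be carried out uniformly starting from a minimal element of $AD(v,w)$, but splitting off the case $(\alpha,\beta)\ge 0$ keeps the appeal to \cite{GLP} to the single instance advertised at the beginning of this subsection.)
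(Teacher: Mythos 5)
Your proof is correct, and in the essential case $(\alpha,\beta)<0$ it takes a genuinely different --- and noticeably shorter --- route than the paper. Both arguments agree up to the point where one knows $\alpha\in AD(v,w)$ and picks a minimal element of $AD(v,w)$ to feed into the generalized lifting property. The paper then uses only the first half of that property, obtains $v\lessdot s_{\alpha'}v\lessdot w$, writes $w=s_\gamma s_{\alpha'}v$ and hence $s_\alpha s_\beta=s_\gamma s_{\alpha'}$, and spends the remaining page on an explicit root-theoretic case analysis (the cases $(\alpha',\beta)>0$ and $(\alpha,\alpha')>0$) culminating in a length-four chain inside an interval of length two. You instead also use the second half of the generalized lifting property, namely $v\le s_\gamma w\lessdot w$, together with the diamond property of length-two Bruhat intervals (exactly two elements of intermediate length); this forces $s_\gamma w=s_\beta v$, i.e.\ $w=s_\gamma s_\beta v$ with $\gamma$ a minimal element of $AD(v,s_\gamma s_\beta v)$, which is precisely the configuration excluded by Theorem~\ref{thm:absurd}. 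This eliminates the entire computational tail of the paper's proof. The price is two extra, but standard, inputs that the paper never invokes: the thinness of rank-two Bruhat intervals, and the covering $v\le s_\gamma w\lessdot w$ from \cite[Theorem~6.3]{GLP} --- you should confirm that the cited form of the generalized lifting property really asserts both halves for the minimal element you chose, since the paper only ever quotes the half $v\lessdot s_\gamma v\le w$. Your treatment of the case $(\alpha,\beta)\ge 0$ via Lemma~\ref{lem:preparation-shuffle} is exactly the paper's (stated there as a contrapositive), and the identity $w^{-1}(\beta)=v^{-1}(\alpha)$ is the same computation.
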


\begin{proof}

Let $v,\beta,w$ as in the statement. Let $\alpha\in R^+$ be such that $w=s_\alpha s_\beta v$. Suppose for a contradiction that $s_\beta w>w$. Then Lemma~\ref{lem:preparation-shuffle} implies that $(\alpha,\beta)<0$. Since $\alpha\in A(s_\beta v)$ by definition, it is clear that $\alpha\neq\beta$ and thus $\left<\alpha,\beta\right>=-1$. By our assumption $s_\beta w>w$, we have $w^{-1}(\beta)>0$. But a short computation using $\left<\alpha,\beta\right>=-1$ shows that $w^{-1}(\beta)=v^{-1}(\alpha)$. Hence, we have $\alpha\in A(v)$. In total, this means $\alpha\in AD(v,w)$. By Theorem~\ref{thm:absurd}, we know that $\alpha$ cannot be a minimal element of $AD(v,w)$. Consequently, there exists a root $\alpha'\in R^+$ such that $\alpha'<\alpha$ and such that $\alpha'$ is a minimal element of $AD(v,w)$. Let $\alpha'$ be such a positive root. By our choice of $\alpha'$, we know that $v^{-1}(\alpha')>0$ and $w^{-1}(\alpha')<0$. In view of $\left<\alpha,\beta\right>=-1$, a short computation shows that 
$$
w^{-1}(\alpha')=v^{-1}(\alpha')-\left<\alpha',\beta\right>v^{-1}(\beta)-\left<\alpha',\alpha\right>(v^{-1}(\alpha)+v^{-1}(\beta))
$$
Since $v^{-1}(\beta)>0, v^{-1}(\alpha)>0, v^{-1}(\alpha')>0, w^{-1}(\alpha')<0$, it follows that either $(\alpha',\beta)>0$ or $(\alpha,\alpha')>0$. 

Since $\alpha'$ is a minimal element of $AD(v,w)$, we can apply the generalized lifting property to the Bruhat interval $v<w$ (cf. \cite[Theorem~6.3]{GLP}). This theorem tells us in view of the length difference $\ell(v,w)=2$ that $v\lessdot s_{\alpha'}v\lessdot w$. Hence, there exists a unique root $\gamma\in R^+$ such that $w=s_\gamma s_{\alpha'}v$. This means in particular that $s_\alpha s_\beta=s_\gamma s_{\alpha'}$.

We now treat the two cases $(\alpha',\beta)>0$ and $(\alpha,\alpha')>0$ separately. Assume first that $(\alpha',\beta)>0$. Since $s_\beta w>w$ by assumption and since $s_{\alpha'}w<w$, we see that $\alpha'\neq\beta$. Thus, we have $\left<\alpha',\beta\right>=1$. 
Now, let $\delta=s_{\alpha'}(\beta)=\beta-\alpha'$. It is clear that $\delta$ is a negative root. If we evaluate the equation $s_\alpha s_\beta=s_\gamma s_{\alpha'}$ at $\beta$, we find in view of $\left<\alpha,\beta\right>=-1$ that
$$
-\alpha-\beta=\delta-\left<\delta,\gamma\right>\gamma=\beta-\alpha'-\left<\delta,\gamma\right>\gamma\,.
$$
If we rearrange this equation, it becomes
$$
2\beta=\alpha'-\alpha+\left<\delta,\gamma\right>\gamma\,.
$$
As $\alpha'-\alpha<0$, we see that $\left<\delta,\gamma\right>>0$. We now prove that $\gamma\neq\pm\delta$. 
Indeed, suppose that $\gamma=\pm\delta$. Then we have $s_\alpha s_\beta=s_\gamma s_{\alpha'}=s_{s_{\alpha'}(\beta)} s_{\alpha'}=s_{\alpha'}s_\beta$. It follows that $s_\alpha=s_{\alpha'}$ and thus $\alpha=\alpha'$ -- a contradiction. Since $\gamma\neq\pm\delta$, we know that $\left<\delta,\gamma\right>=1$. If we plug this value into the previous displayed equality and rearrange, we find that $\gamma=2\beta+\alpha-\alpha'$. Thus we find the pairing $\left<\gamma,\beta\right>=4-1-1=2$ which means that $\beta=\gamma$. But then we have $s_\gamma w=s_{\alpha'}v<w<s_\beta w=s_\gamma w$ -- a contradiction. Equally well, one can plug the equality $\beta=\gamma$ into the equation $\gamma=2\beta+\alpha-\alpha'$ which leads to $\alpha'=\alpha+\beta$ -- a contradiction to $\alpha'<\alpha$.

All in all, this shows that we can assume $(\alpha',\beta)\leq 0$ and $(\alpha,\alpha')>0$. Since $\alpha\neq\alpha'$, we clearly have $\left<\alpha,\alpha'\right>=1$. We can define $\alpha''=s_{\alpha'}(\alpha)=\alpha-\alpha'\in R^+$. It is rather clear that $\left<\alpha',\alpha''\right>=-1$ and that $\alpha=s_{\alpha'}(\alpha'')$. We find that 
$$
s_{\alpha'}s_{\alpha''}s_{\alpha'}s_\beta=s_{s_{\alpha'}(\alpha'')}s_\beta=s_\alpha s_\beta=s_\gamma s_{\alpha'}
$$
and thus $s_{\alpha''}s_{\alpha'}s_\beta=s_{\alpha'}s_\gamma s_{\alpha'}$. Since $\alpha'\in D(w)$, we can use this equation to see that the root
$$
w^{-1}(\alpha')=v^{-1}s_{\alpha'} s_\gamma(\alpha')=-v^{-1}s_{\alpha'}s_\gamma s_{\alpha'}(\alpha')=-v^{-1}s_\beta s_{\alpha'}s_{\alpha''}(\alpha')
$$
is negative. Thus, we have $v^{-1}s_\beta s_{\alpha'}s_{\alpha''}(\alpha')>0$ which means that $s_{\alpha''}s_{\alpha'}s_\beta v<s_{\alpha'}s_{\alpha''}s_{\alpha'}s_\beta v=s_\alpha s_\beta v=w$. On the other hand, since $\alpha'\in A(v)$ and since $(\alpha',\beta)\leq 0$, we also have $v^{-1}s_\beta(\alpha')=v^{-1}(\alpha')-\left<\alpha',\beta\right>v^{-1}(\beta)>0$ and thus $s_\beta v<s_{\alpha'}s_\beta v$. Finally, we compute that $v^{-1}s_\beta s_{\alpha'}(\alpha'')=v^{-1}s_\beta(\alpha)>0$ by the very definition of $\alpha$. This means that $s_{\alpha'}s_\beta v<s_{\alpha''}s_{\alpha'}s_\beta v$. If we put these findings together, we obtain the following chain 
$$
v<s_\beta v<s_{\alpha'}s_\beta v<s_{\alpha''}s_{\alpha'}s_\beta v<s_{\alpha'}s_{\alpha''}s_{\alpha'}s_\beta v=w\,.
$$
It is obvious that this contradicts the fact that $\ell(v,w)=2$. This completes the proof of the theorem.
\end{proof}

\begin{ex}
\label{ex:shuffle1}

We give examples of non simply laced Weyl groups for which Theorem~\ref{thm:shuffle} fails. Indeed, for this example, let $W$ be a Weyl group of rank two with simple roots $\beta$ and $\beta'$ whose Coxeter integer is $>3$. Let $v=s_{\beta'}$ and $w=s_{\beta'}s_\beta s_{\beta'}$. Then $v,\beta,w$ satisfy all of the assumptions of Theorem~\ref{thm:shuffle} except that $W$ is a non simply laced Weyl group. But the conclusion of Theorem~\ref{thm:shuffle} fails. In fact, we have $s_\beta w>w$. Note, since we did not fix the arrangement of the simple roots $\beta$ and $\beta'$ according to their different length, we actually gave for a fixed non simply laced Weyl group of rank two two examples for which Theorem~\ref{thm:shuffle} fails.

\end{ex}

\begin{ex}
\label{ex:shuffle2}

We give an example which shows that Theorem~\ref{thm:shuffle} fails for $v,\beta,w$ which satisfy all of the assumptions of Theorem~\ref{thm:shuffle} except that $s_\beta v\lessdot w$ is replaced by $s_\beta v<w$.
Indeed, for this example, let $R$ be of type $\mathsf{A}_4$. Let $\beta_1,\beta_2,\beta_3,\beta_4$ be the simple roots with the labeling as in \cite[Plate~I]{bourbaki_roots}. Let $v=s_{\beta_3} s_{\beta_4}s_{\beta_3}s_{\beta_1}$, $\beta=\beta_2$, $w=s_{\beta_1}s_{\beta_2}s_{\beta_3}s_{\beta_4}s_{\beta_3}s_{\beta_2}s_{\beta_1}$. Then $v,\beta,w$ satisfy all of the assumptions of Theorem~\ref{thm:shuffle} except that only $s_\beta v<w$ and not $s_\beta v\lessdot w$. But the conclusion of Theorem~\ref{thm:shuffle} fails. In fact, we have $s_\beta w>w$.

\end{ex}

\begin{ex}
\label{ex:shuffle3}

Let $v,w\in W$. Let $\beta\in A_\ell(v)$ be such that $s_\beta v\lessdot w$. We give an example of $v,\beta,w$ as stated such that $s_\beta w>w$. This shows that Theorem~\ref{thm:shuffle} fails if $v$ is not a shuffle element and if $\beta$ is an element of $A_\ell(v)$ which is not uniquely determined by $v$. Indeed, for this example, let $R$ be of type $\mathsf{A}_n$ where $n\geq 2$. Let $\beta,\beta'\in\Delta$ such that $m(\beta,\beta')=3$. Let $v=1$ and let $w=s_{\beta'}s_\beta$. Then $v,\beta,w$ are as stated above such that $s_\beta w>w$. If we set $n=2$, we see that $A_\ell(v)$ consists precisely of the elements $\beta$ and $\beta'$, and $v$ fails to be a shuffle element by one additional root in $A_\ell(v)$. 

\end{ex}

\subsection{Bruhat intervals of length two}

In this subsection, we prove the main theorem on Bruhat intervals of length two (Theorem~\ref{thm:main-interval2}). This theorem follows quite easily from Theorem~\ref{thm:shuffle}. It is only left to express the conditions imposed on a shuffle element in terms of properties of a Bruhat interval of length two. 

\todo[inline,color=green]{\color{black} I don't use interval notation like $[v,w]$ except maybe in the introduction or other less formal contexts. [I don't use it at all up to now and don't intend to use it in the future.]

Also, I should note in the section on roots that there are precisely four exceptions $\alpha$ such that there are several $\beta$'s in $\Delta(\alpha)$ such that $(\alpha,\beta)<0$. This should be a part of the corresponding remark after I did all necessary corrections and proof reading. [I mentioned this in a remark after Fact~\ref{fact:classification-final}.]

Another, slightly different description of the main theorem on shuffle elements. The assumptions are stated in terms of assumptions on the interval $[v,w]$ of length two rather than on the element $v$. Useful for our applications.}

\begin{lem}
\label{lem:simple}

Let $\beta,\beta'\in\Delta$ and $\alpha,\alpha'\in R^+$ be such that $\beta\neq\beta'$ and such that $s_\beta s_{\beta'}=s_{\alpha}s_{\alpha'}$. Then we have $\alpha\in\{\beta,\beta'\}$ or $\alpha'\in\{\beta,\beta'\}$.

\end{lem}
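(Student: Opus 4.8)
The claim is that if two length-two products of reflections agree, $s_\beta s_{\beta'} = s_\alpha s_{\alpha'}$, with $\beta,\beta'\in\Delta$ distinct and $\alpha,\alpha'\in R^+$, then one of $\alpha,\alpha'$ already lies in $\{\beta,\beta'\}$. The natural strategy is to reduce this to a statement about a rank-two parabolic subgroup. The element $u = s_\beta s_{\beta'}$ is a rotation in the dihedral group $W' = \langle s_\beta, s_{\beta'}\rangle$ of order $2m$ where $m = m(\beta,\beta')$. Since $u$ has order $m$ and $u\neq 1$ (as $\beta\neq\beta'$), the equation $s_\alpha s_{\alpha'} = u$ forces $s_\alpha$ and $s_{\alpha'}$ to be reflections whose product is $u$; in particular $s_\alpha u$ and $s_{\alpha'}$ are the same reflection, and both $s_\alpha, s_{\alpha'}$ must invert the plane $\mathrm{span}(\beta,\beta')$ appropriately.

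\textbf{Key steps.} First I would observe that $u = s_\beta s_{\beta'}$ fixes pointwise the orthogonal complement (with respect to $B$) of $\mathrm{span}_{\mathbb{R}}\{\beta,\beta'\}$, and acts as a rotation of order $m$ on that $2$-plane $P$. From $s_\alpha s_{\alpha'} = u$ it follows that $s_\alpha s_{\alpha'}$ also fixes $P^\perp$ pointwise; since a product of two reflections fixes pointwise exactly the intersection of the two reflecting hyperplanes (when the reflections are distinct, which they are here because $u\neq 1$), we get $P^\perp \subseteq H_\alpha\cap H_{\alpha'}$, hence $\alpha,\alpha'\in (P^\perp)^\perp = P$. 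So $\alpha,\alpha'$ are positive roots lying in the plane $P$, i.e. in the rank-two root subsystem $R\cap P$ spanned by $\beta$ and $\beta'$. Second, I would work inside this dihedral situation: the reflections of the dihedral group $W'$ are exactly the $s_\gamma$ for $\gamma$ a root in $R\cap P$, and they are cyclically ordered; writing $u$ as a product of two of them, $u = s_\alpha s_{\alpha'}$, the standard dihedral combinatorics shows that the pairs of reflections whose product equals the rotation $u$ are precisely those of the form $(s_\gamma, s_{\gamma u^{-1}})$ — a one-parameter family — and among them the canonical one $(s_\beta, s_{\beta'})$ with $\beta,\beta'$ simple. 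The point to extract is: if $\{\alpha,\alpha'\}\cap\{\beta,\beta'\}=\emptyset$ then $s_\alpha$ and $s_{\alpha'}$ are both "interior" reflections of the dihedral group, i.e. neither equals $s_\beta$ nor $s_{\beta'}$, and then $\ell(s_\alpha s_{\alpha'})$ computed inside $W'$ would differ from $\ell(s_\beta s_{\beta'}) = 2$ — because $s_\beta, s_{\beta'}$ are the unique two reflections of length $1$ in $W'$. More precisely, if $\alpha$ is not simple (not in $\{\beta,\beta'\}$), then $\ell(s_\alpha)\geq 3$, and one checks using $\ell(s_\alpha s_{\alpha'}) \le \ell(s_\alpha)+\ell(s_{\alpha'})$ together with the fact that reflection lengths in a dihedral group are odd and that the only way to write $u$ (length $2$) as a product of two reflections is with both factors of length $1$, forcing $\{\alpha,\alpha'\} = \{\beta,\beta'\}$.

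\textbf{Main obstacle.} The delicate point is making the last step rigorous and clean: the bound $\ell(s_\alpha s_{\alpha'}) = 2$ does not by itself pin down the reflection lengths of the factors (e.g. $\ell = 2$ could a priori be $3 + ?$ only if cancellation occurs, and cancellation between two reflections in a dihedral group is limited but must be analyzed). I would handle this by passing to the dihedral group $W' = \langle s_\beta, s_{\beta'}\rangle$ explicitly: label its $m$ reflections $t_0 = s_\beta, t_1, \ldots, t_{m-1}$ in cyclic order so that $t_i t_j = u^{\,i-j}$ (indices mod $m$, $u = s_\beta s_{\beta'} = t_0 t_1$ having order $m$ when $m<\infty$). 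Then $s_\alpha s_{\alpha'} = u$ means $t_i t_j = u$ with $i - j \equiv 1 \pmod m$, which gives the family $(t_{j+1}, t_j)$; now $\ell_W(t_k)$ restricted to these: the reflection $t_k$ has a palindromic reduced word of odd length, and $t_k$ is simple (has length $1$) iff $k \in \{0,1\}$ after our normalization — equivalently iff $t_k \in \{s_\beta, s_{\beta'}\}$. Since $\ell_W(u) = 2$ and $u = t_{j+1}t_j$ with $t_{j+1}, t_j$ distinct, the subword property / deletion condition in $W$ forces both factors to have length $\le 2$, hence length exactly $1$ (lengths of reflections are odd), hence $\{t_j, t_{j+1}\} = \{s_\beta, s_{\beta'}\}$, so $\{\alpha, \alpha'\} = \{\beta, \beta'\}$, which certainly implies the weaker claimed conclusion $\alpha\in\{\beta,\beta'\}$ or $\alpha'\in\{\beta,\beta'\}$. (If $m = \infty$ the argument is the same with $u$ of infinite order; but in our finite Coxeter setting $m$ is always finite.) I expect the write-up to be short once the reduction to $R\cap P$ is in place; that reduction via fixed-point spaces is the conceptual core.
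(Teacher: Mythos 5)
Your reduction to the rank-two subsystem is sound and is a nice geometric substitute for the paper's citation of a reflection-length theorem: since $s_\alpha\neq s_{\alpha'}$, the fixed space of $s_\alpha s_{\alpha'}$ is exactly $H_\alpha\cap H_{\alpha'}$, and comparing with $\operatorname{Fix}(s_\beta s_{\beta'})=P^\perp$ forces $\alpha,\alpha'\in P=\operatorname{span}(\beta,\beta')$, hence $\Delta(\alpha),\Delta(\alpha')\subseteq\{\beta,\beta'\}$. Up to that point you and the paper are doing essentially the same thing.

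The second half, however, contains a genuine error. The claim that $\ell_W(u)=2$ together with $u=t_{j+1}t_j$ ``forces both factors to have length $\le 2$'' is not a consequence of the subword or deletion properties and is simply false: already in type $\mathsf{A}_2$ one has $s_\beta s_{\beta'}=s_{\beta+\beta'}\,s_\beta$ (since $s_{\beta+\beta'}=s_\beta s_{\beta'}s_\beta$), a factorization of the length-two element $u$ in which one reflection factor has length $3$. Consequently your derived conclusion $\{\alpha,\alpha'\}=\{\beta,\beta'\}$ is false — which is precisely why the lemma is stated with an ``or'' rather than an equality of sets. Worse, your dihedral argument never uses the simply-laced hypothesis, so if it were correct it would prove the lemma for every Weyl group; but in $\mathsf{B}_2$ one computes $s_\beta s_{\beta'}=(s_{\beta'}s_\beta s_{\beta'})(s_\beta s_{\beta'}s_\beta)$, a product of two reflections in non-simple roots, so the statement genuinely fails there. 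The correct finish exploits simple-lacedness: the rank-two subsystem containing $\alpha,\alpha'$ is of type $\mathsf{A}_1\times\mathsf{A}_1$ or $\mathsf{A}_2$, so if $\alpha\notin\{\beta,\beta'\}$ the only possibility is $\alpha=\beta+\beta'$, whence $s_\alpha=s_\beta s_{\beta'}s_\beta$ and $s_{\alpha'}=s_\alpha s_\beta s_{\beta'}=s_\beta$, i.e.\ $\alpha'=\beta$. You should replace your dihedral length count by this case analysis.
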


\begin{proof}

Let $\beta,\beta',\alpha,\alpha'$ be as in the statement. Since $\beta\neq\beta'$, it is easy to see that the ordinary length as well as well as the reflection length of $s_\beta s_{\beta'}$ is equal to two. In view of \cite[Theorem~1.4]{reflection-length} applied to the parabolic subgroup of $W$ generated by $s_\beta$ and $s_{\beta'}$, it follows that $\Delta(\alpha)\subseteq\{\beta,\beta'\}$ and $\Delta(\alpha')\subseteq\{\beta,\beta'\}$. Assume that $\alpha\notin\{\beta,\beta'\}$. Then it is clear that $\beta$ and $\beta'$ cannot be orthogonal and that $\alpha=\beta+\beta'$. In particular, we have $s_\alpha=s_\beta s_{\beta'} s_\beta$. From the equation $s_\beta s_{\beta'}=s_\alpha s_{\alpha'}$, it now follows that $s_{\alpha'}=s_\alpha s_\beta s_{\beta'}=s_\beta$ and thus $\alpha'=\beta$. This completes the proof of the lemma.
\end{proof}

\begin{rem}

It should be clear that one can also give a simple and more elementary proof of Lemma~\ref{lem:simple} without referring to \cite{reflection-length}. Since this is cumbersome, we preferred this solution.

\end{rem}

\begin{lem}
\label{lem:unique-ascent}

Let $v,w\in W$ be such that $\ell(v,w)=2,v\leq w, v\not\leq_\ell w$. Suppose that for all $\beta\in A_\ell(v)$ we have $s_\beta v\leq w$. Then $v$ is a shuffle element.

\end{lem}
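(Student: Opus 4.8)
The goal is to show that under the hypotheses $\ell(v,w)=2$, $v\leq w$, $v\not\leq_\ell w$, and $s_\beta v\leq w$ for all $\beta\in A_\ell(v)$, the set $A_\ell(v)$ is a singleton. First I would observe that $A_\ell(v)$ is nonempty since $v\neq w_o$ (because $\ell(v)<\ell(w)\leq\ell(w_o)$), so it suffices to rule out $|A_\ell(v)|\geq 2$. The plan is to argue by contradiction: assume $\beta,\beta'$ are two distinct elements of $A_\ell(v)$, so $v<s_\beta v$ and $v<s_{\beta'}v$, both $s_\beta v$ and $s_{\beta'}v$ lie below $w$ by hypothesis, and each has length $\ell(v)+1$.

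The key idea is that $v\not\leq_\ell w$ forces a genuine obstruction. Since $s_\beta v\lessdot w$ is impossible for \emph{both} $\beta$ and $\beta'$ to be "the" step used in a weak-order chain (that would make $v\leq_\ell w$), I would exploit the combinatorics of the interval. Concretely: from $s_\beta v\leq w$ with $\ell(s_\beta v,w)=1$, there is a reflection giving $s_\beta v\lessdot w$, so $w=t\, s_\beta v$ for some reflection $t$; similarly $w=t'\, s_{\beta'} v$. Hence $s_\beta s_{\beta'}$ (after right-multiplying by $v^{-1}$ appropriately) can be rewritten, and more usefully $t s_\beta = t' s_{\beta'}$ as elements, i.e.\ $s_\beta s_{\beta'} = $ (conjugate reflections)\dots. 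The cleaner route: write $w = s_\beta v$ would give $\ell(v,w)=1$, contradiction, so the reflection $t$ in $w = t s_\beta v$ is not $s_{v^{-1}(\beta)}$-trivial; analyze $w v^{-1}$ and use that $\ell(w)=\ell(v)+2$. I expect that the right tool is Lemma~\ref{lem:simple}: if one can show $s_\gamma s_{\gamma'} = s_\beta s_{\beta'}$ for suitable reflections $s_\gamma,s_{\gamma'}$ arising from the two length-one steps, then Lemma~\ref{lem:simple} forces one of them to be a simple reflection, and pushing this through will show that in fact $v<s_\beta v\lessdot w$ or $v<s_{\beta'}v\lessdot w$ via a \emph{left} simple reflection, contradicting $v\not\leq_\ell w$.

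More precisely, here is the intended chain of steps. Step one: since $\ell(v,w)=2$, pick any reduced expression of $w$ and use \cite[Theorem~5.10]{humphreys-coxeter} to see $v$ is obtained by deleting two simple reflections; combined with $v\leq_\ell$-incomparability to $w$, these two deletions cannot be the first two letters. Step two: for $\beta\in A_\ell(v)$, $s_\beta v\leq w$ and $\ell(s_\beta v)=\ell(v)+1=\ell(w)-1$, so $s_\beta v\lessdot w$; write $w s_\beta v^{-1}$... rather, note $s_\beta w$ and compare lengths. Step three: suppose $\beta\neq\beta'$ both in $A_\ell(v)$. Since $s_\beta v\lessdot w$ and $s_{\beta'}v\lessdot w$ are distinct elements both covered by $w$, set $u = s_\beta v$, $u' = s_{\beta'}v$. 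Then $w$ covers both $u,u'$, and $u,u'$ both cover $v$ (the latter because $v\lessdot s_\beta v$, $v\lessdot s_{\beta'}v$). This is a "diamond" in the Bruhat order between $v$ and $w$ of length two. Step four: from the diamond, $w u^{-1}$ and $w u'^{-1}$ are reflections, and $u v^{-1} = s_\beta$, $u' v^{-1} = s_{\beta'}$; hence $(wu^{-1})s_\beta = w v^{-1} = (wu'^{-1})s_{\beta'}$, giving a reflection identity to which Lemma~\ref{lem:simple} applies after conjugating into the standard form — it forces $w u^{-1}\in\{s_\beta,s_{\beta'}\}$ up to the structure there. Step five: trace through the possibilities; each one yields either $v\lessdot w$ directly (contradiction with $\ell(v,w)=2$) or a length-two \emph{left} weak chain $v\lessdot_\ell s_\beta v\lessdot_\ell w$, contradicting $v\not\leq_\ell w$.

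The main obstacle I anticipate is Step four and five: getting Lemma~\ref{lem:simple} into exactly the applicable shape requires careful bookkeeping with left versus right multiplication (the lemma is stated for $s_\beta s_{\beta'} = s_\alpha s_{\alpha'}$ with $\beta,\beta'$ simple, whereas our natural identity has the simple reflections on the \emph{outside} after multiplying on the right by $v$, so one conjugates by $v^{-1}$ or works with $v w^{-1}$ versus $w v^{-1}$), and then correctly identifying which branch corresponds to the weak-order conclusion. The definitional input that $v\not\leq_\ell w$ precisely excludes the existence of $\beta_1,\beta_2\in\Delta$ with $v<s_{\beta_1}v<s_{\beta_2}s_{\beta_1}v=w$ is what closes the argument, so I would phrase the final contradiction in exactly those terms.
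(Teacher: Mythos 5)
Your proposal is correct and follows essentially the same route as the paper: take two distinct $\beta,\beta'\in A_\ell(v)$, use $s_\beta v\lessdot w$ and $s_{\beta'}v\lessdot w$ to write $w=s_\alpha s_\beta v=s_{\alpha'}s_{\beta'}v$, cancel $v$ to get $s_\beta s_{\beta'}=s_\alpha s_{\alpha'}$, and invoke Lemma~\ref{lem:simple} to force one of $\alpha,\alpha'$ to be simple, whence $v\leq_\ell w$ (or the degenerate $w=v$), contradicting the hypotheses. The bookkeeping you worry about in Steps four and five is a non-issue: since the covering reflections act on the left, right-multiplying by $v^{-1}$ already puts the identity in exactly the form of Lemma~\ref{lem:simple}, with no further conjugation required.
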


\begin{proof}

Let $v$ and $w$ be as in the statement. By assumption, it is clear that $A_\ell(v)\neq\emptyset$. Let $\beta,\beta'\in A_\ell(v)$. It suffices to show that $\beta=\beta'$. Suppose for a contradiction that $\beta\neq\beta'$. By assumption, we have $s_\beta v\lessdot w$ and $s_{\beta'}v\lessdot w$. Hence, there exist unique roots $\alpha,\alpha'\in R^+$ such that $w=s_\alpha s_\beta v=s_{\alpha'}s_{\beta'}v$. This last equality means also that $s_\alpha s_\beta=s_{\alpha'}s_{\beta'}$ and that $s_\beta s_{\beta'}=s_\alpha s_{\alpha'}$. Lemma~\ref{lem:simple} now implies that $\alpha\in\{\beta,\beta'\}$ or that $\alpha'\in\{\beta,\beta'\}$. In both cases, it follows that $v\leq_\ell w$ which is contrary to our initial assumption on the interval $v<w$.
\end{proof}

\begin{thm}
\label{thm:interval2}

Let $v,w\in W$ be such that $\ell(v,w)=2, v\leq w, v\not\leq_\ell w$. Suppose that for all $\beta\in A_\ell(v)$ we have $s_\beta v\leq w$. Then we have for all $\beta\in A_\ell(v)$ that $s_\beta w<w$.

\end{thm}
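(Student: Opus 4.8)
The plan is to reduce Theorem~\ref{thm:interval2} directly to Theorem~\ref{thm:shuffle} (the main theorem on shuffle elements) via Lemma~\ref{lem:unique-ascent}. First I would fix $v,w\in W$ satisfying the hypotheses $\ell(v,w)=2$, $v\leq w$, $v\not\leq_\ell w$, and the assumption that $s_\beta v\leq w$ for all $\beta\in A_\ell(v)$. Lemma~\ref{lem:unique-ascent} applies verbatim under exactly these hypotheses, so it immediately tells us that $v$ is a shuffle element. Let $\beta$ denote its pivot element, i.e.\ $A_\ell(v)=\{\beta\}$.

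Next I would pick an arbitrary $\beta\in A_\ell(v)$ — but since $v$ is a shuffle element this forces $\beta$ to be the pivot element. By the standing assumption we have $s_\beta v\leq w$, and since $\ell(s_\beta v)=\ell(v)+1$ while $\ell(w)=\ell(v)+2$, the interval from $s_\beta v$ to $w$ has length one, so $s_\beta v\lessdot w$. This is precisely the hypothesis of Theorem~\ref{thm:shuffle} with the same $v$, $\beta$, $w$: $v$ is a shuffle element with pivot element $\beta$, and $s_\beta v\lessdot w$. Theorem~\ref{thm:shuffle} then concludes $s_\beta w<w$, which is exactly what we wanted to prove. Since $\beta$ was the only element of $A_\ell(v)$, the statement "for all $\beta\in A_\ell(v)$, $s_\beta w<w$" holds trivially.

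I do not anticipate any real obstacle here: the entire content of this theorem has already been front-loaded into Lemma~\ref{lem:unique-ascent} (which identifies $v$ as a shuffle element using the length-two reflection-length argument of Lemma~\ref{lem:simple}) and into Theorem~\ref{thm:shuffle} (whose proof uses the generalized lifting property and the root-combinatorial exclusions of Subsection~\ref{subsec:prelim-roots}). The only thing to check carefully is the bookkeeping that $s_\beta v\leq w$ together with the length hypotheses upgrades to $s_\beta v\lessdot w$, which is immediate from $\ell(s_\beta v,w)=\ell(v,w)-1=1$. So the proof will be short: invoke Lemma~\ref{lem:unique-ascent} to get that $v$ is a shuffle element, observe that its unique ascent $\beta$ satisfies $s_\beta v\lessdot w$, and apply Theorem~\ref{thm:shuffle}.
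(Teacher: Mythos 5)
Your proof is correct and follows exactly the paper's own argument: invoke Lemma~\ref{lem:unique-ascent} to see that $v$ is a shuffle element with pivot $\beta$, note $s_\beta v\lessdot w$ (the paper also asserts this directly from the hypothesis, with the length bookkeeping implicit), and apply Theorem~\ref{thm:shuffle}. Nothing further is needed.
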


\begin{proof}

Let $v$ and $w$ be as in the statement. By Lemma~\ref{lem:unique-ascent}, it follows that $v$ is a shuffle element. Let $\beta$ be the pivot element of $v$. By assumption, we have $s_\beta v\lessdot w$. Theorem~\ref{thm:shuffle} implies that $s_\beta w<w$. In view of $A_\ell(v)=\{\beta\}$, this completes the proof.
\end{proof}

\begin{rem-prob}
\label{rem*:two}

While Theorem~\ref{thm:shuffle} and Theorem~\ref{thm:main-interval2} clearly fail for non simply laced Weyl groups (cf.\ Remark~\ref{rem:two} and Example~\ref{ex:shuffle1}), we were not able to find a counterexample to Theorem~\ref{thm:interval2} for an arbitrary Weyl group. It might be true that under the same assumptions on $v$ and $w$ the conclusion of Theorem~\ref{thm:interval2} or the weaker conclusion that there exists $\beta\in A_\ell(v)$ such that $s_\beta w<w$ hold for more general Coxeter groups, e.g.\ arbitrary Weyl groups.

\end{rem-prob}

\begin{rem}

Example~\ref{ex:shuffle2} shows that the conclusion of Theorem~\ref{thm:interval2} (or even the weaker conclusion that there exists $\beta\in A_\ell(v)$ such that $s_\beta w<w$) fails for elements $v,w\in W$ which satisfy all of the assumptions of Theorem~\ref{thm:interval2} except that $\ell(v,w)>2$. Indeed, the elements $v,w\in W$ as in Example~\ref{ex:shuffle2} satisfy in addition to the properties discussed there $v\not\leq_\ell w$ and $\ell(v,w)=3$.

\end{rem}

\begin{thm}[Main theorem on Bruhat intervals of length two]
\label{thm:main-interval2}

Let $v,w\in W$ be such that $\ell(v,w)=2$ and $v\leq w$. Suppose that for all $\beta\in A_\ell(v)$ we have $s_\beta v\leq w$. Then there exists $\beta\in A_\ell(v)$ such that $s_\beta w<w$.

\end{thm}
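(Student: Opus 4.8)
The plan is to reduce to Theorem~\ref{thm:interval2} by distinguishing whether or not $v\leq_\ell w$ in the weak left Bruhat order. If $v\not\leq_\ell w$, then $(v,w)$ satisfies \emph{all} the hypotheses of Theorem~\ref{thm:interval2}, so $s_\beta w<w$ for every $\beta\in A_\ell(v)$; and since $v\leq w$ with $\ell(v,w)=2>0$ we have $v\neq w_o$, hence $A_\ell(v)\neq\emptyset$, and any $\beta\in A_\ell(v)$ does the job. (This is where the hypothesis ``$s_\beta v\leq w$ for all $\beta\in A_\ell(v)$'' is genuinely used, namely through Theorem~\ref{thm:interval2}.) So the real work is in the complementary case $v\leq_\ell w$, where I expect the hypothesis on $A_\ell(v)$ not to be needed at all.

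Assume $v\leq_\ell w$. Since $\ell(v,w)=2$, there are $s_a,s_b\in S$ with $w=s_as_bv$ and $\ell(v)<\ell(s_bv)<\ell(s_as_bv)=\ell(w)$, so $\ell(s_bv)=\ell(v)+1$, $\ell(w)=\ell(v)+2$, and $s_a\neq s_b$ (otherwise $w=v$). Let $\beta_a,\beta_b\in\Delta$ be the simple roots with $s_{\beta_a}=s_a$, $s_{\beta_b}=s_b$. From $s_bv>v$ we get $\beta_b\in A_\ell(v)$, and from $s_aw=s_bv$ we get $\ell(s_aw)=\ell(w)-1$, hence $s_aw<w$ unconditionally. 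It therefore suffices to show that either $s_bw<w$ (then $\beta_b$ works) or $s_av>v$ (then $\beta_a\in A_\ell(v)$ works, since $s_aw<w$). Suppose $s_bw>w$; I claim $s_av>v$. Indeed $s_bw=s_bs_as_bv$ has length $\ell(v)+3$, so the word consisting of $s_bs_as_b$ followed by a reduced word for $v$ represents $s_bw$ and has length $3+\ell(v)=\ell(s_bw)$, hence is reduced; in particular $\ell(s_bs_as_b)=3$, which forces $m(s_a,s_b)=3$ (here we use that $W$ is simply laced, so $m(s_a,s_b)\in\{2,3\}$, and $m=2$ would give $\ell(s_bs_as_b)=1$), and thus $s_bs_as_b=s_as_bs_a$. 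Consequently $s_as_bs_a$ followed by a reduced word for $v$ is again a reduced word (it represents $s_bw$, of the correct length $\ell(v)+3$), and its suffix ``$s_a$ followed by a reduced word for $v$'' is reduced as well; hence $\ell(s_av)=\ell(v)+1$, i.e. $s_av>v$, as claimed. This completes the second case.

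The only content beyond Theorem~\ref{thm:interval2} is the short combinatorial argument in the case $v\leq_\ell w$, and within it the implication $s_bw>w\Rightarrow s_av>v$; this uses nothing deeper than length bookkeeping, the braid relation in the rank-two parabolic of type $\mathsf{A}_2$ generated by $s_a,s_b$, and the standard fact that suffixes of reduced words are reduced. I do not anticipate a genuine obstacle here: the difficulty of the statement is concentrated in Theorem~\ref{thm:interval2} (ultimately in Theorem~\ref{thm:shuffle} and the generalized lifting property), and the present theorem merely removes the auxiliary hypothesis $v\not\leq_\ell w$ at the price of the weaker existential conclusion.
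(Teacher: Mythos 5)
Your proof is correct and follows essentially the same route as the paper's: reduce to Theorem~\ref{thm:interval2} when $v\not\leq_\ell w$, and otherwise, writing $w=s_{\beta_a}s_{\beta_b}v$ with $v\leq_\ell w$, show that if $s_{\beta_b}w>w$ then the other letter $\beta_a$ lies in $A_\ell(v)$ while $s_{\beta_a}w=s_{\beta_b}v<w$. The paper carries out this second step by computing the roots $w^{-1}(\beta_b)$ and $w^{-1}(\beta_a)$, whereas you use reduced-word length bookkeeping and the braid relation; the two are interchangeable, and your remark that $s_{\beta_a}w<w$ holds unconditionally is in fact slightly more direct than the paper's computation.
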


\begin{proof}

Let $v$ and $w$ be as in the statement. If $v\not\leq_\ell w$, then the result follows clearly from Theorem~\ref{thm:interval2}. Hence, we may assume that $v\leq_\ell w$. Let $\beta,\beta'\in\Delta$ such that $w=s_{\beta'}s_\beta v$. If $s_\beta w <w$, the theorem is proved. Hence, we may assume that $s_\beta w>w$. This means in particular that $\beta$ and $\beta'$ are not orthogonal. Under these assumptions, we find that $w^{-1}(\beta)=v^{-1}(\beta')>0$. Hence, it suffices to prove that $w^{-1}(\beta')<0$. But this last inequality is clear because we have $w^{-1}(\beta')=-v^{-1}(\beta)-v^{-1}(\beta')$ and $\beta,\beta'\in A_\ell(v)$.
\end{proof}

\begin{rem}

In Lemma~\ref{lem:unique-ascent}, Theorem~\ref{thm:interval2}, Theorem~\ref{thm:main-interval2}, the assumption $v\leq w$ is actually each time superfluous because it follows from the other assumptions on $v,w\in W$. We included it just for simplicity.

\end{rem}

\begin{rem}
\label{rem:two}

Example~\ref{ex:shuffle1} and Example~\ref{ex:shuffle2} show again that Theorem~\ref{thm:main-interval2} fails for non simply laced Weyl groups and for elements $v,w\in W$ which satisfy all of the assumptions of Theorem~\ref{thm:main-interval2} except that $\ell(v,w)>2$.

\end{rem}

\begin{ex}

Let $v,w\in W$ as in the statement of Theorem~\ref{thm:main-interval2}. We give an example to show that the stronger conclusion \enquote{For all $\beta\in A_\ell(v)$, we have that $s_\beta w<w$.} (as in Theorem~\ref{thm:interval2}) does not hold. Indeed, for this example, analogously to Example~\ref{ex:shuffle3}, let $R$ be of type $\mathsf{A}_2$ with simple roots $\beta$ and $\beta'$. Let $v=1$ and $w=s_{\beta'}s_\beta$. Then $v$ and $w$ satisfy all of the assumptions of Theorem~\ref{thm:main-interval2}. But we have $\beta'\in A_\ell(v)$ and $s_{\beta'}w>w$.

\end{ex}

\begin{ex}

Let $v,w\in W$ as in the statement of Theorem~\ref{thm:main-interval2}. We give an example which shows that the $\beta\in A_\ell(v)$ such that $s_\beta w<w$ is not necessarily unique (as it is the case in the conclusion of Theorem~\ref{thm:interval2} as we see from its proof). Indeed, for this example, let $R$ be of type $\mathsf{A}_3$. Let $\beta_1,\beta_2,\beta_3$ be the simple roots with the labeling as in \cite[Plate~I]{bourbaki_roots}. Let $v=s_{\beta_2}$ and $w=s_{\beta_1}s_{\beta_3}s_{\beta_2}$. Then $v$ and $w$ satisfy all of the assumptions of Theorem~\ref{thm:main-interval2}. But we have $A_\ell(v)=\{\beta_1,\beta_3\}$, $s_{\beta_1}w<w$, $s_{\beta_3}w<w$.

\end{ex}

\bibliographystyle{aomplain}
\bibliography{lib}

\providecommand{\bysame}{\leavevmode\hbox to3em{\hrulefill}\thinspace}
\providecommand{\noopsort}[1]{}
\providecommand{\mr}[1]{\href{http://www.ams.org/mathscinet-getitem?mr=#1}{MR~#1}}
\providecommand{\zbl}[1]{\href{http://www.zentralblatt-math.org/zmath/en/search/?q=an:#1}{Zbl~#1}}
\providecommand{\jfm}[1]{\href{http://www.emis.de/cgi-bin/JFM-item?#1}{JFM~#1}}
\providecommand{\arxiv}[1]{\href{http://www.arxiv.org/abs/#1}{arXiv~#1}}
\providecommand{\doi}[1]{\url{http://dx.doi.org/#1}}
\providecommand{\MR}{\relax\ifhmode\unskip\space\fi MR }
\providecommand{\MRhref}[2]{%
  \href{http://www.ams.org/mathscinet-getitem?mr=#1}{#2}
}
\providecommand{\href}[2]{#2}
\begin{thebibliography}{10}

\bibitem{andrus}
\bgroup\scshape{}N.~Andruskiewitsch\egroup{} and \bgroup\scshape{}H.-J.
  Schneider\egroup{}, Pointed {H}opf algebras,  in \emph{New directions in
  {H}opf algebras}, \emph{Math. Sci. Res. Inst. Publ.} \textbf{43}, Cambridge
  Univ. Press, Cambridge, 2002, pp.~1--68. \mr{1913436}.  Available at
  \url{https://doi.org/10.2977/prims/1199403805}.

\bibitem{quasi-homogeneity}
\bgroup\scshape{}C.~{B{\"a}rligea}\egroup{}, {Quasi-homogeneity of the moduli
  space of stable maps to homogeneous spaces},  \emph{ArXiv e-prints} (2017).
  Available at \url{http://adsabs.harvard.edu/abs/2017arXiv170606452B}.

\bibitem{reflection-length}
\bgroup\scshape{}B.~Baumeister\egroup{}, \bgroup\scshape{}M.~Dyer\egroup{},
  \bgroup\scshape{}C.~Stump\egroup{}, and \bgroup\scshape{}P.~Wegener\egroup{},
  A note on the transitive {H}urwitz action on decompositions of parabolic
  {C}oxeter elements,  \emph{Proc. Amer. Math. Soc. Ser. B} \textbf{1} (2014),
  149--154. \mr{3294251}.  Available at
  \url{https://doi.org/10.1090/S2330-1511-2014-00017-1}.

\bibitem{bazlov1}
\bgroup\scshape{}Y.~Bazlov\egroup{}, Nichols-{W}oronowicz algebra model for
  {S}chubert calculus on {C}oxeter groups,  \emph{J. Algebra} \textbf{297}
  (2006), 372--399. \mr{2209265}.  Available at
  \url{https://doi.org/10.1016/j.jalgebra.2006.01.037}.

\bibitem{bazlov2}
\bgroup\scshape{}Y.~Bazlov\egroup{} and
  \bgroup\scshape{}A.~Berenstein\egroup{}, Cocycle twists and extensions of
  braided doubles,  in \emph{Noncommutative birational geometry,
  representations and combinatorics}, \emph{Contemp. Math.} \textbf{592}, Amer.
  Math. Soc., Providence, RI, 2013, pp.~19--70. \mr{3087939}.  Available at
  \url{https://doi.org/10.1090/conm/592/11772}.

\bibitem{bourbaki_roots}
\bgroup\scshape{}N.~Bourbaki\egroup{}, \emph{Lie groups and {L}ie algebras.
  {C}hapters 4--6}, \emph{Elements of Mathematics (Berlin)}, Springer-Verlag,
  Berlin, 2002, Translated from the 1968 French original by Andrew Pressley.
  \mr{1890629}.  Available at \url{https://doi.org/10.1007/978-3-540-89394-3}.

\bibitem{GLP}
\bgroup\scshape{}F.~Caselli\egroup{} and
  \bgroup\scshape{}P.~Sentinelli\egroup{}, The generalized lifting property of
  {B}ruhat intervals,  \emph{J. Algebraic Combin.} \textbf{45} (2017),
  687--700. \mr{3627500}.  Available at
  \url{https://doi.org/10.1007/s10801-016-0721-7}.

\bibitem{deodhar}
\bgroup\scshape{}V.~V. Deodhar\egroup{}, Some characterizations of {B}ruhat
  ordering on a {C}oxeter group and determination of the relative {M}\"obius
  function,  \emph{Invent. Math.} \textbf{39} (1977), 187--198. \mr{0435249}.
  Available at \url{https://doi.org/10.1007/BF01390109}.

\bibitem{dyer-reflection-ordering}
\bgroup\scshape{}M.~J. Dyer\egroup{}, Hecke algebras and shellings of {B}ruhat
  intervals,  \emph{Compositio Math.} \textbf{89} (1993), 91--115.
  \mr{1248893}.  Available at
  \url{http://www.numdam.org/item?id=CM_1993__89_1_91_0}.

\bibitem{etingof}
\bgroup\scshape{}P.~Etingof\egroup{}, \bgroup\scshape{}S.~Gelaki\egroup{},
  \bgroup\scshape{}D.~Nikshych\egroup{}, and
  \bgroup\scshape{}V.~Ostrik\egroup{}, \emph{Tensor categories},
  \emph{Mathematical Surveys and Monographs} \textbf{205}, American
  Mathematical Society, Providence, RI, 2015. \mr{3242743}.  Available at
  \url{https://doi.org/10.1090/surv/205}.

\bibitem{quadratic}
\bgroup\scshape{}S.~Fomin\egroup{} and \bgroup\scshape{}A.~N.
  Kirillov\egroup{}, Quadratic algebras, {D}unkl elements, and {S}chubert
  calculus,  in \emph{Advances in geometry}, \emph{Progr. Math.} \textbf{172},
  Birkh\"auser Boston, Boston, MA, 1999, pp.~147--182. \mr{1667680}.

\bibitem{nilcoxeter}
\bgroup\scshape{}S.~Fomin\egroup{} and \bgroup\scshape{}R.~P. Stanley\egroup{},
  Schubert polynomials and the nil-{C}oxeter algebra,  \emph{Adv. Math.}
  \textbf{103} (1994), 196--207. \mr{1265793}.  Available at
  \url{https://doi.org/10.1006/aima.1994.1009}.

\bibitem{humphreys-coxeter}
\bgroup\scshape{}J.~E. Humphreys\egroup{}, \emph{Reflection groups and
  {C}oxeter groups}, \emph{Cambridge Studies in Advanced Mathematics}
  \textbf{29}, Cambridge University Press, Cambridge, 1990. \mr{1066460}.
  Available at \url{https://doi.org/10.1017/CBO9780511623646}.

\bibitem{kirillov}
\bgroup\scshape{}A.~N. Kirillov\egroup{}, Skew divided difference operators and
  {S}chubert polynomials,  \emph{SIGMA Symmetry Integrability Geom. Methods
  Appl.} \textbf{3} (2007), Paper 072, 14. \mr{2322799}.  Available at
  \url{https://doi.org/10.3842/SIGMA.2007.072}.

\bibitem{liu}
\bgroup\scshape{}R.~I. Liu\egroup{}, Positive expressions for skew divided
  difference operators,  \emph{J. Algebraic Combin.} \textbf{42} (2015),
  861--874. \mr{3403185}.  Available at
  \url{https://doi.org/10.1007/s10801-015-0606-1}.

\bibitem{macdonald1991notes}
\bgroup\scshape{}I.~G. Macdonald\egroup{}, \emph{Notes on {S}chubert
  Polynomials}, \emph{Publications du Laboratoire de combinatoire et
  d'informatique math{\'e}matique}, D{\'e}p. de math{\'e}matiques et
  d'informatique, Universit{\'e} du Qu{\'e}bec {\`a} Montr{\'e}al, 1991.
  Available at \url{https://books.google.fr/books?id=BvLuAAAAMAAJ}.

\bibitem{majid}
\bgroup\scshape{}S.~Majid\egroup{}, \emph{Foundations of quantum group theory},
  Cambridge University Press, Cambridge, 1995. \mr{1381692}.  Available at
  \url{https://doi.org/10.1017/CBO9780511613104}.

\end{thebibliography}


\end{document}